\numberwithin{equation}{section}
\newcommand{\Prob}{\mathbb{P}}
\newcommand{\Q}{\mathbb{Q}}
\newcommand{\R}{\mathbb{R}}
\newcommand{\E}{\mathbb{E}}
\newcommand{\N}{\mathbb{N}}
\newcommand{\Z}{\mathbb{Z}}
\newcommand{\bP}{\mathbf{P}}
\newcommand{\bE}{\mathbf{E}}
\newcommand{\fR}{\mathfrak{R}}
\newcommand{\fM}{\mathfrak{M}}
\newcommand{\fA}{\mathcal{A}}
\newcommand{\fC}{\mathfrak{C}}
\newcommand{\cC}{\mathcal{C}}
\newcommand{\cI}{\mathcal{I}}
\newcommand{\cP}{\mathcal P}
\newcommand{\cE}{\mathcal{E}}
\newcommand{\cG}{\mathcal{G}}
\newcommand{\cS}{\mathcal{S}}
\newcommand{\calN}{\mathcal{N}}
\renewcommand{\d}{\mathrm{d}}
\renewcommand{\P}{\Prob}
\newcommand{\bbS}{\mathbb{S}}
\newcommand{\ind}[1]{\mathbbm{1}_{\{ #1\}}}
\newcommand{\indset}[1]{\mathbbm{1}_{#1}}
\newcommand{\Cpts}{\mathsf{Cpts}}
\newtheorem{theorem}{Theorem}[section]
\newtheorem*{theorem*}{Theorem}
\newtheorem{lemma}[theorem]{Lemma}
\newtheorem{claim}[theorem]{Claim}
\newtheorem{proposition}[theorem]{Proposition}
\newtheorem{observation}[theorem]{Observation}
\newtheorem{corollary}[theorem]{Corollary}
\theoremstyle{definition}{

\newtheorem{definition}[theorem]{Definition}
\newtheorem*{definition*}{Definition}

\newtheorem{question}[theorem]{Question}
\newtheorem*{question*}{Question}
\newtheorem*{example*}{Example}
\newtheorem*{examples*}{Examples}
\newtheorem{remark}[theorem]{Remark}
\newtheorem*{remark*}{Remark}

}
\numberwithin{equation}{section}
\newcommand{\norm}[1]{\left\Vert #1 \right\Vert}
\newcommand{\Cov}{\operatorname{Cov}}
\newcommand{\Var}{\operatorname{Var}}
\newcommand{\one}{\mathbbm{1}}
\newcommand{\given}{\;\big|\;}
\newcommand\abs[1]{\left|#1\right|}
\newcommand{\floor}[1]{\left\lfloor#1\right\rfloor}
\newcommand{\llb}{\llbracket}
\newcommand{\rrb}{\rrbracket}
\newcommand{\ep}{\epsilon}
\newcommand{\ostar}{\mathsf{o}^*}
\renewcommand{\e}{\mathsf{e}} 
\newcommand{\w}{\mathsf{w}}
\newcommand{\x}{\mathsf{x}} 
\newcommand{\y}{\mathsf{y}}
\newcommand{\z}{\mathsf{z}}
\renewcommand{\u}{\mathsf{u}}
\renewcommand{\v}{\mathsf{v}}
\newcommand{\X}{\mathsf{X}} 
\newcommand{\sS}{\mathsf{S}}
\newcommand{\h}{\mathsf{h}}
\newcommand{\fJ}{\mathfrak{J}}
\newcommand{\C}{\underline{\cC}} 
\newcommand{\A}{\mathsf{A}} 
\newcommand{\AL}{\mathsf{A_L}} 
\newcommand{\AR}{\mathsf{A_R}} 
\renewcommand{\H}{\mathbb H}
\newcommand{\Pa}{\P^{\h_{\x}}}
\newcommand{\vv}{\vec{v}}
\newcommand{\cL}{\mathcal L}
\newcommand{\Y}{\mathcal{Y}}
\newcommand{\V}{\mathsf{V}}
\newcommand{\Lstar}{\mathsf{L}*}
\newcommand{\Rstar}{\mathsf{R}*}
\newcommand{\legs}{\texttt{\textup{legs}}}
\newcommand{\full}{\texttt{\textup{full}}}
\newcommand{\tv}{\textsc{\textup{tv}}}
\newcommand{\pilegs}{\pi_{W}^{\legs}}
\newcommand{\hatpilegs}{\widehat\pi_{W}^{\legs}}
\newcommand{\n}{\vec{n}}
\newcommand{\SOS}{\textsc{sos}}
\newcommand{\Gb}{\cG}
\newcommand{\fh}{\mathfrak h}
\newcommand{\cp}{\mathrm{cp}}
\newcommand{\len}{\mathrm{len}}
\newcommand{\fcone}{\mathcal{Y}^\blacktriangleleft}
\newcommand{\bcone}{\mathcal{Y}^\blacktriangleright}
\newcommand{\Kb}{\mathcal W}
\newcommand{\wulffcone}{\Kb_{\textup{in}}^\blacktriangleleft}
\newcommand{\extwulffcone}{\Kb_{\textup{out}}^{\blacktriangleleft}}
\newcommand{\np}{\vec{n}^{\perp}}
\newcommand{\bQ}{\mathbf{Q}}
\newcommand{\bS}{\mathbf{S}}
\newcommand{\bX}{\mathbf{X}}
\crefname{step}{Step}{Steps}
\crefname{property}{Property}{Properties}
\crefname{case}{Case}{Cases}
\crefname{question}{Question}{Questions}
\begin{document}

\begin{Frontmatter}

\title[On level line fluctuations of SOS surfaces above a wall]{On level line fluctuations of SOS surfaces above a wall}

\author[1]{Patrizio Caddeo}\orcid{0009-0004-1018-6950}
\author[2]{Yujin H.\ Kim}\orcid{0000-0001-8457-2548}
\author[3]{Eyal Lubetzky}\orcid{0000-0002-2281-3542}

\authormark{P.\ Caddeo \textit{et al}.}

\address[1]{\orgname{Courant Institute of Mathematical Sciences, New York University}, \orgaddress{\street{251 Mercer Street, New York, NY}, \postcode{10012}, \country{USA}};\email{patrizio.caddeo@courant.nyu.edu}}
\address[1]{\orgname{Courant Institute of Mathematical Sciences, New York University}, \orgaddress{\street{251 Mercer Street, New York, NY}, \postcode{10012}, \country{USA}};\email{yujin.kim@courant.nyu.edu}}
\address[1]{\orgname{Courant Institute of Mathematical Sciences, New York University}, \orgaddress{\street{251 Mercer Street, New York, NY}, \postcode{10012}, \country{USA}};\email{eyal@courant.nyu.edu}}


\keywords{SOS model, random surface models, Ferrari-Spohn difusion, Ornstein-Zernike theory, scaling limit}

\keywords[MSC Codes]{\codes[Primary]{60K35}; \codes[Secondary]{82B20, 82B24, 82B41}}

\abstract{We study the low temperature $(2+1)$D Solid-On-Solid model on $\llbracket 1, L \rrbracket^2$ with zero boundary conditions and non-negative heights (a floor at height $0$). Caputo et al.\ (2016) established that this random surface typically admits either $\mathfrak h $ or $\mathfrak h+1$ many nested macroscopic level line loops $\{\mathcal L_i\}_{i\geq 0}$ for an explicit $\mathfrak h\asymp \log L$, and its top loop $\mathcal L_0$ has cube-root fluctuations: e.g., if $\rho(x)$ is the vertical displacement of $\mathcal L_0$ from the bottom boundary point $(x,0)$, then
$\max \rho(x) = L^{1/3+o(1)}$ over~$x\in I_0:=L/2+\llbracket -L^{2/3},L^{2/3}\rrbracket$. It is believed that rescaling $\rho$ by $L^{1/3}$ and $I_0$ by~$L^{2/3}$ would yield a limit law of a diffusion on $[-1,1]$. However, no nontrivial lower bound was known on $\rho(x)$ for a  fixed $x\in I_0$ (e.g., $x=\frac L2$), let alone on $\min\rho(x)$ in $I_0$, to complement the bound on~$\max\rho(x)$. Here we show a lower bound of the predicted order $L^{1/3}$: for every $\epsilon>0$ there exists $\delta>0$ such that $\min_{x\in I_0} \rho(x) \geq \delta L^{1/3}$ with probability at least $1-\epsilon$. The proof relies on  the Ornstein--Zernike machinery due to Campanino--Ioffe--Velenik, and a result of Ioffe, Shlosman and Toninelli (2015) that rules out pinning in Ising polymers with modified interactions along the boundary. En route, we refine the latter result into a Brownian excursion limit law, which may be of independent interest.
We further show that in a $ K L^{2/3}\times K L^{2/3}$ box with boundary conditions $\mathfrak h-1,\mathfrak h,\mathfrak h,\mathfrak h$ (i.e., $\mathfrak h-1$ on the bottom side and $\mathfrak h$ elsewhere), the limit of  $\rho(x)$ as $K,L\to\infty$ is a Ferrari--Spohn diffusion.}

\end{Frontmatter}

\localtableofcontents

\vspace*{14pt}

\section{Introduction}
We consider the Solid-On-Solid (SOS) model on $\Lambda_L =\llb1,L\rrb^2$, an $L\times L$ square in $\Z^2$, at large inverse-temperature $\beta>0$, with zero boundary conditions and a floor at height $0$: denoting by
$\x\sim \y$ a pair of adjacent sites $\x,\y\in\Z^2$, and setting $\varphi_\x = 0$ for all $\x\notin\Lambda_L$, the model assigns a height function $\varphi:\Lambda_L \to \Z_{\geq 0}$ (taking nonnegative integer heights) the probability 
\begin{equation}\label{eq:def-sos-measure}\pi^0_{\Lambda_L}(\varphi) \propto \exp\Big(-\beta\sum_{\x\sim \y} |\varphi_\x-\varphi_\y|\Big)\,.
\end{equation}
The model was introduced in the early 1950's (see~\cite{BCF51,Temperley52}) to approximate the formation of crystals and the interface separating the plus and minus phases in the low temperature 3D Ising model.

While of interest in any dimension $d$, the study of the model on $\Z^2$ has special importance, as it is the only dimension associated with the roughening phase transition. For the low temperature 3D Ising model, which the $(2+1)$D SOS model approximates for large $\beta$, rigorously establishing the roughening phase transition is 
a tantalizing open problem which has seen very little progress since being observed some 50 years ago (numerical experiments suggest it takes place at $\beta_{\textsc r}\approx 0.408$, compared to the critical 3D Ising temperature $\beta_c \approx 0.221$). The corresponding phase transition for the $(2+1)$D SOS \emph{with no floor} $\widehat\pi$ (where $\varphi$ can be negative) was rigorously confirmed as follows:
(i)~(\emph{localization}) for $\beta$ large enough, the surface is rigid, in that $\Var(\varphi_\x)=O(1)$ at $\x$ in the bulk, and furthermore $|\varphi_\x|$ has an exponential tail~\cite{BW82};
(ii)~(\emph{delocalization}) for $\beta$ small enough, 
Fr\"ohlich and Spencer~\cite{FrSp81a,FrSp81b} famously showed that 
$\Var(\varphi_\x) \asymp\log L$, just as in the case where $\varphi$ takes values in~$\R$.
(iii)~Very recently, Lammers~\cite{lammers22} showed the phase transition in $\Var(\varphi_\x)$ is sharp: there exists $\beta_{\textsc r}>0$ such that $\Var(\varphi_\x)\to\infty$ for all $\beta \leq \beta_{\textsc r}$ whereas it is $O(1)$ for all $\beta>\beta_{\textsc r}$; numerical experiments suggest that $\beta_{\textsc r}\approx 0.806$.
(See~\cite{lammers22} for additional details on the recent developments in the SOS model with no floor and related models of integer-valued height functions.)

Our setting is the low-temperature regime ($\beta$ large), yet with the restriction that the surface must lie above above a hard wall (the assumption $\varphi \geq 0$). Bricmont, El-Mellouki and Fr\"ohlich~\cite{BEF86} showed that this induces \emph{entropic repulsion}, regarded as a key feature of the physics of random surfaces: the restriction $\varphi \geq 0$ propels the surface (despite the energy cost) so as to gain entropy. Namely, it was shown in~\cite{BEF86} that $ \frac{c}\beta\log L \leq \E[\varphi_\x \mid \varphi \geq 0] \leq \frac{C}\beta \log L$ for absolute constants $c,C>0$.

The gap between these bounds was closed in~\cite{CLMST14}, where it was established that 
$\E[\varphi_\x \mid \varphi\geq 0]$ is $\frac1{4\beta}\log L + O(1)$, and moreover, $(1-\epsilon_\beta)L^2$ sites are at such a height with high probability (w.h.p.). The following intuition explains the height asymptotics: if the surface lies rigid about height $h$, then the cost of raising every site by $1$ is $4\beta L$ (incurred at the sites along the boundary); the benefit in doing so would be to gain the ability to feature spikes of depth $h+1$ (forbidden at level $h$ due to the restriction $\varphi\geq 0$), and as such a spike has an energetic cost of about $e^{-4\beta h}$, the entropy gain is about $\mathsf H(e^{-4\beta h})L^2$ where $\mathsf H(\cdot)$ is the Shannon entropy; the two terms 
are equated at $h\sim \frac1{4\beta}\log L$. 

\begin{figure}
\vspace{-0.15in}
    \centering
    \begin{tikzpicture}
   \node (fig1) at (7.2,0.85) {    \includegraphics[width=.7\textwidth]{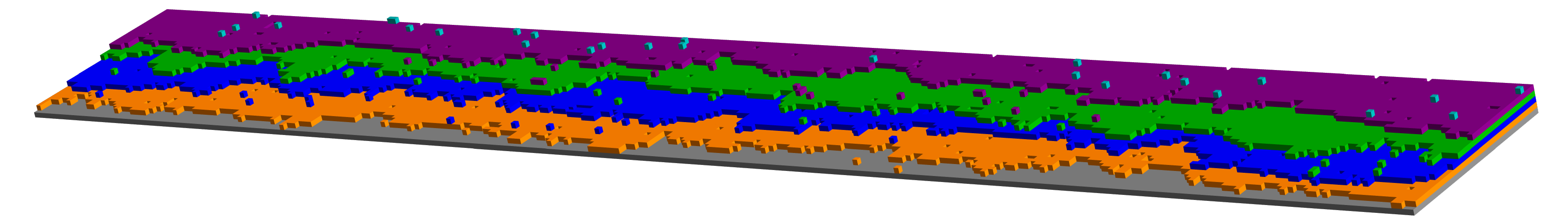}};

    \node (fig2) at (7.2,-1.15) {    \includegraphics[width=.68\textwidth]{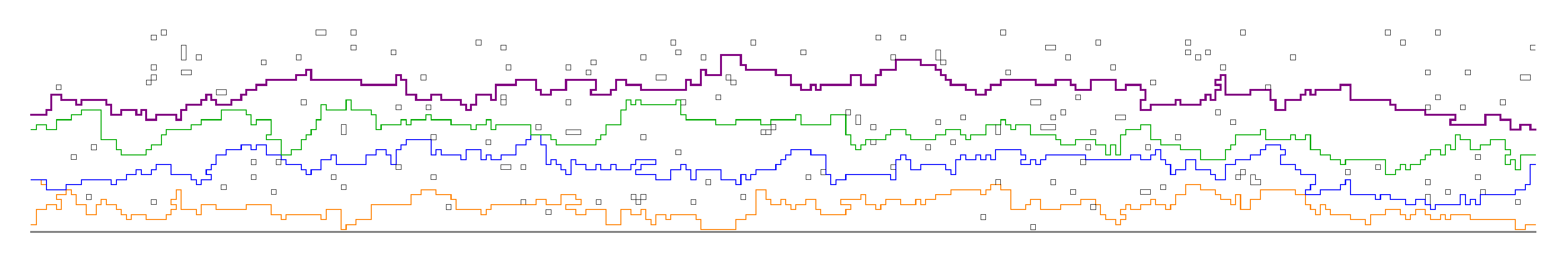}};

    \node (fig3) at (0,-0.13) {    \includegraphics[width=.274\textwidth]{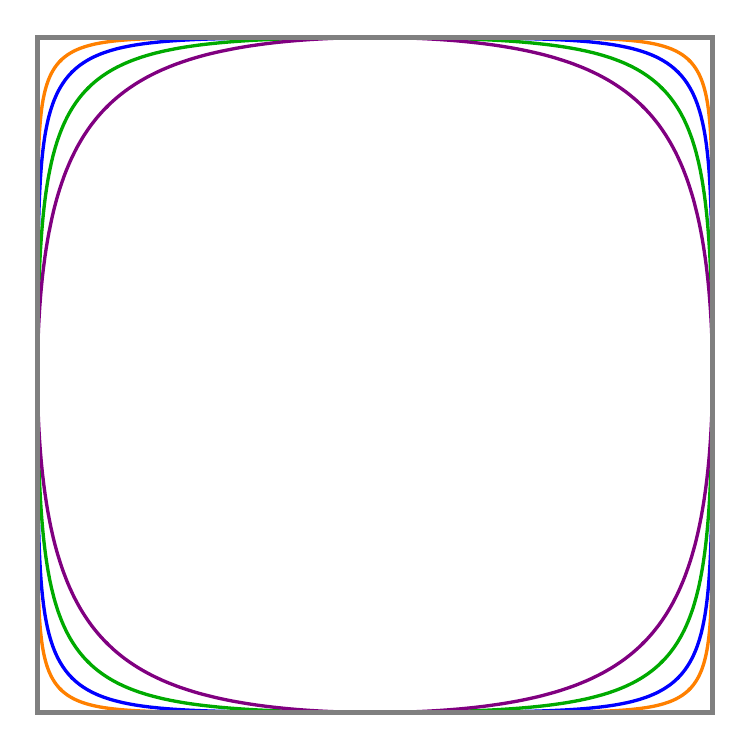}};

   \draw[black,dashed,fill=gray!25!white] (-.4,-1.9) -- (-.4,-1.5) -- (.42,-1.5) -- (.42,-1.9);

   \draw[stealth-stealth,gray!90!black,font=\tiny] (0.5,-1.9) -- node[pos=0.75,right,xshift=-3pt] {$L^{\frac13+\epsilon}$} (.5,-1.5);

    \draw[gray!90!black,font=\tiny] (-1.785,-1.85) -- node[pos=1,below] {$0$} (-1.785,-1.95);
   \draw[gray!90!black,font=\tiny] (0,-1.9) -- node[pos=1,below] {$L/2$} (0,-2.);
    \draw[gray!90!black,font=\tiny] (1.775,-1.85) -- node[pos=1,below] {$L$} (1.775,-1.95);

   \draw[black,dashed] (2.5,-1.7) -- (2.5,-.1) -- (11.9,-.1) -- (11.9,-1.7);

   \end{tikzpicture}
\vspace{-0.2in}
\caption{Fluctuations of the SOS level lines about the flat portions of their scaling limits. Maximal fluctuation is known to be at most $L^{1/3+\epsilon}$ w.h.p., and it is believed that the distance of the top level line from a given boundary point (e.g., the center-side) is of order~$ L^{1/3}$.}
\label{fig:level-lines}
\vspace{-0.1in}
\end{figure}

Significant progress in the understanding of the shape of the SOS surface above a hard wall was obtained in the sequel by the same authors~\cite{CLMST16}. The height-$h$ \emph{level lines} of the surface are the loops formed by placing dual-bonds between every pair $\x\sim \y$ such that $\varphi_\x<h$ and $\varphi_\y\geq h$. To account for local thermal fluctuations, call a loop \emph{macroscopic} if its length is at least $(\log L)^2$. With this notation, (a more detailed version of) the following theorem was given in~\cite{CLMST16} (see also~\cite{CLMST12}):
\begin{theorem*}[{\cite[Thms.~1,2,3 and Rem.~1.3]{CLMST16}}]
For $\beta$ large enough, the $(2+1)$D SOS model with zero boundary conditions on a square $\Lambda_L=\llb1,L\rrb^2$ above the wall $\varphi\geq 0$, satisfies the following w.h.p.:  \begin{enumerate}[(i)]
    \item{} \label{it:jems:loops} Shape:
    At least $(1-\epsilon_\beta)L^2$ of the sites $\x\in\Lambda_L$ have height $\varphi_\x = \fh^\star$, where the random $\fh^\star$ is either $ \lfloor \frac1{4\beta}\log L\rfloor$ or $\lfloor \frac1{4\beta}\log L\rfloor-1$. Moreover, there is a unique macroscopic loop at  each height $0,1,\ldots,\mathfrak h^\star$, and none above height $\fh^\star$.
Further, for a diverging sequence\footnote{Namely, for any sequence of $L$'s where $a_L$, the fractional part of $\frac1{4\beta}\log L$, does not converge to an explicit $\lambda_c(\beta)$.} of $L$'s,
the sequence of nested loops $\mathcal L_0 \subset \mathcal L_1 \subset\ldots$, when rescaled to $[0,1]^2$, converges in probability in Hausdorff distance to a deterministic limit defined by a Wulff shape $\mathcal W$, which is
    the convex body of area~$1$ minimizing the line integral of a surface tension  $\tau_\beta(\cdot)$ along its perimeter~$\partial \mathcal W$; the scaling limit of $\mathcal L_k$ (where $\cL_0$ is the top level line at height~$\fh^\star$) is given by the union of all possible translates of $\mathcal W$, rescaled by an explicit radius $r_k$ that is decreasing in $k$.

    \item{} \label{it:jems-fluct} Fluctuations:
For a diverging
sequence\footnote{Namely, for any sequence of $L$'s such that, for the above $a_L$'s and $\lambda_c(\beta)$, one has  $\liminf_{L\to\infty} a_L > \lambda_c$.} of $L$'s,
the
maximum displacement of the top level line~$\mathcal L_0$ from the boundary segment $I\times\{0\}$ for $I = \llb\epsilon_\beta L,(1-\epsilon_\beta) L\rrb$ is $L^{1/3+o(1)}$. That is, if
\[
\overline\rho(x)=\max\{ y \leq L/2\,:\; (x,y)\in\cL_0\}
\]
is the maximum $y$-coordinate of a point $(x,y)$ visited by $\cL_0$ in the bottom-half of $\Lambda_L$, then 
\begin{equation}\label{eq:max-rho-upper-bound}
\max_{x\in I} \overline\rho(x) \leq L^{1/3+\epsilon} \,,
\end{equation}
for any fixed $\epsilon>0$, whereas for every interval $I'\subset I$ of length $L^{2/3-\epsilon}$,
\begin{equation}\label{eq:max-rho-lower-bound}
\max_{x\in I'} \overline\rho(x) \geq L^{1/3-\epsilon} \,.
\end{equation}
\end{enumerate}
\end{theorem*}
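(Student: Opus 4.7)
The plan is to split the proof into two largely independent components: (i) the shape theorem, via a Peierls/cluster-expansion analysis combined with the Dobrushin--Koteck\'y--Shlosman (DKS) Wulff-shape construction, and (ii) the fluctuation bounds, via an Ornstein--Zernike (OZ) polymer representation for the top level line $\cL_0$.

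For part (i), I would first establish rigidity of the surface around a preferred height $\fh^\star$. The heuristic in the introduction---balancing the boundary cost $4\beta L\cdot \Delta h$ against the bulk entropy gain $\mathsf H(e^{-4\beta h})L^2$ from spikes---can be made rigorous by a cluster expansion computing the ratio of partition functions between the model and its uniform shift by $h$, and identifying the optimizer. This yields $\fh^\star\in\{\floor{\tfrac{1}{4\beta}\log L}-1,\floor{\tfrac{1}{4\beta}\log L}\}$ together with the density statement $\varphi_\x=\fh^\star$ on $(1-\epsilon_\beta)L^2$ sites. Conditional on this typical height, at each level $0\le h\le \fh^\star$ the Peierls contour weights (renormalized by entropic repulsion) are exponentially small in loop length, so with high probability there is a unique macroscopic loop at that level whose enclosed area concentrates at a deterministic value $A_h$ determined by the entropic balance above height $h$. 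Convergence of the rescaled loops to a Wulff shape then follows from DKS applied to the effective Ising-like contour ensemble at inverse temperature $\beta$: each loop is close to a minimizer of $\oint \tau_\beta(\vec n)\,ds$ subject to enclosing area $A_h$, and the associated radius $r_k$ is read off from this area constraint.

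For the upper bound \eqref{eq:max-rho-upper-bound}, I would first condition on all macroscopic loops at levels $\le \fh^\star-1$ being close to their Wulff limits. Along the bottom flat portion of $\partial\mathcal W$, the top level line $\cL_0$ can be recast as an Ising-type polymer pinned near the base and constrained to stay above the floor $\{y=0\}$. Campanino--Ioffe--Velenik OZ theory supplies a random-walk representation with exponential step tails for the unconditioned polymer, and the floor conditioning becomes a positivity constraint on this walk; over a segment of length $\ell$ the maximum excursion height is of order $\ell^{1/2}$ with exponential upper tails. Taking $\ell=L^{2/3+\epsilon}$ and union-bounding over the $O(L^{1/3})$ such segments tiling $I$ yields $\max_{x\in I}\overline\rho(x)\le L^{1/3+\epsilon}$.

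For the lower bound \eqref{eq:max-rho-lower-bound}, the task is to produce, on every interval $I'\subset I$ of length $L^{2/3-\epsilon}$, an excursion of $\cL_0$ of height at least $L^{1/3-\epsilon}$. Using the OZ random-walk representation locally on $I'$, it suffices to show that a walk of length $L^{2/3-\epsilon}$ reaches height $L^{1/3-\epsilon}$ with probability at least $L^{-\epsilon/2}$; independence (up to finite-range corrections) across a dyadic tiling of $I$ then amplifies this to high probability. The hard part will be ruling out \emph{pinning}: one must show that the modified boundary interaction coming from the floor does not localize $\cL_0$ to height $O(1)$ above the wall but instead admits a genuinely diffusive profile, so that the OZ random walk really is free to fluctuate at the predicted scale. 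This is precisely the Ioffe--Shlosman--Toninelli no-pinning input. A secondary difficulty is decoupling $\cL_0$ from the lower (correlated) level lines, which I would address by first conditioning on their locations (concentrated at the Wulff curves) and then invoking mixing for the residual SOS configuration in the annular region between consecutive macroscopic loops.
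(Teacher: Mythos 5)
First, note that this statement is not proved in the paper at all: it is quoted as background from \cite{CLMST16} (Thms.~1,2,3 and Rem.~1.3), so there is no in-paper proof to compare against; I can only assess your sketch on its merits.

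The sketch has a genuine gap in the fluctuation part, and it is the central one: you never use the \emph{area tilt} induced by entropic repulsion, which is the only mechanism producing the $L^{1/3}$ scale. Conditioning the top contour to stay above the floor (a positivity constraint on the OZ walk) and invoking no-pinning gives Brownian-excursion behaviour, i.e.\ fluctuations of order $\ell^{1/2}$ over a stretch of length $\ell$ with \emph{no} mechanism forcing the contour back down to the wall at intermediate points; indeed, \cref{mainthm:BM-for-Ising-polymers} of this very paper shows that the floor-constrained polymer without a tilt fluctuates at scale $\sqrt N$. Consequently your upper-bound step --- ``over a segment of length $\ell=L^{2/3+\epsilon}$ the maximum excursion is $O(\ell^{1/2})$, union bound over the $O(L^{1/3})$ segments'' --- does not go through: without knowing that the contour returns to height $O(L^{1/3+\epsilon})$ at the endpoints of each segment (which is precisely what has to be proved), the walk can sit at height $\asymp L^{1/2}$ across many segments, and the union bound only yields $\max_{x\in I}\overline\rho(x)\lesssim L^{1/2}$. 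The correct input is the reweighting $\pi\propto\widehat\pi\,e^{-\lambda^{(n)}A(\gamma)/L}$ (cf.\ \eqref{eqn:contourLawAreaTerm}--\eqref{eqn:floor-nofloor}): an excursion of height $\Delta$ over horizontal extent $\ell$ pays an entropic cost $\asymp\Delta^2/\ell$ \emph{and} an area cost $\asymp(\lambda/L)\Delta\ell$, and optimizing this balance is what caps $\Delta$ at $L^{1/3}$ and the horizontal scale at $L^{2/3}$; your lower-bound computation likewise only works because the area cost is negligible below these scales, which should be said explicitly.

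Two further, smaller points. In part (i), the limit shape is not ``the $\oint\tau_\beta$-minimizer at fixed enclosed area $A_h$ with $r_k$ read off afterwards'': the loops solve a constrained problem in the unit square in which the level-dependent tilt $\lambda^{(k)}$ competes with the surface tension, and the resulting deterministic limit is the union of all translates of the Wulff body rescaled by $r_k$ that fit in $[0,1]^2$ (hence the flat boundary portions); your formulation misses the box constraint that creates exactly the flat segments on which part (ii) is formulated. Finally, the appeal to Ioffe--Shlosman--Toninelli no-pinning and to OZ couplings is anachronistic for this particular theorem (the $L^{1/3\pm o(1)}$ bounds of \cite{CLMST16} are obtained without OZ machinery, by direct surgery/monotonicity arguments exploiting the area tilt), which is fine as an alternative route in principle, but only once the tilt is built into the polymer measure being analyzed.
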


Consider the distance of the top level line loop $\cL_0$ from a point $(x_0,0)$ on the bottom boundary of the box, where the scaling limit is flat---e.g., the center-side $x_0=L/2$ (see \cref{fig:level-lines} for a depiction). The above theorem shows that $\overline\rho(x_0) \leq L^{1/3+\epsilon} $ w.h.p., yet it gives no nontrivial lower bound on it. It is believed that $\overline\rho(x_0)$ should have order $L^{1/3}$ (with no poly-log corrections);  
more precisely, one expects $\overline\rho(x_0)\asymp_{\texttt P} L^{1/3}$, where we write $f\lesssim_{\texttt P} g$ if $f/g$ is uniformly tight, and $f\asymp_{\texttt P} g$ if $f\lesssim_{\texttt P} g \lesssim_{\texttt P} f$.

Moreover, one expects that if one were to rescale $\overline\rho(x)$ by $L^{1/3}$ along an interval of order $L^{2/3}$ positioned on bottom boundary (within the flat portion of the scaling limit)---take, e.g., \[ I_0 := \llb \tfrac L2 - L^{2/3}, \tfrac L 2 + L^{2/3}\rrb\] for concreteness---then, after rescaling said interval by $L^{2/3}$ (in the concrete example, to $[-1,1]$), one would arrive at a limit law of a nontrivial diffusion, a variant of a Ferrari--Spohn diffusion~\cite{FS05}.
(This prediction was stated here in terms of $\overline\rho(x)$, the maximal vertical displacement  of $\cL_0$, so as to be well-defined, as $\cL_0$ can have many points with a given $x$-coordinate; the same statement is expected to hold for $\underline\rho(x)$ measuring the \emph{minimal} displacement of $\cL_0$, as defined in \cref{mainthm:SOS lower bound}.)

To explain this prediction, note first that it is well-known that the law of a Brownian excursion on $[-N,N]$ tilted (penalized) by $\exp(-\lambda A)$, where $A$ is the area under it, tends as $N\to\infty$ to that of a Ferrari--Spohn diffusion. The entropic repulsion that propels the SOS level line loop $\cL$ to height $h$ acts much like an area tilt: if a loop has internal area $\mathsf S$, then its probability (roughly) gains a tilt of $\exp(\lambda \mathsf S)$ for $\lambda = \mathsf{H}(e^{-4\beta h})$ as described earlier, or equivalently, a tilt of $\exp(-\lambda  A)$ where $A = L^2-\mathsf S $ is the area exterior to it.
Consider $\cL_k$, which is at height $\mathfrak h^\star-k$: there $\lambda \approx e^{-4\beta(\mathfrak h^\star - k)} \approx L^{-1} e^{4\beta k}$ (recall $\mathfrak h^\star \approx\tfrac1{4\beta}\log L$), and we see that the rescaling of $\rho(x)$ by $L^{1/3}$ and $I_0$ by $L^{2/3}$  cancels the $L^{-1}$ factor in $\lambda$ and translates into a tilt of 
$\exp(-e^{4\beta k} \hat{A})$ 
where $\hat{A}$
is the rescaled area, as in the above continuous approximation (see also~\cite{ISV15,IoffeVelenik18}).
Related to this, the famous problem of establishing a Ferrari--Spohn law for the 2D Ising interface under critical prewetting (which may be seen as a version of the SOS problem only with a single contour as opposed to $c\log L$ many) was finally settled in a recent seminal work by Ioffe, Ott, Shlosman and Velenik~\cite{IOSV21} (prior to that, the $L^{1/3+o(1)}$ fluctuations were established by Velenik~\cite{Velenik04} and the tightness of the rescaled area was proved by Ganguly and Gheissari~\cite{GaGh21}). The~challenges in handling a diverging number of interacting (non-crossing) contours with  distinct area tilts (the $k$-th one is tilted by $\approx \exp(-e^{4\beta k}\hat{A})$) are such that the simplified problem that has Brownian excursions with area tilts is already nontrivial; see~\cite{CG23,CIW19a,CIW19b,DLZ23} for recent progress on~it.

In accordance with this prediction for the scaling limit of $L^{-1/3}\rho(x)$ along $I_0$, one expects that both $\max_{x\in I_0} \rho(x)$ and $\min_{x\in I_0} \rho(x)$ would be of the same order as our rescaling factor $L^{1/3}$; i.e., to be precise, that $\max \overline\rho(x) \lesssim_{\texttt P} L^{1/3}$ and that $\min\underline\rho(x) \gtrsim_{\texttt P} L^{1/3}$. Our main result is the latter part (readily implying $\rho(x_0)\gtrsim_{\texttt P} L^{1/3}$ at any given $\epsilon_\beta L \leq x_0 \leq (1-\epsilon_\beta)L$, e.g., the center-side $x_0=L/2$). 

\begin{theorem}\label{mainthm:SOS lower bound}
Fix $\beta$ large, and consider the $(2+1)$D SOS model with zero boundary conditions on $\Lambda_L=\llb1,L\rrb^2$ as per \cref{eq:def-sos-measure} above a wall $\varphi\geq 0$. Let $\cL_0$ be the (w.h.p.\ unique) top macroscopic level line, consider the interval $I_0 = \llb \tfrac{L}2-L^{2/3},\tfrac{L}2+L^{2/3}\rrb$ centered on the bottom boundary, and let
\[
\underline\rho(x)=\min\{ y \geq 0\,:\; (x,y)\in\cL_0\}
\]
denote the minimum vertical displacement of $\cL_0$ from the bottom boundary at the coordinate $x$. 
Then for every $\epsilon>0$ there exists $\delta>0$ such that for large enough $L$, with probability at least $1-\epsilon$,
\begin{equation}\label{eq:rho-lower-bound} \min_{x\in I_0} \underline\rho(x) \geq \delta \,L^{1/3}\,.\end{equation}
\end{theorem}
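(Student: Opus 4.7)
The strategy is to recast the portion of $\cL_0$ running close to the bottom side as an Ising polymer under an effective area tilt, apply the Campanino--Ioffe--Velenik Ornstein--Zernike machinery together with a refined no-pinning input, and extract a Ferrari--Spohn scaling limit whose minimum is almost surely positive on $[-1,1]$.

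First, using \cite{CLMST16}, I will condition on a high-probability event $\cE$ on which $\cL_0$ is the unique macroscopic loop at height $\fh^\star$ and lies at Hausdorff distance $o(L)$ from its Wulff limit, while the inner loops $\cL_1,\cL_2,\ldots$ stay at macroscopic distance from $\cL_0$ along the flat segment parallel to the bottom side. On $\cE$, the portion of $\cL_0$ above $I:=\llb \epsilon_\beta L,(1-\epsilon_\beta)L\rrb \supset I_0$ is a single open contour $\gamma$ with endpoints $\a_L,\a_R$ in neighborhoods of the lower corners, and it suffices to bound the minimum vertical distance of $\gamma$ from the bottom side throughout $I_0$.

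Next, a low-temperature cluster expansion as in \cite{CLMST16} allows me to integrate out the bulk surface both above the wall and directly below $\gamma$; the resulting effective measure on $\gamma$ is the Ising polymer measure from $\a_L$ to $\a_R$ at inverse temperature $\beta$, reweighted by an area tilt $\exp(\lambda \mathsf S_\gamma)$ with $\lambda \asymp e^{-4\beta \fh^\star}\asymp L^{-1}$, together with a hard positivity constraint imposed by the floor. With $\lambda\asymp L^{-1}$, the natural fluctuation scales are precisely $L^{1/3}$ vertically and $L^{2/3}$ horizontally, matching the statement of \cref{mainthm:SOS lower bound}. I will then apply the Ornstein--Zernike decomposition of $\gamma$ into i.i.d.\ renewal increments and show that under the $(L^{2/3},L^{1/3})$ rescaling, the law of $\gamma$ restricted to $I_0$ converges weakly to a Brownian excursion on $[-1,1]$ tilted by $\exp(-\hat{\mathsf A})$, i.e.\ a Ferrari--Spohn diffusion $Z$.

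This last convergence is where the main effort lies: upgrading the qualitative no-pinning statement of \cite{ISV15} into a full weak-convergence limit will require quantitative entropic-repulsion estimates near the wall that are uniform in the area-tilt parameter, tightness of the OZ renewal structure at every horizontal scale, and ruling out macroscopic drops of $\gamma$ toward the boundary layer. Once this Ferrari--Spohn scaling limit is in hand, the conclusion is immediate: $Z$ is a.s.\ continuous and strictly positive on the closed interval $[-1,1]$, so $\min_{t\in[-1,1]} Z(t)>0$ almost surely, and tightness yields $\delta=\delta(\epsilon)>0$ with $\Prob(\min_{t\in[-1,1]} Z(t)\geq 2\delta)\geq 1-\epsilon/2$, which via the weak-convergence statement transfers to \eqref{eq:rho-lower-bound} for all $L$ large enough. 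A secondary technicality, absorbed into the conditioning on $\cE$, is verifying that the inner loops $\cL_1,\cL_2,\ldots$ do not disturb the OZ coupling for $\gamma$ along the flat part above $I_0$; monotonicity arguments and the macroscopic separation on $\cE$ should suffice.
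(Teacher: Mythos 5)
There is a genuine gap: your central step is to prove that the rescaled law of $\cL_0$ near the flat part of the boundary converges to a Ferrari--Spohn diffusion, and then read off the theorem from the a.s.\ positivity of that limit. But this scaling limit is precisely the open prediction that the paper states as motivation and explicitly does \emph{not} prove (cf.\ the discussion around \cref{fig:level-lines} and the references to \cite{IOSV21,GaGh21}); it is strictly stronger than \cref{mainthm:SOS lower bound}. The step you describe as ``where the main effort lies'' --- upgrading the no-pinning result of \cite{IST15} to weak convergence for an area-tilted polymer above a hard wall, uniformly in the tilt --- is not supplied, and it is not a routine extension: the known OZ-based excursion limits (including \cref{mainthm:BM-for-Ising-polymers}) are for \emph{untilted} polymers with no floor, and the Ferrari--Spohn limit for even a single tilted contour above a wall required the substantial machinery of \cite{IOSV21}. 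So as written the argument assumes a result at least as hard as (indeed harder than) the one to be proven.

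A second, related error is in the setup of your conditioning event $\cE$: along the flat bottom segment the lower-height loops $\cL_1,\cL_2,\dots$ are \emph{not} at macroscopic distance from $\cL_0$ --- they are nested between $\cL_0$ and the wall, all squeezed inside the same $L^{1/3+o(1)}$ boundary layer. Consequently, ``integrating out the bulk surface directly below $\gamma$'' does not produce a clean single polymer with area tilt $e^{\lambda \mathsf S_\gamma}$ plus a positivity constraint; what sits below $\cL_0$ is a diverging family of interacting (non-crossing) contours with distinct tilts, which is exactly the difficulty the paper is careful to avoid. The paper's route is different and deliberately cruder: it localizes to a rectangle of width $\asymp L^{2/3}$ at the bottom center, reveals the outermost chain of low sites and uses monotonicity to raise the boundary conditions to the $\legs$ form $H_L-n$/$H_L-n-1$ (\cref{prop:sos-h-and-h-1-final}), which eliminates all lower level lines locally and leaves a single open contour; this contour under the \emph{no-floor} measure is coupled to the modified Ising polymer in $\H$ (\cref{claim:sos-legs-coupling}) whose limit is a plain Brownian excursion by \cref{mainthm:BM-for-Ising-polymers}; finally the floor is reinstated via the tilt $\exp(-\lambda^{(n)}A(\gamma)/L+o(1))$ of \cref{eqn:contourLawAreaTerm}, which is handled by Cauchy--Schwarz because $A(\gamma)\lesssim L^{2/3}\cdot L^{1/3}=L$ makes the tilt $O(1)$ --- no Ferrari--Spohn analysis is needed. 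To repair your proposal you would either have to carry out the tilted-limit program (a major undertaking well beyond the cited inputs) or switch to a domination/monotonicity argument of this kind.
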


As we later explain, we obtain  \cref{eq:rho-lower-bound} by moving from $\rho(x)$, at a constant probability cost, to a  curve whose limit (after the same rescaling) is a Brownian excursion, yielding the $L^{1/3}$ bound.
This refines the lower bound in  \cref{eq:max-rho-lower-bound} 
into the estimate $\max_{x\in I_0}\overline\rho(x) \geq \min_{x\in I_0}\underline\rho(x) \gtrsim_{\texttt P} L^{1/3} $.
Note~though that one cannot replace the $L^{1/3+o(1)}$ in the upper bound of \cref{eq:max-rho-upper-bound} by $O(L^{1/3})$, 
as it addresses $\max\overline \rho(x)$ over all $ I=\llb\epsilon_\beta  L,(1-\epsilon_\beta)L\rrb$. 
Our comparison to a Brownian excursion implies that 
$\max_{x\in I}\underline\rho(x) \geq c L^{1/3}\sqrt{\log L}$ w.h.p.; as we later explain, \cref{mainthm:FS} will imply that  
$\max_{x\in I}\underline\rho(x) 
\gtrsim_{\texttt P} L^{1/3} (\log L)^{2/3}$, its predicted order (see \cref{rem:max-upper-tail} as well as \cref{q:upper-tail}).

To derive the Brownian excursion law, we rely on the powerful Ornstein--Zernike framework as developed by Campanino, Ioffe and Velenik~\cite{Ioffe98,CampaninoIoffe02,CIV03,CIV08}, that allows one to couple the interface in hand to a directed random walk. This machinery was the key to several recent advances in the understanding 2D Ising interfaces (e.g., \cite{IOSV21} mentioned earlier) and Potts interfaces (e.g., \cite{OttVelenik18,IOVW20}). In fact, the  work~\cite{IOVW20}, due to Ioffe, Ott, Velenik and Wachtel, is of particular interest in our setting: there it was shown that the interface of the 2D Potts model in a box with Dobrushin's boundary conditions has the scaling limit of a Brownian excursion for all $\beta>\beta_c$. 
As in our case, one of the main obstacles is the interaction of the interface with the boundary, and in particular, ruling out the scenario whereby the interface is pinned to the wall. This was achieved for the Potts interface in~\cite{IOVW20} (and later used as an ingredient in~\cite{IOSV21}) via a direct analysis of its random cluster counterpart, and then combined with a version of Ornstein--Zernike theory tailored to that model.

Here we instead appeal to the framework of Ioffe, Shlosman and Toninelli~\cite{IST15} to rule out pinning. That approach, while valid only for large enough $\beta$ (whereas the analysis in~\cite{IOSV21} holds for all $\beta>\beta_c$), is fairly generic, and applicable to SOS contours as part of the following family of \emph{Ising polymers}
(to~aid the exposition, we describe it briefly here, deferring its full definition to \cref{subsec:free-ising-polymers,subsec:modified-ising-polymers}).
Call a path $\gamma$ of distinct adjacent edges
 in $(\Z^2)^* = \Z^2+(\frac12,\frac12)$ (vertices may repeat according to a splitting rule)
a \emph{polymer}, or contour, if it connects the origin $\ostar=(\frac12,\frac12)$ to a marked~$\x_N$ at distance~$N$ from~$\ostar$ while staying in a half-plane $\H_{\n}$.
The model gives $\gamma$ a probability proportional~to 
\[ q(\gamma) := \exp\bigg(-\beta|\gamma| + \sum_{\cC}
\Phi(\cC;\gamma)\bigg)\,,\]
where the sum goes over every finite connected subset $\cC$ in $\Z^2$ that intersects $\Delta_\gamma$, the vertex boundary of $\gamma$, and the potential function $\Phi$ satisfies the following properties:
({\bf P1}) $\Phi(\cC;\cdot)$ is \emph{local}, in the sense that it only depends on $\gamma$ through $\cC\cap\Delta_\gamma$; ({\bf P2}) $\sup_\gamma|\Phi(\cC,\gamma)|$ \emph{decays exponentially} in the size of $\cC$ (more precisely, in the minimum size of a graph connecting its boundary edges); and ({\bf P3}) $\Phi$ is invariant under translations of the form $(\cC,\gamma)\mapsto(\cC+\v,\gamma+\v)$. The final requirement in \cite{IST15} is to have that ({\bf P4}) the \emph{surface tension is symmetric}: if one defines the surface tension as \[ \tau_\beta(\n):=-\lim_{N \to\infty}\frac1{ N}\log\Big(\sum_\gamma q(\gamma)\Big)\,,\] 
then 
the function $\n\mapsto \tau_{\beta}(\n)$ should have all discrete symmetries of~$\Z^2$. Under these conditions, the main result of \cite{IST15} was that modifying the potential function $\Phi$ into $\Phi'$ along $\partial \H_{\n}$ does not affect the surface tension. That is, if we let
$\Phi'(\cC,\gamma)=\Phi(\cC,\gamma)$ whenever $\cC$ is fully contained in~$\H_{\n}$, and the modified $\Phi'$ still obeys the decay condition in \cref{p:P2}, then the modified $\tau'_\beta$ agrees with the original~$\tau_\beta$. Moreover, the corresponding partition functions are comparable (see \cref{the:ist-main}).

The main ingredient in our proof of \cref{mainthm:SOS lower bound} is the following result, which establishes a Brownian excursion limit law for (a) Ising polymers as defined by \cite{IST15} in the positive half-plane~$\H$,
and (b) Ising polymers in a box of side length $N$. Our proof of Part (a) hinges  on the ``no pinning'' main result of \cite{IST15} (mentioned above) en route to refining its conclusion and deriving the limit law. Part (b), proved similarly, may be viewed as an analogue of  \cite{IOVW20} for any Ising polymer at large $\beta$. 

\begin{theorem}\label{mainthm:BM-for-Ising-polymers}
Fix $\beta$ large,  and consider the family of Ising polymers $\gamma$ (see \cref{def:free-ising-polymer,def:mod-ising-polymer}) in a domain $D$, where the potential function $\Phi'$ is modified along its boundary $\partial D$, and $D$ is either
\begin{enumerate}[(a)]
\item the positive half-plane $\H$ with the marked end-points $\ostar=(\frac12,\frac12)$ and $\x_N = (\frac12,N-\frac12)$; or
\item a box of side length $N$ whose bottom corners are the same marked end-points $\ostar$ and $\x_N$.
\end{enumerate}
There exists $\sigma>0$ such that,
if $\overline\gamma(x)=\max\{y : (x,y)\in\gamma\}$, then
$\overline \gamma(\lfloor x N\rfloor)/(\sigma\sqrt{N})$ converges weakly to a standard Brownian excursion on $[0,1]$, and the same holds for $\underline\gamma(x)=\min\{y : (x,y)\in\gamma\}$.

In particular, Part (b) applies to the SOS model $\widehat\pi_{\Lambda}^{0,1,1,1}$ with \emph{no floor} on a box $\Lambda$ of side length~$N$, for~$\beta>\beta_0$ and  boundary conditions $0$ on the bottom side and $1$ elsewhere: namely, the height-$1$ level line that connects the bottom corners of the box $\Lambda$ has a scaling limit of a Brownian excursion.
\end{theorem}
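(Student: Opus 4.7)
The plan is to couple the Ising polymer $\gamma$ to a one-dimensional directed random walk via the Ornstein--Zernike (OZ) renewal representation and then import an invariance principle for random-walk bridges conditioned to stay non-negative, with the boundary of $D$ handled by the no-pinning machinery of \cite{IST15}.

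First, I would work with the \emph{free} (unrestricted) polymer from $\ostar$ to $\x_N$. At large $\beta$, tilting by the surface tension $\tau_\beta(\n)$ and invoking \cite{Ioffe98,CampaninoIoffe02,CIV03,CIV08}, this polymer decomposes into an i.i.d.\ concatenation of irreducible cone-point pieces $\xi_1,\xi_2,\dots$ whose longitudinal and transverse displacements have exponential moments. The induced directed walk $S_k=\sum_{i\leq k}\xi_i$ then has positive longitudinal drift $\mu$ and centered transverse steps with variance $\sigma_0^2$. At the level of $(S_k)$, the constraint $\gamma\subset D$ translates to $S_k\geq 0$ for all $k$ (with an additional two-sided condition $S_k\leq N$ in the box case), and the modification $\Phi\to\Phi'$ of \cref{def:mod-ising-polymer} becomes a local multiplicative tilt of the pieces that touch the boundary.

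Next I would invoke \cref{the:ist-main}: the partition functions with and without the boundary modification are comparable and share the same surface tension. Refining this, one shows that along $\partial D$ the polymer still enjoys a uniformly positive density of cone points with transverse displacement $\Omega(1)$, so that the OZ coupling survives up to the boundary. The law of $\gamma$ is then close in total variation to that of a random-walk bridge of $n\sim N/\mu$ steps from $0$ to $0$ conditioned to stay non-negative, up to a bounded Radon--Nikodym factor with a well-defined limit given by the local reweighting of the endpoints. A Caravenna--Chaumont--Doney type invariance principle for such conditioned bridges then yields convergence of the rescaled path by $\sigma_0\sqrt n$ to a standard Brownian excursion on $[0,1]$; reparametrizing by horizontal position $x\in[0,N]$ produces the variance $\sigma^2=\sigma_0^2/\mu$. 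Since individual irreducible pieces have $O(\log N)$ transverse extent with high probability, $\overline\gamma$ and $\underline\gamma$ coincide after rescaling.

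For the box case, the additional constraint $S_k\leq N$ is inactive with probability $1-o(1)$ because the limiting excursion peaks at height $O(\sqrt N)\ll N$; a straightforward partition-function comparison reduces the analysis to the half-plane case. For the SOS application, the standard contour expansion identifies the height-$1$ level line joining the two bottom corners of $\Lambda$ under $\widehat\pi_\Lambda^{0,1,1,1}$ with an Ising polymer of the form of \cref{def:free-ising-polymer,def:mod-ising-polymer} satisfying \cref{p:P1,p:P2,p:P3,p:P4}, the modification $\Phi'$ along the bottom side encoding the $0/1$ asymmetry of the boundary heights, so part~(b) applies directly. The main obstacle I anticipate is the sharpening of \cref{the:ist-main} from a partition-function estimate into the functional statement required above: one must show that the cluster expansion of \cite{CIV03} still converges after the boundary reweighting by $\Phi'$ and that boundary-touching irreducible pieces retain uniform exponential tails, which should follow from combining the cluster-expansion input of \cite{CIV03} with the no-pinning estimates of \cite{IST15}.
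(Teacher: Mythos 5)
Your high-level architecture (OZ cone-point decomposition, effective directed walk, boundary handled via \cite{IST15}, box case reduced to the half-plane, SOS identified as a modified Ising polymer) matches the paper, but there is a genuine gap at the single hardest point: how the boundary modification is neutralized. You assert that near $\partial D$ the polymer ``still enjoys a uniformly positive density of cone points,'' so that the law of $\gamma$ is close in total variation to a conditioned random-walk bridge ``up to a bounded Radon--Nikodym factor.'' Neither claim is justified, and the second is not available in the form you need: \cref{the:ist-main} only compares \emph{partition functions} up to $\beta$-dependent constants, which gives a bounded \emph{expectation} of the reweighting $e^{Y_\gamma}$, not a pointwise bounded density; the paper's \cref{cor:small-contour-probs} extracts from this, via Cauchy--Schwarz, only that small contour events stay small with a square-root loss, and the paper explicitly remarks that the analogous ratio for animals is not bounded at all. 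Whether the boundary tilt pins the interface is exactly the question at stake, and the paper resolves it not by coupling the full path (boundary pieces included) to a conditioned bridge, but by proving \emph{entropic repulsion} (\cref{prop:animal-entropic-repulsion}, built on \cref{prop:ist-repulsion}, \cref{cor:small-contour-probs}, and the $(\log N)^2$ bound on irreducible pieces): with high probability all cone points in $[N^{4\delta},N-N^{4\delta}]$ lie above height $N^{\delta}$, so in the bulk the modified weights coincide with the free ones and the domain constraint is vacuous, and only then does one get an exact coupling (up to TV error $e^{-c\beta(\log N)^2}$) of the bulk cone points with the effective walk (\cref{prop:rw-coupling}). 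Your proposal is missing this mechanism and substitutes an unproved boundary-regularity statement for it.

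A secondary issue: the effective walk here is genuinely two-dimensional, with transverse increments that are \emph{not} symmetric (northeast splitting rule) and correlated with the longitudinal ones, and the number of steps to reach $\x_N$ is random. Invoking a one-dimensional Caravenna--Chaumont/Doney bridge invariance principle after conditioning on $n\sim N/\mu$ steps therefore does not apply off the shelf; the paper instead proves ballot-type and excursion limit theorems for a 2D walk in a half-space, uniformly over start- and end-points of order $N^{1/2-\delta}$, by adapting Denisov--Wachtel and Duraj--Wachtel (\cref{thm:our-rw-inputs,thm:rw-invariance}), together with a localization of the step count and of the $x$-coordinate. Your variance identification $\sigma^2=\sigma_0^2/\mu$ and the observation that $\overline\gamma$ and $\underline\gamma$ coincide after rescaling (via the bound on irreducible pieces) do agree with the paper, as does the SOS identification, though the latter also needs the surface-tension symmetry \cref{p:P4}, verified in the paper through \cref{prop:surface-tension-equiv}.
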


While \cref{mainthm:BM-for-Ising-polymers} addressed level lines in the SOS model (and more generally, Ising polymers) with \emph{no floor}---whereby the scaling limit is a Brownian excursion---its application for \cref{mainthm:SOS lower bound} (addressing SOS above a floor) used the fact that in that setting the effect of the floor is uniformly bounded. Indeed, in an $L^{2/3}\times L^{2/3}$ box centered on the bottom boundary, the tilting effect of the floor on the top level line (as a Radon--Nikodym derivative) amounts to a factor of $\exp[ c A / L ]$, where $A$ is the area under the non-tilted curve (note $A \lesssim_{\texttt P} L$ for a Brownian excursion on an interval of length $L^{2/3}$).
Since, as mentioned above, a Brownian excursion tilted by an area term is known to converge to a Ferrari--Spohn diffusion, one expects that the top level line of SOS in that box will actually dominate a Ferrari--Spohn diffusion. This is the content of \cref{mainthm:FS} below.

We first define the limiting object formally. 
Let $\mathsf{Ai}(x)$ denote the Airy function (of the first kind), i.e., the solution to $y''(x) = x y$ with the initial condition $y=0$ at $x=\infty$. For $\lambda,\sigma>0$, define $f_{\lambda,\sigma}(x) := \mathsf{Ai}((2\lambda\sigma)^{1/3}x +\omega_1)$, where $\omega_1$ is the ``first'' zero of $\mathsf{Ai}$ ($\omega_1<0$ and closest to~$0$).\footnote{The function $f_{\lambda,\sigma}$ is the first eigenfunction of the operator $\mathsf{L}$, see \cite[Eq.~(1.18)]{ISV15}}
The stationary Ferrari--Spohn diffusion we consider is the diffusion on $(0,\infty)$ with generator
\begin{align}\label{def:fs-generator}
    \mathsf{L}\psi = \frac{1}2 \psi'' + \frac{f_{\lambda,\sigma}'}{f_{\lambda,\sigma}}\psi'
\end{align}
and Dirichlet boundary condition at $0$. 

The following result establishes that if we consider the SOS model on a $KL^{2/3}\times KL^{2/3}$ box with boundary conditions $H-1,H,H,H$, where $H$ is the typical height of the top level line (up to $1$ integer), then the $H$-level line will converge weakly  to a Ferrari--Spohn diffusion in $(C[-T,T], \|\cdot\|_{\infty})$ for any $T>0$. 
A direct consequence (via the monotonicity argument in \cref{sec:sos}) is a refinement of \cref{mainthm:SOS lower bound}, showing that $\rho(x)$ from that theorem essentially dominates a Ferrari--Spohn diffusion (thus $\rho(x) \gtrsim_{\texttt P} L^{1/3}$; see \cref{rem:dominating-RW-area-tilt}).

\begin{theorem}
    \label{mainthm:FS}
    Fix $\beta$ large and consider the SOS model on a $K L^{2/3}\times K L^{2/3}$ box with a floor~at~$0$
    and boundary conditions $H = \lfloor \frac1{4\beta}\log L\rfloor$ everywhere except the bottom side, where they are $H-1$. Suppose that $a_L$, the fractional part of $\frac1{4\beta}\log L$, converges to a limit, and let $\overline\rho(x)$ denote the maximum vertical distance of the $H$-level line (connecting the bottom corners of the box) from the bottom side at horizontal location $x\in\R$. 
    Let $\sigma>0$ be the constant from \cref{mainthm:BM-for-Ising-polymers}, Part~(b).
    Then there exists $\lambda>0$ such that $ \overline\rho(\lfloor x L^{2/3} \rfloor ) / (\sigma L^{1/3})$  converges weakly as $L\to\infty$ followed by $K\to\infty$ to the stationary Ferrari--Spohn diffusion on $(0,\infty)$ with generator $\mathsf{L}$ and Dirichlet boundary condition at~0.
     The same holds for $\underline{\rho}(x)$, the minimum height fluctuation of the level line at $x\in\R$.
\end{theorem}
\begin{remark}\label{rem:FS-K-L-1/20}
Using the same methods, a Ferrari--Spohn diffusion limit may also be derived for Ising polymers with the appropriate area tilt. Furthermore, it is possible to take any diverging sequence of $K:= K(L) \in (0,L^{1/20})$. See \cref{thm:fs-detailed} for a more detailed version of \cref{mainthm:FS}. 
\end{remark}

\begin{remark}\label{rem:dominating-RW-area-tilt}
As mentioned above,  \cref{mainthm:BM-for-Ising-polymers} is proved via coupling the Ising polymer to a 2D directed random walk excursion. The proof of \cref{mainthm:FS}, taking into account the floor in the SOS model, proceeds by using the same machinery to couple the polymer to a random walk excursion, yet this time with an area-tilt. We then appeal to the approach of \cite[Section~6]{IOSV21} for handling the convergence of such 2D random walks to the Ferrari--Spohn diffusion.
As a byproduct of this argument, one can read off quantitative results on the model before taking $K,L\to\infty$; namely, the top level line of the SOS model with zero boundary conditions on $\Lambda_L$ above a wall dominates a random walk excursion with endpoints $0$ and an area tilt on any interval of length $L^{2/3}$ at distance at least $\epsilon_\beta L$ from the box corners (a stronger result than \cref{mainthm:SOS lower bound}).
\end{remark}

\begin{remark}\label{rem:max-upper-tail}
Consider the aforementioned stronger version of \cref{mainthm:SOS lower bound}, whereby the vertical displacement $\underline \rho(x)$ of the top level line $\cL_0$ of the SOS model, along any interval of length $L^{2/3}$ bounded away from the corners, stochastically dominates a random walk with area tilt $e^{-cA / L}$. Standard tools will then imply that $\max_{x\in I}\underline\rho(x)$ is $\Omega( L^{1/3}(\log L)^{2/3})$ for $I = \llb \epsilon_\beta L, (1-\epsilon_\beta)L\rrb$. 
Namely, one could show, for some absolute constant $c>0$, a lower bound on the upper tail of $\underline\rho(x_0)$, valid for all $x_0\in I$, \`{a} la Tracy--Widom distribution:
\[ \pi_{\Lambda_L}^0(\underline\rho(x_0) > a L^{1/3}) \geq e^{- c a^{3/2}}\,.\]
Considering about $L^{1/3}$ such $x_0$ taken in disjoint boxes of length $L^{2/3}$ each, along with monotonicity arguments similar to those employed in \cref{sec:sos}
will then imply $\max_{x\in I} \underline\rho(x) \gtrsim_{\texttt P} L^{1/3}(\log L)^{2/3}$.
\end{remark}

It is plausible that this gives the correct order of the upper tail large deviation rate function, and that consequently,  $\max_x\underline\rho(x) \asymp_{\texttt P} L^{1/3}(\log L)^{2/3}$ (see, e.g.,  \cite{GaGh21}, where an estimate of this type was obtained for the 2D Ising interface in critical prewetting, as well the work of Alexander~\cite{Alexander01} on  local roughness of droplet boundaries in the random cluster model).

\begin{question}\label{q:upper-tail} 
Let $x_0\in \llb\epsilon_\beta L, (1-\epsilon_\beta)L\rrb$. What is the rate function $a\mapsto -\log\pi_{\Lambda_L}^0(\underline\rho(x_0)>a L^{1/3})$?
\end{question}

As for lower tails, to our knowledge these are still open for the 2D Ising under critical prewetting, where one expects the Ising interface to reach the bottom in $O_{\texttt P}(1)$ locations along $\llb\epsilon_\beta L,(1-\epsilon_\beta) L\rrb$. One should stress though that it is unclear that the SOS large deviations would take after the behavior of the 2D Ising interface under prewetting---particularly for the lower tails, where in SOS there are $\Theta(\log L)$ level lines below $\cL_0$, all of which must cooperate with a downward deviation.
\begin{question}
    \label{q:lower-tail}
    Let $x_0\in \llb\epsilon_\beta L, (1-\epsilon_\beta)L\rrb$. What is the order of $\pi_{\Lambda_L}^0(\underline\rho(x_0)=0)$?
\end{question}

The paper is organized as follows.
In \cref{sec:polymers-oz}, we formalize the setting of Ising polymers, as well as the inputs we need from Ornstein--Zernike theory. We also 
 establish that the SOS model satisfies the required hypotheses of Ising polymers, and thus 
\cref{mainthm:BM-for-Ising-polymers} is applicable to it.
\Cref{sec:sos} proves \cref{mainthm:SOS lower bound},  addressing the SOS measure $\pi$,
using a monotonicity argument and the conclusion of \cref{mainthm:BM-for-Ising-polymers} on $\widehat\pi$, the SOS measure without a floor. In \cref{sec:rw-1}, we introduce a random walk in $\H$ that is closely related to Ising polymers, and state the key limit theorems for this random walk. In turn, \cref{sec:pf-BM-ist} provides the proof of \cref{mainthm:BM-for-Ising-polymers} modulo these random walks results that are deferred to \cref{sec:rw-half-space}.
In \cref{sec:FS}, we prove \cref{mainthm:FS}.

\section{Cluster expansion, Ising polymers, and Ornstein--Zernike theory}\label{sec:polymers-oz}

In this section, we review the tools needed for our proofs---notably, cluster expansion, prior work on Ising polymers, and Ornstein--Zernike theory.
In several cases, we will need variants of existing results, which are not covered by the results proved in the literature. In those cases, we provide proofs of these analogues (either in the main text or in the appendix).

Throughout the paper, we say that an event holds with high probability (w.h.p.) if its probability tends to $1$ as the system size (typically, $L$ or $N$) tends to $\infty$.
For two functions $f: \N \to (0,\infty)$ and $g: \N \to (0,\infty)$, write
$ f \sim g $
to denote that $\lim_{N\to \infty} f(N)/g(N) = 1$;  write 
$ f \lesssim g$
when there exists a constant $K>0$ such that $f(N) \leq K g(N)$ for all $N \in \N$; and write 
$f \asymp g $ 
when $f\lesssim g$ and $g\lesssim f$.

\subsection{Contours and cluster expansion}
A contour $\gamma$ is a collection of bonds $(e_i)_{i=1}^m$ in the dual lattice $(\Z^2)^*$, where all bonds are distinct except possibly $e_1$ and $e_m$ may coincide, every two consecutive edges share a vertex, and the path formed is simple except in accordance with a splitting rule: if the pair $e_{i}, e_{i+1}$ and $e_j, e_{j+1}$ all intersect at a vertex $\x$, then
the two other end-points of $e_i,e_{i+1}$ are on the same side of the line through $\x$ with slope~$1$ (from southwest to northeast), and similarly for $e_j,e_{j+1}$ (this is the \emph{northeast} splitting rule). 
We call $\gamma$ an \emph{open contour} if $e_1\neq e_m$.

In the context of the SOS model in a finite, connected $\Lambda \Subset \Z^2$ under $0$ boundary conditions, for any $h$, the $h$-level lines (recall that for any configuration $\varphi$ and integer $h$, these are the bonds dual to $\x\sim \y$ with $\varphi_\x<h$ and $\varphi_\y\geq h$) give rise to a collection of disjoint loops after applying the global splitting rule. 
In the presence of a boundary condition $\xi \in \{0,1\}^{\partial \Lambda}$ consisting of a connected stretch of $0$'s (and $1$'s elsewhere), this gives rise to a unique open contour among the height-$1$ level lines (accompanied by a collection of closed contours). We refer to this path as the open $1$-contour.
Let $\Delta_{\gamma}^+$  and $\Delta_{\gamma}^-$ denote the set of sites (of $\Z^2$) immediately above and below $\gamma$, respectively, and define 
\[
\Delta_{\gamma}:= \Delta_{\gamma}^+ \cup \Delta_{\gamma}^- \,.
\]
Note that the sites in $\Delta_{\gamma}^+$ have height $\geq 1$, while the sites in $\Delta_{\gamma}^-$ have height $\leq 0$. Each $\gamma$ divides $\Lambda$ into two regions, $\Lambda_{\gamma}^+$ and $\Lambda_{\gamma}^-$, where we write $\Lambda_{\gamma}^+$ to denote the region that contains $\Delta_{\gamma}^+$ as part of its inner boundary. 
The next proposition addresses the law of this unique SOS open contour $\gamma$. 

\begin{proposition}[{\cite[Lem.~A.2]{CLMST14}}]
\label{prop:soscontour-law} Consider the SOS model $\widehat\pi_\Lambda^\xi$ on  any finite, connected $\Lambda \subset \Z^2$ and under any boundary condition $\xi \in \{0, 1\}^{\partial \Lambda}$ that induces a unique open $1$-contour $\gamma$.
Then there exists a constant $\beta_0>0$ such that for all $\beta \geq \beta_0$,
\begin{align}
\widehat\pi_{\Lambda}^{\xi}(\gamma) \propto \exp\bigg(-\beta|\gamma| + \sum_{\cC \cap \Delta_{\gamma} \neq \emptyset} \phi(\cC;\gamma)\ind{\cC \subset \Lambda}\bigg) \,, \label{eqn:sos-contour-law}
\end{align}
for some ``decoration functions'' $\{\phi(\cC, \gamma)\}_{\cC \subset \Z^2}$ satisfying the following properties:
\begin{enumerate}[(i)]
    \item \label[property]{it:ce-conn} If $\cC$ is not connected, then $\phi(\cC; \gamma) = 0$.
    \item \label[property]{it:ce-local} The decoration function $\phi(\cC;\cdot)$ depends on $\gamma$ only through $\cC \cap \Delta_{\gamma}$. 
    \item \label[property]{it:ce-translation} For all $\v \in \Z^2$, 
    $\phi(\cC; \gamma) = \phi(\cC+\v; \gamma+\v)$\,.
    \item \label[property]{it:ce-decay} Letting $d(\cC)$ denote the cardinality of the smallest connected set of bonds of $\Z^2$ containing all boundary bonds of $\cC$ (i.e., bonds connecting $\cC$ to $\cC^c$), we have the decay bound
    \begin{align}
        \sup_{\gamma} |\phi(\cC; \gamma)| \leq \exp\Big(- (\beta-\beta_0) d(\cC)\Big)\,. \label{eqn:sos-decay-decorations}
    \end{align}
\end{enumerate}
Furthermore, if $\widehat Z_{\Lambda}^\xi$ is the partition function of the SOS model $\widehat \pi_{\Lambda}^\xi$ and 
\[
    \Gb^{\xi}_{\Lambda} 
    := \sum_{\gamma} e^{-\beta|\gamma| + \sum_{\cC \subset \Lambda} \phi(\cC;\gamma)}
\]
is the interface partition function corresponding to the distribution \cref{eqn:sos-contour-law}, then
\begin{align}
    \Gb^{\xi}_{\Lambda} =  \widehat{Z}_{\Lambda}^{\xi}/\widehat{Z}_{\Lambda}^{0}\,. \label{eqn:sos-contour-partitionfunction}
\end{align}
 \end{proposition}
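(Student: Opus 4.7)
The plan is a three-step argument: (i) decompose the measure along the open $1$-contour $\gamma$, (ii) apply a low-temperature cluster expansion to each of the two ``sides'' of $\gamma$, and (iii) collect the $\gamma$-dependent contributions into the decoration functions $\phi(\cC;\gamma)$.

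First I would observe that a configuration $\varphi$ induces $\gamma$ as its open $1$-contour exactly when $\varphi_\x\geq 1$ for $\x\in\Delta_\gamma^+$ and $\varphi_\x\leq 0$ for $\x\in\Delta_\gamma^-$, with the remaining boundary constraints inherited from $\xi$. Each dual bond of $\gamma$ costs at least $\beta$ in energy, producing the explicit prefactor $e^{-\beta|\gamma|}$. Introducing shifted variables $\psi=\varphi-1$ on $\Lambda_\gamma^+$ and $\chi=-\varphi$ on $\Lambda_\gamma^-$ makes both nonnegative on $\Delta_\gamma^\pm$ and zero on the portion of $\partial\Lambda$ inherited from $\xi$; the leftover energy along each crossing bond, $\beta(\varphi_\x-1)+\beta(-\varphi_\y)$, is precisely what a $0$-valued ghost on the opposite side of $\gamma$ would contribute. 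Summing $\varphi$ inducing $\gamma$ therefore factorizes as $e^{-\beta|\gamma|}\widetilde Z_{\Lambda_\gamma^+}^{+}\widetilde Z_{\Lambda_\gamma^-}^{-}$, where each $\widetilde Z$ is a no-floor SOS partition function on the respective subdomain with zero boundary conditions on all of $\partial\Lambda_\gamma^\pm$ together with a positivity constraint at the interior boundary $\Delta_\gamma^\pm$.

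Next I would expand each $\log \widetilde Z_{\Lambda_\gamma^\pm}^{\pm}$ as a Peierls/polymer series of level excitations above the identically-zero ground state. At large $\beta$, a level-$h$ excitation of length $\ell$ carries Peierls weight $\lesssim e^{-c\beta h\ell}$, and hence the Koteck\'y--Preiss convergence criterion will apply with margin growing in $\beta-\beta_0$; this is where the assumption $\beta\geq\beta_0$ enters. The outcome is a convergent representation
\[
\log \widetilde Z_{\Lambda_\gamma^\pm}^{\pm}=\sum_{\cC \text{ connected}}\Psi^{\pm}(\cC;\gamma)\,\ind{\cC\subset\Lambda},\qquad \sup_\gamma|\Psi^{\pm}(\cC;\gamma)|\lesssim e^{-(\beta-\beta_0)\,d(\cC)},
\]
with translation-invariant weights. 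Crucially, the positivity constraint on $\Delta_\gamma^\pm$ and the ghost boundary along $\gamma$ only influence clusters whose support meets $\Delta_\gamma$; for clusters disjoint from $\Delta_\gamma$ the weight $\Psi^\pm(\cC;\gamma)$ coincides with its analogue in the expansion of $\widehat Z_\Lambda^{\,0}$.

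Finally I would set $\phi(\cC;\gamma):=\bigl(\Psi^{+}(\cC;\gamma)+\Psi^{-}(\cC;\gamma)\bigr)\ind{\cC\cap\Delta_\gamma\neq\emptyset}$, absorbing the $\gamma$-dependent contributions; the $\gamma$-independent clusters combine with the expansion of $\widehat Z_\Lambda^{\,0}$ into a common constant absorbed by the normalizer, yielding \eqref{eqn:sos-contour-law}. Properties \eqref{it:ce-conn}--\eqref{it:ce-translation} will be inherited from the polymer expansion: connectedness by construction, locality from the fact that $\Psi^\pm$ depends on $\gamma$ only through how $\Delta_\gamma$ slices $\cC$, and translation invariance from the SOS Hamiltonian; property \eqref{it:ce-decay} is the Koteck\'y--Preiss tail bound applied to $\Psi^\pm$. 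Summing \eqref{eqn:sos-contour-law} over admissible $\gamma$ recovers $\widehat Z_\Lambda^\xi$, while the $\gamma$-independent reference constant equals $\log\widehat Z_\Lambda^{\,0}$, giving \eqref{eqn:sos-contour-partitionfunction}. The main obstacle will be verifying uniform-in-$\gamma$ convergence of the polymer expansion with the sharp decay \eqref{eqn:sos-decay-decorations}, i.e., showing that the shifted ghost boundary along $\gamma$ and the positivity constraint on $\Delta_\gamma^\pm$ do not spoil the Koteck\'y--Preiss criterion; this is a standard but delicate Peierls estimate tailored to excitations touching a prescribed set, and once it is in place the remaining steps are bookkeeping.
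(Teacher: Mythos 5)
Your overall route is the same as the paper's: decompose $\widehat Z_\Lambda^\xi$ along the open $1$-contour via the identity $|\varphi_\u-\varphi_\v|=(\varphi_\u-1)+(0-\varphi_\v)+1$ for $\varphi_\u\geq 1\geq \varphi_\v+1$, cluster-expand the two constrained no-floor partition functions on $\Lambda_\gamma^{\pm}$ (Kotecký--Preiss, valid for $\beta\geq\beta_0$ uniformly in $\gamma$), and fold the $\gamma$-dependent cluster weights into $\phi$. However, your final bookkeeping step contains a genuine gap: you define $\phi(\cC;\gamma):=\bigl(\Psi^{+}(\cC;\gamma)+\Psi^{-}(\cC;\gamma)\bigr)\ind{\cC\cap\Delta_\gamma\neq\emptyset}$ and claim that the clusters disjoint from $\Delta_\gamma$ give "a common constant absorbed by the normalizer." They do not: the collection of clusters avoiding $\Delta_\gamma$ depends on $\gamma$, so $\sum_{\cC\subset\Lambda,\,\cC\cap\Delta_\gamma=\emptyset}f_0(\cC)$ is a $\gamma$-dependent quantity, not a constant. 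To turn it into the constant $\log\widehat Z_\Lambda^0=\sum_{\cC\subset\Lambda}f_0(\cC)$ you must complete the sum, and the completion term $-\sum_{\cC\cap\Delta_\gamma\neq\emptyset}f_0(\cC)$ has to be absorbed into the decoration function. This is exactly what the paper does: its decoration is $\phi(\cC;\gamma)=-f_0(\cC)+f_{\Delta_\gamma^+}(\cC)\ind{\cC\cap\Delta_\gamma^-=\emptyset}+f_{\Delta_\gamma^-}(\cC)\ind{\cC\cap\Delta_\gamma^+=\emptyset}$, i.e.\ the \emph{difference} between the constrained weights and the free weight, not their sum.

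Concretely, with your $\phi$ two things go wrong. First, a cluster that meets both $\Delta_\gamma^+$ and $\Delta_\gamma^-$ (crossing $\gamma$) is contained in neither $\Lambda_\gamma^+$ nor $\Lambda_\gamma^-$, so your $\Psi^{\pm}$ assign it weight $0$, whereas it must carry weight $-f_0(\cC)$ to compensate for its presence in the expansion of the reference $\widehat Z_\Lambda^0$. Second, a one-sided cluster touching $\Delta_\gamma$ should get $f_{\Delta_\gamma^\pm}(\cC)-f_0(\cC)$, not $f_{\Delta_\gamma^\pm}(\cC)$. Without these corrections the claimed proportionality \eqref{eqn:sos-contour-law} fails (a $\gamma$-dependent factor $\exp\bigl(\sum_{\cC\cap\Delta_\gamma=\emptyset}f_0(\cC)\bigr)$ is left over), and the exact identity \eqref{eqn:sos-contour-partitionfunction}, which is part of the statement, cannot come out. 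The repair is straightforward once identified — redefine $\phi$ as the paper does and note that the decay bound \eqref{eqn:sos-decay-decorations} survives since both $f_0$ and $f_{\Delta_\gamma^\pm}$ obey the Kotecký--Preiss tail — but as written the step you dismissed as bookkeeping is precisely where the proof is incomplete.
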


The decoration functions come from 
\textit{cluster expansion} applied to the partition functions in $\Lambda_{\gamma}^+$ and $\Lambda_{\gamma}^-$.
In \cref{appendix:cluster-expansion},
we recall cluster expansion for the SOS model and provide the proof of \cref{prop:soscontour-law},
as the expression for the decoration function $\phi$ is needed to verify that it meets the criteria of modified Ising polymers (\cref{def:mod-ising-polymer}).
In light of \cref{it:ce-conn}, from now on we will write $\cC$ to denote a connected subset (or \emph{cluster}) of~$\Z^2$.

Next, we recall the notion of surface tension for the SOS model.  

\begin{definition}[Dobrushin boundary conditions, surface tension] \label{def:dob}
Fix $\vec{u} \in \bbS^{1}$ in the first quadrant, i.e., with $\theta_{\vec{u}} \in [0,\pi/2)$, where $\theta_{\vec{u}}$ is the angle $\vec{u}$ makes with the positive horizontal axis. Set $\Lambda_{N,M} := \llb 1,N\rrb\times\llb -M,M\rrb$, and let $\xi(\vec{u})$ denote the boundary condition defined by $\xi(\vec{u})_v = 0$ for all $v\in \partial \Lambda$ lying on or below $\mathrm{span}(\vec{u})$, and $\xi(\vec{u})_v= 1$ otherwise. Our main focus is on the boundary condition $\xi(\e_1)$, which we denote by $0,1,1,1$ (after the values induced by $\xi(\e_1)$ on the four sides of the box $\llb 1,N\rrb\times\llb 0,M\rrb$).
Define $d_{N,\vec{u}} := N/\cos(\theta_{\vec{u}})$. The \textit{surface tension} $\tau_{\beta}^{\SOS}\textbf{}(\vec{u})$ is defined by 
\begin{align}
\tau_{\beta}^{\SOS}\textbf{}(\vec{u}) := \lim_{N \to\infty}\lim_{M\to\infty} -\frac{1}{d_{N,\vec{u}}} \log \Big(\cG_{\Lambda_{N,M}}^{\xi(\vec{u})}\Big) \,.
    \label{def:sos-surface-tension}
\end{align}
The value of $\tau_\beta^{\SOS}$ for the other quadrants is defined symmetrically, so that $\tau_\beta^{\SOS}(\vec u) = 
\tau_\beta^{\SOS}(\vec v)$ when $\theta_{\vec u} = -\theta_{\vec v}$ or $\theta_{\vec u} = \pi -\theta_{\vec v}$.

Finally, $\tau_{\beta}^{\SOS}$ extends to an even function on all of $\R^2$ via homogeneity: 
\[
    \tau_{\beta}^{\SOS}(\x) := \norm{\x} \tau_{\beta}^{\SOS}\big(\tfrac{\x}{\norm{\x}}\big) \,, \text{ for all } \x \in \R^2\,.
\]
\end{definition}
The proof of the existence of $\tau_{\beta}^{\SOS}(\x)$ as well as many of its properties can be found in \cite[{\S1--2}]{DKS92}. 

\subsection{The free Ising polymer model}
\label{subsec:free-ising-polymers}
In this section, we define the class of Ising polymer models, as given by \cite{IST15}.\footnote{The Ising polymer model in \cite{IST15} had a weaker decay condition, taking $d(\cC)$ in \cref{p:P2} to be the $L^\infty$-diameter of~$\cC$, whereas our arguments  require $d(\cC)$ to be the minimum size of a connected set containing its boundary.} The reader is also referred to~\cite{DKS92} for many useful results on such polymer models.
It will be shown that the SOS open contour from \cref{eqn:sos-contour-law} falls in this class.

Recall that we  write $\cC$ to denote a finite, connected subset of $\Z^2$. For every contour $\gamma$, consider any \textit{decoration function} $\Phi(\cC; \gamma)$ satisfying the following four properties:
\begin{enumerate}[label=(P\arabic*)]
    \item  \label[property]{p:P1} \emph{Locality}: $\Phi(\cC; \cdot)$ depends on $\gamma$ only through $\cC\cap \Delta_{\gamma}$.
    
    \item \label[property]{p:P2} \emph{Decay}: There exists some $\chi >1/2$ such that, for all $\beta >0$ sufficiently large, 
    \begin{align}
        \sup_{\gamma} |\Phi(\cC,\gamma)| \leq \exp(-\chi \beta (d(\cC)+1))\,,
        \label{eqn:decay-decorations}
    \end{align}
    where $d(\cC)$ is defined as in \cref{it:ce-decay} of \cref{prop:soscontour-law}.
    
    \item \label[property]{p:P3} \emph{Translational symmetry}: for all $\v\in \Z^2$, $\Phi(\cC; \gamma) = \Phi(\cC+\v;\gamma+\v)$.
    
    \item \label[property]{p:P4} \emph{Symmetry of the surface tension}: the surface tension $\tau_{\beta}(\x)$ defined below in~\cref{def:tau-beta} possesses all discrete symmetries of $\Z^2$ (rotations by $\pi/4$ and reflections w.r.t.\ axes and the diagonals $y=\pm x$).
\end{enumerate}
Towards specifying the probability that the model assigns to each polymer---as well as the surface tension $\tau_\beta$ mentioned in \cref{p:P4}---define the \textit{free (polymer) weight} via 
\begin{equation}\label{eq:q-def}
    q(\gamma) = \exp\Big(-\beta|\gamma| + \sum_{\cC \cap \Delta_{\gamma} \neq \emptyset} \Phi(\cC;\gamma)\Big)\,,
\end{equation}
where, here and throughout the article, sums over $\cC$ are assumed to only go over \emph{connected} subsets $\cC \subset \Z^2$. Next, for any $\x \in (\Z^2)^*$, consider the partition function going over all contours $\gamma$ with start-point given by the dual origin $\ostar := (1/2,1/2)$ and end-point $\x$:
\begin{align*}
    \Gb(\x) := \sum_{\gamma: \ostar \to \x} q(\gamma)\,.
\end{align*}
For any set of contours $E$, consider also the partition function going over all contours in $E$ with end-points $0$ and $\x$:
\begin{align}
    \Gb(\x \given E) := \sum_{\gamma: \ostar \to \x} q(\gamma) \ind{\gamma \in E}\,.
\end{align}
Lastly, define the \textit{Ising polymer surface tension} $\tau_{\beta}(\cdot)$ via\footnote{It is common to define $\tau_\beta$ with the $1/\beta$ pre-factor (such was the case in~\cite{DKS92} as well as \cite{CLMST16}).
We do not include the pre-factor here since related Ornstein--Zernike works (e.g.,~\cite{IV08,IOSV21}) do not include this pre-factor, and this keeps various definitions (e.g., $\Kb$, $W^{\h}$, $\Pa$ defined below) consistent with those works. In~\cite{IST15}, $\tau_\beta$ does have a $1/\beta$ pre-factor, though it seems to be a typo, as their inputs from the Ornstein--Zernike theory come from the aforementioned~\cite{IV08} (and as such their calculations are consistent with the above definition of $\tau_\beta$). We will prove the Ornstein--Zernike facts we require here, to make the proof more self-contained and avoid potential consistency issues. \label{footnote:surface-tension-beta}
}
\begin{align}
    \tau_{\beta}(\n) := -\lim_{N\to\infty} \frac{1}{ N} \log \Gb(N\n) \qquad \text{ for } \n \in \bbS^1,
    \label{def:tau-beta}
\end{align}
where the limit is taken over $N$ such that $N \n$ is in $(\Z^2)^*$. By homogeneity, $\tau_{\beta}$ extends to 
all of $\R^2$.
\begin{definition}[Free Ising polymer]
\label{def:free-ising-polymer}
The \textit{free Ising polymer model} in a subset $D\subset \R^2$ is given by the probability measure over contours $\gamma:\ostar \to \x$ contained in $D$:
\begin{align}
    \bP^{\x}( \cdot \given \gamma \subset D) := \frac{\Gb(\x \given \gamma \subset D, \gamma \in \cdot )}{\Gb(\x \given \gamma \subset D)}\,,\label{eq:free-ising-polymer}
\end{align}
for a partition function $\Gb$ as above with a decoration function $\Phi$ satisfying \cref{p:P1,p:P2,p:P3,p:P4}.
\end{definition}

Below, we list some needed properties of $\tau_{\beta}$, proven in \cite{DKS92}.
\begin{proposition}[{Surface tension properties, \cite{DKS92}}]\label{prop:surface-tension-facts}
    There exists  $\beta_0 >0$ such that for all $\beta \geq \beta_0$:
    \begin{enumerate}[(i)]
        \item\label[property]{p:tau-unif-converge}  the formula in \cref{def:tau-beta} converges uniformly,
        \item \label[property]{p:tau-analytic} the surface tension $\tau_{\beta}: \R^2 \to \R^2$ is analytic, 
        \item \label[property]{p:tau-str-tri-ineq} (Strong triangle inequality) For any two non-collinear vectors $\u$ and $\v$ in $\Z^2$, we have
        \[
            \tau_{\beta}(\u) + \tau_{\beta}(\v) > \tau_{\beta}(\u+\v)\,.
        \]
    \end{enumerate}
\end{proposition}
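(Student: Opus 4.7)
All three properties are standard consequences of the DKS92 framework for contour models, and the plan is to adapt those proofs to the Ising polymer setting defined by $q(\gamma)$ in \cref{eq:q-def}; the key enabler is the decay condition \cref{p:P2} with $\chi>1/2$, which tames the decoration functions $\Phi$ uniformly.

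For \cref{p:tau-unif-converge}, I would first establish approximate supermultiplicativity $\Gb(\x+\y)\geq c_\beta \Gb(\x)\Gb(\y)$ by concatenating a contour $\ostar\to\x$ with a translated copy of a contour $\ostar\to\y$ and absorbing into $c_\beta$ the boundary corrections coming from decoration sums whose clusters straddle the junction; the absorption uses the summability implied by \cref{eqn:decay-decorations}. Fekete's subadditive lemma applied to $-\log \Gb(N\n)$ then yields existence of the limit along any rational direction $\n$. Uniform convergence on $\bbS^1$ follows from a one-step Lipschitz estimate $|\log\Gb(\x+\vec{u})-\log\Gb(\x)|\leq C\beta$ for a unit displacement $\vec{u}$ (by appending or removing a single edge and controlling decoration changes via \cref{p:P2}), combined with compactness of $\bbS^1$ and an equicontinuity argument on the pre-limit sequence.

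Analyticity \cref{p:tau-analytic} is the Ornstein--Zernike statement. Following \cite{CIV03,IV08,Ioffe98}, I would decompose each contour at its \emph{cone points}---points admitting both a forward and backward cone disjoint from the rest of the contour---into irreducible sub-contours, whose weights assemble into an exponentially decaying kernel $t(\cdot)$ on $\Z^2$. The threshold $\chi>1/2$ in \cref{p:P2} is precisely what ensures cone points occur with positive density under the free polymer measure and that $t$ decays strictly faster than $e^{-\tau_\beta(\n)N}$. After a Cram\'er tilt by the dual vector $\mathbf{h}$ conjugate to $\n$, the tilted weights $\widehat t(\x)=e^{\mathbf{h}\cdot\x+\tau_\beta(\n)}t(\x)$ form a probability distribution on $\Z^2$ with analytic log-moment generating function $\Lambda(\mathbf{h})$, and $\tau_\beta$ is recovered as its Legendre dual; analyticity and strict convexity of $\tau_\beta$ then transfer from the analyticity and non-degenerate Hessian of $\Lambda$ at the saddle.

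The strong triangle inequality \cref{p:tau-str-tri-ineq} follows by upgrading the above supermultiplicativity to $\Gb(\u+\v)\geq c_\beta\sum_{\y\in B}\Gb(\y)\Gb(\u+\v-\y)$, with $B$ a lattice disk of scale $\sqrt{|\u+\v|}$ centered at the optimizer: for non-collinear $\u,\v$, the strict convexity of $\tau_\beta$ just established makes $\y\mapsto \tau_\beta(\y)+\tau_\beta(\u+\v-\y)$ attain its minimum strictly inside $B$, so the $|B|$-term sum contributes a polynomial prefactor which, on taking logarithms and dividing by $|\u+\v|$, yields a strict macroscopic gap. The main effort in all of this lies in the Ornstein--Zernike decomposition underlying \cref{p:tau-analytic}: specifically, verifying rigorously that cone points occur with positive density under the free polymer measure, and that the induced irreducible weights inherit both the exponential decay and the translation invariance \cref{p:P3} from $\Phi$ through the cluster expansion. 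The sharp form $\chi>1/2$ in \cref{p:P2} is exactly what guarantees that the typical transversal extent of clusters straddling the contour does not spoil the independence needed for irreducibility, making the DKS92 framework apply with essentially no change.
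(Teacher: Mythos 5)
The paper does not reprove these facts at all: its ``proof'' is a citation to \cite{DKS92} (Theorem~4.8 and Proposition~4.12 there for \cref{p:tau-unif-converge,p:tau-analytic}, and Proposition~1 of Section~4.21 for \cref{p:tau-str-tri-ineq}), and the only actual content is the footnoted verification that those arguments use nothing about the decoration functions beyond translation invariance and the decay bound, i.e.\ \cref{it:ce-translation,it:ce-decay} of \cref{prop:soscontour-law}, so they apply verbatim to the weights \cref{eq:q-def}. Your plan is instead to re-derive the DKS/Ornstein--Zernike statements from scratch, and as a sketch it has two genuine soft spots. First, the claimed supermultiplicativity $\Gb(\x+\y)\geq c_\beta\,\Gb(\x)\Gb(\y)$ with a \emph{constant} $c_\beta$ is not justified: the clusters whose contribution changes under concatenation are not confined to a neighborhood of the junction. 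The translated copy of $\gamma_2$ may run alongside $\gamma_1$ at distance $O(1)$ over a stretch of macroscopic length, and then the discrepancy between $\log q(\gamma_1\circ\gamma_2)$ and $\log q(\gamma_1)+\log q(\gamma_2)$ is only bounded by $Ce^{-\chi\beta}\min(|\gamma_1|,|\gamma_2|)$, an error linear in the length rather than $O(1)$ (and, separately, the concatenation need not be an admissible contour under the northeast splitting rule). With a linear error term, Fekete only pins $\limsup$ and $\liminf$ to within $O(e^{-\chi\beta})$ of each other; it does not give existence of the limit, let alone uniform convergence. Removing this interaction is precisely what the cone-point/irreducible decomposition (or the DKS92 argument) is for, so your step (i) cannot be run as a preliminary to, and independently of, the OZ input of step (ii).

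Second, the mechanism you give for \cref{p:tau-str-tri-ineq} does not produce a strict inequality: summing $\Gb(\y)\Gb(\u+\v-\y)$ over a disk of radius $\sqrt{|\u+\v|}$ only yields a polynomial prefactor, which vanishes after taking $\tfrac{1}{|\u+\v|}\log$, so no ``strict macroscopic gap'' can come from it. The correct route, once the OZ step gives analyticity and a non-degenerate Hessian of the tilted log-moment generating function, is purely deterministic: that non-degeneracy gives strict convexity of the unit ball $\{\tau_\beta\leq 1\}$ (equivalently positive curvature of $\partial\Kb$), and for a positively homogeneous function this is exactly equivalent to $\tau_\beta(\u)+\tau_\beta(\v)>\tau_\beta(\u+\v)$ for non-collinear $\u,\v$; note that $\tau_\beta$ itself is never strictly convex along rays, so ``strict convexity of $\tau_\beta$'' must be interpreted transversally. (A minor further point: the constant $\chi>1/2$ in \cref{p:P2} is not ``precisely'' what guarantees a positive density of cone points or the mass gap --- what all these arguments use is simply that $\chi\beta$ is large.) If you want a self-contained write-up, the honest options are either to follow \cite{DKS92} as the paper does, checking as in its footnote that only \cref{it:ce-translation,it:ce-decay} are used, or to carry out the cone-point analysis first and extract (i)--(iii) from it, rather than the order you propose.
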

\begin{proof}
\Cref{p:tau-unif-converge,p:tau-analytic} follow from the combination of Theorem~4.8 and Proposition~4.12 of~\cite{DKS92}.\footnote{These results are stated in~\cite{DKS92} for the Ising polymer model $\Gb(\x)$. The analogue of $\Gb$ is defined in \cite[Eq.~4.3.11]{DKS92} (where for us, $h=0$). Though that formula involves decoration functions $\Phi$ coming from the cluster expansion of the Ising model, the results in \cite[Section~4]{DKS92} that follow this formula only use \cref{it:ce-translation,it:ce-decay} of our \cref{prop:soscontour-law}. This is stated in the final paragraph of \cite[Section~4.3]{DKS92}.}
Lastly, \cref{p:tau-str-tri-ineq}  appeared in \cite[Proposition 1, Section 4.21]{DKS92}.
\end{proof}

For our main application of \cref{mainthm:BM-for-Ising-polymers}, the SOS model, the following result relates the surface tension $\tau_\beta^{\SOS}$ defined in \cref{def:sos-surface-tension} to the Ising polymer surface tension $\tau_\beta$ defined above. 
We suspect it is known, though could not find an exact reference for it, and as it follows from our other arguments in \cref{sec:pf-BM-ist}, we include its proof in
\cref{subsection:proof-surface-tension-equiv} for completeness. 

\begin{proposition}\label{prop:surface-tension-equiv}
Fix $\beta\geq \beta_0$. Let $q(\gamma)$ denote the free polymer weight as per \cref{eq:q-def} with $\Phi(\cC;\gamma)$ taken to be $\phi(\cC;\gamma)$ from
\cref{prop:soscontour-law}.
Consider the Ising polymer surface tension $\tau_{\beta}$ corresponding to the free polymer weights $q(\gamma)$ as per \cref{def:tau-beta}.
For all $\mathsf{u} \in \R^2$,
\[
    \tau_{\beta}^{\emph{\SOS}}(\mathsf{u}) = \tau_{\beta}(\mathsf{u})\,.
\]
\end{proposition}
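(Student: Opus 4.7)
The plan is to identify both surface tensions with the exponential rate of a common Ising-polymer partition function, using the IST framework (developed in \cref{sec:pf-BM-ist}) to absorb the discrepancy between the SOS box setup and the free Ising polymer on $\Z^2$. My first step would be to apply \cref{prop:soscontour-law} together with \cref{eqn:sos-contour-partitionfunction} to rewrite
\[
\cG^{\xi(\vec u)}_{\Lambda_{N,M}}=\sum_{\gamma}\exp\Big(-\beta|\gamma|+\sum_{\cC\subset\Lambda_{N,M}}\phi(\cC;\gamma)\Big),
\]
where $\gamma$ ranges over open $1$-contours in $\Lambda_{N,M}$ with endpoints prescribed by $\xi(\vec u)$ at lattice distance $d_{N,\vec u}$, and then send $M\to\infty$. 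The exponential decay \cref{eqn:sos-decay-decorations}, combined with standard a priori bounds that keep $\gamma$ within polylogarithmic distance of the bottom side, should ensure that the restriction $\cC\subset\Lambda_{N,M}$ only induces a modification of the decoration function near $\partial\H_{\vec u}$ and not a genuine spatial confinement. What survives in the $M\to\infty$ limit is an Ising polymer partition function on the half-plane $\H_{\vec u}$ with a modified decoration function $\phi'$ that coincides with $\phi$ away from $\partial\H_{\vec u}$ and still obeys \cref{p:P1,p:P2,p:P3}.

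Next, I would invoke the main result of \cite{IST15} (stated as \cref{the:ist-main} and deployed throughout \cref{sec:pf-BM-ist}): modifying the decoration function along $\partial\H_{\vec u}$ does not change the surface tension. Applied to $\phi'$ versus $\phi$, this identifies the exponential rate of the half-plane partition function from the previous step with $\tau_\beta(\vec u)$, the free Ising polymer surface tension. The remaining factor $d_{N,\vec u}=N/\cos\theta_{\vec u}$ in the SOS definition, versus $|N\vec u|=N$ in \cref{def:tau-beta}, is then reconciled using the homogeneity extension of $\tau_\beta$ to $\R^2$, giving $\tau_\beta^{\SOS}(\vec u)=\tau_\beta(\vec u)$ on $\bbS^1$ and thus on all of $\R^2$; the lattice symmetries required by \cref{p:P4} (and invoked to extend $\tau_\beta^{\SOS}$ from $[0,\pi/2)$ to $\bbS^1$) follow from the symmetries of SOS.

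The main obstacle I anticipate lies in the first step: verifying that the modified decoration function $\phi'$ still satisfies the strong decay \cref{p:P2} with an exponent $\chi>1/2$ in terms of the connected-boundary size $d(\cC)$, rather than merely in the $L^\infty$-diameter used in \cite{IST15} (cf.\ the footnote in \cref{subsec:free-ising-polymers}). This requires the explicit cluster-expansion construction of $\phi$ developed in \cref{appendix:cluster-expansion}: that representation shows $\phi$ decays in $d(\cC)$ at rate $\beta-\beta_0$, which comfortably dominates $\chi\beta$ for $\beta$ sufficiently large, and this rate is preserved under the boundary modification. Once this quantitative check is completed the IST framework applies directly, and the identification of rates reduces to an Ornstein--Zernike computation already carried out en route to proving \cref{mainthm:BM-for-Ising-polymers}.
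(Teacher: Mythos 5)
There is a genuine gap, and it is primarily logical rather than technical: your plan invokes \cref{the:ist-main} (the no-pinning theorem of \cite{IST15}) and, more broadly, ``the IST framework'' and the Ornstein--Zernike computations of \cref{sec:pf-BM-ist}. All of that machinery is stated for Ising polymers satisfying \cref{p:P1,p:P2,p:P3,p:P4}; but for the SOS-derived weights, \cref{p:P4} (symmetry of $\tau_\beta$) is established \emph{only through} \cref{prop:surface-tension-equiv} itself (see the paragraph right after its statement). So using \cref{the:ist-main} here is circular. The paper is explicit about this danger: its proof uses no result of \cite{IST15} except Eq.~(4.5) there (our \cref{lem:ist-length-cpts}), which is proved without assuming \cref{p:P4}; the rest is a direct comparison of modified and free weights in the strip, ``much simpler'' than but analogous to the proof of \cref{eqn:partition-fn-comparison} (cone-point dominance via \cref{prop:many-cone-points}, then swapping the short left/right irreducible pieces), combined with the external input \cite[Theorem~4.16]{DKS92}, which identifies the exponential rate of the \emph{free-weight} partition function in the strip $\cI_{N,M}$ with the full-plane surface tension $\tau_\beta$. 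Your proposal contains no substitute for this last identification, and the vague appeal to ``an Ornstein--Zernike computation already carried out'' inherits the same circularity.

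There is also a geometric misreading in your first step. In \cref{def:dob} the contour is the open $1$-contour of the Dobrushin box $\Lambda_{N,M}$: it crosses the box at angle $\theta_{\vec u}$ around $\mathrm{span}(\vec u)$, there is no floor, and no a priori bound keeps $\gamma$ within polylogarithmic distance of the bottom side. The indicator $\ind{\cC\subset\Lambda_{N,M}}$ modifies the decoration function only for clusters reaching $\partial\Lambda_{N,M}$; after $M\to\infty$ what survives is a modification along the two \emph{vertical} sides of the strip, i.e.\ only near the two endpoints of $\gamma$ --- not a modification along a line $\partial\H_{\vec u}$ that the polymer runs parallel to. Hence the setting is not that of the no-pinning theorem at all (there is no wall for $\gamma$ to be pinned to), and what is actually needed is exactly the endpoint-local comparison plus the \cite{DKS92} identification described above. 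Your concluding concern about upgrading the decay from $L^\infty$-diameter to $d(\cC)$ is legitimate but already settled by \cref{prop:soscontour-law}; it is not where the difficulty lies.
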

Observe that the decoration function $\phi(\cC;\gamma)$ appearing in the law of the unique open $1$-contour
under $\widehat{\pi}_{\Lambda}^{\xi}$ satisfies \cref{p:P1,p:P2,p:P3,p:P4}.
Indeed, \cref{prop:soscontour-law} states that the decoration function $\phi(\cC;\gamma)$  satisfies \cref{p:P1,p:P2,p:P3}. \cref{p:P4} holds for $\tau_{\beta}^{\SOS}$ because of the symmetry of the SOS model, and thus for $\tau_{\beta}$ thanks to \cref{prop:surface-tension-equiv}.

However, the open $1$-contour in the SOS model is not a free Ising polymer, due to the $\ind{\cC\subset \Lambda}$ term appearing in~\eqref{eqn:sos-contour-law}, which introduces an interaction of $\gamma$ with the boundary via the decoration function $\phi$. This served as one of the motivations of \cite{IST15} to study modified Ising polymers---the generalization of the free Ising polymers described above to allow domain-induced modifications on the ``free'' weights (the SOS open contour does belong to that family of models: see \cref{obs:sos-is-ist}).

\subsection{The modified Ising polymer model} \label{subsec:modified-ising-polymers}
For any $D \subset \R^2$ and for any decoration function $\Phi(\cC;\gamma)$ satisfying \cref{p:P1,p:P2,p:P3,p:P4}, consider any function $\Phi_D(\cC;\gamma)$ satisfying
\begin{enumerate}[label=(M\arabic*)]
    \item \label[property]{p:M1} $\Phi_D(\cC; \gamma) = \Phi(\cC; \gamma)$ for any $\cC \subset D$, and 
    \item \label[property]{p:M2} $\Phi_D(\cC;\gamma)$ satisfies the same decay bound in \cref{eqn:decay-decorations} for all $\cC$.
\end{enumerate}
Call $\Phi_D(\cC;\gamma)$ a \textit{modified decoration function} (with modifications outside of $D$), and define  the \textit{modified polymer weight}
\[
    q_{D}(\gamma) = \exp\Big(-\beta|\gamma| + \sum_{\cC \cap \Delta_{\gamma} \neq \emptyset} \Phi_D(\cC;\gamma)\Big)
\]
as well as the partition function
\begin{align*}
    \Gb_D(\x) := \!\!\! \sum_{\gamma : \ostar \to \x , \gamma \subset D} q_D(\gamma) \ \  \text{ and } \ \ 
    \Gb_D(\x \given E ) := \!\! \! \sum_{\gamma : \ostar \to \x , \gamma \subset D} q_D(\gamma) \ind{\gamma \in E} \text{ for a set of contours $E$.}
\end{align*}
\begin{definition}[Modified Ising polymer]
\label{def:mod-ising-polymer}
The \textit{modified Ising polymer} in a subset $D \subset \R^2$ is given by the probability measure on contours $\gamma:\ostar\to\x$ contained in $D$:
\begin{align}
    \bP_{D}^{\x}(\cdot) := \frac{\Gb_D(\x \given \gamma \in \cdot)}{\Gb_D(\x)} \,,
    \label{eqn:modified-ising-polymer}
\end{align}
where the partition function $\Gb_D$ is defined for a decoration function $\Phi$ satisfying \cref{p:P1,p:P2,p:P3,p:P4}, and a modified decoration function $\Phi_D$ satisfying \cref{p:M1,p:M2}.
\end{definition}

Note that the SOS open contour with law $\widehat{\pi}_{\Lambda}^{\xi}$ given by \cref{eqn:sos-contour-law} is of the form above, with  $\Phi(\cC;\gamma)=\phi(\cC;\gamma)$, $D=\Lambda$, and $\Phi_{\Lambda}(\cC;\gamma)=\phi(\cC;\gamma)\ind{\cC \subset \Lambda}$ for $\phi(\cC;\gamma)$ from 
\cref{prop:soscontour-law} (in which this choice of $\Phi_D(\cC;\gamma)$ clearly satisfies \cref{p:M1,p:M2}). Namely, the following holds:

\begin{observation}[SOS is a modified Ising polymer] \label{obs:sos-is-ist}
Fix $\beta\geq \beta_0$, and consider the SOS model $\widehat\pi_{\Lambda}^{\xi}$ in a finite, connected subset $\Lambda \subset \Z^2$ and boundary condition $\xi \in \{0,1\}^{\partial \Lambda}$ that induces a unique open $1$-contour $\gamma$. Let $\bar{\Lambda}$ denote the region in $\R^2$ enclosed by $\partial \Lambda$ (i.e., the region enclosed by the $\Z^2$-edges connecting the boundary vertices of $\Lambda$). Assume for convenience that the start-point of $\gamma$ is $\ostar$, and denote its end-point by $\x$. Then $\gamma$ 
has the Ising polymer law $\bP^{\x}_{\bar\Lambda}$ defined with decoration weights $\Phi(\cC;\gamma) := \phi(\cC;\gamma)$ and $\Phi_{\bar\Lambda}(\cC;\gamma):= \phi(\cC;\gamma)\ind{\cC \subset \Lambda}$.
\end{observation}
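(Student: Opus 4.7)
The plan is to verify directly that the law of the SOS open $1$-contour described in \cref{prop:soscontour-law} is exactly of the form \cref{eqn:modified-ising-polymer} with the stated choices $D=\bar\Lambda$, $\Phi(\cC;\gamma):=\phi(\cC;\gamma)$, and $\Phi_{\bar\Lambda}(\cC;\gamma):=\phi(\cC;\gamma)\ind{\cC\subset\Lambda}$. Three things need to be checked: (a) the unnormalized weights and partition functions match those of the modified Ising polymer, (b) the base decoration $\Phi=\phi$ satisfies \cref{p:P1,p:P2,p:P3,p:P4}, and (c) the modified decoration $\Phi_{\bar\Lambda}$ satisfies \cref{p:M1,p:M2}. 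Each piece is a short unpacking of the definitions together with properties of $\phi$ already established in \cref{prop:soscontour-law}.

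For (a), I would first observe that every open $1$-contour $\gamma$ produced by a height function under $\widehat\pi_\Lambda^\xi$ consists of dual bonds separating adjacent $\Z^2$-sites of $\Lambda\cup\partial\Lambda$, and is therefore contained in $\bar\Lambda$; conversely, the sum in \cref{eqn:sos-contour-law} ranges over such $\gamma$. Substituting $\phi(\cC;\gamma)\ind{\cC\subset\Lambda}=\Phi_{\bar\Lambda}(\cC;\gamma)$ identifies the unnormalized weight of $\gamma$ in \cref{eqn:sos-contour-law} with $q_{\bar\Lambda}(\gamma)$, and summing over $\gamma:\ostar\to\x$ inside $\bar\Lambda$ identifies the normalizing constant with $\Gb_{\bar\Lambda}(\x)$, so that $\widehat\pi_\Lambda^\xi(\gamma)=\bP^{\x}_{\bar\Lambda}(\gamma)$.

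For (b), \cref{p:P1} and \cref{p:P3} are immediate from \cref{it:ce-local} and \cref{it:ce-translation} of \cref{prop:soscontour-law}. The decay condition \cref{p:P2} follows from the cluster-expansion bound \cref{eqn:sos-decay-decorations} once $\beta$ is large enough: since any nonempty cluster satisfies $d(\cC)\geq 4$ (the four boundary bonds incident to a single site), for $\beta\geq C\beta_0$ one has $(\beta-\beta_0)d(\cC)\geq \chi\beta(d(\cC)+1)$ with, say, $\chi=3/4>1/2$. The symmetry condition \cref{p:P4} is the only non-combinatorial input: the SOS weight \cref{eq:def-sos-measure} is invariant under every lattice symmetry of $\Z^2$, so these symmetries pass to $\tau_\beta^{\SOS}$ via \cref{def:sos-surface-tension}, and then to $\tau_\beta$ by \cref{prop:surface-tension-equiv}.

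For (c), any $\cC\subset\Z^2$ lying inside the $\R^2$-region $\bar\Lambda$ is automatically contained in $\Lambda=\bar\Lambda\cap\Z^2$, so $\ind{\cC\subset\Lambda}=1$ and $\Phi_{\bar\Lambda}=\Phi$ on such clusters, yielding \cref{p:M1}; \cref{p:M2} follows at once from the pointwise bound $|\Phi_{\bar\Lambda}(\cC;\gamma)|\leq|\phi(\cC;\gamma)|$ and the already-established \cref{p:P2}. The only step here that is not a pure definition chase is the invocation of \cref{prop:surface-tension-equiv} to secure \cref{p:P4}, which is why the equivalence of SOS and Ising-polymer surface tensions is isolated as a separate statement, handled in \cref{subsection:proof-surface-tension-equiv}.
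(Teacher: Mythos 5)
Your proposal is correct and follows essentially the same route as the paper, which treats this as a definitional unpacking: the weight form and \cref{p:P1,p:P2,p:P3} come from \cref{prop:soscontour-law}, \cref{p:P4} comes from the lattice symmetries of the SOS measure transferred to $\tau_\beta$ via \cref{prop:surface-tension-equiv}, and \cref{p:M1,p:M2} are immediate for $\Phi_{\bar\Lambda}=\phi\,\ind{\cC\subset\Lambda}$. Your explicit bookkeeping for \cref{p:P2} (using $d(\cC)\geq 4$ to absorb the $\chi\beta(d(\cC)+1)$ exponent for $\beta$ a constant multiple of $\beta_0$) is a fine, slightly more careful version of the constant adjustment the paper leaves implicit.
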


It  will be very convenient to view Ising polymers as connected paths in the lattice $\Z^2$ rather than the dual lattice $(\Z^2)^*$ via the translation map $\iota: \ostar \mapsto 0$, and this is the convention we will follow for the remainder of the article (for clarity, note also that, through this mapping, clusters $\cC$ will henceforth denote finite, connected subsets of $(\Z^2)^*$ instead of $\Z^2$), excluding \cref{sec:sos}.

Let us now re-state \cref{mainthm:BM-for-Ising-polymers} in the above language,  which is the form in which we will prove it (\cref{subsec:pf-BM-for-Ising-polymers}). For $N \in \N$, define $Q:= [0,N]^2$.
\begin{theorem}\label{thm:BM-for-Ising-polymers-restated}
Fix $\beta>0$ large,  take $D$ to be either $\H$ or $Q$, and set $\x := (N,0)$. 
Consider an Ising polymer $\gamma \sim \bP_D^{\x}(\cdot)$. There exists $\sigma>0$ such that, if $\overline\gamma(x)=\max\{y : (x,y)\in\gamma\}$, then
$\overline \gamma(\lfloor xN \rfloor)/(\sigma\sqrt{N})$ converges weakly to a standard Brownian excursion in the Skorokhod space $(D[0,1], \|\cdot\|_{\infty})$ and the same holds for $\underline\gamma(x)=\min\{y : (x,y)\in\gamma\}$.
\end{theorem}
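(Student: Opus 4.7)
The plan is to reduce the Ising polymer to a directed random walk via the Campanino--Ioffe--Velenik Ornstein--Zernike framework, then combine an invariance principle for random walk bridges with the no-pinning result of \cite{IST15} to obtain the Brownian excursion limit.

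\textbf{Step 1 (OZ cone-point decomposition).} Under the free polymer measure $\bP^{\x_N}$ with decoration $\Phi$ satisfying \cref{p:P1,p:P2,p:P3,p:P4}, cluster expansion groups the terms $\sum_{\cC}\Phi(\cC;\gamma)$ into local contributions along $\gamma$. Exploiting the decay in \cref{p:P2} and the strong triangle inequality for $\tau_\beta$ (\cref{p:tau-str-tri-ineq}), one identifies \emph{cone-points}: vertices $\v\in\gamma$ that split $\gamma$ into a backward piece lying in $\v-\Y$ and a forward piece in $\v+\Y$, for a suitable forward cone $\Y\subset\Z_{>0}\times\Z$. The resulting decomposition gives a weight-preserving bijection, under a tilted measure, between $\gamma$ and a directed random walk $(S_k)_{k=0}^n$ with i.i.d.\ increments $(X_k,Y_k)$ having exponential tails, $\E[X_k]=:\mu>0$, and (by the lattice symmetry \cref{p:P4}) $\E[Y_k]=0$ with $\sigma_0^2:=\Var(Y_k)\in(0,\infty)$.

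\textbf{Step 2 (free bridge invariance principle).} The endpoint condition $\gamma:\ostar\to\x_N=(N,0)$ corresponds to conditioning on $\sum_k X_k=N$ and $\sum_k Y_k=0$, with $n\asymp N/\mu$. A local CLT for the random walk, together with the $O(\log N)$ diameter bound for non-cone ``irreducible'' pieces (from the exponential decay of increments), gives a functional invariance principle: under $\bP^{\x_N}$, the rescaled trace of $\gamma$ converges in $(D[0,1],\|\cdot\|_\infty)$ to a standard Brownian bridge with diffusion constant $\sigma:=\sigma_0/\sqrt{\mu}$. Since a single irreducible piece has $O(\log N)$ vertical span w.h.p., $\overline\gamma(\lfloor xN\rfloor)-\underline\gamma(\lfloor xN\rfloor)=o(\sqrt N)$ uniformly, and the same limit governs $\overline\gamma$ and $\underline\gamma$.

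\textbf{Step 3 (confinement to $D$ and no pinning).} To pass from the free model to the modified measure $\bP_D^{\x_N}$, we condition on $\gamma\subset D$ and replace $\Phi$ by $\Phi_D$ near $\partial D$. On the random walk side, for a mean-zero finite-variance bridge, conditioning on positivity $S_k\geq 0$ rescales to a standard Brownian excursion (Iglehart--Kaigh--Liggett); in the box case $D=Q=[0,N]^2$ the upper constraint $S_k\leq N$ is inactive in the limit since the excursion has supremum of order $\sqrt N\ll N$. The crux is transferring this walk-level statement to the polymer: the modified decoration $\Phi_D$ couples $\gamma$ to $\partial D$ and could a priori pin it to the wall. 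Here we invoke \cref{the:ist-main} from \cite{IST15} to show that the partition functions $\Gb_D(\x_N)$ and $\Gb(\x_N\mid\gamma\subset D)$ agree up to a multiplicative constant, then refine this comparison into a pathwise estimate showing that $\gamma$ spends only $o(N)$ steps within bounded distance of $\partial D$. Consequently the cone-point structure from Step 1 survives under $\bP_D^{\x_N}$, and the random walk coupling together with the invariance principle for conditioned bridges yields the Brownian excursion limit for both $\overline\gamma$ and $\underline\gamma$.

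\textbf{Main obstacle.} The principal difficulty is Step 3: upgrading the IST partition-function-level no-pinning statement into a quantitative pathwise estimate compatible with the OZ random walk representation. This is precisely the Brownian excursion refinement of \cite{IST15} advertised in the abstract, and is where the random walk half-space results deferred to \cref{sec:rw-half-space} enter---one must establish an invariance principle for bridges of walks whose step weights are modified near the boundary, and verify that this modification contributes only a tight $o(\sqrt N)$ correction. Once this is in place, Parts (a) and (b) follow from the same argument, with the second upper wall in Part (b) being asymptotically invisible.
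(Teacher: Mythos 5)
Your high-level architecture is the same as the paper's: cone-point (Ornstein--Zernike) decomposition into an effective directed walk under the tilted weights, the IST comparison of modified and unmodified partition functions, and an invariance principle for the conditioned walk. The gap is precisely at the step you flag as the main obstacle, and the substitute you propose there is not the right statement and would not carry the argument. Saying that $\gamma$ spends only $o(N)$ steps within bounded distance of $\partial D$, or that the boundary modification contributes ``a tight $o(\sqrt N)$ correction,'' misrepresents how the modification acts: $\Phi_D-\Phi$ is not an additive correction to the path, it multiplies the weight through every cluster that reaches $\partial D$, and even rare boundary contacts cannot be absorbed this way. Moreover, to identify the cone points with a walk conditioned to stay in $\H$ one must also control the discrepancy between requiring only the cone points in $\H$ versus the entire animal (contour plus clusters) in $\H$ --- the difference between $\fA$ and $\fA^+$ in the paper --- and neither of your proposed estimates addresses this.

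What the paper actually proves, and what your sketch is missing, is a quantitative entropic-repulsion statement at a polynomial scale: with probability $1-o(1)$ under $\bP_D^{\x}$ every cone point in the bulk strip $[N^{4\delta},N-N^{4\delta}]$ lies above height $N^{\delta}$ (\cref{prop:animal-entropic-repulsion}), which together with the $(\log N)^2$ bound on irreducible pieces (\cref{prop:bounded-irreducible-pieces}) forces the whole animal away from $\partial\H$ there; consequently $\Phi_D=\Phi$ for every contributing cluster, the constraint $\gamma\subset D$ is automatic in the bulk, and the cone points are coupled to the effective walk conditioned to hit a prescribed endpoint before dropping below $N^{\delta}$, with total-variation error $e^{-\nu\beta(\log N)^2}$ (\cref{prop:rw-coupling}). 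The repulsion itself is not a ``refinement of the partition-function comparison'' in the loose sense you describe: it requires (i) \cref{cor:small-contour-probs}, a Cauchy--Schwarz upgrade of \cref{the:ist-main} and \cref{prop:partition-fn-comparison} transferring smallness of \emph{contour} events (not animal events) between free and modified measures, and (ii) the key input \cref{prop:ist-repulsion} (IST's Theorem~7), which bounds the half-space polymer partition function from below by random-walk hitting probabilities and thereby lets bad cone-point events be killed by walk estimates. Finally, on the walk side, Iglehart--Kaigh--Liggett for one-dimensional bridges conditioned to stay positive does not apply off the shelf: the effective walk is two-dimensional with a random number of steps, vertically asymmetric increments (northeast splitting rule), and start/end heights growing like $N^{4\delta}(\log N)^2$ after repulsion, which is why the paper proves versions of the Denisov--Wachtel ballot theorem and the Duraj--Wachtel invariance principle that are uniform in these boundary heights (\cref{thm:our-rw-inputs,thm:rw-invariance}, proved in \cref{sec:rw-half-space}).
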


\begin{remark}\label{rk:curvature}
The variance $\sigma^2$ with $\sigma$ as in \cref{thm:BM-for-Ising-polymers-restated} (and by extension, the one in \cref{mainthm:BM-for-Ising-polymers}) is given above \cref{thm:rw-invariance}. It is related to the curvature of the Wulff shape (defined in \cref{subsec:surface-tension-wulff}) associated to the Ising polymer. See  \cite[Appendix~B]{IOSV21} for details.
\end{remark}

\subsection{Non-negative decoration functions and a product structure}
\label{subsec:pos-decorations-def-animals}
Following \cite[Section~3.1]{IST15}, we employ a construction going back to \cite{DS99} that allows us to consider decoration functions which are non-negative. In this discussion, we allow for the case $D =\R^2$, in which case $\Phi_D(\cC;\gamma) = \Phi(\cC;\gamma)$ and $q_D(\gamma) =q(\gamma)$. 

For a contour $\gamma$, we consider the set of (not necessarily distinct) bonds in $\Z^2$ (recall
we have applied the translation map $\iota: \ostar \mapsto 0$):
\[
    \nabla_{\gamma} := \bigcup_{b = (\y, \y+\e_i) \in \gamma} \{ b, b+\e_i,b-\e_i\} \,.
\]
To be clear, $b$ is a bond in $\gamma$, $\e_i$ denotes a standard basis vector, and $b\pm\e_i$ denotes the bond obtained by translating $b$ by $\pm \e_i$.
Thus, $\nabla_{\gamma}$ contains three bonds for each bond of $\gamma$. Define
\begin{align*}
    \Phi_{D}'(\cC;\gamma) &:= \abs{\cC \cap \nabla_{\gamma}}e^{-\chi \beta (d(\cC)+1)} + \Phi_D(\cC;\gamma) 
\end{align*}
where $|\cC \cap \nabla_{\gamma}|$ is equal to the number of bonds, counted with multiplicity, in $\nabla_{\gamma}$ that $\cC$ intersects:
\[
    \abs{\cC \cap \nabla_{\gamma}} = \sum_{b = (\y, \y+\e_i) \in \gamma} (\ind{b \cap \cC \neq \emptyset} + \ind{b+\e_i \cap \cC \neq \emptyset} + \ind{b-\e_i\cap \cC \neq \emptyset}) \,.
\]
Observe that $\Phi'_D \geq 0$ by \cref{eqn:decay-decorations} and \cref{p:M2}.
For any fixed bond $b \in \Z^2$, let $c(\beta)$ denote the value of
\[
    c(\beta) := \sum_{\cC \subset (\Z^2)^*  \,,\, \cC \cap b \neq \emptyset} e^{-\chi\beta( d(\cC)+1)}\,.
\]
Note that
\begin{align}
    c(\beta) = \sum_{m \geq 1} e^{-\chi\beta(m+1)} \abs{\{\cC \subset (\Z^2)^*: \cC \cap b \neq \emptyset ,d(\cC) = m \}} \leq e^{-\chi \beta}\,,
    \label{eqn:c-beta-bound}
 \end{align}
where we used that $\abs{\{\cC \subset (\Z^2)^*: \cC \cap b \neq \emptyset ,d(\cC) = m \}} \leq e^{c m}$ for some $c>0$. (To see this, replace the marked $e\in\cC$ intersecting $b$ by a marked boundary edge $e_0\in\partial\cC$ with the same $y$-coordinate (say) at the cost of a factor of $m$; then, when enumerating the smallest connected set of edges of $\Z^2$ containing the marked $e_0$ and specified edges $\partial\cC$, regard $\partial \cC$ as the vertices of a 6-regular graph (whose vertices are the $\Z^2$ bonds and two are adjacent if they share an end-point; that is, the line graph of $\Z^2$), and recall that in a graph whose maximum degree is $\Delta$, the number of $m$-vertex connected subgraphs rooted at a given vertex is at most $(e \Delta)^m$.)
Using the fact that $\cC\cap \Delta_{\gamma} \neq \emptyset$ implies $\cC\cap \nabla_{\gamma} \neq \emptyset$, we have
\begin{align*}
    \sum_{\cC \cap \Delta_{\gamma} \neq \emptyset} \Phi_D(\cC;\gamma) 
    = -3c(\beta)|\gamma| + \sum_{\cC \cap \nabla_{\gamma} \neq \emptyset} \Phi_{D}'(\cC;\gamma)\,,
\end{align*}
Thus, we have
\begin{align*}
    q_{D}(\gamma) = \exp \Big( -(\beta+3c(\beta))|\gamma| + \sum_{\cC \cap \nabla_{\gamma} \neq \emptyset} \Phi_{D}'(\cC;\gamma)\Big)\,. 
\end{align*}
Since $f(\beta):= \beta+3c(\beta)$ is  strictly increasing for all $\beta$ large enough, we will henceforth redefine $\beta = f(\beta)$ so that we may drop the laborious $3c(\beta)$ from our weights:
\begin{align}
    q_{D}(\gamma) = \exp \Big( -\beta|\gamma| + \sum_{\cC \cap \nabla_{\gamma} \neq \emptyset} \Phi_{D}'(\cC;\gamma)\Big) 
    \label{def:q-positive-decorations}\,.
\end{align}

A useful comparison to record at this stage is that, for any $D \subset \R^2$ , 
\begin{align}
    \abs{ \log \frac{q_D(\gamma)}{q(\gamma)}} \leq 6 e^{-\chi \beta}|\gamma|\,.
    \label{eqn:modified-weight-comparison}
\end{align}
\begin{remark}\label{rk:positive-decorations}
    For any domain $D \subset \R^2$, the non-negative decoration functions $\Phi'(\cC;\gamma)$ still satisfy \cref{p:P1,p:P2,p:P3,p:P4} above, and the modified non-negative decoration functions $\Phi_D'(\cC;\gamma)$ still satisfy \cref{p:M1,p:M2}.
\end{remark}

We next uncover the product structure of $q_D$. Defining $\Psi_{D}(\cC,\gamma) := \big(\exp(\Phi'_{D}(\cC; \gamma))-1\big)\one_{\{\cC \cap \nabla_{\gamma} \neq \emptyset\}}$, we may write
\[
    \exp\Big(\sum_{\cC \cap \nabla_{\gamma} \neq \emptyset} \Phi_{D}'(\cC;\gamma)\Big)
    =
    \prod_{\cC \cap \nabla_{\gamma} \neq \emptyset} \Big(\big(e^{ \Phi_{D}'(\cC;\gamma)} -1\big) +1\Big) = \sum_{\C = \{\cC_i\}} \prod_i \Psi_{D}(\cC_i;\gamma)\,,
\]
where the sum goes over all possible finite collections $\C$ of clusters.
Given this, for any contour $\gamma$ and any collection $\C$ of clusters, we define the \textit{animal weight} $q_{D}(\Gamma)$ of the \textit{animal} $\Gamma = [\gamma, \C]$ by
\begin{align}\label{def:animal-weight}
    q_{D}(\Gamma) = q_{D}([\gamma,\C]) := e^{-\beta|\gamma|}\prod_{\cC \in \C} \Psi_{D}(\cC; \gamma) \,,
\end{align}
and observe
\[
    q_{D}(\gamma) = \sum_{\Gamma = [\gamma,\C]}q_{D}(\Gamma)\,.
\]
The above allows us to consider the free and modified Ising polymer measures as probability measures on animals:
\begin{align}
    \bP^{\x}(\cdot \given \gamma \subset D) = \frac{\Gb(\x \given \gamma \subset D, \Gamma \in \cdot)}{\Gb(\x\given \gamma \subset D)} \qquad \text{and}\qquad
    \bP_{D}^{\x}(\cdot) := \frac{\Gb_D(\x \given \Gamma \in \cdot)}{\Gb_D(\x)} \,.
    \label{eqn:ising-polymer-animals}
\end{align}
Due to the product structure of $q_{D}(\Gamma)$, it is often convenient to consider animals rather than contours; indeed, we will see in \cref{subsec:rw-model} that the product structure begets a connection with a random walk, which is crucial to our analysis.

When $D$ is taken to be $\R^2$, we will omit $D$ from the notation laid out above.

\subsection{Notation for Ising polymers and animals}\label{subsec:notation}
Below, we set notation that will be used throughout the article in the context of Ising polymers and animals.
Recall that we regard Ising polymers as connected paths in the lattice $\Z^2$ rather than the dual lattice $(\Z^2)^*$ via the translation map $\iota: \ostar \mapsto 0$ (the convention hereafter, excluding \cref{sec:sos}).
\begin{itemize}[leftmargin=2em]
    \item We  write $Q:= [0,N]^2$. 
    \item For a point $\mathsf{u} \in \Z^2$, we will write $\mathsf{u}_{1}$ and $\mathsf{u}_{2}$ to denote the $x$-coordinate and $y$-coordinate of $\mathsf{u}$ respectively.

    \item For a subset $E \subset \Z^2$, we'll write $\mathsf{u}+ E$ to denote the translation of $E$ by the  vector defined by~$\mathsf{u}$.

    \item For a contour $\gamma$, we write $|\gamma|$ to denote the number of bonds in  $\gamma$. We write $(\gamma(0), \dots, \gamma(|\gamma|))$ to denote the ordered vertices of $\gamma$.

    \item For an animal $\Gamma := [\gamma, \C]$, we write $|\Gamma| := |\gamma|$, and  $\X(\Gamma) := \X(\gamma)$ to denote the displacement of $\gamma$; that is, the vector in $\Z^2$ given by the end-point of $\gamma$ minus the start-point of $\gamma$. 
    
    \item For a pair of contours $\gamma:= (\gamma(0), \dots, \gamma(|\gamma|))$ and $\gamma':= (\gamma'(0), \dots, \gamma'(|\gamma'|))$, let $\Delta := \gamma(|\gamma|) - \gamma'(0)$. 
    We define their concatenation to be
    \[
    \gamma \circ \gamma' := \big( \gamma(0), \dots, \gamma(|\gamma|),\Delta+ \gamma'(1), \dots, \Delta+ \gamma'(|\gamma'|) \big ) \,.
    \]
    Similarly, for a pair of animals $\Gamma := [\gamma, \C]$ and $\Gamma':= [\gamma', \C']$, we define their concatenation to be 
    \[
    \Gamma \circ \Gamma' := [ \gamma \circ \gamma', ~\C \cup \C']\,.
    \]
    \item For $\u, \v \in \Z^2$, we  write $\gamma : \u \to \v$ to indicate that $\gamma(0) = \u$ and $\gamma(|\gamma|) = \v$. We  write $\Gamma: \u \to \v$ to indicate that $\Gamma = [\gamma, \C]$ for some contour $\gamma : \u \to \v$.
    \item For a set $D\subset \R^2$, we say $\Gamma = [\gamma, \C] \subset D$ if $\gamma$ as well as all clusters $\cC \in \C$ are contained in $D$.
    \item For any $D \subset \R^2$, we define the set of contours 
    \[
        \cP_D(\u,\v):= \{\gamma: \gamma \subset D \,,\, \gamma : \u \to \v \}\,.
    \]
    When $\u = 0$, we will simply write $\cP_D(\v)$. We'll write $\Gamma \in \cP_D(\u,\v)$ for animals $\Gamma$ to mean $\Gamma = [\gamma, \C]$ for some contour $\gamma \in \cP_D(\u,\v)$.
\end{itemize}

\subsection{The Wulff Shape}
\label{subsec:surface-tension-wulff}
Now, define the \textit{Wulff shape}
\[
\Kb := \bigcap_{\y \in \R^2} \{\h \in \R^2: \h \cdot \y \leq \tau_\beta(\y) \}\,,
\]
which is clearly closed and convex (as it is the intersection of half-spaces). 
Observe that
\[
\Kb = \overline{\left\{\h \in \R^2: \sum_{\y \in \Z^2} e^{\h\cdot \y} \Gb(\y) < \infty\right\}}\,.
\]
Indeed, from \cref{def:tau-beta}, we have
\begin{align}
\log \Gb(\y) = - \tau_\beta(\y) \big(1+o_{\norm{\y}_1}(1)\big) \,.
\label{eqn:partition-fn-tau}
\end{align}
It follows that the sum in the second expression for $\Kb$ converges if and only if $\h \cdot \y <  \tau_\beta(\y)$ for all $\y\in\Z^2$ large,
  which by homogeneity and continuity of $\tau_{\beta}$ is equivalent to the same holding for all $\y \in \R^2$. 
Including the equality case $\h\cdot \y = \tau_{\beta}(\y)$  in the first expression for $\Kb$ is equivalent to taking the closure of the set in the second expression.

\subsection{Cone-points, the irreducible decomposition of animals, and weight factorization}
Let us define the \textit{forward cone}
$\fcone := \{(x,y) \in \Z^2 : |y|\leq x \}$ and the \textit{backward cone} $\bcone := -\fcone$. We will also need $\fcone_{\delta}:= \{ (x,y) \in \Z^2 : |y|\leq \delta x\}$ for $\delta >0$. Given a contour $\gamma$ and an animal $\Gamma=[\gamma,\C]$, we say that $\u\in\gamma$ is a \textit{cone-point} for $\gamma$ if 
\[
\gamma  \subset \u+\bcone \cup \u+\fcone\,,
\]
and we say $\u\in\gamma$ is a \textit{cone-point} for $\Gamma$ if
\[
\Gamma \subset \u+\bcone \cup \u+\fcone\,.
\]
Recall from \cref{subsec:notation} that the previous display means that the forward and backward cones emanating from $\u$ fully contain $\gamma$ as well as all clusters $\cC$ in $\C$. Of course if $\u \in \gamma$ is a cone-point for $\Gamma$, then $\u$ is a cone-point for $\gamma$ as well.
Note that for any $\u, \v \in \Z^2$, $\u \in \v + \fcone$ if and only if $\v \in \u +\bcone$.

If $\Gamma$ has two cone-points $\u$ and $\v$, the contour part of $\Gamma$  between $\u$ and $\v$ as well as all associated clusters are entirely contained  within the ``diamond''  
$\u+\fcone \cap \v+\bcone$.
An animal is called  \textit{left-irreducible} if it contains no cone-points and if it is entirely contained in the backwards cone emanating from its end-point. Similarly, an animal is called \textit{right-irreducible} if it contains no cone-points and it is contained in the forward cone emanating from its start-point. An animal is called \textit{irreducible} if it is both left- and right-irreducible.  We let $\AL, \AR$ and $\A$ be the sets of left-irreducible, right-irreducible and irreducible animals, respectively, with start-point at the origin.

Consider now an animal $\Gamma$ with at least two cone-points, and decompose it into (left-/right-) irreducible animals as follows:
\begin{align}
    \Gamma = \Gamma^{(L)} \circ \Gamma^{(1)} \circ \cdots \circ \Gamma^{(n)} \circ \Gamma^{(R)}\,, \label{eqn:irreducible-decomposition}
\end{align}
for some $n\geq 1$, $\Gamma^{(L)} \in \AL$, $\Gamma^{(R)} \in \AR$, and $\Gamma^{(i)} \in \A$ for $i =1,\dots,n$. For $i\in \{L, 1,\dots,n, R\}$, we will write 
\[
    \Gamma^{(i)} =: [\gamma^{(i)} , \C_i]\,.
\]
From the definition of a  cone-point, any cluster $\cC \in \C_i$ will be such that $\cC \cap \nabla_{\gamma^{(j)}} = \emptyset$ for $j \neq i$. Informally, this means that any cluster of $\Gamma^{(i)}$ will \textit{not} be a cluster of $\Gamma^{(j)}$. Thus, the weights $q_D(\Gamma)$ ``factorize'' into a product of the weights of the irreducible pieces:
    \begin{align}
        q_D(\Gamma) = \prod_{i \in \{L, 1,\dots, n,R\}} e^{- \beta |\gamma^{(i)}|}
        \prod_{\cC \in \C_i} \Psi_D(\cC;\gamma) = q_D(\Gamma^{(L)}) q_D(\Gamma^{(R)})\prod_{i=1}^n q_D(\Gamma^{(i)})
        \label{eqn:q-factorization}
    \end{align}
When $\gamma \subset Q$, note that the shape of the forward and backward cones necessitates 
\[
    \Gamma^{(1)} \circ \cdots \Gamma^{(n)} \subset [1,N-1] \times (-\infty, N]\,,
\]
so that \cref{eqn:q-factorization} becomes 
\begin{align}
    q_Q(\Gamma) = q_Q(\Gamma^{(L)}) q_Q(\Gamma^{(R)}) \prod_{i=1}^n q_{\H}(\Gamma^{(i)})\,. 
    \label{eqn:qQ-factorization}
\end{align}

We  write $\Cpts(\gamma)$ and $\Cpts(\Gamma)$  to denote the set of  cone-points of $\gamma$ and the set of  cone-points of $\Gamma$, respectively. 
Let $n:= n(\Gamma) = |\Cpts(\Gamma)|-1$ as above, so that $n$ is equal to the number of irreducible pieces of $\Gamma$. We label the cone-points of $\Gamma$ by $\zeta^{(1)}, \dots, \zeta^{(n+1)}$, where the ordering is  strictly increasing in the $x$-coordinates, i.e.,
    \[
    \zeta_{1}^{(i)} < \zeta_1^{(i+1)} \,.
    \]
 The set $\Cpts(\Gamma)$ therefore takes on a natural meaning as an ordered $n(\Gamma)-$tuple, and so we will abuse notation and write
    \begin{align}
        \Cpts(\Gamma) = \big(\zeta^{(i)}\big)_{i=1}^{n+1} 
        = \big( \X(\Gamma^{(L)}), \X(\Gamma^{(L)}) + \X(\Gamma^{(1)}), \dots, \X(\Gamma^{(L)})+ \cdots + \X(\Gamma^{(R)}) \big ) \,. 
        \label{eqn:cone-points-vector}
    \end{align}

The following result of \cite{IST15} states that the free weight of 
contours in $\Z^2$ that have large length and  contain a sub-linear number of cone-points compared to the distance of the end-point is exponentially small compared to $\Gb$. 
\begin{lemma}[{\cite[Eq.~(4.5)]{IST15}}\footnote{In \cite{IST15}, $1+\ep$ is replaced by an unspecified constant $r_0$; however, it is trivial to see that $r_0$ may be taken to be arbitrarily close to $1$ in \cite[Lemma~4]{IST15},  which is the same $r_0$ as in \cite[Eq.~4.5]{IST15}. Additionally, our condition $\y \in \fcone_{\delta}\setminus \{0\}$ appears as $\x \in \mathcal{Q}_+$ in \cite{IST15}.}] \label{lem:ist-length-cpts}
Fix any $\ep,\delta \in (0,1)$. 
There exist $\beta_0 \in (0, \infty)$, $\nu_0>0$, $\delta_0 >0$, and $c>0$ such that the following bounds hold uniformly over $\beta \geq \beta_0$, $\y \in \fcone_{\delta}\setminus \{0\}$, and $r \geq 1+\ep$:
    \begin{align}
        \Gb\big(\y \given |\gamma| \geq r \norm{\y}_1\big) 
        &\leq ce^{-\nu_0 \beta r \norm{\y}_1} \Gb(\y)
        \label{eqn:ist-length-bound}
        \\
        \Gb\big(\y \given |\Cpts(\gamma)| < 2\delta_0 \norm{\y}_1 \big)
        &\leq c e^{- \nu_0 \beta \norm{\y}_1} \Gb(\y)
        \label{eqn:ist-many-cone-points}
    \end{align}
\end{lemma}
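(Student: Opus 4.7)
I will treat the two inequalities separately: the length bound \cref{eqn:ist-length-bound} by a crude enumeration, and the cone-point bound \cref{eqn:ist-many-cone-points} by a Chernoff argument that leverages the renewal structure \cref{eqn:q-factorization} provided by the irreducible decomposition.

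For \cref{eqn:ist-length-bound}, the decay \cref{eqn:decay-decorations} combined with the cluster-counting bound used just after \cref{eqn:c-beta-bound} yields $\sum_{\cC \cap \Delta_\gamma \neq \emptyset}|\Phi(\cC;\gamma)| \leq C e^{-\chi\beta}|\gamma|$, so every free weight satisfies $q(\gamma) \leq \exp(-(\beta - Ce^{-\chi\beta})|\gamma|)$. Because the splitting rule leaves at most three choices at each step, the number of contours of length $n$ from $\ostar$ to a fixed $\y$ is at most $3^n$. Summing the geometric series over $n \geq r\|\y\|_1$ then gives
\[
\Gb(\y \mid |\gamma| \geq r\|\y\|_1) \leq C'\exp\!\bigl(-(\beta - Ce^{-\chi\beta} - \log 3)\, r\|\y\|_1\bigr),
\]
while the monotone straight-line contour from $\ostar$ to $\y$ of length $\|\y\|_1$ produces $\Gb(\y) \geq \exp(-(\beta + Ce^{-\chi\beta})\|\y\|_1)$. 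Using $r \geq 1+\ep$ and taking $\beta$ large, the ratio is bounded by $c\exp(-\nu_0 \beta r\|\y\|_1)$ for any fixed $\nu_0 < \ep/(1+\ep)$.

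For \cref{eqn:ist-many-cone-points}, I would apply a Chernoff-type estimate that exploits the product structure of $q$: for any $t>0$,
\[
\Gb(\y \mid |\Cpts(\gamma)| \leq 2\delta_0\|\y\|_1) \leq e^{2t\delta_0\|\y\|_1} \sum_{\Gamma:\ostar \to \y} q(\Gamma)\, e^{-t|\Cpts(\Gamma)|}.
\]
Tilting by $\h \in \R^2$ and summing over the endpoint, the right-hand side factors via \cref{eqn:q-factorization} as $L_t(\h)R_t(\h)/(1 - F_t(\h))$, where $F_t(\h) := e^{-t}\sum_{\Gamma \in \A} e^{\h\cdot \X(\Gamma)}q(\Gamma)$ is the tilted irreducible generating function and $L_t, R_t$ are the analogous boundary sums. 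Taking $\h \in \partial \Kb$ dual to $\y$ (so that $F_0(\h) = 1$ and $\h \cdot \y = \tau_\beta(\y)$) and then $t > 0$ small enough that $F_t(\h)$ remains bounded away from $1$, a saddle-point inversion of the Laplace transform in the spirit of Ornstein--Zernike gives an upper bound of order $e^{-\h\cdot \y - c(t)\|\y\|_1}$ times a polynomial in $\|\y\|$, where $c(t)>0$ quantifies the gap $1 - F_t(\h)$. The matching lower bound $\Gb(\y) \gtrsim e^{-\h\cdot\y}/\|\y\|^{1/2}$, coming from the same renewal theorem applied to $F_0$, then produces \cref{eqn:ist-many-cone-points} once $\delta_0$ is chosen small enough that $2t\delta_0 < c(t)/2$.

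\textbf{Main obstacle.} The real content lies in \cref{eqn:ist-many-cone-points}: one must establish that (i) the identity $F_0(\h) = 1$ precisely characterizes $\partial \Kb$ in the direction dual to $\y$; (ii) the analytic continuation $F_t(\h)$ extends past $t = 0$ with a strictly positive gap below $1$; and (iii) the saddle-point inversion of the Laplace transform yields matching upper and lower asymptotics for $\Gb(\y)$ with the same polynomial prefactor. Each of these hinges on a sharp exponential decay of the form $\sum_{\Gamma \in \A,\, |\Gamma|=k} q(\Gamma) \leq e^{-c\beta k}$ for irreducible animals, which is exactly the point at which the cluster-expansion decay \cref{eqn:decay-decorations} with $\chi > 1/2$ is used; this is also where the condition $\beta \geq \beta_0$ is crucial. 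By contrast, \cref{eqn:ist-length-bound} requires none of this machinery and is pure combinatorics.
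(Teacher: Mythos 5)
First, note that the paper does not prove this lemma at all: it is imported verbatim from \cite[Eq.~(4.5)]{IST15} (with the footnoted cosmetic adjustments), and the genuine proof lives in Section~4 of that paper, where it is a low-temperature energy--entropy/counting argument. So the relevant comparison is between your proposal and the IST15-style argument that the paper leans on. Your treatment of \cref{eqn:ist-length-bound} is correct and essentially the standard one: the decay \cref{eqn:decay-decorations} gives $q(\gamma)\le e^{-(\beta-Ce^{-\chi\beta})|\gamma|}$, the number of contours of length $n$ is at most $4\cdot 3^{n-1}$, and a single monotone staircase gives $\Gb(\y)\ge e^{-(\beta+Ce^{-\chi\beta})\|\y\|_1}$; with $r\ge 1+\ep$ and $\beta$ large this yields the stated bound with, say, $\nu_0=\ep/(2(1+\ep))$. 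This matches the kind of crude estimate the paper itself uses elsewhere (e.g.\ in the proof of \cref{claim:sos-legs-coupling}).

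For \cref{eqn:ist-many-cone-points}, however, your plan has a genuine gap, and it is at exactly the step you flag as an assumption. The Chernoff/renewal scheme requires (a) that the tilted generating function over animals with the penalty $e^{-t|\Cpts(\Gamma)|}$ converges at (indeed slightly beyond) the dual point $\h_\y\in\partial\Kb$, and (b) a mass-gap bound of the form $\sum_{\Gamma\in\A,\,|\Gamma|=k}e^{\h_\y\cdot\X(\Gamma)}q(\Gamma)\le e^{-c\beta k}$ together with convergence of the boundary sums $L_t,R_t$. But animals with few or no cone points are precisely the objects you are trying to bound: controlling their total tilted weight \emph{is} the content of \cref{eqn:ist-many-cone-points}, and in this paper's logical order (and in the OZ literature) the mass gap \cref{eqn:oz-exp-decay} and the finiteness of $\sum_{\Gamma\in\AL\cup\AR}W^{\h_\y}(\Gamma)$ (\cref{claim:oz-1}) are \emph{derived from} \cref{lem:ist-length-cpts} via \cref{prop:many-cone-points}, not the other way around. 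So your argument is circular unless the irreducible-piece decay is proved independently, and that cannot follow from the $\chi>1/2$ decoration decay alone: the decorations are only a small perturbation of the bare weight $e^{-\beta|\gamma|}$, and the actual mechanism is geometric. Concretely, a cone-point-free stretch in a direction of $\fcone_\delta$ forces excess vertical excursions ("defects") whose extra length is proportional to the length of path they shadow; suppressing a positive density $1-2\delta_0$ of cone points therefore costs energy of order $\beta\|\y\|_1$ against entropy of order $\|\y\|_1$, which is where the $\beta$ in the exponent $e^{-\nu_0\beta\|\y\|_1}$ comes from. (Your saddle-point route would at best give a rate $c(t)$ not proportional to $\beta$, and even setting it up needs the unproven gap.) Note also that examples such as the alternating staircase or the "tent" path show that cone-point-free contours of length comparable to $\|\y\|_1$ exist, so no soft argument bounds them; one really needs the quantitative surgery/counting estimate of \cite{IST15} (or the analogous DKS skeleton argument using the strict triangle inequality of \cref{prop:surface-tension-facts}). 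As it stands, your proposal proves the first inequality and defers the entire difficulty of the second to an assumed input that is downstream of the lemma itself.
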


In \cite{IST15}\footnote{See the paragraph containing equation~(4.9) in~\cite{IST15}}, it is claimed without proof that the an analogue of \cref{lem:ist-length-cpts} also holds for animals. For completeness, we supply the result as well as  a proof below.

\begin{proposition}\label{prop:many-cone-points}
For any $\delta \in (0,1)$, 
there exist  $\nu>0$ and $c>0$ such that the following bounds hold uniformly over $\beta \geq \beta_0$, and $\y \in \fcone_{\delta}\setminus \{0\}$:
\[
\Gb(\y \given |\Cpts(\Gamma)| < \delta_0\|\y\|_1) 
\leq
ce^{-\nu\beta \|\y\|_1}\Gb(\y) \,,
\]
where $\delta_0$ is as in \cref{lem:ist-length-cpts}.
\begin{proof}
The idea is to show that, for a typical $\Gamma= [\gamma,\C]$, many of the cone-points for $\gamma$ are also cone-points for $\Gamma$. We do this by showing that typically, $\cC$ does not contain many clusters (more precisely, the total $d(\cdot)$-size is not too big). The result then follows from \cref{eqn:ist-many-cone-points}.

Our starting point is the following, which comes from  \cref{p:P2} and \cref{rk:positive-decorations} and states that for some constant $c_1>0$, for all $\beta>0$ sufficiently large, and for any animal $[\gamma, \C]$,
\begin{equation}\label{eqn:clusterWeightUB}
q([\gamma, \underline{\mathcal{C}}]) \leq e^{-\beta|\gamma|}\exp\bigg({-c_1\beta\sum_{\cC \in \underline{\mathcal{C}}} d(\cC)}\bigg)\,.
\end{equation}
Now, define the events
\begin{align*}
A &= \{\Gamma = [\gamma,\underline{\mathcal{C}}] \given |\Cpts(\Gamma)| < \delta_0\|\y\|_1\} \\
B &= \{\Gamma = [\gamma,\underline{\mathcal{C}}] \given |\Cpts(\gamma)| \geq 2\delta_0\|\y\|_1\,, \,|\gamma| \leq 1.1 \|\y\|_1 \}
\end{align*}
Then, from \cref{eqn:ist-length-bound,eqn:ist-many-cone-points}, we have
\[
\frac{\Gb(\y \given A)}{\Gb(\y)} = \frac{\Gb(\y \given A, B)}{\Gb(\y)} + O\big( e^{-\nu_0 \beta \norm{\y}_1}\big)\,.
\]
Now, consider a contour $\gamma$ with at least $2\delta_0\|\y\|_1$ cone-points 
and an animal $\Gamma$ such that $\Gamma = [\gamma, \C]$ for some set of clusters $\C$. Note that if $\Gamma$ has less than $\delta_0\|\y\|_1$ cone-points, then the clusters of $\C$ must intersect $\gamma$ in such a way that at least $\delta_0\|\y\|_1$ cone-points of $\gamma$ are not cone-points of $\Gamma$.
This necessitates that the sum of $d(\cC)$ over $\cC \in \C$ exceeds $\delta_0 \|\y\|_1$.
Hence, we can write
\begin{align*}
&\Gb(\y \given A, B) \\ \leq &\sum_{\substack{\gamma:0\to \y \\ |\gamma|\leq 1.1 \|\y\|_1}}\sum_{m \geq \delta_0 \norm{\y}_1} \sum_{n=1}^{m}\sum_{k=1}^{n}\binom{3|\gamma|}{k} \sum_{\substack{\underline{n} = (n_1,\dots,n_k) \\ n_1+\dots+n_k=n}} \sum_{\substack{\underline{m} = (m_1,\dots,m_n) \\ m_1+\dots+m_n=m}} \bigg(\prod_{i=1}^k|G_{i, \underline{n}, \underline{m}}|\bigg) e^{-\beta|\gamma|-c_1\beta m}\,,
\end{align*}
where:
the second sum accounts for the possible values of $m =\sum_{\cC\in \C} d(\cC)$; 
the third sum accounts for the possible values of $n=|\C|$ given $m$; 
the fourth sum and the binominal coefficient account for the number of possible bonds of $\nabla_{\gamma}$ that the clusters of $\C$ intersect;
the fifth sum accounts for the number of clusters that intersect a particular bond of $\nabla_{\gamma}$ (given $k$ and an arbitrary labeling of the bonds $b_1, \dots, b_k$);
the sixth sum accounts for the possible $d(\cdot)$-value of each of the $n$ clusters given that their total $d(\cdot)$-sum  is $m$, and where $\underline{m}=(m_1,\dots,m_n)$ 
is ordered such that
$m_1,\dots,m_{n_1}$ represent the $d(\cdot)$-values of the $n_1$ clusters intersecting $b_1$, $m_{n_1+1},\dots,m_{n_1+n_2}$ represent the $d(\cdot)$-values of the $n_2$ clusters intersecting $b_2$ and so on;
each $G_{i, \underline{n}, \underline{m}}$ denotes the set of all possible collections of the $n_i$ clusters intersecting $b_i$ 
given the aforementioned $\underline{n}$ and $\underline{m}$;
and lastly, the exponential is the cumulative upper bound on $q([\gamma,\underline{\mathcal{C}}])$ given by \cref{eqn:clusterWeightUB} (given that the cumulative $d(\cdot)$ is $m$). 

Now, similar to \cref{eqn:c-beta-bound}, the number of clusters $\cC$ with $d(\cC)= r$ and  intersecting some fixed bond $b$ is bounded from above by $e^{c_2 r}$, and therefore
\[
\prod_{i=1}^k|G_{i, \underline{n}, \underline{m}}| \leq e^{c_2 (m_1+\dots+m_n)} = e^{c_2 m}\,.
\]
Hence, $\Gb(\y \given A, B)$ is upper-bounded by
\begin{align*}
&\sum_{\substack{\gamma:0\to \y \\ |\gamma|\leq 1.1 \|\y\|_1}}e^{-\beta|\gamma|}\sum_{m \geq \delta_0 \norm{\y}_1} \sum_{n=1}^{m}\sum_{k=1}^{n}\binom{3|\gamma|}{k} \sum_{\substack{\underline{n} = (n_1,\dots,n_k) \\ n_1+\dots+n_k=n}} \sum_{\substack{\underline{m} = (m_1,\dots,m_n) \\ m_1+\dots+m_n=m}} e^{-c_1\beta m}e^{c_2 m} \\
\leq&\sum_{\substack{\gamma:0\to \y \\ |\gamma|\leq 1.1 \|\y\|_1}}e^{-\beta|\gamma|}\sum_{m \geq \delta_0 \norm{\y}_1}e^{-c_3\beta m} \sum_{n=1}^{m}\sum_{k=1}^{n}\binom{3|\gamma|}{k} \binom{n-1} {k-1} \binom{m-1}{n-1} \\
\leq&\sum_{\substack{\gamma:0 \to \y \\ |\gamma|\leq 1.1 \|\y\|_1}}e^{-\beta|\gamma|} \sum_{m \geq \delta_0 \norm{\y}_1}e^{-c_3\beta m} 2^m (2^m-1) 2^{3|\gamma|} 
\leq \ e^{-c_4\beta \norm{\y}_1}\sum_{\gamma:0\to\y}e^{-\beta|\gamma|}\,.
\end{align*}
Finally, the non-negativity of the decorations $\Phi'$ implies
\[
\Gb(\y) = \sum_{\substack{\Gamma = [\gamma, \underline{\mathcal{C}}] \\ \gamma:0 \to\y}}q([\gamma, \underline{\mathcal{C}}]) = \sum_{\gamma:0 \to\y}\exp\bigg({-\beta|\gamma|+\sum_{\substack{\cC \subset \Lambda \\ \cC \cap \Delta_{\gamma} \neq \emptyset}} \Phi'(\cC;\gamma)}\bigg) \geq \sum_{\gamma:0 \to\y}e^{-\beta|\gamma|}\,,
\]
and thus
\[
\frac{\Gb(\y \given A, B)}{\Gb(\y)} \leq e^{-c_5\beta \norm{\y}_1}\,,
\]
concluding the proof.
\end{proof}
\end{proposition}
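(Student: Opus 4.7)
The plan is to use \cref{lem:ist-length-cpts} to reduce the problem to contours $\gamma$ that already have many cone-points, and then to argue that the clusters attached to $\gamma$ rarely destroy enough of these cone-points to bring $|\Cpts(\Gamma)|$ below $\delta_0\|\y\|_1$. A cone-point $\u$ of $\gamma$ fails to be a cone-point of $\Gamma = [\gamma,\C]$ exactly when some cluster $\cC\in\C$ attached near $\u$ has large enough reach to exit the cones $\u+\bcone\cup\u+\fcone$. By \cref{p:P2}, each such cluster carries a multiplicative cost $e^{-\chi\beta(d(\cC)+1)}$, so we should be able to trade energy (the total $d$-mass of $\C$) against the combinatorial entropy of cluster placements.

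Concretely, first I would define
\[
A = \{\Gamma = [\gamma,\C] : |\Cpts(\Gamma)| < \delta_0\|\y\|_1\}, \qquad
B = \{\Gamma = [\gamma,\C] : |\Cpts(\gamma)| \geq 2\delta_0\|\y\|_1,\ |\gamma| \leq (1+\epsilon)\|\y\|_1\},
\]
and observe via \cref{eqn:ist-length-bound,eqn:ist-many-cone-points} of \cref{lem:ist-length-cpts} that
$\Gb(\y \given A) \leq \Gb(\y \given A\cap B) + O(e^{-\nu_0\beta\|\y\|_1})\Gb(\y)$, since the $B^c$ contribution is already of the desired order. On $A\cap B$, at least $\delta_0\|\y\|_1$ of the cone-points of $\gamma$ must be destroyed, and since the event ``$\u\in\Cpts(\gamma)\setminus\Cpts(\Gamma)$'' forces some cluster meeting $\nabla_\gamma$ near $\u$ to have $d(\cC)\gtrsim 1$, one gets $m := \sum_{\cC\in\C} d(\cC) \geq \delta_0\|\y\|_1$.

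Next I would enumerate $(\gamma,\C)$ in $A\cap B$ in stages: choose $\gamma$ (contributing the weight $e^{-\beta|\gamma|}$, since $\Phi'\geq 0$ is packaged with the clusters), then choose the total mass $m\geq \delta_0\|\y\|_1$, the number $n\leq m$ of clusters, the number $k\leq n$ of distinct bonds of $\nabla_\gamma$ they touch, the composition of $n$ into $k$ parts describing how many clusters share each bond, and the composition of $m$ into $n$ parts giving each $d(\cC_i)$. For each bond $b\in\nabla_\gamma$ and each target value $r$, the number of clusters $\cC\ni b$ with $d(\cC)=r$ is at most $e^{c_2 r}$ by the same tree-of-edges bound used for \cref{eqn:c-beta-bound}. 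Putting all factors together, the entropy in the compositions and placements contributes $\binom{3|\gamma|}{k}\binom{n-1}{k-1}\binom{m-1}{n-1}\leq 2^{O(|\gamma|)}2^{O(m)}$, while the energy contributes $e^{-c_1\beta m}e^{c_2 m}$; for $\beta$ sufficiently large this yields an overall factor $e^{-c_3\beta m}$, and summing over $m\geq \delta_0\|\y\|_1$ gives $e^{-c_4\beta\|\y\|_1}\sum_{\gamma:0\to\y} e^{-\beta|\gamma|}$. The lower bound $\Gb(\y)\geq \sum_{\gamma:0\to\y} e^{-\beta|\gamma|}$, valid because $\Phi'\geq 0$, then closes the argument.

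The main obstacle is the accounting in the enumeration step: one must carefully disentangle ``clusters $\cC$'' from ``bonds of $\nabla_\gamma$ they touch'' to avoid double counting, and verify that the resulting polynomial-in-$|\gamma|$ and exponential-in-$m$ combinatorial factors are all absorbed by $e^{-c_1\beta m}$ once $\beta$ is large. Technically this is standard Peierls-type bookkeeping, but it is the only place where the precise form of \cref{p:P2} (decay in $d(\cC)$ rather than in diameter) is used, and where the restriction $|\gamma|\leq(1+\epsilon)\|\y\|_1$ from the event $B$ is essential to prevent the $\binom{3|\gamma|}{k}$ entropy from overwhelming the cluster cost.
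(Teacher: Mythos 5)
Your proposal is correct and follows essentially the same route as the paper's proof: the same events $A$ and $B$, the same observation that destroying $\delta_0\|\y\|_1$ cone-points forces $\sum_{\cC\in\C} d(\cC) \geq \delta_0\|\y\|_1$, the same Peierls-type enumeration over bonds of $\nabla_\gamma$ and compositions of $m$, and the same closing lower bound $\Gb(\y)\geq\sum_{\gamma:0\to\y}e^{-\beta|\gamma|}$ from $\Phi'\geq 0$. No gaps to flag.
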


\begin{remark}\label{rk:cone-points-general}
Note that the proof of \cref{prop:many-cone-points} is such that, for any Ising polymer model $\Gb_D$ or $\Gb(\cdot \given \gamma \subset D)$ satisfying the analogous bounds in \cref{lem:ist-length-cpts},    \cref{prop:many-cone-points}  follows with $\Gb$ replaced by $\Gb_D$ or $\Gb(\cdot \given \gamma \subset D)$. This is used when we prove the existence of cone-points for $\Gamma$ in these models in \cref{lem:generalized-length-cpts}.
\end{remark}

\subsection{Ornstein--Zernike theory and its applications} \label{subsec:oz-theory}
For $\h \in \R^2$, we define $W^{\h}(\cdot)$ by
\begin{align*}
W^{\h}(\Gamma) := e^{\h \cdot \X(\Gamma)}q(\Gamma)
\end{align*}
for any animal $\Gamma$ (not necessarily irreducible). For $\y\neq0$, define the \textit{dual} parameter $\h_\y$ by
\[
\h_\y = \nabla \tau_\beta(\y)\,.
\]
The homogeneity of $\tau_\beta(\y)$ as per \cref{def:tau-beta} implies that $\h_\y \in \partial \Kb$ (since $\h_\y \cdot \y = \nabla \tau_\beta(\y)\cdot\y = \tau_\beta(\y)$ and recalling the first definition of $\Kb$). 

\cref{prop:oz-weights} below, which forms the main result of the Ornstein--Zernike theory for the Ising polymer models, was stated in \cite[Thm.~5]{IST15} in a slightly different setting (namely, the cones are defined differently here), pointing to \cite{IV08} as its relevant input. Indeed, one may infer this result via an Ornstein--Zernike analysis as in \cite[Sec~3.3 and~3.4]{IV08}, but since (a) the models considered in that work do not include Ising polymers, and (b) our setting differs somewhat from that of \cite{IST15}, we include a full proof of this proposition in \cref{appendix:ornstein--Zernike}.

\begin{proposition}\label{prop:oz-weights}
For any $\delta \in (0,1)$, there exists $\beta_0>0$ such that for all $\beta>\beta_0$ and for any $\y \in \fcone_{\delta}\setminus\{0\}$, the collection of weights $W^{\h_{\y}}$ 
defines a probability distribution on the set $\A$ of irreducible animals. 
To emphasize that $W^{\h_{\y}}$ defines a probability distribution (on irreducible animals), and for consistency with \cite{IST15}, we use the notation
\begin{align}
\P^{\h_{\y}}(\Gamma) := e^{\h_{\y}\cdot \X(\Gamma)} q(\Gamma) = W^{\h_{\y}}(\Gamma) \,.
\label{def:pa}
\end{align}
Let $\E^{\h_{\y}}$ denote expectation under $\P^{\h_{\y}}$. Then 
\begin{align}
    \E^{\h_{\y}}[\X(\Gamma)] = \alpha \y \,,
    \label{eqn:colinear-expectation}
\end{align}
for some constant $\alpha := \alpha(\beta, \y) >0$--- in particular, the expectation of $\X(\Gamma)$ under $\P^{\h_{\y}}$ is collinear to $\y$.
Finally, there exists a ``mass-gap'' constant $\nu_g>0$ 
such that for all $\beta>0$ large, $\y \in \fcone_{\delta} \setminus\{0\}$, and $k \geq 1$,
\begin{align}
    \sum_{\Gamma \in \AL \cup \AR} \P^{\h_{\y}}(\Gamma)\ind{|\Gamma| \geq k} \leq C e^{-\nu_g \beta k}\,,
    \label{eqn:oz-exp-decay}
\end{align}
where $C:= C(\beta)>0$ is a positive constant. 
\end{proposition}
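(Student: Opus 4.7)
The approach is the classical Ornstein--Zernike scheme adapted to the Ising polymer setting. Starting from the cone-point decomposition of Section~2.6 and the weight factorization \cref{eqn:q-factorization}, and using \cref{prop:many-cone-points} to discard the negligible contribution of animals with few cone points, one converts the partition function $\sum_{\y}e^{\h\cdot\y}\Gb(\y)$ into a convolution over irreducible pieces. In the region of absolute convergence this yields
\[
\sum_{\y\in\Z^2}e^{\h\cdot\y}\Gb(\y) \;=\; G_L(\h)\,\frac{1}{1-G_A(\h)}\,G_R(\h)\,,
\]
with $G_L(\h):=\sum_{\Gamma\in\AL}e^{\h\cdot\X(\Gamma)}q(\Gamma)$ and analogous definitions for $G_A, G_R$. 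By \cref{eqn:partition-fn-tau}, the left-hand side converges precisely on the interior of $\Kb$.

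\textbf{Critical tilt.} For $\y\in\fcone_\delta\setminus\{0\}$, Euler's identity for the analytic $1$-homogeneous $\tau_\beta$ (\cref{p:tau-analytic}) gives $\h_\y\cdot\y=\tau_\beta(\y)$, while convexity of $\tau_\beta$ gives $\h_\y\cdot\z\leq\tau_\beta(\z)$ for all $\z$; hence $\h_\y\in\partial\Kb$. The strict triangle inequality \cref{p:tau-str-tri-ineq} renders $\Kb$ strictly convex at $\h_\y$, and I would use this to extend $G_A$ continuously up to $\partial\Kb$ in a neighborhood of $\h_\y$. Since $G_L$ and $G_R$ remain finite at $\h_\y$ (they each involve a single irreducible piece whose length is controlled by the decoration decay \cref{p:P2}), divergence of the left-hand side as $\h\to\h_\y$ forces $G_A(\h_\y)=1$. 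This is the first assertion of the proposition: $W^{\h_\y}=\P^{\h_\y}$ is a probability measure on $\A$. Collinearity \cref{eqn:colinear-expectation} then follows from Legendre duality: the level set $\{G_A=1\}$ coincides locally with $\partial\Kb$, so $\nabla G_A(\h_\y)$ is an outward normal to $\Kb$ there; by the defining property $\h_\y=\nabla\tau_\beta(\y)$ this normal is parallel to $\y$. As $\nabla G_A(\h_\y)=\E^{\h_\y}[\X(\Gamma)]$, one obtains $\E^{\h_\y}[\X(\Gamma)]=\alpha\y$, with $\alpha>0$ since $\h_\y\cdot\alpha\y=\alpha\tau_\beta(\y)>0$.

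\textbf{Mass gap.} For \cref{eqn:oz-exp-decay} I would argue directly. By symmetry it suffices to handle $\Gamma\in\AL$. Left-irreducibility forces $\gamma\subset\X(\Gamma)+\bcone$, and in particular $\|\X(\Gamma)\|_\infty\leq|\gamma|$. The cluster sum is controlled via \cref{eqn:c-beta-bound}:
\[
\sum_{\C}\prod_{\cC\in\C}\Psi(\cC;\gamma)\;\leq\;\exp\bigl(O(e^{-\chi\beta})|\gamma|\bigr),
\]
while the number of length-$k$ contours from $0$ grows at most exponentially in $k$, and the tilt obeys $e^{\h_\y\cdot\X(\Gamma)}\leq e^{\|\h_\y\|_\infty|\gamma|}$. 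For $\y\in\fcone_\delta$ with $\delta<1$, the strict convexity of $\Kb$ at $\h_\y$ (from \cref{p:tau-str-tri-ineq}) keeps $\h_\y$ uniformly away from the extreme points of $\Kb$ associated with the axial normals, giving $\|\h_\y\|_\infty\leq(1-c_0)\tau_\beta(\e_1)$ for some $c_0=c_0(\delta)>0$; for large $\beta$ this translates into $\|\h_\y\|_\infty\leq(1-c_0')\beta$. Summing a geometric series in the exponent then yields \cref{eqn:oz-exp-decay} with some $\nu_g>0$.

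\textbf{Main obstacle.} The most delicate step is making the factorization identity rigorous: cluster decorations can in principle couple adjacent irreducible pieces, and this is resolved by the animal-level definition of cone points together with \cref{prop:many-cone-points} (via \cref{rk:cone-points-general}). A secondary subtlety is the continuous extension of $G_A$ up to $\partial\Kb$ near $\h_\y$ that allows one to assert $G_A(\h_\y)=1$; this hinges on the strict convexity of $\Kb$ guaranteed by \cref{p:tau-str-tri-ineq} and, in turn, on the assumption $\y\in\fcone_\delta\setminus\{0\}$ that keeps $\h_\y$ in a region of smooth boundary.
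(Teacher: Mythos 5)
Your first two steps (criticality of the tilt and collinearity) are essentially the paper's argument: the paper also sandwiches the tilted full partition function between the renewal series $G_L(\h)\sum_m G_A(\h)^m G_R(\h)$ and uses continuity of $G_A$ on a small neighbourhood straddling $\partial\Kb$ to conclude $G_A(\h_\y)=1$ (\cref{claim:oz-1,claim:oz-2}), and your Legendre-duality derivation of \cref{eqn:colinear-expectation} is a legitimate variant of the paper's computation (which instead differentiates $f(\h_\y)\equiv 1$ in $\y$ and argues with the kernel of the Hessian of $\tau_\beta$, including the possible degenerate points). One caveat even here: you attribute the finiteness of $G_L,G_R$ at $\h_\y$ to the decoration decay \cref{p:P2}, and the continuity of $G_A$ past $\partial\Kb$ to strict convexity of $\Kb$. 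Neither is the real mechanism: both facts are exactly statements of exponential smallness of cone-point-free animals \emph{relative to} $\Gb(\y)\asymp e^{-\tau_\beta(\y)}$, i.e.\ they require \cref{lem:ist-length-cpts} and \cref{prop:many-cone-points} together with $\h_\y\cdot\y\le\tau_\beta(\y)$ and \cref{eqn:partition-fn-tau}; this is precisely what \cref{claim:oz-1} does.

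The genuine gap is in your mass-gap step. Your bound rests on the claim $\|\h_\y\|_\infty\le(1-c_0)\tau_\beta(\e_1)$, uniformly over $\y\in\fcone_\delta$, so that $e^{\h_\y\cdot\X(\Gamma)}\le e^{(1-c_0')\beta|\gamma|}$ leaves room against $e^{-\beta|\gamma|}$ after paying the contour entropy. This is false: $\e_1\in\fcone_\delta$ for every $\delta$, and $\h_{\e_1}\cdot\e_1=\tau_\beta(\e_1)=\beta(1+o(1))$, so $\|\h_\y\|_\infty=\beta(1+o(1))$ throughout $\fcone_\delta$ (the dual body $\Kb$ is, to leading order, the square $\{\|\h\|_\infty\le\beta\}$, and $\h_\y$ sits on its right face). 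Consequently the crude estimate $e^{\h_\y\cdot\X(\Gamma)}q(\gamma)\le C^{|\gamma|}e^{(\|\h_\y\|_\infty-\beta+O(e^{-\chi\beta}))|\gamma|}$ gives no decay at all: a near-straight horizontal stretch has $\h_\y\cdot\X\approx\beta|\gamma|$, exactly cancelling the energy. The decay in \cref{eqn:oz-exp-decay} cannot come from entropy-versus-energy counting; it comes from the geometric constraint that $\Gamma\in\AL\cup\AR$ has \emph{no cone points}, and that constraint must be used quantitatively. The paper's proof does this by splitting on the displacement: for $\|\X(\Gamma)\|_1\ge k/2$ it uses \cref{eqn:ist-many-cone-points} (cone-point-free animals carry weight $\le e^{-\nu\beta\|\y\|_1}\Gb(\y)$) together with $e^{\h\cdot\y}\Gb(\y)\le Ce^{\ep\|\y\|_1}$, valid for every $\h\in\Kb$; for $\|\X(\Gamma)\|_1<k/2$ it uses the length bound \cref{eqn:ist-length-bound} with $r=k/\|\y\|_1\ge 2$, again relative to $\Gb(\y)$. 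Without an input of this type your geometric series diverges, so the mass-gap part of the proposal does not go through as written.
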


Note that, since $\h_{\y} \cdot \y = \tau_{\beta}(\y)$ (by the homogeneity of $\tau_\beta$ as mentioned above), we have the following from \cref{eqn:q-factorization} 
\begin{align}
    q(\Gamma) = e^{-\tau_{\beta}(\y)} \P^{\h_{\y}}(\Gamma^{(L)}) \P^{\h_{\y}}(\Gamma^{(R)}) \prod_{i=1}^n \P^{\h_{\y}}(\Gamma^{(i)})
    \,,
        \label{eqn:decomposed-weight}
\end{align}
for any $\Gamma$ with at least two cone-points. 
For $\u, \v \in \Z^2$ and a set of animals $E$,  introduce
\begin{equation}
\fA(\u,\v ; E)  = \sum_{n\geq 1} \sum_{\Gamma^{(1)}, \dots, \Gamma^{(n)} \in \A} \prod_{i=1}^n \P^{\h_{\y}}(\Gamma^{(i)}) \ind{\u+ \gamma^{(1)} \circ \dots \circ \gamma^{(n)} \in \cP_{\H}(\u,\v)} \ind{\u+\Gamma^{(1)} \circ \dots \circ \Gamma^{(n)} \in E} \,. \label{def:A-partition-fn}
\end{equation}
When $E$ is taken to be the set of all possible animals, we'll simply write $\fA(\u,\v ; E) = \fA(\u,\v)$.
Then, for $D = Q$ or $\H$, 
\cref{eqn:decomposed-weight} allows us to write 
\begin{multline}
    \Gb(\y \given \gamma \subset D, |\Cpts(\Gamma)|\geq 2)
    \\
    = \ e^{-\tau_{\beta}(\y)} \sum_{\substack{\Gamma^{(L)} \in \AL \\ \gamma^{(L)} \subset D}} \P^{\h_{\y}}(\Gamma^{(L)})
    \ \sum_{\substack{\Gamma^{(R)} \in \AR \\ \gamma^{(R)} \subset D}} 
    \P^{\h_{\y}}(\Gamma^{(R)})  \ \fA( \X(\Gamma^{(L)}), \y- \X(\Gamma^{(R)}) )\,.
    \label{eqn:good-decomposed-2}
\end{multline}
Note that the sum over $n$ in~\eqref{def:A-partition-fn} above closely resembles the probability of a random walk event.
This product structure of probability weights $\P^{\h_{\y}}$ naturally leads to considering the random walk model related to the law of $\Cpts(\Gamma)$ described in \cref{subsec:rw-model}.

\section{Proof of 
\texorpdfstring{\cref{mainthm:SOS lower bound}}{Theorem 1.1}
modulo \texorpdfstring{\cref{mainthm:BM-for-Ising-polymers}}{Theorem 1.2}}\label{sec:sos}

In this section, we infer \cref{mainthm:SOS lower bound} from  \cref{mainthm:BM-for-Ising-polymers}(a). Throughout the section, let $H_L=\lfloor\frac1{4\beta}\log L\rfloor$.
Recall that, as per \cref{eq:rho-lower-bound}, we wish to give a lower bound on the event
\[
E := \Big\{\min_{x\in I_0} \underline\rho(x) \geq \delta \,L^{1/3}\Big\}\,.
\]
The fact that the definition of $\underline\rho(x)$ pertains to the height-$\fh^*$ (top) level line, where $\fh^*$ is random in $\{H_{L-1}, H_L\}$ (as mentioned in the Introduction, the results of \cite{CLMST16} determine it w.h.p.\ for side lengths $L$ outside of a critical subsequence) make this event delicate to work with. We will derive the required lower bound on $E$ via more tractable events, tailored to the analysis of a local rectangle of side-length $\asymp L^{2/3}$ located at the center of the bottom side of $\Lambda_L$.

Let
\[ R_0 := \llb \tfrac L2 - 3 L^{2/3}, \tfrac L2 + 3 L^{2/3}\rrb \times \llb 0, 2 L^{2/3}\rrb\,,\]
and let $R_1\subset \Lambda_L$ be the set of $\x\in \Lambda_L$ at distance at most $(\log L)^2+1$ from $R_0$. For fixed $\delta>0$, define the marked region $\mathfrak{M} := I_0 \times \llb 0, \delta L^{1/3}\rrb$. \cref{mainthm:SOS lower bound} will follow from the next proposition, where here and in what follows, $x$ and $y$ are $*$-connected if their Euclidean distance is at most $\sqrt2$.
\begin{proposition}
\label{prop:sos-h-and-h-1-final}
Fix $\beta\geq \beta_0$.
Let $B_n$ be the event that there is a $*$-connected path in $R_0$, intersecting $\mathfrak{M}$, whose  length is $\geq (\log L)^2$ and whose sites have height $\geq H_L-n$. Let $C_n$ be the event that there is no simple path in $R_1$ of length $\geq (\log L)^2$ whose sites have height $\geq H_L-n+1$. Then, for every fixed $n\geq 0$ and $\epsilon>0$ there exists $\delta>0$ such that $\pi_{\Lambda_L}^0(B_n\cap C_n) < \epsilon$ for all $L$ large.
\end{proposition}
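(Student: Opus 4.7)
The plan is to reduce $B_n\cap C_n$ to a downward-deviation event for a single contour under the no-floor SOS model $\widehat\pi$, and then invoke \cref{mainthm:BM-for-Ising-polymers}(b) for the Brownian-excursion scaling.

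\textbf{Step 1: From $B_n\cap C_n$ to a contour dip.} On $B_n\cap C_n$, the macroscopic path of heights $\geq H_L-n$ provided by $B_n$, combined with the absence of any macroscopic path of heights $\geq H_L-n+1$ in $R_1$ (given by $C_n$), forces the existence of a unique macroscopic level line $\cL$ at height exactly $H_L-n$ inside $R_1$ whose interior touches the horizontal line $\mathfrak{L}$ within $R_0$. Since the exterior of $\cL$ inside $R_1$ is connected to $\partial\Lambda_L$ (where the heights are $0$), a planar-topology argument shows that $\cL$ itself must cross $\mathfrak{L}$ at some column in the horizontal range of $R_0$. Equivalently, denoting by $\underline\rho_{\cL}$ the lower envelope of $\cL$, we have $\underline\rho_{\cL}(x^*)<\delta L^{1/3}$ for some $x^*$ in the range of $R_0$.

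\textbf{Step 2: Monotone coupling to a no-floor contour.} Let $W$ be a box of side $M\asymp L^{2/3}$ centered on the bottom mid-side of $\Lambda_L$, containing $R_0$ with a comparable margin on each side. Shifting heights by $-(H_L-n-1)$ maps the level $H_L-n$ to level $1$, so that $\cL$ becomes the height-$1$ contour in the shifted model. Combining the cluster expansion (\cref{prop:soscontour-law} and \cref{obs:sos-is-ist}) with the FKG inequality and the stochastic monotonicity of SOS in boundary conditions---as in \cite{CLMST14,CLMST16}---one obtains the following comparison: conditioning on the heights outside $W$ and on the two points at which $\cL$ enters and exits $\partial W$, the restriction of $\cL$ to $W$ stochastically dominates (from above, in the sense of its lower envelope) the unique height-$1$ contour $\cL'$ of the no-floor measure $\widehat\pi^{0,1,1,1}_W$. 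Informally, the floor and the $\Theta(\log L)$ lower level lines of $\pi^0$ only push $\cL$ upward relative to the clean single-contour reference. Consequently, up to a negligible event controlling the typicality of heights outside $W$,
\[
\pi^0_{\Lambda_L}(B_n\cap C_n)\ \leq\ \widehat\pi^{0,1,1,1}_W\Big(\min_{x\in J_L}\underline\rho_{\cL'}(x)<\delta L^{1/3}\Big)+o(1),
\]
where $J_L\subset$ base of $W$ is the horizontal range of $R_0$.

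\textbf{Step 3: Brownian-excursion bound.} By \cref{mainthm:BM-for-Ising-polymers}(b), $\cL'$ under $\widehat\pi^{0,1,1,1}_W$, after rescaling horizontally by $M$ and vertically by $\sigma\sqrt{M}\asymp \sigma L^{1/3}$, converges weakly to a standard Brownian excursion $e$ on $[0,1]$. Choosing $W$ so that $J_L/M$ converges to a fixed compact sub-interval $J\subset(0,1)$, the right-hand side above converges as $L\to\infty$ to $\P(\min_{t\in J}e(t)<\delta/\sigma)$. Since $e$ is a.s.\ strictly positive on $(0,1)$, this probability tends to $0$ as $\delta\to 0$, and choosing $\delta$ small gives $\pi^0_{\Lambda_L}(B_n\cap C_n)<\epsilon$ for all $L$ large.

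\textbf{Main obstacle.} The crux is the monotone coupling of Step 2, which must handle the interplay of $\cL$ with both the floor and the $\Theta(\log L)$ lower level lines and remain valid under the conditioning along $\partial W$; the ingredients (cluster expansion and FKG monotonicity) are all standard in the SOS toolkit of \cite{CLMST14,CLMST16}, but must be combined carefully to realize the height-shift that aligns $\cL$ with a height-$1$ contour in the clean no-floor box $\widehat\pi^{0,1,1,1}_W$, while isolating $\cL$'s law from the contribution of the lower level lines.
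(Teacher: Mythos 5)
There is a genuine gap, and it sits exactly where you flag the ``main obstacle'': the stochastic domination in your Step 2 is not only unproven, it goes in the wrong direction. The dip event (equivalently $B_n$) is \emph{increasing} in the height field, and the floor pushes the surface \emph{up} (FKG), so $\pi^0_{\Lambda_L}$ stochastically dominates the no-floor measure and $\pi^0_{\Lambda_L}(B_n)\geq \widehat\pi(B_n)$-type inequalities are what monotonicity gives--- the opposite of what you need. Seen at the level of the contour law, the floor manifests as the area tilt $\exp(-\frac{\lambda^{(n)}}{L}A(\gamma)+o(1))$ of \cref{eqn:contourLawAreaTerm}, which \emph{penalizes} large area under $\gamma$, i.e.\ pushes the open contour toward the bottom wall and makes the dip \emph{more} likely with the floor than without; a correlation inequality between the two decreasing functionals (dip indicator and tilt) would in fact give $\pi^{\legs}_W(\mathrm{dip})\geq\widehat\pi^{\legs}_W(\mathrm{dip})$. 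So ``the floor and the $\Theta(\log L)$ lower level lines only push $\cL$ upward relative to the clean single-contour reference'' is backwards, and no FKG/monotonicity argument can yield your displayed inequality $\pi^0(B_n\cap C_n)\leq\widehat\pi^{0,1,1,1}_W(\min\underline\rho_{\cL'}<\delta L^{1/3})+o(1)$. The paper instead makes the comparison quantitative: after reducing to a single open contour it writes $\pi_W^{\legs}$ as $\widehat\pi_W^{\legs}$ tilted by $e^{-\lambda^{(n)}A(\gamma)/L+o(1)}$ and applies Cauchy--Schwarz, using that $A(\gamma)\lesssim L$ typically (from the excursion limit) to bound the denominator away from $0$ and to convert smallness of the no-floor dip probability into smallness (of its square root) for the with-floor measure. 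A second, related problem is your conditioning ``on the heights outside $W$ and on the two points at which $\cL$ enters and exits $\partial W$'': conditioning on entry/exit points of a specific level line is not a monotone conditioning and breaks the FKG machinery you invoke. The paper avoids this by a revealing procedure (possible precisely because of $C_n$) that produces a chain $\fC$ of sites of height $\leq H_L-n$ enclosing $R_0$, then uses the domain Markov property and monotonicity \emph{in boundary conditions only} to pass to the $\legs$ boundary condition, under which the open $(H_L-n)$-contour has deterministic endpoints; a Peierls bound disposes of macroscopic closed contours.

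Two further remarks. Your Step 1 extraction of a unique macroscopic level line at height exactly $H_L-n$ crossing $\mathfrak{L}$ is not needed (and its uniqueness is not justified); in the paper's route one only uses that $B_n$ is increasing, so it survives the raising of boundary conditions. Finally, your use of \cref{mainthm:BM-for-Ising-polymers}(b) in a clean box is a harmless deviation in principle, but note that the region $W$ produced by the correct reduction is random and not a box, which is why the paper couples $\widehat\pi_W^{\legs}$ with the \emph{half-plane} polymer $\bP^{\x}_{\H}$ (\cref{claim:sos-legs-coupling}) and invokes part (a) rather than part (b).
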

\begin{proof}[Proof of \cref{mainthm:SOS lower bound}]

Observe that 
\[\pi_{\Lambda_L}^0( E^c ,\, \fh^*= H_L) \leq \pi_{\Lambda_L}^0(B_0 \cap C_0)+o(1)\,.\]
Indeed, \cite[Theorem~2a]{CLMST16} implies that there are w.h.p.\ no level lines of length at least $\log^2 L$ nested in the height-$\fh^{*}$ level line loop $\cL^{*}$ (implying $C_0$), whereas 
$E^c $ implies that $\cL^*$ intersects $\mathfrak M$; thus,
following its interior boundary for length at least $(\log L)^2$ will produce the path as per $B_0$.

Similarly,
\[\pi_{\Lambda_L}^0( E^c ,\, \fh^*=H_L-1) \leq \pi_{\Lambda_L}^0(B_1 \cap C_1)+o(1)\,,\]
again using that the height-$\fh^{*}$ level line loop $\cL^{*}$ must encompass a $*$-chain of sites crossing the marked region $\mathfrak{M}$ as in $B_1$ and that $C_1$ holds w.h.p. when $\fh^{*} = H_L-1$, by \cite[Theorem~2a]{CLMST16}.  The conclusion now follows from \cref{prop:sos-h-and-h-1-final}.
\end{proof}

\begin{proof}[Proof of \cref{prop:sos-h-and-h-1-final}]
Fix $n\geq 0$ and $\ep >0$. 
The following procedure will reveal the ``outermost'' chain $\mathfrak C$ of $\Z^2$-connected  sites of height at most $H_L-n$ that encloses, together with the southern boundary of $\Lambda_L$, the box $R_0$.
 For each site $\x$ in the north, east, and west boundaries of $R_1$, reveal its $\Z^2$-connected component of sites $\y \in R_1$ for which $\varphi_{\y} > H_L-n$. 
 Let $U$ denote the collection of all revealed vertices, and let $\fC$ be the exterior boundary of $U$ contained in $R_1$. By definition,  $\varphi_\y \leq H_L-n$ for all $\y \in \fC$. 
 Now, let $W$ denote the collection of sites whose exterior boundary is formed by $\fC$ and the southern boundary of $\Lambda_L$.
On the event $C_{n}$, each connected component comprising $U$ has diameter strictly less than $(\log L)^2$; in particular, $R_0 \subset W$.

 Now, condition on $\fC$, and let $\xi(\fC)$ denote the boundary condition of $W$ given by the heights on $\fC$ and the sites of height $0$ on $\partial \Lambda$. From the domain Markov property,  we may deduce our desired bound $\pi_{\Lambda_L}^0(B_n \cap C_n) <\ep$ if we can show
 \[
    \pi_{W}^{\xi(\fC)}(B_n) < \ep\,,
 \]
 uniformly over $\fC$.
Define $\x_{\ell} := \lfloor \tfrac{L}2 -2L^{2/3} \rfloor$ and  $\x_r:=\x_{\ell} + \lfloor 4L^{2/3} \rfloor$.
Using that all heights on $\fC$ are bounded by $H_L-n$, that all heights on $\partial \Lambda_L$ are $0$ (in particular, bounded by $H_L-n-1$), and that $B_n$ is an increasing event, we have
 \begin{align}
    \pi_{W}^{\xi(\fC)}(B_n)\leq \pi_{W}^{\legs}(B_n) \,,
    \label{eqn:Bn-monotonicity}
 \end{align}
where the boundary conditions $\legs$ are $H_L-n-1$ on the horizontal line of sites $\llb \x_{\ell}, \x_r \rrb \times \{-1\}$ and $H_L-n$ elsewhere (see \cref{fig:wiggle}, noting the ``legs'' of sites of height $H_L-n$ protruding inwards from the bottom boundary of $W$). Thus, it suffices to show the right-hand is bounded by~$\ep$.
\begin{figure}
    \centering
    \begin{tikzpicture}

    \draw[black,dashed] (-7.195,-.7) -- (-4.195,-.7);
    \draw[black,dashed] (4.195,-.7) -- (7.195,-.7);
    
    \draw [gray!75,fill=gray!20] (-4.195,-.7) rectangle ++(8.39,2.9);

    \node (fig) at (0,0.668) {    \includegraphics[width=8.21cm]{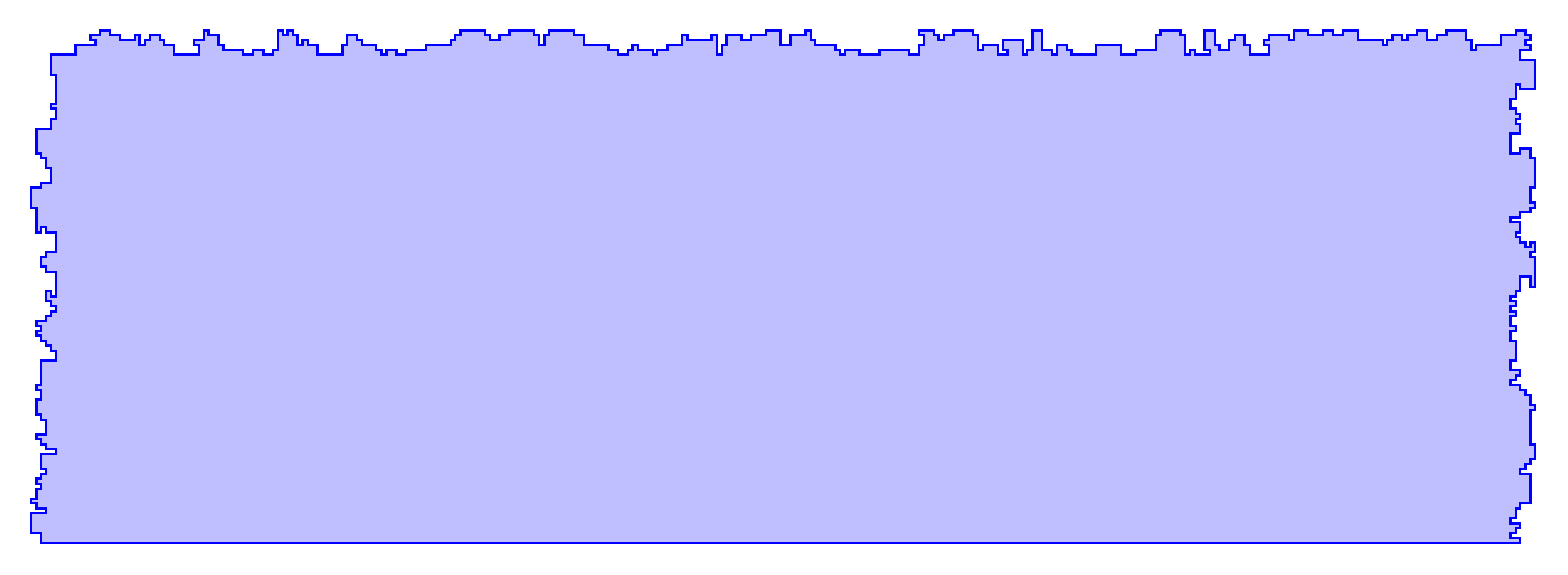}};

    \node[blue,font=\small] at (0.5,2.14) {$W$};
  
   \draw [purple!75!blue] (-3.7,-.69) -- (-3.7,1.75) -- (3.7,1.75) -- (3.7,-.69);
  
   \node[purple!75!blue,font=\small] at (0,1.46) {$R_0$};
   \node[gray!75,font=\small] at (0,2.42) {$R_1$};
  \draw[black,font=\tiny] (0,-.7) -- node[pos=1,below,yshift=3pt] {$(\frac{L}2,0)$} (0,-.8);
    
   \draw[black,font=\tiny] (-2.6,-.7) -- node[pos=1,below,yshift=3pt] {$\x_{\ell}$} (-2.6,-.8);

   \draw[black,font=\tiny] (2.6,-.7) -- node[pos=1,below,yshift=3pt] {$\x_r$} (2.6,-.8);
   
  \draw[black,font=\tiny] (0,-.7) -- node[pos=1,below,yshift=3pt] {$(\frac{L}2,0)$} (0,-.8);

    \draw [blue, ultra thick] (-3.9,-.7) --(-2.6,-.7);

    \draw [blue, ultra thick] (3.9,-.7) --(2.6,-.7);

    \draw[black,dotted] (-3.7,-0.7)--(-3.7,-1.25);
    \draw[black,dotted] (-2.6,-1.05)--(-2.6,-1.25); 
    \draw[black,dotted] (2.6,-1.05)--(2.6,-1.25); 
    \draw[black,dotted] (3.7,-0.7)--(3.7,-1.25);
    
    \draw[stealth-stealth,gray!75!black,font=\tiny] (-3.7,-1.25) -- node[pos=0.5,below,yshift=2pt] {$L^{2/3}$} (-2.6,-1.25);
    \draw[stealth-stealth,gray!75!black,font=\tiny] (-2.6,-1.25) -- node[pos=0.5,below,yshift=2pt] {$4L^{2/3}$} (2.6,-1.25);
\draw[stealth-stealth,gray!75!black,font=\tiny]     (3.7,-1.25) -- node[pos=0.5,below,yshift=2pt] {$L^{2/3}$} (2.6,-1.25);

    \draw[stealth-stealth,gray!75!black,font=\tiny] (-3.5,-.67) -- node[pos=0.5,right,xshift=-2pt] {$2L^{2/3}$} (-3.5,1.72);

   \end{tikzpicture}
\caption{The rectangles $R_0$ (purple) and $R_1$ (gray), and in between the region $W$ (blue) whose boundary conditions are raised via monotonicity to $H_L-n$ except on the segment between $\x_{\ell}$ and $\x_r$, where they are set to $H_L-n-1$.}
\label{fig:wiggle}
\end{figure}

Moving to $\pi_{W}^{\legs}$ has two main advantages. The first is that there exists a \emph{unique} open $H_L-n$-contour $\gamma$ under this measure, and in particular, the sites of height $\geq H_L-n$ along $\gamma$ form a $*$-chain of sites of length larger than $(\log L)^2$ (typically of order $L^{2/3}$). Further, a standard Peierls argument shows that, w.h.p.\ under $\pi_{W}^{\legs}$, there are no such $*$-chains of length larger than $\log L$.
In particular, defining
\[
    \underline{\rho}^{\gamma}(x) := \min \{ y \geq 0 : (x,y) \in \gamma \} \text{ for $x \in \R$,}
\]
we have 
\begin{align}
    \pi_{W}^{\legs}(B_n) = \pi_{W}^{\legs}\big( \min_{x \in I_0} \underline{\rho}^{\gamma}(x) < \delta L^{1/3}\big) +o(1)\,.
    \label{eqn:Bn-opencontour}
\end{align}

The second advantage of moving to $\pi_{W}^{\legs}$ is that we may couple $\gamma$ under the no-floor measure $\widehat\pi_{W}^{\legs}$ with a modified Ising polymer law  in the half-space $\bP_{\H}^\x$ (see \cref{def:mod-ising-polymer}), for which we have \cref{mainthm:BM-for-Ising-polymers}.
After exhibiting this in \cref{claim:sos-legs-coupling}, we compare $\hatpilegs$ and $\pilegs$ via \cref{eqn:contourLawAreaTerm}.

We begin with the argument relating $\hat\pi_W^\legs$ to $\bP_{\H}^\x$.
From now on, we shift $W$ to the left so that $\x_{\ell} = 0$ and $\x_r = \lfloor 4L^{2/3} \rfloor$. Note that $\gamma$ is a contour in $W$ with start-point $\ostar \in (\Z^2)^*$ and end-point $\x:= \x_r - (1/2,1/2)$. We will couple $\hatpilegs$ with the Ising polymer law
$\bP_{\H}^{\x}$, defined using the SOS decoration weights $\Phi(\cC; \gamma) := \phi(\cC;\gamma)$ and $\Phi_{\H}(\cC;\gamma):= \phi(\cC;\gamma)\ind{\cC\subset \H}$ from \cref{prop:soscontour-law}.
This coupling is natural in light of the very far distance of $\fC$ from both $\ostar$ and $\x$.
Define $N:= \lfloor 4L^{2/3} \rfloor$. Recall the set $\cP_{D}(\u,\v)$ from \cref{subsec:notation}. 
Let $\bar{W}$ denote the region in $\R^2$ enclosed by $W$, i.e., the region enclosed by the $\Z^2$-edges connecting the boundary vertices of $W$.  Then
$\gamma \in \cP_{\bar{W}}(\ostar, \x)$.
Lower the boundary conditions of $\hatpilegs$ to $0,1,1,1$ using the shift-invariance of the no-floor model. Then \cref{obs:sos-is-ist} states that $\gamma \sim \hatpilegs$ has the Ising polymer law $\bP_{\bar{W}}^{\x}$, 
and the decoration weights are defined via  
$
\Phi(\cC; \gamma) := \phi(\cC;\gamma)$ and 
$\Phi_{\bar{W}}(\cC;\gamma):= \phi(\cC;\gamma)\ind{\cC\subset \bar{W}} 
$.

\begin{claim}\label{claim:sos-legs-coupling} 
Extend  $\hatpilegs$ to a probability measure on all contours in $\cP_{\H}(\ostar, \x)$ via $\hatpilegs(\gamma) = 0$ for all $\gamma \not \in \cP_{\bar{W}}(\ostar, \x)$.  For some constant $C>0$ and for all $\beta>0$ large enough, we have 
\[
 \big\| \bP_{\H}^{\x} - \hatpilegs \big\|_{\tv}
\leq C e^{-\beta C^{-1} N}\,.
\]
\begin{proof}[Proof of \cref{claim:sos-legs-coupling}]
We use the identification of the law of the contour induced by $\hatpilegs$ with $\bP_{\bar{W}}^{\x}$ throughout this proof. 

The main input for the proof is the following: for any $\delta \in (0,1)$ and for all $\beta,N>0$  large, 
\begin{align}
    \max\Big(\hatpilegs\big(|\gamma| \geq (1+\delta)N\big)\,,\, \bP_{\H}^{\x}\big(|\gamma| \geq (1+\delta)N\big) \Big) \leq e^{-\beta \delta N/2}\,.
    \label{eqn:legscoupling-length-bound}
\end{align}
We prove \cref{eqn:legscoupling-length-bound} now.
Using the decay of $\phi(\cC;\gamma)$ (\cref{prop:soscontour-law}(iv)) as in~\cref{eqn:c-beta-bound}, we have the following for any $\gamma : \ostar \to \x$:
\begin{align}
    \sum_{\cC \cap \Delta_{\gamma} \neq \emptyset } \abs{\phi(\cC;\gamma)} \leq \sum_{\y \in \Delta_{\gamma}} \, \sum_{\cC  \ni \y} \sup_{\gamma} |\phi(\cC;\gamma)| \leq 3|\gamma| e^{-(\beta-\beta_0)}\,.
    \label{eqn:legs-length-bound-1}
\end{align}
Since $|\gamma|$ is always larger than $N$, we have 
\begin{align}
    \min\Big( \Gb_{\H}(\x)\,,\,\Gb_{\bar{W}}(\x)\Big) \geq \exp\Big(-\big(\beta + 3e^{-(\beta -\beta_0)}\big) N\Big)\,. \label{eqn:legs-partition-fn-compare}
\end{align}
On the other hand, \cref{eqn:legs-length-bound-1} implies
\begin{multline*}
    \max\Big(\Gb_{\H}\big(\x \given |\gamma| \geq (1+\delta)N\big)\,,\,\Gb_{\bar{W}}\big(\x \given |\gamma| \geq (1+\delta)N\big) \Big) \\
    \leq \sum_{\substack{\gamma \subset \H \\ |\gamma| \geq (1+\delta)N}} \exp\Big( -(\beta - 3e^{-(\beta -\beta_0)})|\gamma| \Big) 
    \leq e^{-(\beta -c)(1+\delta)N}\,,
\end{multline*}
where $c>0$ is a constant independent of $\beta$ and $N$. The above two bounds yield \cref{eqn:legscoupling-length-bound}.  
Note that a consequence of~\eqref{eqn:legscoupling-length-bound} is
\begin{align}
    \hatpilegs\big(d_{L^{\infty}}(\gamma, \fC) \leq (1-2\delta)N \big) \leq e^{-\beta \delta N/2}\,.
    \label{eqn:legscoupling-distance-bound}
\end{align}

Now, fix any $\delta \in (0,1/2)$.  Observe that 
\[
    q_{\H}(\gamma)/q_{\bar{W}}(\gamma) = \exp \Big(\sum_{\cC \cap \Delta_{\gamma} \neq \emptyset} \phi(\cC;\gamma) \ind{\cC \subset \H \setminus W}\Big)
\]
For any contour $\gamma$ such that $d_{L^{\infty}}(\gamma,\fC) > (1-2\delta)N$, we have the following for all $\beta>0$ large:
\[
    \sum_{\cC \cap \Delta_{\gamma} \neq \emptyset} |\phi(\cC;\gamma)| \ind{\cC \subset \H \setminus W} \leq \sum_{\y \in\Delta_{\gamma}} \sum_{\cC \ni \y} \sup_{\gamma} |\phi(\cC;\gamma)| \leq |\gamma| e^{-c \beta N}\,,
\]
where in the last inequality, $c>0$ is a constant independent of $N$ and $\beta$ large, and the bound follows similarly to \cref{eqn:legs-length-bound-1}.
For any $|\gamma|<(1+\delta)N$, \cref{eqn:legs-length-bound-1} yields an upper bound on $\Gb_\H(\x)$ of $\exp(-(\beta - 3(1+\delta)e^{-(\beta -\beta_0)}) N)$. This upper bound, the lower-bound on $\Gb_{\bar{W}}(\x)$ in~\eqref{eqn:legs-partition-fn-compare}, and the previous display imply the following for any $|\gamma|<(1+\delta)N$:
\[
    \abs{\widehat{\pi}_{W}^{\legs}(\gamma) - \bP_{\H}^{\x}(\gamma)} \leq e^{-c' \beta N}\,,
\]
where $c'>0$  is a constant independent of $\beta$ and $N$. Since the number of connected paths in $(\Z^2)^*$ rooted at $\ostar$ of length at most $(1+\delta)N$ is trivially bounded $4^{(1+\delta)N}$, the previous display along with \cref{eqn:legscoupling-length-bound} imply that the total variation distance between $\widehat{\pi}_{W}^{\legs}$ viewed as a probability measure on $\cP_{\H}(\ostar, \x)$ and $\bP_{\H}^{\x}$ is bounded by $e^{-c''\beta N}$.
\end{proof}
\end{claim}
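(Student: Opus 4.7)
My plan is to reduce the claim to a comparison of two modified Ising polymer measures on essentially the same set of contours. By \cref{obs:sos-is-ist}, after lowering the floor boundary conditions to $0$ (a shift preserved by the no-floor measure), $\gamma$ under $\hatpilegs$ has precisely the law $\bP_{\bar W}^{\x}$, defined using the SOS weights $\Phi(\cC;\gamma) := \phi(\cC;\gamma)$ with truncation $\Phi_{\bar W}(\cC;\gamma) := \phi(\cC;\gamma)\ind{\cC\subset\bar W}$. The task therefore becomes bounding $\|\bP_\H^{\x} - \bP_{\bar W}^{\x}\|_{\tv}$. Both measures use the same bulk weight, differing only through which clusters $\cC$ contribute; the key point is that the endpoints $\ostar$ and $\x$ lie on the bottom side of $W$ (which is shared with $\partial\H$), while the remaining portions of $\partial W$ are at distance $\Theta(N)$ from this segment. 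Hence any cluster that witnesses the difference between $\H$ and $\bar W$ must have large diameter, where the exponential decay of $\phi$ from \cref{prop:soscontour-law}(iv) wins.

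The first technical step is a length bound. Using \cref{prop:soscontour-law}(iv) together with the cluster-counting estimate behind \cref{eqn:c-beta-bound}, one shows that $\sum_{\cC\cap\Delta_\gamma\neq\emptyset}|\phi(\cC;\gamma)| \leq 3e^{-(\beta-\beta_0)}|\gamma|$ uniformly in $\gamma$. Combined with $|\gamma|\geq N$ for every $\gamma:\ostar\to\x$, this gives the lower bound $\Gb_D(\x) \geq e^{-(\beta + O(e^{-\beta}))N}$ for $D\in\{\H,\bar W\}$ (e.g.\ via the straight contour). On the other hand, summing the inequality $q_D(\gamma) \leq e^{-(\beta-O(e^{-\beta}))|\gamma|}$ over the (at most $4^k$) contours of length $k\geq (1+\delta)N$ yields an upper bound of $e^{-(\beta-c_0)(1+\delta)N}$ for $\beta$ large. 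Dividing, for any fixed $\delta\in(0,1/2)$ we obtain
\[
\max\bigl(\bP_\H^{\x}(|\gamma|\geq (1+\delta)N),\ \bP_{\bar W}^{\x}(|\gamma|\geq (1+\delta)N)\bigr) \leq e^{-c\beta\delta N}.
\]
The length control implies, by triangle inequality, that with the same high probability $\gamma$ stays within $L^\infty$-distance $O(\delta N)$ of the horizontal segment from $\ostar$ to $\x$, and hence at distance at least $(1-2\delta)N$ from $\fC$, since $\fC$ lives on the three non-bottom sides of $R_1$.

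For a ``good'' contour $\gamma$ with $|\gamma|\leq (1+\delta)N$ and $d(\gamma,\fC) \geq (1-2\delta)N$, the ratio satisfies
\[
\frac{q_\H(\gamma)}{q_{\bar W}(\gamma)} = \exp\Bigl(\sum_{\cC\cap\Delta_\gamma\neq\emptyset}\phi(\cC;\gamma)\ind{\cC\not\subset\bar W}\Bigr),
\]
and every cluster entering this sum is connected and meets both $\Delta_\gamma$ and $\H\setminus\bar W$, forcing $d(\cC) \gtrsim N$. Plugging into the decay bound and summing the geometric series over $\y\in\Delta_\gamma$ gives $|\log(q_\H(\gamma)/q_{\bar W}(\gamma))| \leq |\gamma|\,e^{-c'\beta N}$, hence a pointwise comparison $|\bP_\H^{\x}(\gamma) - \hatpilegs(\gamma)| \leq e^{-c''\beta N}$ once the partition function lower bound from the previous step is used. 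Summing over the at most $4^{(1+\delta)N}$ good contours, and absorbing the negligible bad event into this bound, yields $\|\bP_\H^{\x}-\hatpilegs\|_{\tv} \leq Ce^{-\beta C^{-1}N}$ for $\beta$ sufficiently large.

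The main obstacle is bookkeeping: three independent error terms (the length tail, the weight-ratio error, and the combinatorial $4^{(1+\delta)N}$ factor from counting contours) must all fit into a single bound of the form $e^{-\beta C^{-1}N}$. This forces $\delta$ to be taken small enough and $\beta$ large enough that $4^{(1+\delta)N} \ll e^{c\beta N}$, and it is convenient throughout to work with the non-negative decoration convention so that the bound $\Gb_D(\x) \geq e^{-(\beta+O(1))N}$ obtained from the straight contour is not spoiled by sign cancellations.
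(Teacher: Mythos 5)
Your proposal is correct and follows essentially the same route as the paper's proof: identify $\hatpilegs$ with $\bP_{\bar W}^{\x}$ via \cref{obs:sos-is-ist}, prove the length tail bound and hence distance of $\gamma$ from $\fC$ using the decay of $\phi$ together with the partition-function lower bound from $|\gamma|\geq N$, compare $q_\H(\gamma)$ with $q_{\bar W}(\gamma)$ pointwise on good contours (where the differing clusters must have $d(\cC)\gtrsim N$), and sum the resulting pointwise error over the at most $4^{(1+\delta)N}$ short contours. The only cosmetic difference is your remark about the non-negative decoration convention, which the paper does not need since the bound $\sum_{\cC}|\phi(\cC;\gamma)|\leq 3|\gamma|e^{-(\beta-\beta_0)}$ already controls sign cancellations.
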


Now, from~\cref{eqn:contourLawAreaTerm}, we have 
\begin{equation}
    \pi_{\tilde R_0}^{\legs}(\gamma) \propto \widehat\pi_{\tilde R_0}^{\legs}(\gamma) \exp\left(- \frac{\lambda^{(n)}}{L}A(\gamma)+o(1)\right) \,,
    \label{eqn:floor-nofloor}
\end{equation}
where $A(\gamma)$ denotes the area under $\gamma$ in $W$.
From \cref{eqn:Bn-monotonicity} and \cref{eqn:Bn-opencontour},
the claim will be proven if we can show that for any $\ep>0$, there exists $\delta >0$ such that 
\begin{align}
    \pilegs(\min_{x \in I_0} \underline{\rho}^{\gamma}(x) < \delta L^{1/3}) <\epsilon 
    \label{eqn:prop3-goal}
\end{align}
for $L$ sufficiently large.
\cref{mainthm:BM-for-Ising-polymers} and  \cref{claim:sos-legs-coupling} imply that,
under horizontal rescaling by $|\x|_1 = \lfloor 4L^{2/3} \rfloor$ and vertical rescaling by $\lfloor 4L^{2/3} \rfloor^{1/2}$, $\gamma$ under $\hatpilegs$ converges weakly to a Brownian excursion as $L\to\infty$.
In particular, $\hatpilegs(\min_{x \in I_0} \underline{\rho}^{\gamma}(x) < \delta L^{1/3})$ can be made arbitrarily close to $0$ by taking $\delta$ small enough.  Letting $\tilde \E$ denote expectation with respect to  $\hatpilegs$, we have
\[
    \pilegs\big(\min_{x \in I_0} \underline{\rho}^{\gamma}(x) < \delta L^{1/3}\big) = \frac{\tilde \E\Big[\ind{\min_{x \in I_0} \underline{\rho}^{\gamma}(x) < \delta L^{1/3}} e^{-\frac{\lambda^{(n)}}{L}A(\gamma)+o(1)} \Big]}{\tilde \E\Big[e^{-\frac{\lambda^{(n)}}{L}A(\gamma)+o(1)}  \Big]}\,.
\]
The denominator is bounded away from $0$ because $\lambda^{(n)}$ is bounded for fixed $\beta$, and the Brownian excursion limit of $\gamma$ implies that, for any $c>0$, $\tilde{\E}[e^{-\frac{c}{L}A(\gamma)}]$ converges to $\E[e^{-cA(\xi)}]$ where $\xi$ has the law of a Brownian excursion on $[0,4]$
(see also  \cref{claim:area-tilt} below, where this is explained in more detail in the more delicate setting of \cref{thm:fs-detailed}).

$A(\gamma)$ is unlikely to be much bigger than $L^{2/3}\cdot L^{1/3} = L$ (making the expectation strictly positive). Hence, since the exponential in the numerator is bounded above by $1+o(1)$, we get
\[
\pilegs\big(\min_{x \in I_0} \underline{\rho}^{\gamma}(x) < \delta L^{1/3}\big) \leq C \hatpilegs\big(\min_{x \in I_0} \underline{\rho}^{\gamma}(x) < \delta L^{1/3}\big)\,,
\]
for some $C$ depending only on $\beta$ and $L$ large enough. Since the right-hand side can be made arbitrarily small by taking $\delta$ small enough, as discussed above, this implies \cref{eqn:prop3-goal}, thereby concluding the proof.
\end{proof}

\section{The effective random walk model, free Ising polymers, and limit theorems}
\label{sec:rw-1}

The factorization of $q(\Gamma)$  from~\eqref{eqn:decomposed-weight} and the input from Ornstein-Zernike theory (\cref{prop:oz-weights}) naturally leads one to consider the two-dimensional effective random walk defined in~\cref{subsec:rw-model}. In that subsection, we further expose the connection between this effective random walk and the cone-points of the \emph{free} Ising polymer model in $\H$. 
A key result there is \cref{prop:ist-repulsion}, which enables a quantitative comparison between the partition function $\Gb(\x \given \gamma \subset \H)$ and the probability that the random walk stays in $\H$.
In particular, we will be able to rule out bad events for the cone-points of the free Ising polymer in $\H$ using random walk estimates. 
In the next Section, we develop comparison results between  free and modified Ising polymers such that bad events for the cone-points of modified Ising polymers can also be ruled out using random walk estimates.
Eventually, this will lead to a coupling between the cone-points of the modified Ising polymer and our  random walk.

In the rest of this section, we develop various limit theorems for our random walk that will be used in the analysis of (modified) Ising polymers in \cref{sec:pf-BM-ist}, culminating in the proof of \cref{thm:BM-for-Ising-polymers-restated} (and thus \cref{mainthm:BM-for-Ising-polymers}).

\subsection{The random walk model and free Ising polymers}
\label{subsec:rw-model}
Consider the random walk $(\sS(i))_{i \in \Z_{\geq 0}}$, whose law will be denoted by $\P$, with i.i.d.\ increments $\{X(i) = (X_1(i), X_2(i))\}_{i \geq 1}$ of step distribution $X$, where for any $\vv \in \Z^2$, 
\begin{align}
    \P(X = \vv) = \sum_{ \Gamma \in \A } \Pa(\Gamma) \ind{\X(\Gamma) = \vv}\,.  
    \label{eqn:step-dist}
\end{align}
We will denote by $(\sS_1(\cdot),\sS_2(\cdot))$ the two coordinates of $\sS(\cdot)$. 
For $\u \in \Z^2$, we will write $\P_{\u}$ to denote the law of $\sS(\cdot)$ started from $\sS(0) = \u$.
From \cref{eqn:colinear-expectation}, we have $\E[X] = \alpha \x$, and from \cref{eqn:oz-exp-decay}, we inherit exponential tail decay: for all $\beta$ sufficiently large, there exists a constant $C':= C'(\beta)>0$ such that for all $k \geq 1$, we have
\begin{align}
    \sum_{\vec{v} \in  \Z^2: \norm{\vec{v}}_1 \geq k} \P(X = \vv) \leq C' e^{-\nu_g \beta k}\,.
    \label{eqn:rw-exp-tail}
\end{align}
We remark that the step-distribution of $X$ need not be symmetric in the $y$-coordinate $X_2$ due to the northeast splitting rule (this is the key difference between our random walk and the random walk considered in \cite{IOVW20}, and why we cannot use their random walk results here).
Also note that $\P(X \in \fcone) = 1$, and thus the $x$-coordinate $X_1 >0$ a.s.

We will write $H_{E}^A$ to denote the first hitting time of a set $E \subset \R^2$ by a stochastic process $A_.$, omitting the $A$ from the notation when the stochastic process is clear. For  singleton sets, we will simply write $H_{\u}^A$ rather than $H_{\{\u\}}^A$. Let $\H_{-} := \R \times \R_{<0}$.
In light of the factorization~\eqref{eqn:decomposed-weight}, for fixed $\Gamma^{(L)}$ and $\Gamma^{(R)}$, we will be interested  in our random walk $\sS(\cdot)$ started from $\Gamma^{(L)}$ and conditioned on the event
\[
    \{ H_{\x - \X(\Gamma^{(R)})} < H_{\H_{-}} \} \,.
\]
It will therefore be helpful to obtain an expression for $\P_{\u}(H_{\v} < H_{\H_-})$ in terms of animal weights, for any $\u, \v \in \Z^2$. Towards this end, define the sets 
\begin{align}
    \V^+_{\u, \v} &:= \{ (\vv_1, \dots, \vv_n) \in (\Z^2)^n : n \geq 1, \u+\vv_1+ \dots+ \vv_m = \v, \u+\vv_1+\dots+\vv_i \in \H \ ~\forall 1 \leq i \leq n \} \nonumber \\
    \A^+_{\u,\v} &:= \{ (\Gamma^{(1)}, \dots, \Gamma^{(n)}) \in \A^n : n \geq 1 , \big(\X(\Gamma^{(1)}), \dots, \X(\Gamma^{(n)})\big) \in \V^+_{\u, \v} \}\,,
    \label{def:V+,A+}
\end{align}
and, for a set of animals $E$, introduce 
\[
\fA^+(\u,\v ; E)  = \sum_{n\geq 1} \sum_{(\Gamma^{(1)}, \dots, \Gamma^{(n)}) \in \A_{\u,\v}^+} \prod_{i=1}^n \Pa(\Gamma^{(i)})  \ind{\Gamma^{(1)} \circ \dots \circ \Gamma^{(n)} \in E} \,.
\]
When $E$ is taken to be the set of all possible animals, we'll simply write $\fA^+(\u,\v ; E) = \fA^+(\u,\v)$. 
Observe that
\begin{align}
    \P_{\u}\big(H_{\v} < H_{\H_{-}} \big) 
    = \sum_{n \geq 1} \sum_{(\vv_1, \dots, \vv_n) \in \V^+_{\u, \v}} \prod_{i=1}^n \P(X=\vv_i)
    = 
    \fA^+(\u, \v)\,. \label{eqn:hitting-time-equation}
\end{align}
In words, the above equation states that the probability of the random walk hitting $\v$ before leaving $\H_{-}$ is equal to a sum over all possible products of weights of irreducible animals whose concatenation has all cone-points in $\H$.

Recall $\mathcal{A}(\cdot,\cdot;\cdot)$ from~\eqref{def:A-partition-fn}, and note that for any $\u, \v \in \Z^2$ and any set of animals $E$,
\begin{align}
    \fA(\u,\v;E) \leq \fA^+(\u,\v;E) \,, \label{eqn:U-U+}
\end{align}
since the (only) difference between $\fA^+(\u, \v;E)$ and $\fA(\u, \v;E)$ is that the sum defining $\fA(\u, \v;E)$ restricts to tuples of irreducible animals such that the \emph{entire concatenation} $\u+ \Gamma^{(1)} \circ \cdots \circ \Gamma^{(n)}$ is contained in $\H$, while the sum defining $\fA^+(\u, \v;E)$  requires only the \emph{cone-points} to stay in $\H$. In particular, $\Cpts(\Gamma)$ (viewed as an ordered tuple, see \cref{eqn:cone-points-vector}) under the conditioned law $\bP^{\x}(\cdot \given \Gamma^{(L)}, \Gamma^{(R)}, \gamma \subset \H)$ and the trajectory of the random walk $\P_{\X(\Gamma^{(L)})}(\cdot \given H_{\x- \X(\Gamma^{(R)})} < H_{\H_-})$ only differ due to these requirements.

Ultimately, we will achieve a coupling between $\Cpts(\Gamma)$ with the trajectory of the random walk  because of an entropic repulsion result (\cref{prop:animal-entropic-repulsion}): the cone-points of $\Gamma$ in the ``bulk'' of the strip $[0,N] \times [0,\infty)$ stay far away from the boundary of $\H$ with high probability, so that the aforementioned difference between $\mathcal{A}^+$ and $\mathcal{A}$ becomes negligible. A major step towards showing entropic repulsion is is the following crucial result from \cite{IST15}.\footnote{The notation in \cite{IST15} differs from ours: note that their quantity $\P_{\beta,+}^{\h_\x}(\u, \v)$ (defined in \cite[Eq.~5.10]{IST15}) is exactly our $\fA^+(\u,\v)$, which by \cref{eqn:hitting-time-equation} is nothing but $\P_\u(H_{\v}<H_{\H_-})$.}
\begin{proposition}[{\cite[Theorem~7]{IST15}}]\label{prop:ist-repulsion}
Recall $\nu_g$ from~\cref{prop:oz-weights}. There exists $\overline{\delta} \in (0,\nu_g/4)$ and $\beta_0>0$  such that for all $\beta > \beta_0$, there exists a constant $C:=C(\beta) > 0$ such that 
\[
    \sup_{\substack{\u, \x - \v \in \Y \\ \u, \v \in \H}} e^{-\overline{\delta} \beta ( \norm{\u}_1 + \norm{\x-\v}_1 )} \P_{\u}(H_{\v} < H_{\H_{-}}) \leq C e^{\tau_{\beta}(\x)} \Gb(\x \given \gamma \subset \H)\,.
\]
\end{proposition}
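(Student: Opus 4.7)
The plan is to invert the irreducible decomposition formula~\eqref{eqn:good-decomposed-2} of $\Gb(\x \mid \gamma \subset \H)$ so that the random-walk hitting probability $\P_\u(H_\v < H_{\H_-}) = \fA^+(\u,\v)$ (via~\eqref{eqn:hitting-time-equation}) appears as the middle factor, with the prefactor $e^{\bar\delta\beta(\norm{\u}_1 + \norm{\x-\v}_1)}$ paid for by the $\Pa$-cost of prescribed boundary irreducible pieces. Concretely, for each admissible $(\u,\v)$ one restricts the decomposition of $\Gb(\x \mid \gamma \subset \H)$ to polymers whose left and right irreducible pieces have displacements $\u$ and $\x - \v$ respectively, and then compares the resulting middle sum (an instance of $\fA$) against $\fA^+(\u,\v)$.

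The first step is to construct, for each $\u$ admissible in the sup, an explicit left-irreducible animal $\Gamma^{(L)}_\u \in \AL$ contained in $\H$ with $\X(\Gamma^{(L)}_\u) = \u$ and
\[
\Pa(\Gamma^{(L)}_\u) \geq c_1 e^{-\bar\delta \beta \norm{\u}_1},
\]
and symmetrically $\Gamma^{(R)}_{\x-\v}$. A zig-zag path hugging the ray from $0$ to $\u$, lifted to stay at height $\geq 1$, is manifestly irreducible and has bare weight $q(\Gamma^{(L)}_\u) \asymp e^{-\beta \norm{\u}_1}$; the Ornstein--Zernike tilt $e^{\h_\x \cdot \u}$, combined with $\h_\x \cdot \u \leq \tau_\beta(\u) \leq c\beta\norm{\u}_1$ (by convexity of $\tau_\beta$ and~\cref{prop:surface-tension-facts}), delivers the claimed bound once $\bar\delta \in (0,\nu_g/4)$ is small and $\beta$ is large. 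Inserting both pieces into~\eqref{eqn:good-decomposed-2} yields
\[
\Gb(\x \mid \gamma \subset \H) \;\geq\; c_1^2\, e^{-\tau_\beta(\x) - \bar\delta\beta(\norm{\u}_1 + \norm{\x-\v}_1)}\, \fA(\u',\v'),
\]
where $\u',\v'$ are the mildly lifted inner endpoints of the fixed pieces.

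The principal obstacle is the final comparison $\fA(\u',\v') \geq c_2\, \fA^+(\u,\v)$: the trivial inclusion~\eqref{eqn:U-U+} goes the wrong way, and one must quantitatively upgrade from cone-points-in-$\H$ to whole-polymer-in-$\H$ \emph{without} circularly invoking entropic repulsion (the eventual output of~\cite{IST15}, for which the present proposition is a key input). My plan is to expand $\fA^+(\u,\v)$ as a sum over cone-point trajectories $(\zeta^{(i)})$ with intermediate displacements sampled from $\Pa$, and to bound the conditional $\Pa^{\otimes n}$-probability that every intermediate irreducible piece $\Gamma^{(i)}$ stays in $\H$ from below by $\prod_i (1 - e^{-\nu_g \beta h_i})$, where $h_i = \min(\zeta^{(i)}_2, \zeta^{(i+1)}_2)$, using the mass-gap estimate~\eqref{eqn:oz-exp-decay}. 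After truncating to polymers of length $O(\norm{\x}_1)$ via~\eqref{eqn:ist-length-bound} (so that the number of cone-points is $O(\norm{\x}_1)$) and combining the cone-point density~\eqref{eqn:ist-many-cone-points} with the height-$1$ lift of the endpoints, a direct random-walk estimate shows that $\sum_i e^{-\nu_g \beta h_i}$ remains $O(1)$ on a subset of $\fA^+$-mass at least $c_2>0$. Combining the two displayed inequalities and rearranging gives the proposition with $C = (c_1^2 c_2)^{-1}$; the hardest point is the last step, where one must control random-walk excursions close to the boundary using only the mass-gap and the explicit lift, without yet knowing that such excursions are rare.
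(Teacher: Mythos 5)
The paper does not actually prove this proposition: it is imported verbatim from \cite{IST15} (their Theorem~7), with only the footnoted translation of notation, so what you are attempting is a reproof of an external input rather than a reconstruction of an internal argument. Evaluated on its own terms, your plan has a decisive quantitative gap at step one. You need left/right connectors with displacement $\u$ (resp.\ $\x-\v$) whose $\Pa$-weight is at least $c_1 e^{-\overline{\delta}\beta\norm{\u}_1}$ with $\overline{\delta}<\nu_g/4$. But the \emph{total} $\Pa$-mass of all animals from $0$ to $\u$ is at most $e^{\h_\x\cdot\u}\Gb(\u)\approx e^{\h_\x\cdot\u-\tau_\beta(\u)}$, and for $\u$ near the diagonal boundary of $\fcone$, say $\u=(m,m)$, one has $\h_\x\cdot\u=\tau_\beta(\e_1)\,m\approx\beta m$ while $\tau_\beta(\u)\approx 2\beta m-O(m)$; hence \emph{every} choice of $\Gamma^{(L)}_\u$, zig-zag or otherwise, costs at least roughly $e^{-\beta\norm{\u}_1/2}$. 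The inequality you invoke, $\h_\x\cdot\u\le\tau_\beta(\u)$, points the wrong way: you need a lower bound on $\h_\x\cdot\u-\beta|\gamma^{(L)}|$, and it fails badly off the horizontal axis. So your route can only deliver the proposition with $\overline{\delta}$ of order $1/2$, whereas $\nu_g\lesssim 1/2$ (cone-point-free zig-zags of length $k$ already have $\Pa$-weight about $e^{-\beta k/2}$), so $\overline{\delta}<\nu_g/4$ is out of reach; and with $\overline{\delta}\ge\nu_g$ every downstream application (summing $\Pa(\Gamma^{(L)})e^{\overline{\delta}\beta\norm{\X(\Gamma^{(L)})}_1}$ against the $e^{-\nu_g\beta k}$ tails of \cref{eqn:oz-exp-decay}) collapses.

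Your step (c) is also not justified by the tools you cite. The mass-gap bound \cref{eqn:oz-exp-decay} is unconditional, whereas you need a dip estimate \emph{conditional on the displacement} of each irreducible piece, which requires a matching lower bound on $\Pa(\X(\Gamma)=\vv)$ of the correct exponential order; moreover your product $\prod_i(1-e^{-\nu_g\beta h_i})$ is literally zero when a cone point sits on $\partial\H$ (height $0$ is allowed, since killing occurs only on $\H_-$), and the claim that $\sum_i e^{-\nu_g\beta h_i}=O(1)$ on positive conditional mass needs uniform ballot-type local estimates over \emph{all} admissible $(\u,\v)$, which this paper develops only for endpoint heights up to $N^{1/2-\delta}$. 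A workable reorganization decouples the $(\u,\v)$-dependence entirely: the slack $e^{\overline{\delta}\beta(\norm{\u}_1+\norm{\x-\v}_1)}$ is ample against a ballot-type \emph{upper} bound on $\P_\u(H_\v<H_{\H_-})$, so the whole content reduces to the lower bound $e^{\tau_\beta(\x)}\Gb(\x\given\gamma\subset\H)\gtrsim N^{-3/2}$, with connectors attached near $0$ and $\x$ at height $O(1)$ rather than at $\u$ and $\v$. That lower bound is precisely the no-pinning estimate that constitutes the technical heart of \cite{IST15}; your lifting/conditional-dip ideas are of the right kind for it, but as organized your argument does not prove the stated proposition.
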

\cref{prop:ist-repulsion} combined with the exponential tail decay of $\Gamma^{(L)}$ and $\Gamma^{(R)}$~\eqref{eqn:oz-exp-decay} (note the significance of $\bar{\delta} < \nu_g/4$) will allow us to eliminate bad events  for $\Cpts(\Gamma)$ under the free Ising polymer law via estimates for random walks in a half-space, which are significantly easier to obtain compared to directly analyzing the polymer law.   

\subsection{Limit theorems for our random walk, confined to the half-space}
\label{subsec:rw-limit-theorem-statements}
We now state two important limit theorems,  \cref{thm:our-rw-inputs,thm:rw-invariance}, for our random walk $\sS(\cdot)$ confined to positive half-space, though our results apply to a more general class of two-dimensional random walks. 
Our  random walk $\sS(\cdot)$ is on $\Z^2$ and has i.i.d.\ increments $X$ with mean $(\mu,0)$ for some $\mu>0$, exponential tails, and satisfies $X_1 >0$ a.s. 
Let us also write $\Var(X) = (\sigma_1^2, \sigma_2^2)$.

Let $V_1$ denote the unique positive harmonic function for the one-dimensional random walk $\sS_2$ killed upon leaving $(0,\infty)$ satisfying 
\[
\lim_{a \to \infty} \frac{V_1(a)}{a}  = 1 \,.
\]
The uniqueness of $V_1$ was established by Doney in~\cite{Doney98}.
Similarly, let $V_1'$ denote the analogous harmonic function for the random walk $-\sS_2$.

\begin{theorem}\label{thm:our-rw-inputs}
    Fix any $A>0$ and any $\delta \in (0,1/2)$.
    Uniformly over $k \in [N/\mu - A\sqrt{N}, N/\mu + A \sqrt{N}]$ and $u, v \in (0, N^{1/2-\delta}] \cap \N$, 
    we have the following results. 
    \begin{enumerate}
        \item There exist constants $\mathbf{C}>0$ and $\kappa := \kappa(X)>0$ such that
    \begin{align}
        \P_{(0,u)}\Big(\sS(k) = (N,v) \,,\, H_{\H_{-}} > k \Big) \sim \mathbf{C} \kappa \frac{V_1(u) V_1'(v)}{k^2} \exp\Big(-\frac{(N-k\mu)^2}{2k \sigma_1^2 }\Big)\,.
        \label{eqn:our-rw-ballot}
    \end{align}
Furthermore, 
if
\[ p_{N,A} := \P_{(0,u)}\Big( H_{(N,v)}  \in [\tfrac{N}{\mu} - A\sqrt{N}, \tfrac{N}{\mu}+ A\sqrt{N}] \given H_{(N,v)} < H_{\H_{-}} \Big)\]
then 
\begin{align}
    \lim_{A \to \infty} \liminf_{N\to\infty} p_{N,A}= 1\,. \label{eqn:our-rw-goodk}
\end{align}
 
    \item  
    Let $\widehat{\sS}_2(\cdot)$ denote the denote the linear interpolation of the points $(n, \sS_2(n))_{n\in\N}$, viewed as an element of $C[0,\infty)$.
    The family of conditional laws 
    \[
        \Q_{u,v}^k(\cdot) := \P_{(0,u)}\bigg( \Big(\frac{\widehat{\sS}_2(tk)}{\sigma_2\sqrt{k}}\Big)_{t\in [0,1]} \in \cdot \given \sS(k) = (N,v) \,,\, H_{\H_{-}}>k \bigg)
    \]
    converges as $k \to \infty$ to the law of the standard Brownian excursion in $C[0,1]$.
\end{enumerate}
\end{theorem}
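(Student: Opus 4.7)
The plan is to prove Part 1 as a two-dimensional ballot-type local central limit theorem (LCLT), then derive Part 2 as an invariance principle via the classical two-step strategy of finite-dimensional convergence plus tightness, both in the spirit of Caravenna and Denisov--Wachtel for random walks confined to half-spaces. The key structural observation is that the half-space constraint $H_{\H_{-}} > k$ depends only on the vertical coordinate $\sS_2$ (centered, variance $\sigma_2^2$), while the hitting of the horizontal target $N$ is governed by the positive-drift walk $\sS_1$ (drift $\mu$, variance $\sigma_1^2$), so one can hope to decompose the problem accordingly.

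For Part 1, I would perform a joint characteristic-function/Edgeworth expansion at the point $(N,v)$, which produces an LCLT factor of order $k^{-1/2} e^{-(N-k\mu)^2/(2k\sigma_1^2)}$ from the horizontal coordinate and, simultaneously, the classical Doney--Caravenna ballot asymptotic of order $V_1(u)V_1'(v)/k^{3/2}$ from $\sS_2$ conditioned to stay positive and land at $v$. Their product gives the announced $1/k^2$ decay, with $\mathbf{C} = (2\pi \sigma_1 \sigma_2)^{-1}$ and $\kappa$ identifiable from the normalization of $V_1, V_1'$. The exponential tail bound \cref{eqn:rw-exp-tail} supplies both the smoothness of the characteristic function and the moment control needed to make the expansion uniform for $u, v \in (0, N^{1/2-\delta}]$. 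The concentration statement \cref{eqn:our-rw-goodk} then follows by summing \cref{eqn:our-rw-ballot} over $k$: the Gaussian factor suppresses contributions outside $|k - N/\mu| \leq A\sqrt{N}$ by $e^{-\Omega(A^2)}$, so the conditional probability of the good window tends to $1$ as $A \to \infty$.

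For Part 2, finite-dimensional convergence of the rescaled vertical trajectory $(\widehat{\sS}_2(tk)/(\sigma_2\sqrt{k}))_{t\in[0,1]}$ under $\Q_{u,v}^k$ is obtained by iterating Part 1 on sub-intervals $[t_{j-1}k, t_j k]$ induced by a fixed grid $0 = t_0 < \dots < t_m = 1$: in the scaling limit, the product of half-space transition kernels, with the Doney harmonic functions rescaling appropriately, matches the transition density of the 3D Bessel bridge and hence the finite-dimensional distribution of the standard Brownian excursion. Tightness is obtained from a Kolmogorov--Chentsov moment estimate on $\sS_2$ under $\Q_{u,v}^k$, where the uniformity from Part 1 combined with the exponential moments of $X$ provides the necessary fourth-moment control on increments and thus on the modulus of continuity.

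The main obstacle I anticipate is executing Part 1 with the stated uniformity while accounting for the possible correlation between $X_1$ and $X_2$ (unlike \cite{IOVW20}, whose $y$-symmetric step distribution permits a direct factorization). Handling the correlation requires the full two-dimensional joint characteristic-function expansion rather than a clean product, which is standard in the random-walks-in-cones literature but demands careful bookkeeping to obtain uniformity up to the polynomial scales $u, v \asymp N^{1/2-\delta}$. Once this uniform joint LCLT is secured, both the hitting-time concentration and the Brownian excursion invariance principle of Part 2 follow from essentially classical arguments.
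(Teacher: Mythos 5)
Your Part 1 hinges on the claim that a joint characteristic-function/Edgeworth expansion at the endpoint $(N,v)$ ``simultaneously'' produces the ballot factor $V_1(u)V_1'(v)/k^{3/2}$, and this step is not a valid mechanism: the constraint $H_{\H_{-}}>k$ is an event about the entire trajectory of $\sS_2$, not about $\sS(k)$, so no expansion of the endpoint law can see it. The harmonic-function factor comes from the fluctuation theory of the walk conditioned to stay positive, which must be combined with local estimates at intermediate times --- precisely the Denisov--Wachtel machinery. If $X_1$ and $X_2$ were independent, the probability would factor exactly into a one-dimensional ballot LCLT for $\sS_2$ times a one-dimensional LCLT for $\sS_1$ and your plan (with $\mathbf{C}=(2\pi\sigma_1\sigma_2)^{-1}$) would essentially be the argument of \cite{IOVW20}; with correlation there is no such factorization, and the correlation is not ``bookkeeping'': already in the Gaussian surrogate model, conditioning on the positive excursion of $\sS_2$ ending at $v=o(\sqrt k)$ changes both the normalizing constant and, when $\rho\neq 0$, the variance governing the horizontal Gaussian factor (to $\sigma_1^2(1-\rho^2)$), so a naive product formula cannot be repaired term by term. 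The paper resolves exactly this point by a different device: apply the linear map $M$ normalizing the covariance, which turns $\sS$ into an isotropic walk confined to a \emph{tilted} half-space $\H_{\n}$ whose normal coordinate is $\sS_2/\sigma_2$, and then prove versions of the Denisov--Wachtel half-space results that are uniform over start/end heights up to $k^{1/2-\delta}$ and over the $\np$-coordinate of the endpoint (\cref{prop:rwcones-tau,prop:thm3,prop:rwcones-thm5,prop:rwcones-ballot}); the genuinely new work is this uniformity, not a new expansion. Your proposal is missing the mechanism that replaces it.

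Part 2 has gaps even granting Part 1. Iterating \cref{eqn:our-rw-ballot} over a time grid does not give the finite-dimensional distributions, because at interior grid times the conditioned walk sits at heights of order $\sqrt k$, outside the range $u,v\le N^{1/2-\delta}$ covered by Part 1; one needs interior local estimates for the walk conditioned to stay positive (in the paper, the uniform versions of Theorems 3 and 5 of \cite{DenisovWachtel15}, i.e.\ \cref{prop:thm3,prop:rwcones-thm5}, fed into the Duraj--Wachtel change-of-measure identity with the density $h$ as in \cref{prop:2nd-component-genrw-limit}). Likewise, tightness near $t=0,1$, where the conditioning degenerates, is handled there by absolute continuity with respect to the Brownian meander on $[0,1-\delta]$ plus time reversal at the right endpoint, not by a Kolmogorov--Chentsov fourth-moment bound, which does not control the endpoint behaviour of an excursion. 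Finally, \cref{eqn:our-rw-goodk} does not follow by ``summing \cref{eqn:our-rw-ballot} over $k$'': the asymptotic is only available for $k$ within $A\sqrt N$ of $N/\mu$, so the complementary values of $k$ require separate upper bounds (large-deviation and staying-positive estimates), which the paper imports via \cref{prop:genrw-ballot} by following the proof of Theorem~5.1 of \cite{IOVW20}.
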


Next, we describe the diffusive limit of our random walk.
For $\ell, n\in \N$ and any ordered $\ell$-tuple of points $\mathcal S$ in $\R^2$, enumerated as 
\[
    \mathcal S = \big( (s_1(1), s_2(1)), (s_1(2), s_2(2)) , \dots, (s_1(\ell),s_2(\ell))\big) 
\]
and satisfying $s_1(i) < s_1(i+1)$ for each $i \in [1,\ell-1]$, we 
define $\fJ_n(\mathcal S)$ to be the linear interpolation through the  diffusively-rescaled points 
\begin{align}
    \fJ_n(\mathcal S) := \Big(\frac{1}{n}s_1(i), \frac{1}{\sigma\sqrt{n}}s_2(i) \Big)_{i=1}^k
    \label{def:J} 
\end{align}
where $\sigma^2 := \sigma_2^2/\mu$ (c.f. \cref{rk:curvature}).
Consider now the linear interpolation corresponding to our random walk $\sS(\cdot)$ up to time $H_{(N,v)}$:
\[
    \mathfrak{e}_N^{\sS,v} := \fJ_N\Big( (\sS(i))_{i =0}^{H_{(N,v)}} \Big)\,,
\]
Viewing $\mathfrak{e}_N^{\sS,v}$ as a random element of $C[0,1]$,
\Cref{thm:rw-invariance} below gives 
a convergence result to the Brownian excursion on $[0,1]$. 

\begin{theorem}\label{thm:rw-invariance}
Fix any $\delta \in (0,1/2)$. Uniformly over $u, v \in (0,N^{1/2-\delta}] \cap \N$, we have the following. 
The  family of conditional laws
\begin{align*}
    \mathbf{Q}_{u,v}^N (\cdot) := \P_{(0,u)}\Big(\big(\mathfrak{e}_N^{\sS,v}(t))_{t\in [0,1]} \in \cdot \given  H_{(N,v)} < H_{\H_{-}} \Big) \,, 
\end{align*}
converges weakly as $N \to \infty$  to the law of the standard Brownian excursion in $(C[0,1], \|\cdot\|_{\infty})$.
\end{theorem}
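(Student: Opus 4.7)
The plan is to reduce Theorem~\ref{thm:rw-invariance} to Theorem~\ref{thm:our-rw-inputs}(2), by first localizing the random hitting time $K := H_{(N,v)}$ and then absorbing the rescaling of the horizontal coordinate into a uniformly negligible time-change. Since $X_1>0$ almost surely the horizontal coordinate is strictly increasing, so $\{K=k\}=\{\sS(k)=(N,v)\}$, and conditional on $\{K=k,\,H_{\H_-}>K\}$ the process $(\widehat{\sS}_2(i))_{i\leq k}$ has exactly the law $\Q_{u,v}^k$ appearing in Theorem~\ref{thm:our-rw-inputs}(2). Given $\varepsilon>0$, \cref{eqn:our-rw-goodk} lets us choose $A$ large enough that $K\in K_A := \N\cap[N/\mu-A\sqrt{N},\,N/\mu+A\sqrt{N}]$ with conditional probability $\geq 1-\varepsilon$, uniformly in $u,v\in(0,N^{1/2-\delta}]\cap\N$ and all $N$ large. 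Since Theorem~\ref{thm:our-rw-inputs}(2) is stated uniformly in $u,v$ and since every $k\in K_A$ tends to infinity with $N$, its conclusion applies uniformly for $k\in K_A$; a bounded-convergence argument mixing in $K$ then yields that $t\mapsto \widehat{\sS}_2(tK)/(\sigma_2\sqrt{K})$ converges weakly, conditionally on $\{K\in K_A\}$, to the standard Brownian excursion $e$ on $[0,1]$.

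Next, I would control the discrepancy between the step-indexed parametrization and the $x$-coordinate parametrization that defines $\mathfrak{e}_N^{\sS,v}$. Writing $M_i := \sS_1(i)-i\mu$, the exponential tails in \cref{eqn:rw-exp-tail} combined with a Hoeffding-type maximal inequality give $\max_{i\leq K}|M_i|=O_p(\sqrt{N\log N})$; together with $|K\mu-N|\leq A\mu\sqrt{N}$ on $\{K\in K_A\}$ this yields
\[
\sup_{0\leq i\leq K}\Big|\tfrac{\sS_1(i)}{N}-\tfrac{i}{K}\Big| \;=\; O_p\!\Big(\sqrt{(\log N)/N}\Big).
\]
Hence the abscissa map $i/K\mapsto \sS_1(i)/N$ is a uniformly $o_p(1)$ perturbation of the identity on $[0,1]$. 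Writing $\phi$ for the piecewise-linear interpolant through $(i/K,\sS_1(i)/N)_{i=0}^K$ and letting $g$ denote the piecewise-linear interpolant through $(i/K,\sS_2(i)/(\sigma\sqrt{N}))_{i=0}^K$, a direct check shows $\mathfrak{e}_N^{\sS,v} = g\circ\phi^{-1}$; the previous display, monotonicity of $\phi$, and the asymptotic equicontinuity of $g$ (inherited from its weak convergence in Step~1 via Skorokhod representation) then give $\|\mathfrak{e}_N^{\sS,v}-g\|_\infty = o_p(1)$.

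Finally, I would match the scales. The step-indexed curve $g$ equals
\[
t\;\longmapsto\;\frac{\widehat{\sS}_2(tK)}{\sigma\sqrt{N}}\;=\;\frac{\sigma_2}{\sigma}\sqrt{\tfrac{K}{N}}\cdot\frac{\widehat{\sS}_2(tK)}{\sigma_2\sqrt{K}},
\]
and on $\{K\in K_A\}$ we have $K/N\to 1/\mu$ in probability; by the choice $\sigma^2=\sigma_2^2/\mu$ in~\cref{def:J}, the prefactor tends to $1$. Combined with Step~1, this yields weak convergence of $g$, and therefore of $\mathfrak{e}_N^{\sS,v}$, to $e$ conditional on $\{K\in K_A\}$. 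Since $\varepsilon$ was arbitrary, the full conditional law $\mathbf{Q}_{u,v}^N$ converges weakly to $e$ in $(C[0,1],\|\cdot\|_\infty)$, uniformly in $u,v\in(0,N^{1/2-\delta}]\cap\N$.

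The main substantive obstacle is the uniformity in $k\in K_A$ of the Brownian-excursion limit in Theorem~\ref{thm:our-rw-inputs}(2), which is what allows us to condition out the random $K$; this uniformity is already built into the formulation of that theorem. A cleaner alternative route, if one wishes to avoid the mixing entirely, would be to establish finite-dimensional convergence of $\mathfrak{e}_N^{\sS,v}$ directly from the local-CLT estimate~\cref{eqn:our-rw-ballot} (applied to joint densities via a Markov decomposition), and then prove tightness in $C[0,1]$ through modulus-of-continuity bounds derived from the same ballot-type estimates.
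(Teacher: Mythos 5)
Your proposal follows essentially the same route as the paper: localize $H_{(N,v)}$ to the window $[N/\mu - A\sqrt N, N/\mu + A\sqrt N]$ via \cref{eqn:our-rw-goodk}, invoke the fixed-$k$ excursion limit of \cref{thm:our-rw-inputs}(2) uniformly over that window, control the horizontal fluctuations of $\sS_1$ by exponential tails plus a Hoeffding bound (this is the paper's \cref{claim:rw-x-size}), and then carry out the reparametrization and the matching of $\sigma^2=\sigma_2^2/\mu$, a step the paper delegates to the proof of \cite[Theorem~5.3]{IOVW20} but which you write out explicitly. Two minor quantitative caveats, neither affecting validity: the maximal bound on $\max_{i\le K}|\sS_1(i)-\mu i|$ must be transferred past the conditioning by comparing the unconditional bad-event probability with the ballot-type lower bound from \cref{eqn:our-rw-ballot} (as in the paper's proof of \cref{claim:rw-x-size}), and with increments truncated at $(\log N)^2$ Hoeffding only yields such a superpolynomially small bound at level $N^{1/2+\eta}$ (or $\sqrt N$ times a power of $\log N$) rather than your stated $O_p(\sqrt{N\log N})$ — but since the argument only needs uniform $o_p(N)$ control of the abscissa, this changes nothing.
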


\cref{thm:our-rw-inputs,thm:rw-invariance} are proved in \cref{subsec:pf-our-rw-inputs}. These results
are modifications of a ballot-type theorem from~\cite{DenisovWachtel15} and an invariance principle from~\cite{DurajWachtel20}, respectively. Those important results were proved for a much broader class of random walks in very general cones; however, they were only proved for fixed start-points and end-points. The two results above have been modified to hold uniformly in appropriate ranges of start-points and end-points, which is crucial for our application. 
We remark that \cite[Section~5]{IOVW20} states and proves results that similarly modify the results of~\cite{DenisovWachtel15} and~\cite{DurajWachtel20}. However, first, the random walk they consider is symmetric in the $y$-coordinate, which simplifies their analysis; and second, their proofs do not always make explicit the aforementioned uniformity, in particular for the $x$-coordinate of the end-point of their random walk.
In \cref{sec:rw-half-space}, where our random walk estimates are proved, we take care to  describe explicitly how we modify the proofs in \cite{DenisovWachtel15} and \cite{DurajWachtel20}  to handle  uniformity in a broad range of start- and end-points.

\section{Proof of \texorpdfstring{\cref{mainthm:BM-for-Ising-polymers}}{Theorem 1.2} via random walk coupling}\label{sec:pf-BM-ist}

Throughout this Section, set $\x := \x(N) = (N,0) \in \Z^2$. Recall the notation set forth in \cref{subsec:notation}. 

In this section we prove \cref{thm:BM-for-Ising-polymers-restated}, thereby proving \cref{mainthm:BM-for-Ising-polymers}, via a certain coupling between $\Cpts(\Gamma)$ under the modified Ising polymer law $\bP_D^{\x}$ and the effective random walk, where $D = Q$ or $\H$. This is accomplished as follows.

In \cref{subsec:rw-model}, we described the particular role played by the free Ising polymer model in the half-space due to its direct connection with an effective random walk. 
It is then crucial for our analysis to be able to  compare the modified Ising polymers we are interested in with free Ising polymers.
In \cref{subsec:comparison-result}, we state our needed comparison results, \cref{the:ist-main} (the main theorem of \cite{IST15}) and \cref{prop:partition-fn-comparison}, the latter being an extension of the former that allows us to handle the $D = Q$ case. The key consequence of these comparisons is that contour events of small probability in one model are still small in the other model (\cref{cor:small-contour-probs}).

In \cref{subsection:proof-partition-fn-comparison}, we prove \cref{prop:partition-fn-comparison}, showing along the way that $\Gamma$ under the modified Ising polymer law in $D$ typically has many cone-points (\cref{lem:generalized-length-cpts}). In \cref{subsection:proof-surface-tension-equiv}, we prove \cref{prop:surface-tension-equiv} in a similar way to \cref{prop:partition-fn-comparison}.

In \cref{subsec:bounded-animals}, we show that each (left/right-)irreducible piece $\Gamma^{(i)}$ has size bounded by $(\log N)^2$, $i \in \{L, 1, \dots, n, R\}$, w.h.p.\ under $\bP_D^{\x}$.

In \cref{subsec:entropic-repulsion}, we show the key result enabling a coupling between $\Cpts(\Gamma)$ and the effective random walk, \cref{prop:animal-entropic-repulsion}. This result, which is an expression of entropic repulsion, states that, with high $\bP_D^{\x}$-probability, each cone-point of $\Gamma$ stays above height $N^{\delta}$ in the large interval $[N^{4\delta}, N-N^{4\delta}]$, for any $\delta \in (0,1/4)$. This is achieved via  \cref{cor:small-contour-probs}, which reduces the result to the result in the free polymer model; and \cref{prop:ist-repulsion}, which will allow us to reduce the result to an entropic repulsion result for the random walk conditioned to stay in $\H$.

The entropic repulsion, the shape of the cones $\fcone$ and $\bcone$, and the bound $|\Gamma^{(i)}| \leq (\log N)^2$ implies that the entire animal stays bounded away from $\partial\H$, the boundary of the half-plane,
 in the strip $[N^{4\delta}, N-N^{4\delta}] \times [0,\infty)$. The first consequence of this is that the weight modifications play no role in this strip; that is, for each cluster $\cC$ of $\Gamma$ in this strip, we have $\Phi_D(\cC;\gamma) = \Phi(\cC;\gamma)$. Thus, the portion of $\Gamma$ contained in this strip behaves like a free Ising polymer, which can be related to the random walk. 
The second consequence is that the requirement that $\gamma$ stays in $D$ in this strip becomes trivial. In particular, following the discussion after \cref{eqn:U-U+}, we will be able to couple the cone-points of $\Gamma$ in this strip with an effective random walk. This is achieved in \cref{subsec:random-walk-coupling}.

\cref{thm:BM-for-Ising-polymers-restated} is finally proved in \cref{subsec:pf-BM-for-Ising-polymers}.

\subsection{Comparing Ising polymers with modified and unmodified weights}
\label{subsec:comparison-result}
We begin with the main theorem of \cite{IST15}, which states that the partition functions of the Ising polymers in $\H$, with and without modifications, are equivalent up to a $\beta$-dependent constant.

\begin{theorem}[{\cite[Theorem~1]{IST15}}]
\label{the:ist-main}
For all $\beta>0$ large enough, we have
\begin{align}
    \tau_{\beta}(\x) = - \lim_{N\to\infty} \frac{1}{\norm{\x}} \log \Gb_{\H}(\x)\,. \label{eqn:ist-main-tension}
\end{align}
In fact, 
there exist constants $C_1:=C_1(\beta), C_2:=C_2(\beta)>0$ such that for all $\beta$ large enough, 
\begin{align}
    C_1\, \Gb(\x \given \gamma \subset \H) \leq \Gb_{\H}(\x) \leq C_2\, \Gb(\x \given \gamma \subset \H)\,. \label{eqn:ist-main-partitionfn}
\end{align}
\end{theorem}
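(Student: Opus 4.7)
The theorem asserts two things: \eqref{eqn:ist-main-tension} identifies the exponential rate of the half-space partition function with the full-plane surface tension $\tau_\beta$, and \eqref{eqn:ist-main-partitionfn} sharpens this to a multiplicative comparison between $\Gb_\H(\x)$ and $\Gb(\x\mid\gamma\subset\H)$. The natural plan is to prove \eqref{eqn:ist-main-partitionfn} first, and then deduce \eqref{eqn:ist-main-tension} by transferring the known rate of the free $\Gb(\x)$ through $\Gb(\x\mid\gamma\subset\H)$ via \eqref{eqn:ist-main-partitionfn}. The starting observation is that for any $\gamma\subset\H$, a cluster $\cC$ contributing to $\log(q_\H(\gamma)/q(\gamma))$ must contain some $\y\in\Delta_\gamma$ and reach outside $\H$, which forces $d(\cC)\geq\mathrm{dist}(\y,\partial\H)$. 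The decay bound \eqref{eqn:decay-decorations} together with the cluster-counting estimate from \eqref{eqn:c-beta-bound} then gives
\[
\Bigl|\log\tfrac{q_\H(\gamma)}{q(\gamma)}\Bigr|\,\leq\,C(\beta)\sum_{\y\in\Delta_\gamma}e^{-c\beta\,\mathrm{dist}(\y,\partial\H)},
\]
so that $q_\H(\gamma)/q(\gamma)$ is pinched above and below by $\beta$-dependent constants whenever $\gamma$ does not linger near $\partial\H$ in this weighted sense. Crucially, the coarse pointwise bound \eqref{eqn:modified-weight-comparison} of $e^{6e^{-\chi\beta}|\gamma|}$ is too lossy to be used by itself, so one has to exploit the decay in $\mathrm{dist}(\y,\partial\H)$.

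The task thus reduces to showing that both $\Gb_\H(\cdot)/\Gb_\H(\x)$ and $\bP^\x(\cdot\mid\gamma\subset\H)$ are dominated by such ``nice'' polymers, meaning those that additionally satisfy $|\gamma|\leq(1+\varepsilon)N$ and have at least $\delta_0 N$ cone-points. The length and cone-count conditions are handled by \cref{lem:ist-length-cpts,prop:many-cone-points} on the free side; the corresponding bound on the modified side follows by the crude transfer \eqref{eqn:modified-weight-comparison}, whose $e^{6e^{-\chi\beta}|\gamma|}$ factor is dominated by the $e^{-\nu_0\beta|\gamma|}$ suppression for $\beta$ large. To rule out too many cone-points landing near $\partial\H$, I would invoke \cref{prop:ist-repulsion} to dominate the relevant polymer event by a random-walk event for the effective walk $\sS$ conditioned to stay in $\H$ and hit $\x$, then use the ballot estimate \cref{thm:our-rw-inputs}: a walk with zero vertical drift, starting and ending on $\partial\H$ and conditioned positive for time $\asymp N$, has bulk height $\Omega(\sqrt N)$ and contributes only $O(1)$ to $\sum_i e^{-c\beta\,\mathrm{dist}(\sS(i),\partial\H)}$ near each endpoint. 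Summing the uniform ratio bound over nice $\gamma$ and absorbing the exponentially small bad contribution into the constants yields \eqref{eqn:ist-main-partitionfn}. For \eqref{eqn:ist-main-tension}, the upper bound on the rate of $\Gb(\x\mid\gamma\subset\H)$ is immediate from $\Gb(\x\mid\gamma\subset\H)\leq\Gb(\x)$; for the matching lower bound, decompose $\Gb(\x)$ via \eqref{eqn:decomposed-weight}, so that, up to the exponentially small contributions of $\Gamma^{(L)},\Gamma^{(R)}$ via \eqref{eqn:oz-exp-decay}, the half-space event becomes a walk event for $\sS$; the ballot bound \eqref{eqn:our-rw-ballot} shows this has probability $\asymp N^{-O(1)}$, a subexponential factor that leaves the $\tau_\beta$ rate invariant.

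The main obstacle is the ``cone-points near $\partial\H$'' estimate underlying the nice-polymer reduction. One wants to bootstrap from the polymer to the effective random walk through \cref{prop:ist-repulsion}, and then read off the required repulsion from ballot-type random-walk estimates; but \cref{prop:ist-repulsion} is itself of random-walk flavor, so the logical ordering must be checked carefully to avoid circularity---the inputs of \cref{sec:rw-half-space} must be established without invoking \cref{the:ist-main} itself. A secondary technical subtlety is the need for a \emph{uniform} (not merely typical) bound on the endpoint contributions: $\gamma$ genuinely dips to $\partial\H$ near $\ostar$ and $\x$, and one must verify that the weight correction $\sum_{\y} e^{-c\beta\,\mathrm{dist}(\y,\partial\H)}$ coming from these endpoint excursions has an $O(1)$ tail, so that it can be absorbed into the multiplicative constants $C_1,C_2$ rather than introducing an $N$-dependent prefactor.
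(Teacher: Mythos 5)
First, a point of order: the paper does not prove \cref{the:ist-main} at all --- it is quoted verbatim as \cite[Theorem~1]{IST15}, and everything downstream (\cref{prop:partition-fn-comparison}, \cref{cor:small-contour-probs}, \cref{lem:generalized-length-cpts} for $D=\H$, the entropic repulsion for modified polymers) is built \emph{on top of} it. So there is no internal proof to compare against; your proposal is an attempt to reconstruct the main theorem of \cite{IST15}. Its architecture (irreducible decomposition, control of the modification cost through $\operatorname{dist}(\y,\partial\H)$, the repulsion input \cref{prop:ist-repulsion}, ballot estimates for the effective walk) does mirror the genuine strategy of \cite{IST15}, and your deduction of \eqref{eqn:ist-main-tension} from \eqref{eqn:ist-main-partitionfn} plus a polynomial lower bound on $\Gb(\x\mid\gamma\subset\H)$ is sound (it is essentially \cref{lem:partition-function-size}).

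The genuine gap is in the step you yourself flag as the main obstacle, and it is not merely a matter of checking the logical ordering. Your boundary-avoidance argument (via \cref{prop:ist-repulsion} and the ballot estimates) controls cone-points near $\partial\H$ only under the \emph{free} conditioned law $\bP^{\x}(\cdot\mid\gamma\subset\H)$; to conclude \eqref{eqn:ist-main-partitionfn} you also need the analogous statement under the \emph{modified} law, i.e.\ that $\Gb_{\H}$ is dominated by polymers for which $\sum_{\y\in\Delta_\gamma}e^{-c\beta\,\mathrm{dist}(\y,\partial\H)}=O(1)$. The only transfer mechanisms available in the paper are \cref{cor:small-contour-probs} (which presupposes \eqref{eqn:ist-main-partitionfn} --- circular) and the crude bound \eqref{eqn:modified-weight-comparison}, which, as you note, only yields comparisons up to $e^{O(e^{-\chi\beta})N}$ and hence cannot produce the constant-order two-sided bound; the same objection applies to your claim that the length/cone-point bounds pass to the modified side ``by the crude transfer.'' What is missing is precisely the content of \cite{IST15}: an argument excluding \emph{pinning}, i.e.\ ruling out that the boundary modifications (which can be of either sign relative to $\Phi'$) make boundary-hugging polymers dominant in $\Gb_{\H}$. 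That requires an energy--entropy comparison --- bounding the maximal per-contact gain from the modified decorations against the cost of returning to $\partial\H$, the latter quantified through \cref{prop:ist-repulsion} and an excursion decomposition --- and no such argument appears in your sketch. Without it, the claim that ``nice'' polymers dominate $\Gb_{\H}(\x)$ is unsupported, and \eqref{eqn:ist-main-partitionfn} does not follow.
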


Though we only consider $\H$ in this article, \cite[Theorem~1]{IST15} is stated more generally for half-spaces whose interior normal has argument lying in $[-\pi/4, 3\pi/4]$.

The next proposition can be seen as an extension of \cref{the:ist-main} to address  polymers in $Q$.
\begin{proposition}\label{prop:partition-fn-comparison}
    For all $\beta>0$ sufficiently large, there exists a constant $C:= C(\beta)>0$ such that for any  $N\in \N$,
    \begin{align}
        C^{-1}\,\Gb(\x \given \gamma \subset \H ) \leq \Gb(\x \given \gamma \subset Q) \leq C\,\Gb(\x \given \gamma \subset \H ) \,,\label{eqn:partition-fn-comparison-freeQH}
    \end{align}
    and 
    \begin{align}
        C^{-1}\,\Gb(\x \given \gamma \subset Q ) \leq \Gb_Q(\x) \leq C\Gb(\x \given \gamma \subset Q ) \,.\label{eqn:partition-fn-comparison}
    \end{align}
\end{proposition}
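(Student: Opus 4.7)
The proof will extend Theorem~\ref{the:ist-main} (IST15's main comparison) from the half-plane $\H$ to the box $Q$, and it naturally splits into the two displayed inequalities.

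For \cref{eqn:partition-fn-comparison-freeQH}, the upper bound $\Gb(\x\given \gamma\subset Q) \leq \Gb(\x\given \gamma\subset\H)$ is immediate from $Q\subset\H$ and non-negativity of the free weights $q(\gamma)$. For the lower bound I will show that contours in $\H$ that exit $Q$ contribute only an exponentially small fraction of $\Gb(\x\given \gamma\subset\H)$. A contour $\gamma\subset\H$ with $\gamma\not\subset Q$ must visit some site $\y\in\H$ with $\y_2\geq N$, $\y_1\geq N$, or $\y_1<0$. Using \cref{prop:many-cone-points} (which guarantees many cone-points in typical configurations) I can, at the cost of an event of probability $e^{-\nu\beta N}$, assume that $\gamma$ possesses a cone-point $\y$ of the above type; by the product structure \cref{eqn:q-factorization} the weight then factors exactly as $q(\gamma_1)q(\gamma_2)$ with $\gamma_1:\ostar\to\y$ and $\gamma_2:\y\to\x$ both in $\H$. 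Summing over $\y$ and invoking Theorem~\ref{the:ist-main} and \cref{eqn:partition-fn-tau},
\[
\Gb(\x\given\gamma\subset\H,\gamma\not\subset Q)\ \lesssim\ \sum_{\y\notin Q}e^{-\tau_\beta(\y)-\tau_\beta(\x-\y)}\,.
\]
The strong triangle inequality (\cref{p:tau-str-tri-ineq}), together with the compactness of the unit sphere and the analyticity of $\tau_\beta$ (\cref{p:tau-analytic}), yields a uniform lower bound $\tau_\beta(\y)+\tau_\beta(\x-\y)\geq \tau_\beta(\x)+c N$ for all $\y\notin Q$, so the above sum is of order $e^{-\tau_\beta(\x)}e^{-cN/2}$ after absorbing a polynomial factor. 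Since $\Gb(\x\given\gamma\subset Q)\gtrsim e^{-\tau_\beta(\x)}$ (by a Peierls-type construction of a near-straight contour in $Q$), this is negligible and gives the desired lower bound.

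For \cref{eqn:partition-fn-comparison}, I will compare $q_Q(\gamma)$ and $q(\gamma)$ contour-by-contour. The modified weight satisfies $q_Q(\gamma)/q(\gamma)=\exp\Big(\sum_{\cC\cap\Delta_\gamma\neq\emptyset}(\Phi_Q-\Phi)(\cC;\gamma)\,\one_{\cC\not\subset Q}\Big)$, and the only clusters contributing have $d(\cC)\geq \mathrm{dist}(\cC,\partial Q)$. Using the exponential decay \cref{eqn:decay-decorations} and the analog of \cref{prop:many-cone-points} for modified polymers (\cref{rk:cone-points-general}), I restrict attention to a good event on which $\gamma$ has $\Theta(N)$ cone-points, all but $O(1)$ of which lie in the bulk of $Q$, at distance $\geq (\log N)^2$ from $\partial Q\setminus\partial\H$; on this event the ratio $q_Q/q$ is bounded above and below by constants depending only on $\beta$. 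The contribution of the complementary event can be controlled exactly as in Step~1 (and by the crude a priori bound \cref{eqn:modified-weight-comparison}), using that excursions to height $\geq (\log N)^2$ are exponentially improbable by the same strong-triangle-inequality argument. Finally, the modifications along $\partial\H$ itself are handled by IST15's \cref{the:ist-main}, which shows $\Gb_\H(\x)\asymp \Gb(\x\given\gamma\subset\H)$; combined with Step~1 and the bulk comparison this gives \cref{eqn:partition-fn-comparison}.

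\noindent\textbf{Main obstacle.} The chief technical difficulty lies in Step~2, specifically in disentangling the weight modifications along the bottom side of $Q$ (inherited from $\partial\H$) from those along the top and vertical sides, and avoiding circularity with later sections of the paper. The cleanest resolution is to fix the cone-point skeleton first, use \cref{prop:ist-repulsion} to rule out pinning to $\partial\H$, and only then perform the cluster-level weight comparison on the bulk portion of the polymer—where the modifications near the top and sides of $Q$ are $e^{-\Omega(\log N)^2}$-close to the unmodified weights and can be absorbed into the constant $C$ uniformly in $N$.
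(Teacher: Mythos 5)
Your plan for the lower bound in \cref{eqn:partition-fn-comparison-freeQH} contains a genuine gap: it is simply not true that every contour $\gamma\subset\H$ with $\gamma\not\subset Q$ pays an exponential-in-$N$ price. A contour can exit $Q$ through the left (or right) vertical side by an $O(1)$ backtrack near its endpoint, e.g.\ visiting $\y=(-1,1)$; for such $\y$ the strong triangle inequality gives $\tau_\beta(\y)+\tau_\beta(\x-\y)-\tau_\beta(\x)=O(1)$, not $cN$, so your claimed uniform gain ``$\geq cN$ for all $\y\notin Q$'' fails, and indeed the exiting contours carry a constant (in $N$) fraction of the weight --- this is precisely why the proposition only asserts comparability up to a constant $C(\beta)$. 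The reduction via \cref{prop:many-cone-points} is also unjustified: many cone-points overall does not put a cone-point \emph{inside} the excursion out of $Q$, and in the typical scenario the exit occurs entirely within the cone-point-free boundary piece $\Gamma^{(L)}$ or $\Gamma^{(R)}$. Finally, your normalization is off by the entropic-repulsion factor: a near-straight contour only gives $\Gb(\x\given\gamma\subset Q)\geq e^{-(\beta+O(e^{-\chi\beta}))N}$, and in fact $\Gb(\x\given\gamma\subset\H)\asymp e^{-\tau_\beta(\x)}N^{-3/2}$ (cf.\ \cref{lem:partition-function-size}), so even a correct bound of the ``bad'' contribution by $C e^{-\tau_\beta(\x)}$ would be $N^{3/2}$ times too large; splitting at a cone-point and bounding each half by a full-plane partition function loses exactly the factor one cannot afford to lose. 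The paper's route avoids all of this: it first restricts, at cost $\ep\,\Gb_\H(\x)$, to animals with $\Theta(N)$ cone-points and boundary pieces of size $\leq K$ (\cref{lem:G-H-smallGammaL,lem:generalized-length-cpts}, whose proofs use \cref{prop:ist-repulsion} for the \emph{free} model in $\H$ to retain the $N^{-3/2}$), and then performs a surgery --- replacing $\Gamma^{(L)},\Gamma^{(R)}$ by cluster-less monotone staircase paths --- which forces the whole contour into $Q$ while paying only a $K$-dependent constant; no ``negligibility of exiting contours'' is ever claimed.

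For \cref{eqn:partition-fn-comparison}, your contour-by-contour comparison of $q_Q$ and $q$ on a good event is in the right spirit (the paper encodes the same idea algebraically via $q_Q(\Gamma)=q_Q(\Gamma^{(L)})\,q_Q(\Gamma^{(R)})\prod_i q_\H(\Gamma^{(i)})$ in \cref{eqn:qQ-factorization}), but your way of justifying the good event is circular at this stage: ruling out pinning or proximity to the boundary for the \emph{modified} measure in $Q$ via \cref{prop:ist-repulsion} is exactly what \cref{cor:small-contour-probs} and \cref{prop:animal-entropic-repulsion} deliver later, and those rest on \cref{eqn:partition-fn-comparison} itself. (Also, ``excursions to height $\geq(\log N)^2$ are exponentially improbable'' cannot be right as stated --- typical heights are of order $\sqrt N$; what must be controlled is proximity to the top and lateral sides of $Q$ and the size of backtracks, and the strong-triangle argument you invoke for this is the flawed one from your Step~1.) To repair the argument you would essentially have to reproduce the paper's scheme: prove the cone-point/length estimates for the modified models (\cref{lem:generalized-length-cpts}), reduce to bounded boundary pieces, and do the staircase replacement, using \cref{the:ist-main} only for the $\H$-modifications, where it is a legitimate external input.
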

The proof of \cref{prop:partition-fn-comparison} is given in \cref{subsection:proof-partition-fn-comparison}.

One can readily imply from \cref{eqn:ist-main-partitionfn,eqn:partition-fn-comparison} the following, relating $\bP^{\x}(\cdot \mid \gamma\subset D)$ and $\bP_D^{\x}$, exactly as \cite{IST15} derived its analogue from \cref{eqn:ist-main-partitionfn} (see the discussion below Eq.~(2.7) in that paper): 
\begin{corollary}\label{cor:small-contour-probs}
Let $D = \H$ or $Q$. There exists a constant $C:= C(\beta)>0$ such that for any set of contours $A$ contained in $D$, 
\[
\bP_D^{\x}(A) \leq C \sqrt{\bP^{\x}(A \given \gamma \subset D)} \quad \text{ and } \quad \bP^{\x}(A \given \gamma \subset D) \leq C\sqrt{\bP_D^{\x}(A)} \,.
\]
\end{corollary}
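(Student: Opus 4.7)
The plan is to combine Cauchy--Schwarz with the partition-function comparisons of \cref{the:ist-main} and \cref{prop:partition-fn-comparison}, by recognizing that $q_D(\gamma)^2/q(\gamma)$ is itself (up to the exponential factor) the weight of an auxiliary modified Ising polymer with base decoration $\Phi$ and modified decoration $\Phi_D^{(2)} := 2\Phi_D - \Phi$.

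For the first inequality, I would apply Cauchy--Schwarz to $\bP_D^{\x}(A)=\sum_{\gamma\subset D,\,\gamma\in A}\bP_D^{\x}(\gamma)$ against the probabilities $\bP^{\x}(\gamma\mid\gamma\subset D)$, obtaining
\[
\bP_D^{\x}(A)^2 \;\leq\; \bP^{\x}(A\mid\gamma\subset D)\cdot\sum_{\substack{\gamma\subset D\\ 0\to\x}} \frac{\bP_D^{\x}(\gamma)^2}{\bP^{\x}(\gamma\mid\gamma\subset D)} \;=\; \bP^{\x}(A\mid\gamma\subset D)\cdot\frac{\Gb(\x\mid\gamma\subset D)}{\Gb_D(\x)^2}\,\Gb_D^{(2)}(\x),
\]
where $\Gb_D^{(2)}(\x):=\sum_{\gamma\subset D,\,0\to\x}q_D(\gamma)^2/q(\gamma)$. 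The key observation is that
\[
\frac{q_D(\gamma)^2}{q(\gamma)} = \exp\!\Bigl(-\beta|\gamma|+\sum_{\cC\cap\nabla_\gamma\neq\emptyset}\Phi_D^{(2)}(\cC;\gamma)\Bigr),
\]
so $\Gb_D^{(2)}$ is the partition function of a genuine modified Ising polymer (base $\Phi$, modification $\Phi_D^{(2)}$). To verify $\Phi_D^{(2)}$ is a legitimate modified decoration: for $\cC\subset D$ one has $\Phi_D=\Phi$ by \cref{p:M1}, hence $\Phi_D^{(2)}=\Phi$ there, so \cref{p:M1} holds; and $|\Phi_D^{(2)}|\leq 3\,e^{-\chi\beta(d(\cC)+1)}$, which for $\beta$ large is $\leq e^{-\chi'\beta(d(\cC)+1)}$ for any $\chi'\in(1/2,\chi)$, so \cref{p:M2} holds. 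Thus \cref{the:ist-main} (when $D=\H$) or \cref{eqn:partition-fn-comparison} of \cref{prop:partition-fn-comparison} (when $D=Q$), applied to the auxiliary polymer, yields $\Gb_D^{(2)}(\x)\leq C\,\Gb(\x\mid\gamma\subset D)$.

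Combining with the matching lower bound $\Gb_D(\x)\geq C'\Gb(\x\mid\gamma\subset D)$ from the same results, the Cauchy--Schwarz correction factor is $O(1)$ and the first inequality follows. The reverse inequality is entirely symmetric: Cauchy--Schwarz on $\bP^{\x}(A\mid\gamma\subset D)$ against the weights $\bP_D^{\x}(\gamma)$ leads to the auxiliary decoration $2\Phi-\Phi_D$, which again satisfies \cref{p:M1} (it equals $\Phi$ for $\cC\subset D$) and \cref{p:M2} (same bound), so the same application of \cref{the:ist-main}/\cref{prop:partition-fn-comparison} closes the argument.

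The only (minor) technical point I anticipate is verifying \cref{p:M2} for the auxiliary decorations $\Phi_D^{(2)}$ and $2\Phi-\Phi_D$; this is what ultimately fixes the required threshold on $\beta$, but is otherwise routine since $\chi>1/2$ leaves room to absorb the multiplicative constants into a slightly smaller $\chi'>1/2$.
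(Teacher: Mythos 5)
Your proposal is correct and follows essentially the same route as the paper: the paper also uses Cauchy--Schwarz against the weight ratio $q_D(\gamma)/q(\gamma)$ (phrased there as a tilted expectation $\mathbf{E}[\one_A e^{Y_\gamma}]$) and controls the resulting second-moment partition function by observing it is itself a modified Ising polymer with decoration $2\Phi_D'-\Phi'$, to which \cref{the:ist-main} and \cref{prop:partition-fn-comparison} apply. Your verification of \cref{p:M1,p:M2} for the auxiliary decorations, and the symmetric treatment of the reverse inequality, is exactly the implicit content of the paper's argument.
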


For instance, if $Y_\gamma := \sum_{\cC \cap \nabla_{\gamma} \neq \emptyset} \Phi_D'(\cC;\gamma) - \Phi'(\cC;\gamma)$ and $\mathbf{E}[\cdot]$ denotes expectation under $\bP^{\x}(\cdot\mid\gamma\subset D)$, then 
$\mathbf{E}[e^{Y_{\gamma}}]$ is just $\Gb_D(\x)/\Gb(\x \mid  \gamma \subset D)$, and therefore $\bP_D^\x(A) = \mathbf{E}[\one_{A} e^{Y_\gamma} ] / \mathbf{E}[e^{Y_\gamma}]$.
Notice $C^{-1} \leq \mathbf{E}[e^{Y_{\gamma}}] \leq C$ by \cref{eqn:ist-main-partitionfn} (for $D=\H$) and \cref{eqn:partition-fn-comparison} (for $D=Q$). 
By Cauchy--Schwarz,  $\bP_D^\x(A) \leq C \sqrt{\bP^\x(A\mid \gamma\subset D )}\sqrt{\mathbf{E}[e^{2Y_\gamma}]}$, and the last expectation is  uniformly bounded (as $\mathbf{E}[e^{2Y_{\gamma}}] =\tilde{\Gb}_D(\x)/\Gb(\x \mid \gamma \subset D)$, where $\tilde{\Gb}_D(\x)$ is defined w.r.t.\ $\tilde{\Phi}'_D(\cC;\gamma) := 2\Phi_D'(\cC;\gamma)-\Phi'(\cC;\gamma)$). 

A particular consequence of \cref{cor:small-contour-probs} is that contour events that hold with probability tending to $1$ in the unmodified models (which are much easier to study) still hold with probability tending to $1$ in the modified models. The most significant application of this fact is in the proof of entropic repulsion under $\bP_Q^{\x}(\cdot)$ (\cref{prop:animal-entropic-repulsion}), which, as explained at the start of this Section, is a crucial step towards the random walk coupling achieved in \cref{subsec:random-walk-coupling}.

\begin{remark}
In \cref{cor:small-contour-probs}, we specified that the event $A$ should be a set of contours. The reason this was emphasized is that in \cref{eqn:ising-polymer-animals} we extended the measure $\bP_D^{\x}$ from contours to animals, but \cref{cor:small-contour-probs} does not hold for general animal events $A$.
Indeed, it is \emph{not} true that small animal events in one model stay small in another model, as the ratio $|q(\Gamma)/q_D(\Gamma)|$ is not bounded away from $0$ nor $\infty$ (as there are no such bounds on $|\Phi'_Q(\cC;\gamma)|/|\Phi'(\cC;\gamma)|$ in the generality of weight modifications that we must consider).
\end{remark}

\subsection{Proof of \cref{prop:partition-fn-comparison}}
\label{subsection:proof-partition-fn-comparison}

We will need two auxiliary lemmas. 
The  first, 
\cref{lem:G-H-smallGammaL}, states that in any model where at least two cone-points exist, the partition function is dominated by animals whose 
first (left) and last (right) irreducible pieces have size of order $1$. 

\begin{lemma}\label{lem:G-H-smallGammaL}
Let $D$ be either $Q$ or $\H$. For any $\ep \in (0,1)$, there exists $K_{\ep} := K_{\ep,\beta} >0$ such that
\begin{align}
    \Gb_{D}(\x \given |\Cpts(\Gamma)|\geq 2 \,,\, \max(|\Gamma^{(L)}|, |\Gamma^{(R)}|) > K_{\ep}) \leq \ep \Gb_{\H}(\x) \label{eqn:smallGammaL-D}
\end{align}
and 
\begin{align}
    \Gb(\x \given \gamma \subset D  \,,\,  |\Cpts(\Gamma)|\geq 2 \,,\,  \max(|\Gamma^{(L)}|, |\Gamma^{(R)}|) > K_{\ep}) \leq \ep \Gb(\x \given \gamma \subset \H)\,. \label{eqn:smallGammaL-unmodified}
\end{align}
\end{lemma}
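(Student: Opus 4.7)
My plan is to leverage the factorization of animal weights around cone-points \eqref{eqn:good-decomposed-2} (resp.\ the analog of \eqref{eqn:qQ-factorization} for $D=Q$) together with the exponential $\Pa$-mass decay on $\AL \cup \AR$ guaranteed by \eqref{eqn:oz-exp-decay}. Via \eqref{eqn:good-decomposed-2}, both sides of \eqref{eqn:smallGammaL-unmodified} take the form
\[
e^{-\tau_\beta(\x)}\!\!\sum_{\substack{\Gamma^{(L)} \in \AL,\,\Gamma^{(R)} \in \AR\\ \gamma^{(L)},\gamma^{(R)}\subset D}}\Pa(\Gamma^{(L)})\,\Pa(\Gamma^{(R)})\,\fA\bigl(\X(\Gamma^{(L)}),\x-\X(\Gamma^{(R)})\bigr),
\]
differing only by the extra restriction $\max(|\Gamma^{(L)}|,|\Gamma^{(R)}|) > K_\ep$ on the LHS. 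For the modified version \eqref{eqn:smallGammaL-D}, I would use \eqref{eqn:qQ-factorization} and then pass from $q_Q,q_\H$ to the unmodified $q$ via \eqref{eqn:modified-weight-comparison}, which introduces only a factor $\exp(6 e^{-\chi\beta}|\cdot|)$ per irreducible piece; for $\beta$ large this is absorbed into the $e^{-\nu_g\beta|\cdot|}$ decay in \eqref{eqn:oz-exp-decay} and reduces the modified case to the free one up to constants.

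By the symmetry between $\AL$ and $\AR$, it suffices to bound the contribution from $|\Gamma^{(L)}|>K_\ep$. The workhorse is a Harnack-type comparison
\[
\fA\bigl(\X(\Gamma^{(L)}),\x-\X(\Gamma^{(R)})\bigr) \leq e^{C(|\Gamma^{(L)}|+|\Gamma^{(R)}|)}\,\fA(0,\x),
\]
valid for $\X(\Gamma^{(L)}),\X(\Gamma^{(R)}) \in \fcone$ with $\|\X(\cdot)\|_1 \leq |\Gamma^{(\cdot)}|$ and a constant $C$ independent of $\beta$. Assuming this, splitting the LHS sum by $|\Gamma^{(L)}|=k$ and invoking \eqref{eqn:oz-exp-decay}, the ``large-$\Gamma^{(L)}$'' contribution is at most
\[
C'\,e^{-\tau_\beta(\x)}\fA(0,\x)\sum_{k>K_\ep} e^{-(\nu_g\beta-C)k} \;\leq\; C''\,e^{-\tau_\beta(\x)}\fA(0,\x)\,e^{-\tfrac12\nu_g\beta K_\ep}
\]
for $\beta$ large. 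On the other hand, restricting to a canonical pair $\Gamma^{(L)}_0\in\AL$, $\Gamma^{(R)}_0\in\AR$ of $O(1)$ size gives the matching lower bound $\Gb(\x\mid\gamma\subset\H,|\Cpts|\geq 2) \gtrsim e^{-\tau_\beta(\x)}\fA(0,\x)$, and by \cref{the:ist-main} together with \cref{prop:many-cone-points} this is comparable up to a multiplicative constant to $\Gb_\H(\x)$. The resulting ratio is at most $C''' e^{-\tfrac12\nu_g\beta K_\ep}$, which drops below $\ep$ once $K_\ep$ is chosen large enough in $\ep$ and $\beta$.

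The main obstacle is establishing the Harnack comparison on $\fA$ at constant (rather than merely sub-exponential, as afforded by \cref{the:ist-main}) precision, uniformly over $\|\X(\Gamma^{(L)})\|_1 \leq |\Gamma^{(L)}|$ that may be arbitrarily large but is controlled by the exponential tail. A clean route uses the random-walk representation $\fA^+(\u,\v)=\Prob_\u(H_\v<H_{\H_-})$ from \eqref{eqn:hitting-time-equation}--\eqref{eqn:U-U+} combined with the local-limit estimate \eqref{eqn:our-rw-ballot} of \cref{thm:our-rw-inputs}; an alternative, more elementary derivation uses $\Pa = e^{\h_{\x}\cdot\X}q$ and the convexity of $\tau_\beta$ at $\h_{\x}\in\partial\Kb$ to control the requisite shifts of partition functions, exploiting that $\nu_g\beta\gg C$ for $\beta$ large so that the polynomial-in-$k$ or linear-exponential factor $e^{Ck}$ is dominated by the tail.
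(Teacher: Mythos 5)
Your overall architecture (factorize at the cone points, use the exponential tails \eqref{eqn:oz-exp-decay} of $\Gamma^{(L)},\Gamma^{(R)}$, and reduce to a comparison of the bulk quantity at shifted endpoints with the one at $(0,\x)$) is the same as the paper's, but the step you yourself flag as the main obstacle --- the Harnack-type bound $\fA(\u,\v)\le e^{C(\|\u\|_1+\|\x-\v\|_1)}\fA(0,\x)$, equivalently a \emph{lower} bound on $\fA(0,\x)$ (or on $\Gb(\x\mid\gamma\subset\H)$) in terms of a random-walk quantity --- is exactly the no-pinning input, and neither of your proposed routes supplies it. The identities \eqref{eqn:hitting-time-equation}--\eqref{eqn:U-U+} only give $\fA\le\fA^+=\P_{\u}(H_{\v}<H_{\H_-})$, i.e.\ an \emph{upper} bound of the constrained-animal quantity by a random-walk probability; on the right-hand side of your comparison you need the reverse direction, since in $\fA$ the whole contour (not just the cone points) must stay in $\H$, and a priori that constraint could cost a factor exponentially small in $N$ (pinning to the wall). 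The ballot estimate \eqref{eqn:our-rw-ballot} says nothing about this, and convexity of $\tau_\beta$ cannot see it either. This is precisely what \cref{prop:ist-repulsion} (Ioffe--Shlosman--Toninelli's Theorem~7) provides, and the paper's proof runs through it: it bounds the bulk sum by $\Gb_{\H}(\u\to\v)$, invokes the interior-endpoint version \eqref{eqn:generalized-IST} of \cref{the:ist-main}, and then proves \eqref{eqn:G-H-smallGammaL-2} by factorizing the $\u\to\v$ polymer at its cone points and applying \cref{prop:ist-repulsion} to $\fA(\u',\v')\le\P_{\u'}(H_{\v'}<H_{\H_-})$, converting the walk probability into $e^{\tau_\beta(\x)}\Gb(\x\mid\gamma\subset\H)$ at cost $e^{\overline{\delta}\beta(\|\u\|_1+\|\x-\v\|_1)}$ with $\overline{\delta}<\nu_g/4$, which \eqref{eqn:oz-exp-decay} absorbs. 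Without such an input, your scheme also leaves unproved the companion bound $\fA(\u_0,\v_0)\gtrsim\fA(0,\x)$ needed for your canonical-pair lower bound on the denominator.

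A second, independently fatal, issue is the reduction of the modified case to the free one via \eqref{eqn:modified-weight-comparison}: the factor $\exp(6e^{-\chi\beta}|\gamma^{(i)}|)$ cannot be ``absorbed per irreducible piece'' into \eqref{eqn:oz-exp-decay}, because that estimate penalizes only the boundary pieces $\Gamma^{(L)},\Gamma^{(R)}$; the bulk pieces carry $q_{\H}$-weights in both \eqref{eqn:q-factorization} and \eqref{eqn:qQ-factorization}, and comparing them to free weights term by term accumulates a loss $e^{c(\beta)N}$ over $\sum_i|\gamma^{(i)}|\ge N$ bonds, which is unacceptable when the target is a constant-factor bound $\ep\,\Gb_{\H}(\x)$ with $K_\ep$ independent of $N$. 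The paper avoids this by bounding the bulk by $\Gb_{\H}(\u\to\v)$ and using \eqref{eqn:generalized-IST} --- again an IST-type comparison, now at general endpoints --- to pass to free weights at constant cost.
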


\begin{remark}\label{rk:upgrade-H-to-D}
    Observe that once \cref{prop:partition-fn-comparison} is proved, the domain $\H$ in the right-hand side of \cref{eqn:smallGammaL-D,eqn:smallGammaL-unmodified} may be replaced by $D$, for $D$ either $Q$ or $\H$.
\end{remark}

The second auxiliary result, \cref{lem:generalized-length-cpts}, extends \cref{lem:ist-length-cpts}, and thus \cref{prop:many-cone-points}  thanks to \cref{rk:cone-points-general}, to the modified models on $Q$ and $\H$. 

\begin{lemma}\label{lem:generalized-length-cpts}
Let $D$ be either $Q$ or $\H$. Fix any $\ep,\delta \in (0,1)$.
There exist positive constants $\beta_0, \delta_0$, and $\nu_0>0$ such that  for all $\beta \geq \beta_0$, there exists $C:= C(\beta)>0$ such that the following hold uniformly over all $N \in \N$ and $r \geq 1+\ep$:
    \begin{align}
        \Gb_D\big(\x \given |\gamma| \geq r \norm{\x}_1\big) 
        &\leq Ce^{-\nu_0 \beta r \norm{\x}_1} \Gb_{D}(\x)
        \label{eqn:generalized-length-bound}
        \\
        \Gb_D\big(\x \given |\Cpts(\gamma)|< 2\delta_0 \norm{\x}_1\big)
        &\leq C e^{- \nu_0 \beta\norm{\x}_1} \Gb_D(\x)
        \label{eqn:generalized-many-cone-points}
    \end{align}
In particular, there exists $\nu>0$ such that for all $\beta \geq \beta_0$, there exists $C:= C(\beta)>0$ such that the following holds uniformly over all $N\in\N$:
\begin{equation}
\Gb_D(\x \given |\Cpts(\Gamma)| < \delta_0\|\x\|_1) 
\leq
Ce^{-\nu \beta \|\x\|_1}\Gb_D(\x) \,.
\label{eqn:generalized-many-animal-cone-points}
\end{equation}
\end{lemma}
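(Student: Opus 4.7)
I would prove the three bounds in order: first the length estimate \cref{eqn:generalized-length-bound}, then the contour cone-point bound \cref{eqn:generalized-many-cone-points}, and finally the animal cone-point bound \cref{eqn:generalized-many-animal-cone-points}. The animal bound is essentially for free: once the two contour-level bounds hold with $\Gb_D$ in place of $\Gb$, \cref{rk:cone-points-general} tells us that the argument of \cref{prop:many-cone-points} goes through verbatim, since its cluster-counting relies only on the uniform weight-upper-bound \cref{eqn:clusterWeightUB}, which holds for $q_D$ by \cref{rk:positive-decorations} and \cref{p:M2}. The real work is therefore in the two contour bounds for $\Gb_D$, and in parsing the $D=Q$ case we will need to interleave with \cref{prop:partition-fn-comparison}.

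\textbf{Length bound and the $D=\H$ cone-point bound.} The decay condition \cref{p:M2} combined with the bookkeeping behind \cref{eqn:c-beta-bound} gives the pointwise estimate $q_D(\gamma)\leq \exp(-(\beta-3e^{-\chi\beta})|\gamma|)$, uniformly in $\gamma$ and in $D\in\{\H,Q\}$. Summing over the at most $C^k$ lattice paths of length $k$ starting at the origin and then over $k\geq r\|\x\|_1$ yields $\Gb_D(\x\mid |\gamma|\geq r\|\x\|_1)\leq C\exp(-(\beta-c')r\|\x\|_1)$ with $c'=c'(\beta)\to 0$. For the denominator, the straight-line contour $\gamma_0$ from $\0$ to $\x=(N,0)$ lies in both $\H$ and $Q$, so $\Gb_D(\x)\geq q_D(\gamma_0)\geq \exp(-(\beta+3e^{-\chi\beta})\|\x\|_1)$; taking the ratio and using $r\geq 1+\ep$ yields \cref{eqn:generalized-length-bound} for $\beta$ large, with a constant $\nu_0$ depending only on $\ep$. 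For the cone-point bound with $D=\H$ I split on whether $|\gamma|\leq R\|\x\|_1$ or $|\gamma|>R\|\x\|_1$ with $R=1+\ep$: the tail is handled by the length bound, and on the bulk the weight comparison \cref{eqn:modified-weight-comparison} gives $q_\H(\gamma)\leq e^{6e^{-\chi\beta}R\|\x\|_1}q(\gamma)$, so dropping the restriction $\gamma\subset\H$ and invoking \cref{eqn:ist-many-cone-points} yields an upper bound of $C\exp(6e^{-\chi\beta}R\|\x\|_1-\nu_0\beta\|\x\|_1)\Gb(\x)$. To replace $\Gb(\x)$ by $\Gb_\H(\x)$ I would invoke \cref{eqn:ist-main-tension} together with \cref{eqn:partition-fn-tau}: both quantities satisfy $\log(\cdot)=-\tau_\beta(\x)(1+o(1))$ as $\|\x\|_1\to\infty$, so $\log(\Gb(\x)/\Gb_\H(\x))=o(\|\x\|_1)$ and hence $\Gb(\x)\leq e^{\eta\|\x\|_1}\Gb_\H(\x)$ for any $\eta>0$ and $\|\x\|_1$ large. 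Choosing $\eta$ and $6e^{-\chi\beta}R$ much smaller than $\nu_0\beta$ closes the $D=\H$ case, with the small-$\|\x\|_1$ regime absorbed into the prefactor $C(\beta)$.

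\textbf{The $D=Q$ cone-point bound and the main obstacle.} Two applications of \cref{eqn:modified-weight-comparison} give $|\log(q_Q(\gamma)/q_\H(\gamma))|\leq 12e^{-\chi\beta}|\gamma|$ for every $\gamma\subset Q$, so the same split-by-length argument reduces the cone-point bound for $\Gb_Q$ to the one for $\Gb_\H$ just proved, modulo a multiplicative comparison $\Gb_\H(\x)\leq C(\beta)\Gb_Q(\x)$. This last comparison is exactly half of \cref{prop:partition-fn-comparison}, and it is the main obstacle, because \cref{prop:partition-fn-comparison} cannot in turn use the $D=Q$ case of the present lemma. My proposed ordering is therefore (i) length bound for both $D$, (ii) $D=\H$ cone-point bound, (iii) \cref{prop:partition-fn-comparison}, (iv) $D=Q$ cone-point bound. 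Establishing (iii) amounts to ruling out that a typical polymer under $\bP_\H^{\x}$ exits $Q$; the horizontal direction is controlled for free by the cone structure (each cone-point lies in the forward cone from the previous one, so the $x$-coordinates are monotone in $[0,N]$), and the vertical direction, which is the delicate part, can be handled by combining the length bound (reaching height $N$ and returning to $y=0$ forces $|\gamma|\geq 3N+2$, which sits deep in the length-bound tail) with the $D=\H$ estimates from step (ii) and the irreducible-piece decomposition \cref{eqn:q-factorization}, mirroring the IST proof strategy adapted to a box.
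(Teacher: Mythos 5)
Your proposal is correct in outline and follows essentially the same architecture as the paper: the $D=\H$ case is deduced from the unmodified bounds of \cref{lem:ist-length-cpts} plus the fact (via \cref{the:ist-main} and \cref{eqn:partition-fn-tau}) that $\log\Gb(\x)$, $\log\Gb(\x\given\gamma\subset\H)$ and $\log\Gb_\H(\x)$ all share the leading order $-\tau_\beta(\x)$; the $D=Q$ case is a transfer through an $\H$-versus-$Q$ comparison; and \cref{eqn:generalized-many-animal-cone-points} comes for free from \cref{rk:cone-points-general}, exactly as in the paper. Your two deviations are harmless and arguably more self-contained: you prove \cref{eqn:generalized-length-bound} by crude path-counting against the trivial lower bound $\Gb_D(\x)\geq e^{-(\beta+c)\norm{\x}_1}$ (the paper uses the same device in \cref{claim:sos-legs-coupling}, and derives the length bound instead from \cref{lem:ist-length-cpts}), and you transfer from free to modified weights via \cref{eqn:modified-weight-comparison} under a length cap, where the paper uses the Cauchy--Schwarz estimate \cref{cor:small-contour-probs}; both give the claimed exponential smallness since $e^{O(e^{-\chi\beta})\norm{\x}_1}$ is beaten by $e^{-\nu_0\beta\norm{\x}_1}$ for $\beta$ large.

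The one point needing care is your step (iii). In the paper, this lemma and \cref{prop:partition-fn-comparison} are proved in tandem: \cref{eqn:partition-fn-comparison-freeQH} and \cref{eqn:partition-fn-comparison-3} are established \emph{before} the $D=Q$ case (using only the $D=\H$ bounds and \cref{lem:G-H-smallGammaL}), but the upper bound $\Gb_Q(\x)\leq C\,\Gb(\x\given\gamma\subset Q)$ in \cref{eqn:partition-fn-comparison} is proved \emph{after} and uses the $D=Q$ case of this lemma. So you cannot schedule the full proposition before your step (iv); you may only invoke the halves available pre-$Q$ — which, fortunately, suffice: $\Gb_\H(\x)\leq C\,\Gb_Q(\x)$ follows from \cref{eqn:partition-fn-comparison-1-1} by replacing $\Gamma^{(L)},\Gamma^{(R)}$ with cluster-less monotone staircase paths, so that the resulting animals lie entirely in $Q$, where the free, $\H$-modified and $Q$-modified weights coincide, with the replacement cost controlled by the exponential tails \cref{eqn:oz-exp-decay}. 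Relatedly, your heuristic for (iii) slightly misplaces the difficulty: it is not long vertical excursions (the length bound indeed kills those), nor is it literally true that the typical polymer stays in $Q$ — its first and last irreducible pieces and their clusters may exit $Q$ near the endpoints, and one must also compare modified with free weights — so a surgery/partition-function comparison, not a containment statement, is what is required. Alternatively, you can bypass $\Gb_\H(\x)\lesssim\Gb_Q(\x)$ entirely, as the paper does: restrict the $D=\H$ bounds to $\{\gamma\subset Q\}$, replace the denominator by $C\,\Gb_\H(\x\given\gamma\subset Q)$ via \cref{eqn:partition-fn-comparison-3}, and then apply the two-sided comparison \cref{eqn:compare-weights-Q} to numerator and denominator under the length cap.
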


We postpone the proof of \cref{lem:G-H-smallGammaL} to the end of the subsection. We prove \cref{prop:partition-fn-comparison} and \cref{lem:generalized-length-cpts} in tandem, proceeding as follows:
\begin{enumerate}[(a)]
\item \label[part]{it:partition-fn-comparison-cpts-part-a} We begin by proving \cref{lem:generalized-length-cpts} for $D=\H$.
\item \label[part]{it:partition-fn-comparison-cpts-part-b} We then prove the first part of \cref{prop:partition-fn-comparison} (\cref{eqn:partition-fn-comparison-freeQH}), as well as its analog for $\Gb_{\H}$, i.e.,
\begin{align}
    C^{-1}\Gb_{\H}(\x)\leq \Gb_{\H}(\x \given \gamma \subset Q) \leq C\Gb_{\H}(\x)\,. \label{eqn:partition-fn-comparison-3}
\end{align}
\item \label[part]{it:partition-fn-comparison-cpts-part-c} These results are then used to prove \cref{lem:generalized-length-cpts} for $D=Q$.
\item \label[part]{it:partition-fn-comparison-cpts-part-d} Finally, we prove \cref{eqn:partition-fn-comparison}, thereby completing the proof of \cref{prop:partition-fn-comparison}.
\end{enumerate}
\begin{proof}[Proof of \cref{it:partition-fn-comparison-cpts-part-a}]
For each bound, we will first apply \cref{lem:ist-length-cpts}, then \cref{the:ist-main}.
Recall that \cref{lem:ist-length-cpts} states that the analogous bounds 
to \cref{eqn:generalized-length-bound} and \cref{eqn:generalized-many-cone-points} hold for $\Gb(\x)$.
\cref{eqn:ist-main-tension} implies that $\log \Gb(\x)$ and $\log \Gb(\x\given \gamma \subset \H)$ have the same leading order; thus, for some $\nu_0' \in (0, \nu_0]$, we have 
\begin{align*}
        \Gb\big(\x \given \gamma \subset \H \,,\, |\gamma| \geq r \norm{\x}_1\big) 
        &\leq 
        \Gb\big(\x \given |\gamma| \geq r \norm{\x}_1\big)
        \leq Ce^{-\nu_0' \beta r \norm{\x}_1} \Gb(\x \given \gamma \subset \H)\,, \text{ and}
        \\
        \Gb\big(\x \given \gamma \subset \H \,,\,  \Cpts(\gamma) < 2\delta_0 \norm{\x}_1 \big) 
        &\leq 
        \Gb\big(\x \given |\Cpts(\gamma)| < 2\delta_0 \norm{\x}_1 \big)
        \leq C e^{-\nu_0' \beta \norm{\x}_1} \Gb(\x \given \gamma \subset \H)\,. 
\end{align*}
\cref{eqn:generalized-length-bound} and \cref{eqn:generalized-many-cone-points} for $D =\H$ now follow from \cref{cor:small-contour-probs},
and \eqref{eqn:generalized-many-animal-cone-points}   from \cref{rk:cone-points-general}. 
\end{proof}

\begin{proof}[Proof of \cref{it:partition-fn-comparison-cpts-part-b}]
The upper-bounds in both \cref{eqn:partition-fn-comparison-freeQH,eqn:partition-fn-comparison-3} hold with $C = 1$, since $Q \subset \H$.  

We now  show the lower bound for \cref{eqn:partition-fn-comparison-3}. The lower bound in \cref{eqn:partition-fn-comparison-freeQH}  is obtained simply by replacing $q_{\H}$ with $q$ in what follows.
By \cref{lem:G-H-smallGammaL,lem:generalized-length-cpts} (for $D=\H$), we have the following: for any $\ep \in(0,1)$, there exists $K:=K(\beta,\ep)>0$ such that for $\|\x\|_1$ large,
\begin{align}
    (1-\ep)\Gb_{\H}(\x) \leq \Gb_{\H}\big(\x \given |\Cpts(\Gamma)| \geq \delta_0 \norm{\x}_1 \,,\,  \max(|\Gamma^{(L)}|, |\Gamma^{(R)}|)\leq K \big) \,, \label{eqn:partition-fn-comparison-1-1}
\end{align}
Now, using the weight factorization of $q_{\H}(\Gamma)$ in \cref{eqn:q-factorization}, we may express the right-hand-side of the above (for $N=\norm{\x}_1 \geq 2/\delta_0$) as
\begin{multline*}
    \!\!\!\!\!
    \sum_{\substack{\Gamma^{(L)} \in \AL\\ \gamma^{(L)} \subset \H}}\sum_{\substack{\Gamma^{(R)} \in \AR\\ \gamma^{(R)} \subset \H}} q_{\H}(\Gamma^{(L)})\ q_{\H}(\Gamma^{(R)})\ind{|\Gamma^{(L)}|, |\Gamma^{(R)}|\leq K} \\
    \times \sum_{n\geq \delta_0 \|\x\|_1} \sum_{\Gamma^{(1)}, \dots, \Gamma^{(n)} \in \A} \prod_{i=1}^n q_{\H}(\Gamma^{(i)}) 
    \ind{\substack{\Gamma^{(1)} \circ \cdots \circ \Gamma^{(n)} \in \cP_{\H}(\X(\Gamma^{(L)}), \x- \X(\Gamma^{(R)}))}}
    \,.
\end{multline*}
For any $\u \in \fcone$, let $\gamma^{(L)}(\u)$ denote an arbitrary up-right path from $0$ to $\u$, and let $\gamma^{(R)}(\u)$ denote an arbitrary down-right path from $\x-\u$ to $\x$ (if one exists).
Since the backwards cone of $\Gamma^{(L)}$ must contain the origin and the forwards cone of $\x-\Gamma^{(R)}$ must contain $\x$, it follows that 
\[
    \gamma^{(L)}(\X(\Gamma^{(L)})) \circ \gamma_1 \circ \cdots \circ \gamma_m \circ \gamma^{(R)}(\X(\Gamma^{(R)})) \subset Q
\]
for any collection $\{\Gamma^{(L)}, \Gamma^{(1)} ,\dots, \Gamma^{(m)}, \Gamma^{(R)}\}$ contributing to a nonzero term in the second-to-last display. In particular, for any $\Gamma^{(L)} \in \AL$ and $\Gamma^{(R)} \in \AR$ such that $\{|\Gamma^{(L)}|, |\Gamma^{(R)}|\leq K\}$, we have 
\begin{multline}
q_{\H}(\gamma^{(L)}(\X(\Gamma^{(L)})))  q_{\H}(\gamma^{(R)}(\X(\Gamma^{(R)}))) \\
\times \sum_{m\geq 1} \sum_{\Gamma^{(1)}, \dots, \Gamma^{(m)} \in \A} \prod_{i=1}^m q_{\H}(\Gamma^{(i)}) \ind{\substack{\Gamma^{(1)} \circ \cdots \circ \Gamma^{(m)} \in \cP_{\H}(\X(\Gamma^{(L)}), \x- \X(\Gamma^{(R)}))}}  
\leq \Gb_{\H}(\x \given \gamma \subset Q) \,. 
\label{eqn:clusterless-bound}
\end{multline}
Since 
$
    q_{\H}(\Gamma^{(L)}) \leq q_{\H}(\gamma^{(L)}(\X(\Gamma^{(L)})))
$
and
$q_{\H}(\Gamma^{(R)}) \leq q_{\H}(\gamma^{(R)}(\X(\Gamma^{(R)}))) 
$,
it follows from the last three displays that the right-hand side of \cref{eqn:partition-fn-comparison-1-1} is bounded above by
\begin{align*}
    \sum_{\substack{\Gamma^{(L)} \in \AL\\ \gamma^{(L)} \subset \H}}
    \frac{q_{\H}(\Gamma^{(L)})}{q_{\H}(\gamma^{(L)}(\X(\Gamma^{(L)})))} \ind{ |\Gamma^{(L)}|\leq K} 
    \sum_{\substack{\Gamma^{(R)} \in \AR\\ \gamma^{(R)} \subset \H}} \frac{q_{\H}(\Gamma^{(R)})}{q_{\H}(\gamma^{(R)}(\X(\Gamma^{(R)})))}\ind{ |\Gamma^{(R)}|\leq K} 
    \Gb_{\H}(\x \given \gamma \subset Q) \\
    \leq 
    \Big(\sum_{\u \in \Z^2 :\|\u\|_1 \leq K} e^{\beta \|\u\|_1}\Gb_{\H}(\u) \Big)^2 \Gb_{\H}(\x \given \gamma \subset Q)\,.
\end{align*}
The pre-factor in the last display is a constant depending only $K:=K(\beta,\ep)$, finishing the proof.
\end{proof}

\begin{proof}[Proof of \cref{it:partition-fn-comparison-cpts-part-c}]
In \cref{it:partition-fn-comparison-cpts-part-a}, we proved \cref{eqn:generalized-length-bound,eqn:generalized-many-cone-points} for $D=\H$. Bounding below the left-hand sides of these equations by adding the restriction $\gamma\subset Q$, and giving an upper bound on the right-hand sides by using \cref{eqn:partition-fn-comparison-3} to replace $\Gb_{\H}(\x)$ by $C\Gb_{\H}(\x \given \gamma \subset Q)$ yields the same results for $\Gb_{\H}(\x \given \gamma\subset Q, \cdot)$:
    \begin{align}
       \Gb_{\H}\big(\x \given \gamma \subset Q\,,\, |\gamma| \geq 1.1 \norm{\x}_1\big) 
        &\leq Ce^{-\nu_0 \beta \norm{\x}_1} \Gb_{\H}(\x \given \gamma \subset Q)\,, \text{ and}
        \label{eqn:ist-length-bound-Q}
        \\
        \Gb_{\H}\big(\x \given \gamma \subset Q\,,\, |\Cpts(\gamma)|  < 2\delta_0 \norm{\x}_1\big)
        &\leq C e^{-\nu_0 \beta \norm{\x}_1} \Gb_{\H}(\x \given \gamma \subset Q)
        \label{eqn:ist-cpts-Q}
    \end{align}
    uniformly in $\beta$ large enough and $\x$. 
    Similar to \cref{eqn:modified-weight-comparison}, we have
    \begin{align}
        \abs{ \log \frac{q_{\H}(\gamma)}{q_{Q}(\gamma)}} \leq 6e^{-\chi\beta} |\gamma|\,. \label{eqn:compare-weights-Q}
    \end{align}
    \cref{eqn:generalized-length-bound} and \cref{eqn:generalized-many-cone-points} for $D=Q$ are then simple consequences of \cref{eqn:ist-length-bound-Q,eqn:ist-cpts-Q,eqn:compare-weights-Q}. Once again, \cref{eqn:generalized-many-animal-cone-points} for $D=Q$ follows by \cref{rk:cone-points-general}. 
\end{proof}

\begin{proof}[Proof of \cref{it:partition-fn-comparison-cpts-part-d}]
Let us start by proving the upper bound in \cref{eqn:partition-fn-comparison}. As in \cref{eqn:partition-fn-comparison-1-1}, \cref{lem:G-H-smallGammaL,lem:generalized-length-cpts} (now for $D=Q$) yield the following for any $\ep \in (0,1)$, some $K:= K(\beta,\ep)>0$, and all $\|\x\|_1$ large enough: 
\begin{align}
    \Gb_{Q}(\x) \leq \ep \Gb_{\H}(\x) + \Gb_{Q}\big(\x \given |\Cpts(\Gamma)| \geq \delta_0 \|\x\|_1 \,,\,  \max(|\Gamma^{(L)}|, |\Gamma^{(R)}|)\leq K \big) \,. \label{eqn:partition-fn-comparison-1-2}
\end{align}
The first term on the right-hand side, $\ep\Gb_{\H}(\x)$, may be bounded  by $C_0(\beta)\Gb(\x \given \gamma \subset Q)$ via \cref{eqn:ist-main-partitionfn,eqn:partition-fn-comparison-freeQH}.
We now follow the proof of \cref{it:partition-fn-comparison-cpts-part-b} to bound the second term: using the factorization \cref{eqn:qQ-factorization} of $q_Q(\Gamma)$, expanding the right-hand side as in \cref{it:partition-fn-comparison-cpts-part-b}, and replacing $\Gamma^{(L)}$ and $\Gamma^{(R)}$ by cluster-less up-right paths $\gamma^{(L)}(\X(\Gamma^{(L)}))$ and $\gamma^{(R)}(\X(\Gamma^{(R)}))$, we obtain an upper bound by $C_1\Gb_{\H}(\x \given \gamma \subset Q)$, which by \cref{eqn:ist-main-partitionfn,eqn:partition-fn-comparison-freeQH} is bounded above by $C_1'\Gb(\x \given \gamma \subset Q)$.  

For the lower bound in \cref{eqn:partition-fn-comparison}, start instead from $\Gb(\x \given \gamma \subset Q)$. Bound it from above  by $C_2\Gb(\x \given \gamma \subset \H)$ using \cref{eqn:partition-fn-comparison-freeQH}, and bound the latter by $C_2'\Gb_{\H}(\x)$ using \cref{eqn:ist-main-partitionfn}. 
Now, use the inequality in \cref{eqn:partition-fn-comparison-1-1}, expand the right-hand side exactly as in the proceeding display, and again replace $\Gamma^{(L)}$ and $\Gamma^{(R)}$ by cluster-less, up-right paths.
Observe that the bound in \cref{eqn:clusterless-bound} holds with the right-hand side replaced with $\Gb_Q(\x)$, since all animals on the left-hand side are contained in $Q$ (and therefore the $q_{\H}$-weights of these animals are equal to the $q_Q$-weights). Following the remainder of the proof of \cref{it:partition-fn-comparison-cpts-part-b}, we obtain the lower bound in \cref{eqn:partition-fn-comparison}.
\end{proof}

\begin{proof}[Proof of~\cref{lem:G-H-smallGammaL}]
For any $\u,\v \in \Z^2$ and for any set of animals $E$, define the partition functions
    \begin{align}\label{def:general-partition-fn}
        \Gb_D(\u \to \v) := \sum_{ \substack{\gamma : \u \to \v\\ \gamma \subset D}} q_D(\gamma)\quad \text{and} \quad \Gb_D(\u \to \v \given E) := \sum_{ \Gamma \in \cP_D( \u , \v) \cap E} q_D(\Gamma) \,.
    \end{align}
We begin with three estimates pertaining to $\Gb_D(\u \to \v)$.

First, we have the analogue of \cref{eqn:oz-exp-decay} for the modified animal weights: for all $\beta>0$ sufficiently large, there exists a constant $c':=c'(\beta)>0$ such that for all $k, N\geq 1$,
\begin{align}
    \sum_{\Gamma \in \AL \cup \AR} e^{\h_{\x} \cdot \X(\Gamma)} q_D(\Gamma) \ind{|\Gamma|\geq k} \leq c' e^{-\nu_g \beta k} \,.
    \label{eqn:gen-oz-exp-decay}
\end{align}
This is proved at the end of \cref{appendix:ornstein--Zernike}. 

Next, below Theorem 3 of \cite{IST15}, it is noted that \cref{eqn:ist-main-partitionfn} in the current article
actually holds if the end-points of the contour are not on the line $\partial \H$; that is, for any $\u,\v \in \H$, we have 
\begin{align}
 C_1(\beta) \Gb(\u \to \v\given \gamma \subset \H) \leq \Gb_{\H}(\u\to \v) \leq C_2(\beta) \Gb(\u \to \v\given \gamma \subset \H)\,. \label{eqn:generalized-IST}
\end{align}
Similarly, in Sections~4.1 and~4.2 of \cite{IST15}, it is shown that contours with linear size and many cone-points dominate $\Gb(\x \given \gamma \subset \H)$, and the arguments  there yield that the same is true when the end-points of the contour are not on $\partial \H$; that is, for any fixed $\ep\in (0,1)$, there exist constants $\delta_0, \nu, c >0$ such that the following bounds hold uniformly over $\beta>0$ sufficiently large, over $\u,\v \in \H$ satisfying $\v\in \u +\fcone_{\delta}\setminus\{0\}$, and over $r \geq 1+\ep$:
    \begin{align*}
        \Gb(\u \to \v \given \gamma \subset \H \,,\, |\gamma| \geq r \norm{\v-\u}_1 ) \leq ce^{-\nu \beta \norm{\v-\u}_1} \Gb(\u \to \v \given \gamma \subset \H)\,.
    \end{align*}
    and 
    \begin{align*}
        \Gb(\u \to \v \given \gamma \subset \H \,,\, |\Cpts(\gamma)| < 2\delta_0 \|\v-\u\|_1 ) \leq ce^{-\nu \beta \norm{\v-\u}_1} \Gb(\u \to \v \given \gamma \subset \H)\,.
    \end{align*}
As a consequence of \cref{rk:cone-points-general}, we have many cone-points for animals $\Gamma$ as well:
    \begin{align}
        \Gb(\u \to \v \given \gamma \subset \H \,,\, |\Cpts(\Gamma)|  < \delta_0 \|\v-\u\|_1) \leq ce^{-\nu \beta \norm{\v-\u}_1} \Gb(\u \to \v \given \gamma \subset \H)\,.
        \label{eqn:cone-points-general-startend}
    \end{align}

Now, towards  \cref{eqn:smallGammaL-D}, we only show
$\Gb_{D}(\x \given |\Cpts(\Gamma)|\geq 2 \,,\, |\Gamma^{(L)}| > K_{\ep}) \leq \ep \Gb_{D}(\x)$, as the analogous bound with $\Gamma^{(R)}$ instead of $\Gamma^{(L)}$ follows via the same argument.
We begin by using \cref{eqn:q-factorization,eqn:qQ-factorization} to obtain the expansion
\begin{multline}
    \Gb_{D}(\x \given |\Cpts(\Gamma)|\geq 2\,,\, |\Gamma^{(L)}| > K_{\ep})= \sum_{\substack{\u \in \fcone \\ \x-\v \in \fcone}} 
    \sum_{\substack{\Gamma^{(L)}\in\AL \\ \Gamma^{(L)} \in \cP_D(\u)}} q_{D}(\Gamma^{(L)}) \ind{|\Gamma^{(L)}| >K_{\ep}} 
     \\
    \times \sum_{\substack{ \Gamma^{(R)}\in\AR \\ \Gamma^{(R)} \in \cP_D(\v ,\x)}} q_{D}(\Gamma^{(R)}) 
    \Big( \sum_{n\geq 1} \sum_{\substack{\Gamma^{(1)}, \dots, \Gamma^{(n)} \in \A \\ \Gamma^{(1)} \circ \cdots \circ \Gamma^{(n)} \in \cP_{\H}(\u,\v)}} \prod_{i=1}^n q_{\H}(\Gamma^{(i)}) \Big) \,. \label{eqn:G-H-smallGammaL-1}
\end{multline}
The above double sum over $n$ and irreducible animals is bounded by $\Gb_{\H}(\u \to \v)$, which by \cref{eqn:generalized-IST} is bounded by $C_2(\beta)\Gb(\u \to \v \given \gamma \subset \H)$. It then suffices to show 
\begin{align}
    \Gb(\u \to \v \given \gamma \subset \H ) \leq Ce^{\h_{\x}\cdot (\u+\x-\v) + \bar{\delta}\beta(\|\u\|_1 +\|\x-\v\|_1)} \Gb(\x \given \gamma \subset \H)\,. \label{eqn:G-H-smallGammaL-2}
\end{align}
where $C:= C(\beta)>0$ is a constant and $\bar{\delta}$ is as in \cref{prop:ist-repulsion}.
Indeed, substituting the bound in \cref{eqn:G-H-smallGammaL-2} into \cref{eqn:G-H-smallGammaL-1}, using the exponential tail from \cref{eqn:gen-oz-exp-decay}, and recalling $\bar{\delta}< \nu_g/4$~yields 
\[
\Gb_{D}(\x \given |\Cpts(\Gamma)|\geq 2 \,,\, |\Gamma^{(L)}|> K_{\ep}) \leq C' e^{-\tfrac{\nu_g}2 \beta K_{\ep}} \Gb(\x \given \gamma \subset \H) \,.
\]
By \cref{the:ist-main}, the right-hand side above is bounded by $\ep \Gb_{\H}(\x)$ for $K_{\ep}:= K_{\ep, \beta}>0$ large enough.

To show \cref{eqn:G-H-smallGammaL-2}, we use the existence of cone-points \cref{eqn:cone-points-general-startend} (for which we write a $1+o(1)$, where the $o(1)$ term vanishes as $\|\v-\u\|_1$ tends to infinity) and the weight factorization  in \cref{eqn:decomposed-weight}:
\begin{align*}
     &\Gb\Big(\u \to \v \given \gamma \subset \H\Big) \\
     \leq \
     &(1+o(1)) \!\!\!\!\!\! \sum_{\substack{\u'\in \u+\fcone \\ \x-\v-\v' \in \fcone}} \sum_{\substack{\Gamma^{(L)}: \u \to \u' \in \AL \\ \Gamma^{(R)}: \v' \to \v \in \AR}} \!\! q(\Gamma^{(L)}) q(\Gamma^{(R)}) 
     \sum_{n \geq 1}
     \sum_{\substack{\Gamma^{(1)} ,\dots,\Gamma^{(n)} \in \A }} \prod_{i=1}^n q(\Gamma^{(i)}) \ind{\Gamma^{(1)} \circ \cdots \circ \Gamma^{(n)} \in \cP(\u',\v')} \\
     = \ &(1+o(1)) \ e^{-\h_{\x}\cdot (\v-\u)}  \sum_{\substack{\u'\in \u+\fcone \\ \x-\v-\v' \in \fcone}} \sum_{\substack{\Gamma^{(L)}: \u \to \u' \in \AL \\ \Gamma^{(R)}: \v' \to \v \in \AR}} \Pa(\Gamma^{(L)}) \Pa(\Gamma^{(R)})  
     \fA(\u', \v')  \\
     \leq \ &(1+o(1)) \ e^{\h_{\x}\cdot (\u+\x-\v)} \Gb(\x \given \gamma \subset \H) \\
     &\quad\times \Big(\sum_{\substack{\u'\in \u+\fcone}}  \sum_{\substack{\Gamma^{(L)}: \u \to \u' \in \AL }}   \Pa(\Gamma^{(L)})e^{\overline{\delta}\beta 
     \|\u'\|_1} 
     \Big)
     \Big(\sum_{\substack{ \x-\v-\v' \in \fcone}} \sum_{\substack{\Gamma^{(R)}: \v' \to \v \in \AR}}  \Pa(\Gamma^{(R)})  e^{\overline{\delta}\beta
     \|\x-\v'\|_1 
     }\Big) \\
     \leq \ &(1+o(1)) \ e^{\h_{\x}\cdot (\u+\x-\v) + \bar{\delta}\beta(\|\u\|_1 +\|\x-\v\|_1)}
     \Gb(\x \given \gamma \subset \H) \\
    &\quad\times\Big(\sum_{\Gamma^{(L)} \in \AL}   \Pa(\Gamma^{(L)})e^{\overline{\delta}\beta \|\X(\Gamma^{(L)})\|_1}\Big)
     \Big(\sum_{\Gamma^{(R)} \in \AR}   \Pa(\Gamma^{(R)})e^{\overline{\delta}\beta \|\X(\Gamma^{(R)})\|_1}\Big)
\end{align*}
In the second-to-last line we used \cref{eqn:hitting-time-equation,eqn:U-U+} and then \cref{prop:ist-repulsion} to bound the $\fA(\u',\v')$ term, as well as  $\h_{\x} \cdot \x = \tau_{\beta}(\x)$. In the third-line, we used the triangle inequality.
\Cref{eqn:G-H-smallGammaL-2} then follows from the exponential tails of $\Gamma^{(L)}$ and $\Gamma^{(R)}$ in \cref{eqn:oz-exp-decay}, again recalling $\bar{\delta}<\nu_g/4$. This yields \cref{eqn:smallGammaL-D}. The proof of \cref{eqn:smallGammaL-unmodified} is identical: just replace the left-hand side of \cref{eqn:G-H-smallGammaL-1} by $\Gb(\x \given |\Cpts(\Gamma)|\geq 2 , |\Gamma^{(L)}|>K_{\ep}, \gamma \subset D)$, and then  replace all modified weights $q_D$ by $q$.
\end{proof}

\subsection{Proof of \cref{prop:surface-tension-equiv}}
\label{subsection:proof-surface-tension-equiv}
Fix a direction $\vec{u} \in \bbS^{d-1}$, and let $t_N^-(\vec{u}):= (N, y_N) \in \Z^2$ denote the lattice point whose $x$-coordinate is equal to $N$ and is on or below the line $\mathrm{span}(\vec{u})$. 
Let $\ostar:= (1/2, 1/2)$ be the origin of the dual lattice $(\Z^2)^*$, and define $\x_N(\vec{u}):= t_N^-(\vec{u})+(-1/2,1/2) \in (\Z^2)^*$.
Then we have the formula
\[
\tau_{\beta}^{\SOS}(\vec{u}) = \lim_{N\to\infty} \lim_{M \to \infty} -\frac{1}{\beta \norm{\x_N(\vec{u})}} \log \Gb_{\cI_{N,M}}(\x_N(\vec{u}))\,,
\]
where 
$\cI_{N,M}$ denotes the strip $[0,N] \times [-M,M]$.
\cite[Theorem~4.16]{DKS92}
implies that the surface tension $\tau_{\beta}$ is equal to the surface tension of the model in $\cI_{N,M}$ with the same (free) weights $q(\gamma)$, that is:\footnote{
The proof of that result in the book \cite{DKS92} contains an error, corrected in the appendix to \cite{IST15}. The mistake had to do with a part of the result that is irrelevant to us, namely, a part that claims an unchanged surface tension even after modification of the decoration functions $\Phi(\cC;\Gamma)$. In our application of \cite[Theorem~4.16]{DKS92}, we are not making any modifications to the decoration functions.} 
    \[
        \tau_{\beta}(\vec{u}) = \lim_{N\to\infty} \lim_{M\to\infty} -\frac{1}{\beta \norm{\x_N(\vec{u})}} \log \Gb(\x_N(\vec{u})\given \gamma \subset \cI_{N,M})\,. 
    \]
Thus, the proof will be finished if we can compare to exponential order the  partition function with modified weights $\Gb_{\cI_{N,M}}(\x_N(\vec{u}))$ and the partition function with free weights $\Gb(\x_N(\vec{u})\given \gamma \subset \cI_{N,M})$. The proof of this comparison is extremely similar to the proof of \cref{eqn:partition-fn-comparison} above, though much simpler, and so it is omitted for brevity's sake.\footnote{
A crucial input for  the proof of \cref{eqn:partition-fn-comparison} was  \cref{prop:many-cone-points}, which we proved using \cref{lem:ist-length-cpts}, cited from the paper of \cite{IST15}. Note \cite{IST15} was written for Ising polymers, so in particular, \cref{p:P4} of the surface tension is always assumed in that paper.
However, we use \cref{prop:surface-tension-equiv} in order to prove \cref{p:P4} of $\tau_\beta$, so this might appear to be circular reasoning. However, \cref{lem:ist-length-cpts} is proved in \cite{IST15} (Eq.~(4.5) there) without assuming \cref{p:P4}, so there is no issue in using it. No other results from \cite{IST15} are needed for the proof of \cref{prop:surface-tension-equiv}.}  \qed

\subsection{Boundedness of the irreducible pieces}\label{subsec:bounded-animals}

The main result of this subsection is \cref{prop:bounded-irreducible-pieces}, which states that each irreducible piece of $\Gamma$ has size bounded by $(\log N)^2$ with high $\bP_D^{\x}(\cdot)-$probability.

When $\Gamma$ has at least two cone-points, recall from \cref{eqn:irreducible-decomposition} that we write $\Gamma^{(1)}, \dots, \Gamma^{(n)}$ to denote the irreducible components of $\Gamma$, where  $n := |\Cpts(\Gamma)|-1$.
\begin{proposition} \label{prop:bounded-irreducible-pieces}
For $D=Q$ or $D=\H$,
\begin{align*}
    \bP_D^\x \big( \{|\Cpts(\Gamma)|\geq 2\} \,,\, \{\exists  i  \in \{L, 1, \dots,  |\Cpts(\Gamma)|-1 ,  R\} : |\Gamma^{(i)}| \geq (\log N)^2 \} \big) = o(1)\,.
\end{align*}
\end{proposition}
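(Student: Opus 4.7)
The plan is to reduce to the free Ising polymer law via \cref{cor:small-contour-probs} and then control each irreducible piece via a union bound combined with the Ornstein--Zernike mass gap \cref{eqn:oz-exp-decay}. It suffices to show $\bP^{\x}(\text{bad event}\mid \gamma\subset D) = o(1)$, since we expect stretched-exponential decay in $\log N$, which easily survives the square-root incurred by \cref{cor:small-contour-probs}.

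For the endpoint pieces $\Gamma^{(L)}$ and $\Gamma^{(R)}$, the unmodified tail estimate \cref{eqn:smallGammaL-unmodified} in \cref{lem:G-H-smallGammaL} (upgraded via \cref{rk:upgrade-H-to-D}) gives: for every $\varepsilon>0$ there is a constant $K_\varepsilon$ such that $\max(|\Gamma^{(L)}|,|\Gamma^{(R)}|)\leq K_\varepsilon$ with probability at least $1-\varepsilon$. Since $K_\varepsilon$ is fixed and $(\log N)^2\to\infty$, this handles the endpoints with room to spare.

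For the middle pieces $\Gamma^{(1)},\dots,\Gamma^{(n)}$, I would use the product factorization \cref{eqn:good-decomposed-2} together with $\fA\leq\fA^+$ from \cref{eqn:U-U+} to bound the bad-event partition function by
\[
e^{-\tau_\beta(\x)}\sum_{\Gamma^{(L)},\Gamma^{(R)}} \P^{\h_\x}(\Gamma^{(L)})\P^{\h_\x}(\Gamma^{(R)})\,\fA^+\bigl(\X(\Gamma^{(L)}),\,\x-\X(\Gamma^{(R)});\,\exists\,i\leq n,\,|\Gamma^{(i)}|\geq(\log N)^2\bigr).
\]
A union bound over the position $i$ of the large piece, combined with the mass-gap tail \cref{eqn:oz-exp-decay} applied to the $i$-th piece (using $\A\subset\AL$) and the bound $n\leq|\gamma|\leq 2N$ inherited from the length estimate \cref{eqn:generalized-length-bound} of \cref{lem:generalized-length-cpts}, yields that this inner $\fA^+$ term is at most $CN\exp(-\nu_g\beta(\log N)^2)$ times $\fA^+(\X(\Gamma^{(L)}),\x-\X(\Gamma^{(R)}))$. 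The full partition function $\Gb(\x\mid\gamma\subset D)$ is, by the existence of many cone-points \cref{eqn:generalized-many-animal-cone-points} together with \cref{the:ist-main} and \cref{eqn:partition-fn-comparison-freeQH}, of matching order (up to the same sum over endpoint pieces), so the ratio is $o(1)$.

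The main obstacle is the proper execution of the sum-out of $\Gamma^{(i)}$: the cone-point random-walk constraint that the walk stays in $\H$ and ends at $\x-\X(\Gamma^{(R)})$ couples $\Gamma^{(i)}$ to the rest through its displacement $\X(\Gamma^{(i)})$, so a naive mass-gap bound does not immediately factor through. The cleanest route is to first restrict to the typical event $\|\X(\Gamma^{(i)})\|_1=O(\log N)$ (valid by \cref{eqn:rw-exp-tail} and a union bound over $n=O(N)$ pieces), on which the condition $|\Gamma^{(i)}|\geq(\log N)^2$ becomes a genuine ``excess length'' event whose probability is bounded uniformly in the displacement by the product structure of \cref{eqn:decomposed-weight} and \cref{eqn:oz-exp-decay}. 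Aggregating the remaining $n-1$ pieces against the unconditioned random-walk probability $\fA^+(\X(\Gamma^{(L)}),\x-\X(\Gamma^{(R)}))$ then recovers the desired factor and completes the estimate.
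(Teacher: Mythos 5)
There is a genuine gap at the very first step. Your reduction "it suffices to show $\bP^{\x}(\text{bad event}\mid\gamma\subset D)=o(1)$" via \cref{cor:small-contour-probs} is not available here, because \cref{cor:small-contour-probs} is stated (and is only true) for \emph{contour} events, while the bad event $\{\exists i:\,|\Gamma^{(i)}|\geq(\log N)^2\}$ is an \emph{animal} event: the cone-points of $\Gamma$, and hence the irreducible decomposition $\Gamma^{(L)}\circ\Gamma^{(1)}\circ\cdots\circ\Gamma^{(R)}$ and the sizes $|\Gamma^{(i)}|$, depend on the clusters $\C$ and not only on $\gamma$ (a cluster can destroy a cone-point of $\gamma$). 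The remark immediately following \cref{cor:small-contour-probs} warns precisely against this use: the ratio $q(\Gamma)/q_D(\Gamma)$ of animal weights is not bounded above or below, so small animal events in the free model need not remain small in the modified model, and the Cauchy--Schwarz argument behind \cref{cor:small-contour-probs} does not go through for animal events. One could try to repair this by splitting the animal event into a contour event (the contour itself has a long irreducible piece) plus a ``many/large clusters kill cone-points'' event handled as in the proof of \cref{prop:many-cone-points}, but that is an additional argument you have not supplied. The same objection applies to your treatment of the endpoint pieces: \cref{eqn:smallGammaL-unmodified} concerns the free model, and transferring $\{|\Gamma^{(L)}|>K_\ep\}$ back to $\bP_D^{\x}$ again requires a statement about animal events (the paper instead uses \cref{eqn:smallGammaL-D}, which is stated directly for $\Gb_D$).

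The paper's proof sidesteps this entirely by never leaving the modified model: it restricts to the set $\mathcal P_D^{\cp,\len}(\x,K)$ (using \cref{lem:G-H-smallGammaL,lem:generalized-length-cpts}, both proved for $\Gb_D$ itself), expands $\Gb_D$ via the factorization \cref{eqn:qQ-factorization}, in which the middle irreducible pieces carry $q_{\H}$-weights whose exponential tails are supplied by \cref{eqn:gen-oz-exp-decay} (proved in the appendix for the modified weights), isolates the long piece between two free endpoints $\u,\v$, bounds the flanking sums by $\Gb_{\H}(\cdot\to\cdot)\lesssim e^{-\tau_\beta(\cdot)}$ and uses the strong triangle inequality, pays a polynomial factor $(N+1)^4$ for the union over $\u,\v$, and compares with the lower bound $\Gb_D(\x)\gtrsim e^{-\tau_\beta(\x)}N^{-3/2}$ coming from \cref{lem:partition-function-size}, \cref{prop:partition-fn-comparison} and \cref{the:ist-main}. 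Your second part (handling the coupling of the long piece's displacement to the random-walk constraint, and the claim that the full partition function is ``of matching order'') gestures at the same ingredients -- in particular you would still need the $N^{-3/2}$ lower bound, i.e.\ \cref{prop:ist-repulsion} and the ballot estimate, to control the ratio $\fA^+/\fA$ -- but without a valid transfer to the free model the whole route as written does not close.
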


We will need the following two results on the asymptotic size of the partition functions of interest. 
Recall the notation for asymptotic relations set out at the start of \cref{sec:polymers-oz}.

\begin{lemma}\label{lem:full-partition-function-size}
    There exists some $C:= C(\beta)>0$ 
    \begin{align}
    \Gb(\x) \leq Ce^{-\tau_{\beta}(\x)}\,. \label{eqn:full-partition-function-size}
    \end{align}
\begin{proof}
Similar to \cref{eqn:good-decomposed-2}, we have from \cref{eqn:hitting-time-equation}
\begin{align*}
    e^{\tau_{\beta}(\x)} \Gb(x \given |\Cpts(\Gamma)| \geq 2) &= \sum_{\Gamma^{(L)} \in \AL} \sum_{\Gamma^{(R)} \in \AR} \Pa(\Gamma^{(L)}) \Pa(\Gamma^{(R)}) \P_{\X(\Gamma^{(L)})}(H_{\x- \X(\Gamma^{(R)})} < \infty) \\
    &\leq \Big(\sum_{\Gamma^{(L)} \in \AL} \Pa(\Gamma^{(L)}) \Big) \Big( \sum_{\Gamma^{(R)} \in \AR} \Pa(\Gamma^{(R)}) \Big) \,.
\end{align*}
The above summations are finite by \cref{claim:oz-1}. The Lemma then follows from~\cref{eqn:ist-many-cone-points}.
\end{proof}
\end{lemma}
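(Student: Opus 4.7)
The plan is to combine the irreducible decomposition of animals with the Ornstein--Zernike probability measures $\Pa$ from \cref{prop:oz-weights}, identifying the sum over the ``middle'' irreducible pieces as an effective random walk hitting probability which can be bounded crudely by $1$.

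First, I would discard configurations with few cone-points. By \cref{prop:many-cone-points} applied to the full-plane model,
\[
\Gb(\x;\,|\Cpts(\Gamma)|<2) \;\le\; \Gb(\x;\,|\Cpts(\Gamma)|<\delta_0\|\x\|_1) \;\le\; c\,e^{-\nu\beta\|\x\|_1}\,\Gb(\x),
\]
so rearranging yields $\Gb(\x)\le (1+o(1))\,\Gb(\x;\,|\Cpts(\Gamma)|\ge 2)$ uniformly as $\|\x\|_1\to\infty$; for $\|\x\|_1$ bounded the claim is trivially absorbed into the constant $C(\beta)$.

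Next, I would apply the irreducible decomposition \cref{eqn:irreducible-decomposition} together with the weight factorization \cref{eqn:decomposed-weight} in the full plane (no cluster constraint), using $\h_\x\cdot\X(\Gamma)=\tau_\beta(\x)$ for any animal ending at $\x$. Each animal weight converts cleanly into a product of $\Pa$-weights, yielding
\[
e^{\tau_\beta(\x)}\,\Gb(\x;\,|\Cpts(\Gamma)|\ge 2) \;=\; \sum_{\Gamma^{(L)}\in\AL}\sum_{\Gamma^{(R)}\in\AR} \Pa(\Gamma^{(L)})\,\Pa(\Gamma^{(R)})\cdot \P_{\X(\Gamma^{(L)})}\!\big(H_{\x-\X(\Gamma^{(R)})}<\infty\big),
\]
where the last factor is the hitting probability of the effective random walk from \cref{subsec:rw-model}. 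The crucial point is that, unlike \cref{eqn:good-decomposed-2}, there is no half-plane constraint, so the sum over the middle irreducible pieces is a bare hitting probability, bounded trivially by $1$. Summing the mass-gap estimate \cref{eqn:oz-exp-decay} then gives $\sum_{\Gamma\in\AL}\Pa(\Gamma),\sum_{\Gamma\in\AR}\Pa(\Gamma)\le C(\beta)<\infty$, and the claim follows.

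The only real subtlety is a mild one: $\Pa$ is defined in \cref{prop:oz-weights} only for directions $\x\in\fcone_\delta\setminus\{0\}$, so for arbitrary $\x\in\Z^2\setminus\{0\}$ I would rotate into $\fcone_\delta$ using the discrete lattice symmetries of $\tau_\beta$ granted by \cref{p:P4}. There is no genuine obstacle here, as the entire argument is essentially a bookkeeping consequence of the OZ factorization plus the finiteness of the $\Pa$-mass of (left/right-)irreducible pieces.
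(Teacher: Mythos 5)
Your proposal is correct and follows essentially the same route as the paper: the OZ/irreducible decomposition with weights converted to $\Pa$-probabilities, the middle pieces identified as a random-walk hitting probability bounded by $1$, finiteness of the $\Pa$-mass of the left/right pieces, and the cone-point estimate to absorb animals with fewer than two cone-points (you invoke the animal version, \cref{prop:many-cone-points}, where the paper cites \cref{eqn:ist-many-cone-points}, a harmless difference). No gaps; the rotation remark is unnecessary here since $\x=(N,0)\in\fcone_\delta$, but it does no harm.
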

 We remark that a precise first-order asymptotic for $\Gb(\x)$ is given in \cite[Eq.~(4.12.3)]{DKS92}.
The asymptotic can be proved by showing $\P_{\X(\Gamma^{(L)})}(H_{\x- \X(\Gamma^{(R)})} < \infty)$ is of order $|\x|^{-1/2}$ via random walk estimates, similar to the proof of \cref{thm:our-rw-inputs}.

\begin{lemma}\label{lem:partition-function-size}
We have 
\begin{align}
    \Gb(\x \given \gamma \subset \H) \gtrsim  e^{-\tau_{\beta}(\x)}N^{-3/2}  \,, \label{eqn:partition-function-size}
\end{align}
where the implied constant depends on $\beta$.

\begin{proof}
Item (1) of \cref{thm:our-rw-inputs} shows that for any positive, fixed $u$ and $v$ in $\N$ (independent of $N$), we have
\begin{align}
    \P_{(0,u)}(H_{(N,v)} < H_{\H_{-}}) \sim \mathbf{C} \kappa \frac{V_1(u) V_1'(v)}{N^{3/2}}\,. \label{eqn:hitting-ballot-asymp}
\end{align}
The lower-bound of \cref{eqn:partition-function-size} then follows immediately from \cref{prop:ist-repulsion} and \cref{eqn:hitting-ballot-asymp}.
\end{proof}
\end{lemma}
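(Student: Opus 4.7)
The plan is to read off the claimed lower bound on $\Gb(\x \given \gamma\subset\H)$ directly from \cref{prop:ist-repulsion}, using \cref{thm:our-rw-inputs}(1) to supply a matching $N^{-3/2}$ lower bound on a random walk hitting probability. The key observation is that \cref{prop:ist-repulsion} is exactly of the form ``random walk hitting probability'' $\lesssim e^{\tau_\beta(\x)}\,\Gb(\x\given \gamma\subset\H)$, with a prefactor $e^{-\bar\delta\beta(\|\u\|_1+\|\x-\v\|_1)}$ that becomes a harmless $O(1)$ constant as soon as we can take $\u$ and $\x-\v$ bounded.

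First I would fix a small positive integer $u_0\geq 1$ and set $\u:=(u_0,u_0)$ and $\v:=(N-u_0,u_0)$. Then $\u,\v\in\H$, $\u\in\fcone$, $\x-\v=(u_0,-u_0)\in\fcone$, so \cref{prop:ist-repulsion} applies and gives
\[
\P_{\u}(H_{\v}<H_{\H_-})\;\leq\; C\,e^{\bar\delta\beta(\|\u\|_1+\|\x-\v\|_1)}\,e^{\tau_\beta(\x)}\,\Gb(\x\given\gamma\subset\H)\;=\;C'(\beta)\,e^{\tau_\beta(\x)}\,\Gb(\x\given \gamma\subset\H),
\]
because both $\|\u\|_1$ and $\|\x-\v\|_1$ equal $2u_0$, independent of $N$.

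Second I would invoke \cref{thm:our-rw-inputs}(1) with the fixed heights $u=v=u_0$. By translation invariance of the increments the walk started at $\u=(u_0,u_0)$ and hitting $\v=(N-u_0,u_0)$ is the same as the one started at $(0,u_0)$ and hitting $(N-2u_0,u_0)$; summing the local limit estimate \eqref{eqn:our-rw-ballot} over $k$ in the window $[(N-2u_0)/\mu-A\sqrt N,\,(N-2u_0)/\mu+A\sqrt N]$ and controlling the tails via \eqref{eqn:our-rw-goodk} yields
\[
\P_{\u}(H_{\v}<H_{\H_-})\;\sim\; \frac{\mathbf{C}\kappa\,V_1(u_0)\,V_1'(u_0)}{N^{3/2}},
\]
which is $\gtrsim N^{-3/2}$ since $V_1(u_0),V_1'(u_0)>0$. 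Chaining the two displays rearranges to $\Gb(\x\given\gamma\subset\H)\gtrsim e^{-\tau_\beta(\x)}N^{-3/2}$.

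There is essentially no obstacle in this argument: both halves (the effective-random-walk ballot asymptotic and the Ornstein–Zernike-type repulsion bound) are already in hand. The only piece of bookkeeping is to choose the endpoints $\u,\v$ so that they simultaneously lie in $\H$, lie in the cones required by \cref{prop:ist-repulsion}, and fall in the regime $u,v\in(0,N^{1/2-\delta}]$ where \cref{thm:our-rw-inputs}(1) applies with $O(1)$ constants; the symmetric choice $(u_0,u_0)$ and $(N-u_0,u_0)$ handles all three constraints at once.
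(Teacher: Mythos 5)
Your proposal is correct and follows essentially the same route as the paper: combine the ballot-type asymptotic from \cref{thm:our-rw-inputs}(1) (summed over the hitting time, which is legitimate since $X_1>0$ a.s.\ makes the hitting time of $(N,v)$ unique) with \cref{prop:ist-repulsion} for bounded start/end displacements, so the prefactor $e^{\bar\delta\beta(\|\u\|_1+\|\x-\v\|_1)}$ is $O(1)$. Your explicit choice of endpoints to satisfy the cone conditions is just a more careful bookkeeping of the same two-step argument the paper gives.
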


It is not too hard to show a matching upper-bound, so that $\Gb(\x  \given \gamma\subset \H) \asymp e^{-\tau_{\beta}(\x)} N^{-3/2}$. Again, we do not pursue this here as the lower-bound suffices.

\begin{proof}[Proof of \cref{prop:bounded-irreducible-pieces}]
Define the set of animals
\begin{align}
\mathcal{P}_{D}^{\cp, \len}(\x, K) := 
\{ \Gamma \in \cP_D(\x) : |\Cpts(\Gamma)| \geq 2 \,,\, \max(|\Gamma^{(L)}|, |\Gamma^{(R)}|) \leq K \,,\, |\Gamma| \leq 1.1 \norm{\x}_1 \}\,.
    \label{def:animals-cp-len-K}
\end{align}
In \cref{lem:G-H-smallGammaL,rk:upgrade-H-to-D,lem:generalized-length-cpts}, it was shown that this set of animals dominates $\bP_D^{\x}$, i.e.,
\begin{align*}
    \lim_{K\to\infty} \lim_{N\to\infty} \bP_D^{\x}\big(\mathcal{P}_{D}^{\cp,\len}(x,K)\big) = 1 \,.
\end{align*}
Therefore, in light of  \cref{lem:partition-function-size}, 
the proof of \cref{prop:bounded-irreducible-pieces} will be complete upon showing
\begin{align}
     \Gb_D\big( \x \given \cP_D^{\cp,\len}(\x,K) \,,\, \{\exists 1 \leq i \leq |\Cpts(\Gamma)|-1 : |\Gamma^{(i)}| \geq (\log N)^2 \} \big) = o\big(e^{-\tau_{\beta}(\x)}N^{-3/2}\big)
     \label{eqn:bounded-irr-animals-goal}
\end{align}
for any fixed $K$.

Define the sets of animals
\begin{align}
    \A_{\mathsf{L},K} := \{ \Gamma \in \AL : |\Gamma| < K \}
    \quad \text{ and } \quad 
    \A_{\mathsf{R},K} := \{\Gamma \in \AR : |\Gamma| < K \}\,.
    \label{def:ALK-ARK}
\end{align}
Using the factorization of weights 
(\cref{eqn:q-factorization}, and \cref{eqn:qQ-factorization} for $D=Q$),
we have 
\begin{align*}
    &\Gb_D 
    \big( \x \given \mathcal{P}_{D}^{\cp, \len}(\x, K) \,,\, \{\exists 1 \leq i \leq |\Cpts(\Gamma)|-1 : |\Gamma^{(i)}| \geq (\log N)^2 \} \big) \nonumber \\
    &\quad =  \sum_{\substack{\Gamma^{(L)} \in \A_{\mathsf{L},K} \\ \gamma^{(L)} \subset D}} q_D(\Gamma^{(L)})
    \ \sum_{\substack{\Gamma^{(R)} \in \A_{\mathsf{R},K} \\ \gamma^{(R)} \subset D}} 
    q_D(\Gamma^{(R)}) 
    \sum_{n \geq 1}
    \  \sum_{\Gamma^{(1)}, \dots, \Gamma^{(n)} \in \A} 
    \ \prod_{i=1}^n q_{\H}(\Gamma^{(i)}) \\
    &\qquad \times \ind{\Gamma^{(1)} \circ \dots \circ \Gamma^{(n)} \in \cP_{\H}(\X(\Gamma^{(L)}), \x- \X(\Gamma^{(R)}))} 
    \ind{\exists 1 \leq k \leq n : |\Gamma^{(k)}| \geq (\log N)^2} \nonumber \\ 
    &\quad\leq   
    \sum_{\substack{\Gamma^{(L)} \in \A_{\mathsf{L},K} \\ \gamma^{(L)} \subset D}} q_D(\Gamma^{(L)})
    \ \sum_{\substack{\Gamma^{(R)} \in \A_{\mathsf{R},K} \\ \gamma^{(R)} \subset D}} 
    q_D(\Gamma^{(R)}) \nonumber \\
    &\qquad \times \sum_{\substack{ \u \in \X(\Gamma^{(L)})+\fcone \\ \v \in \x - \X(\Gamma^{(R)}) + \bcone }} 
    \Gb_{\H}\big( \X(\Gamma^{(L)}) \to \u \big)
    \Gb_{\H}\big( \v \to \x - \X(\Gamma^{(R)}) \big) \sum_{\Gamma \in \A \cap \cP_{\H}(\u, \v)} q_{\H}(\Gamma) \ind{|\Gamma|\geq (\log N)^2}\,.
\end{align*}
For any $\mathsf{a}, \mathsf{b} \in \H$,  \cref{eqn:generalized-IST,eqn:full-partition-function-size}
along with the trivial bound
$
    \Gb(\mathsf{a} \to \mathsf{b} \given \gamma \subset \H) \leq \Gb(\mathsf{b}-\mathsf{a})\,.
$
implies
$
    \Gb_{\H}(\mathsf{a} \to \mathsf{b}) \leq C e^{\tau_{\beta}(\mathsf{b}-\mathsf{a})}  
$
for some constant $C:=C(\beta)>0$.
Define the weights $W_{\H}^{\h}(\Gamma) := e^{\h\cdot\X(\Gamma)}q_{\H}(\Gamma)$, under which $\Gamma$ has exponential tails by \cref{eqn:gen-oz-exp-decay}.
Then 
\begin{align*}
    &\Gb_D 
    \big( \x \given \cP_{D}^{\cp,\len}(\x,K) \,,\, \{\exists 1 \leq i \leq |\Cpts(\Gamma)|-1 :  
 |\Gamma^{(i)}| \geq (\log N)^2 \} \big) \nonumber \\
    &\leq  C^2 
    \sum_{\substack{\Gamma^{(L)} \in \A_{\mathsf{L},K} \\ \gamma^{(L)} \subset D}} q_D(\Gamma^{(L)}) e^{\tau_{\beta}(\X(\Gamma^{(L)}))}
    \ \sum_{\substack{\Gamma^{(R)} \in \A_{\mathsf{R},K} \\ \gamma^{(R)} \subset D}} 
     q_D(\Gamma^{(R)}) e^{\tau_{\beta}(\X(\Gamma^{(R)}))} 
     \\
    &\times 
    \sum_{\substack{ \u \in \X(\Gamma^{(L)})+\fcone \\ \v \in \x - \X(\Gamma^{(R)}) + \bcone }} 
    \Big(\sum_{\Gamma \in \A} W_{H}^{\h_{\v-\u}}(\Gamma) \ind{|\Gamma|\geq (\log N)^2} \Big)
    \\
    &\times \exp\Big(-\tau_{\beta}\big(\X(\Gamma^{(L)})\big)- \tau_{\beta}\big(\u - \X(\Gamma^{(L)})\big) - \tau_{\beta}(\v-\u) - \tau_{\beta}\big(\x - \X(\Gamma^{(R)}) - \v\big)-\tau_{\beta}\big(\X(\Gamma^{(R)})\big) \Big)\,.
\end{align*}
The exponential tails in \cref{eqn:gen-oz-exp-decay} implies both
\[
    \sum_{\substack{\Gamma^{(L)} \in \A_{\mathsf{L},K} \\ \gamma^{(L)} \subset D}} q_D(\Gamma^{(L)})e^{\tau_{\beta}(\X(\Gamma^{(L)}))}
    \quad \text{and} \quad \sum_{\substack{\Gamma^{(R)} \in \A_{\mathsf{R},K} \\ \gamma^{(R)} \subset D}} q_D(\Gamma^{(R)})e^{\tau_{\beta}(\X(\Gamma^{(R)}))}
\]
are bounded by a constant $C_K>0$, while
\[
    \sum_{\Gamma \in \A} W_{\H}^{\h_{\v-\u}}(\Gamma) \ind{|\Gamma|\geq (\log N)^2}  \leq c' e^{-\nu_g \beta (\log N)^2}\,.
\]
These bounds, the strong triangle inequality (\cref{prop:surface-tension-facts}), and the fact that there are at most $|0+\fcone \cap \x +\bcone \cap \Z^2| \leq (N+1)^2$ possible values for $\u$ and for $\v$ yield
\begin{multline*}
    \Gb_Q 
    \big( \x \given  \{\exists 1 \leq i \leq |\Cpts(\Gamma)|-1 : |\Gamma^{(i)}| \geq (\log N)^2 \} \big)
    \leq C'_K(N+1)^4 e^{-\nu_g \beta(\log N)^2} e^{-\tau_{\beta}(\x)} \,,
\end{multline*}
for some constant $C'_K>0$.
Thus, we have \cref{eqn:bounded-irr-animals-goal}, finishing the proof.
\end{proof}

\subsection{Entropic repulsion of the animal}
\label{subsec:entropic-repulsion}
Fix any $\delta \in (0,1/4)$, and define the rectangles  $\mathfrak{R}_0(\delta) := [N^{4\delta}, N- N^{4\delta}] \times [0, 2N^{\delta}]$ and 
$\mathfrak{R}(\delta) := [N^{4\delta}, N- N^{4\delta}] \times [0, N^{\delta}]$.
The main result in this subsection is the entropic repulsion result \cref{prop:animal-entropic-repulsion}, which states that, under $\bP_D^{\x}(\cdot)$, the cone-points of $\Gamma$ do not intersect $\mathfrak{R}(\delta)$ with probability  tending to $1$ as $N \to \infty$.

\begin{proposition}\label{prop:animal-entropic-repulsion}
Let $D=Q$ or $D=\H$. For any $\delta \in (0,1/4)$,
\[
    \lim_{N\to\infty} \bP_D^{\x}\big( \Cpts(\Gamma) \cap \mathfrak{R}(\delta)  \neq \emptyset \big) = 0\,.  
\]
\end{proposition}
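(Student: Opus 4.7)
The plan is to decompose $\Gamma$ into its irreducible pieces, identify the locations of its cone points with the trajectory of the effective random walk $\sS$ in $\H$, and then invoke entropic repulsion for $\sS$ via \cref{thm:our-rw-inputs,thm:rw-invariance}. First I would use \cref{prop:bounded-irreducible-pieces} together with the $D$-versions of \cref{lem:G-H-smallGammaL,lem:generalized-length-cpts} (cf.\ \cref{rk:upgrade-H-to-D}) to restrict attention to the event
\[
G_K := \big\{\Gamma \in \cP_D^{\cp,\len}(\x, K)\big\} \cap \bigcap_{i=1}^{|\Cpts(\Gamma)|-1}\!\big\{|\Gamma^{(i)}| \leq (\log N)^2\big\},
\]
whose $\bP_D^{\x}$-probability tends to $1$ as first $N\to\infty$ and then $K\to\infty$. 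On $G_K$, writing $\u := \X(\Gamma^{(L)})$ and $\v := \x - \X(\Gamma^{(R)})$, one has $\u_2, v_2 \in [0,K]$ and $\|\u\|_1, \|\x-\v\|_1 \leq K$, and consecutive cone points are within horizontal distance $(\log N)^2$, which keeps the middle portion of $\Gamma$ inside $[1,N-1]\times(-\infty,N]$. Consequently the factorizations \eqref{eqn:q-factorization}--\eqref{eqn:qQ-factorization} apply with free middle weights $q_\H$ in both $D=\H$ and $D=Q$.

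Second, on $G_K \cap E$ with $E := \{\Cpts(\Gamma)\cap \mathfrak{R}(\delta) \neq \emptyset\}$, combining this factorization with the bound \eqref{eqn:modified-weight-comparison} (which gives $q_D(\Gamma^{(L)}), q_D(\Gamma^{(R)}) \leq e^{cK} q(\cdot)$ when $|\Gamma^{(L)}|, |\Gamma^{(R)}| \leq K$) yields
\[
e^{\tau_{\beta}(\x)}\, \Gb_D(\x;\, G_K \cap E) \;\leq\; C_K \!\sum_{\Gamma^{(L)} \in \A_{\mathsf L, K}}\sum_{\Gamma^{(R)} \in \A_{\mathsf R, K}} \Pa(\Gamma^{(L)})\, \Pa(\Gamma^{(R)})\; \fA(\u,\v;\, E).
\]
Bounding $\fA(\u,\v;E) \leq \fA^+(\u,\v;E)$ via \eqref{eqn:U-U+} and invoking the random walk identification \eqref{eqn:hitting-time-equation} reduces everything to estimating
\[
p_N(\u,\v) \;:=\; \P_{\u}\big(\exists\, k\geq 0:\; \sS(k) \in \mathfrak{R}(\delta),\; H_{\v} < H_{\H_-}\big).
\]

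The crux will be a random walk estimate: uniformly over $\u_2, v_2 \in [0,K]$, one should show $p_N(\u,\v) = o(N^{-3/2})$. The ballot asymptotic \eqref{eqn:our-rw-ballot} gives $\P_\u(H_\v < H_{\H_-}) \asymp V_1(\u_2) V_1'(v_2)\, N^{-3/2} \lesssim K^2 N^{-3/2}$, so it suffices to prove that the \emph{conditional} probability $p_N(\u,\v)/\P_\u(H_\v < H_{\H_-})$ vanishes. By \cref{thm:rw-invariance}, the $\fJ_N$-rescaled conditioned walk converges in $(C[0,1], \|\cdot\|_\infty)$ to a standard Brownian excursion $e$. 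Under $\fJ_N$, the slab $\mathfrak{R}(\delta)$ maps to $[N^{4\delta-1}, 1-N^{4\delta-1}] \times [0, N^{\delta-1/2}/\sigma]$; since $\delta < 1/4$, both the time-window boundaries and the height cap tend to $0$. Because $e(t) \gtrsim \sqrt{t}$ near $t=0$ (and symmetrically near $t=1$), the excursion's infimum over $[N^{4\delta-1}, 1-N^{4\delta-1}]$ is of order at least $N^{2\delta-1/2} \gg N^{\delta-1/2}$, so the probability that the rescaled walk enters this shrinking slab vanishes; a quantitative bound follows by plugging the ballot asymptotic \eqref{eqn:our-rw-ballot} into a pointwise union bound over possible visit locations.

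Putting everything together, summing $\Pa(\Gamma^{(L)}) \Pa(\Gamma^{(R)})$ (which are probability weights on $\AL, \AR$ by \cref{prop:oz-weights}) gives $e^{\tau_\beta(\x)} \Gb_D(\x;\, G_K \cap E) = o(N^{-3/2})$, while \cref{lem:partition-function-size} together with \cref{prop:partition-fn-comparison} and \cref{the:ist-main} supplies the matching lower bound $\Gb_D(\x) \gtrsim e^{-\tau_\beta(\x)} N^{-3/2}$. Dividing shows $\bP_D^{\x}(G_K \cap E) = o(1)$, and combined with the first step this yields the proposition. The main obstacle is the third paragraph: one must extract a uniform, quantitative small-ball estimate for the conditioned walk near its endpoints from the weak convergence in \cref{thm:rw-invariance}. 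This is exactly where the hypothesis $\delta < 1/4$ is used, as it ensures that the rescaled height cap $N^{\delta-1/2}$ is asymptotically much smaller than the excursion's typical size $N^{2\delta-1/2}$ at times of order $N^{4\delta-1}$.
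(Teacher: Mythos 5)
Your overall skeleton (restrict to bounded left/right pieces and many cone points, factorize, pass from $\fA$ to $\fA^+$ via \eqref{eqn:U-U+} and \eqref{eqn:hitting-time-equation}, and finish with a random-walk repulsion estimate against the $N^{-3/2}$ ballot scale) is exactly the paper's strategy --- but the paper runs it for the \emph{free} polymer conditioned on $\{\gamma\subset D\}$, whereas you try to run it directly under the modified law $\bP_D^{\x}$, and that is where your argument has a genuine gap. After the factorization \eqref{eqn:q-factorization}/\eqref{eqn:qQ-factorization}, the bulk irreducible pieces carry the \emph{modified} weights $q_{\H}(\Gamma^{(i)})$, not the free weights; yet your displayed bound replaces the entire middle product by $\fA(\u,\v;E)$, which is built from the free tilted weights $\Pa$. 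There is no piecewise justification for this: as the remark following \cref{cor:small-contour-probs} stresses, the ratio $q_D(\Gamma)/q(\Gamma)$ for an individual \emph{animal} is bounded away from neither $0$ nor $\infty$ (so even your claimed $q_D(\Gamma^{(L)})\leq e^{cK}q(\Gamma^{(L)})$ does not follow from \eqref{eqn:modified-weight-comparison}, which is a statement about contours after summing over cluster decorations), and at the contour level the comparison \eqref{eqn:modified-weight-comparison} costs a factor $e^{6e^{-\chi\beta}|\gamma|}=e^{\Theta(N)}$ over the bulk, which annihilates the $o(N^{-3/2})$ target. The random-walk identification \eqref{eqn:hitting-time-equation} and \cref{prop:ist-repulsion} are intrinsically tied to the free weights, so the modified bulk weights cannot be fed into them without an a priori comparison --- and obtaining such a comparison at the level of full partition functions is precisely the content of \cref{the:ist-main} and \cref{prop:partition-fn-comparison}, not something available piece by piece.

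The paper's remedy, which you should adopt, is a two-step route: first prove the repulsion statement for $\bP^{\x}(\cdot\mid\gamma\subset D)$, where all weights are free and your steps (decomposition, \eqref{eqn:U-U+}, \cref{prop:ist-repulsion}, ballot estimate and exponential tails of $\Gamma^{(L)},\Gamma^{(R)}$ with $\bar\delta<\nu_g/4$) go through verbatim --- this is \cref{lem:entropic-repulsion-unmodified}; then transfer to $\bP_D^{\x}$ via the Cauchy--Schwarz comparison \cref{cor:small-contour-probs}, which is legitimate because ``$\gamma$ avoids $\fR(\delta)$'' is a \emph{contour} event, and conclude since every cone point of $\Gamma$ lies on $\gamma$. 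Two smaller points: for $D=Q$ you also need \eqref{eqn:partition-fn-comparison-freeQH} to compare the conditioned-on-$Q$ normalization with the half-plane one before invoking \cref{prop:ist-repulsion}; and, as you yourself flag, the shrinking-slab estimate near the endpoints cannot be extracted from the weak convergence of \cref{thm:rw-invariance} alone --- the paper instead gets it from the local ballot asymptotics of \cref{thm:our-rw-inputs}(1) together with standard random-walk bounds, which is the quantitative input you would need in any case.
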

From the shape of $\fcone$ and the $(\log N)^2$ bound on each $|\Gamma^{(i)}|$ (\cref{prop:bounded-irreducible-pieces}), \cref{prop:animal-entropic-repulsion} is enough to deduce that, with high $\bP_D^{\x}$-probability, the entire animal (contour and clusters) stays away from $\partial \H$ in a slightly shorter rectangle~\eqref{eqn:entire-animal-repulsion}. We will use this fact in the following subsection to couple $\Cpts(\Gamma)$ with the random walk.
We begin with an analogous result in the free Ising polymer case, where we can exploit the connection with the random walk and associated estimates, \`a la \cref{prop:ist-repulsion}.

\begin{lemma}\label{lem:entropic-repulsion-unmodified}
Let $D=Q$ or $D=\H$.
For any $\delta \in (0,1/8)$, 
\[
    \lim_{N\to\infty} \bP^{\x} \big( \Gamma \cap \mathfrak{R}(\delta)  \neq \emptyset \given \gamma \subset D \big) = 0\,.  
\]
\begin{proof}
In light of \cref{prop:bounded-irreducible-pieces} and the shape of $\fcone$, it suffices to show that the cone-points of $\Gamma$ avoid the larger rectangle $\mathfrak{R}_0(\delta)$ with high probability, i.e.,
\begin{align*}
    \lim_{N\to\infty} \bP^{\x} \big( E_{\delta} \given \gamma \subset D \big) = 0 \,, \quad \text{ where } E_{\delta} :=  \{ \Cpts(\Gamma) \cap \mathfrak{R}_0(\delta) \neq \emptyset \}\,.
\end{align*}
Recall $\cP_D^{\cp,\len}(\x,K)$ from~\eqref{def:animals-cp-len-K}, and observe that \cref{lem:G-H-smallGammaL,lem:generalized-length-cpts,rk:upgrade-H-to-D} imply 
\begin{align*}
    \lim_{K\to\infty} \lim_{N\to\infty} \bP^{\x}\big(\mathcal{P}_{D}^{\cp,\len}(\x,K) \given \gamma \subset D \big) = 1 \,.
\end{align*}
From the previous two displays, the proof of the Lemma will be complete upon showing
\begin{align}
    \lim_{K\to\infty} \lim_{N\to\infty} \bP^{\x}\big(\mathcal{P}_{D}^{\cp,\len}(\x,K) , E_{\delta} \given \gamma \subset D \big) = 0\,.
    \label{eqn:entropic-repulsion-goal}
\end{align}

Now, fix $K \geq 1$ and $N$ large compared to $K$. 
Recall the sets $\A_{\mathsf{L},K}$ and $\A_{\mathsf{R},K}$ from~\eqref{def:ALK-ARK}.
Using \cref{eqn:good-decomposed-2}, we have
\begin{multline*}
    \bP^{\x}\big(\cP_D^{\cp,\len}(\x, K) , E_{\delta} \given \gamma \subset D \big) \\
    \leq 
    \frac{1}{\Gb(\x \given \gamma \subset D) } e^{-\tau_{\beta}(\x)} 
    \sum_{\substack{\Gamma^{(L)}\in \A_{\mathsf{L},K} \\ \gamma^{(L)} \subset D}} \Pa(\Gamma^{(L)})
    \ \sum_{\substack{\Gamma^{(R)} \in \A_{\mathsf{R},K} \\ \gamma^{(R)} \subset D}} 
    \Pa(\Gamma^{(R)})  \ \fA\big( \X(\Gamma^{(L)}), \x- \X(\Gamma^{(R)}) ;  E_{\delta} \big)\,.
\end{multline*}
Using \cref{eqn:partition-fn-comparison-freeQH}, \cref{eqn:U-U+} and \cref{prop:ist-repulsion}, we may bound from above the right-hand side of the previous display 
\begin{align}
    &C^{-1}\!\!\! \sum_{\substack{\Gamma^{(L)} \in \A_{\mathsf{L},K} \\ \gamma^{(L)} \subset D}} \!\!\!\! \Pa(\Gamma^{(L)}) e^{\overline{\delta} \beta \|\X(\Gamma^{(L)})\|_1}
    \! \sum_{\substack{\Gamma^{(R)} \in \A_{\mathsf{R},K} \\ \gamma^{(R)} \subset D}}  \!\!\!\!
    \Pa(\Gamma^{(R)}) e^{\overline{\delta} \beta \|\X(\Gamma^{(R)})\|_1} \  \frac{\fA^+\big( \X(\Gamma^{(L)}), \x- \X(\Gamma^{(R)}) ; E_{\delta} \big)}{\P_{X(\Gamma^{(L)})}(H_{\x-\X(\Gamma^{(R)})} > H_{\H_{-}})} \nonumber \\
    =&~ C^{-1}\!\!\! \sum_{\substack{\Gamma^{(L)} \in \A_{\mathsf{L},K} \\ \gamma^{(L)} \subset D}} \Pa(\Gamma^{(L)}) e^{\overline{\delta} \beta \|\X(\Gamma^{(L)})\|_1}
    \ \sum_{\substack{\Gamma^{(R)} \in \A_{\mathsf{R},K} \\ \gamma^{(R)} \subset D}} 
    \Pa(\Gamma^{(R)}) e^{\overline{\delta} \beta \|\X(\Gamma^{(R)})\|_1} \nonumber 
    \\
    &\times \P_{\X(\Gamma^{(L)})}\big( \exists i < H_{\x -\X(\Gamma^{(R)})} : \sS_1(i) \in [N^{4\delta}, N-N^{4\delta}] \,,\, \sS_2(i) \in [0, 2N^{\delta}] \given H_{\x-\X(\Gamma^{(R)})} < H_{\H_{-}}\big) .
    \label{eqn:entropic-repulsion-1}
\end{align}
The ballot-type result \cref{thm:our-rw-inputs}(1) and standard random walk estimates yield
\begin{align*}
    \lim_{N\to\infty} \P_{\X(\Gamma^{(L)})}\big( \exists i < H_{\x -\X(\Gamma^{(R)})} : \sS_1(i) \in [N^{4\delta}, N-N^{4\delta}] \,,\, \sS_2(i) \in [0, 2N^{\delta}] \given H_{\x-\X(\Gamma^{(R)})} < H_{\H_{-}}\big) \\
    = 0 \qquad
\end{align*}
uniformly over $\Gamma^{(L)}\in \A_{\mathsf L, K}$ and $\Gamma^{(R)} \in \A_{\mathsf R, K}$. 
On the other hand, the exponential tails in \cref{eqn:oz-exp-decay} and the relation $4\overline{\delta} < \nu_g$ yield
\[
    C^{-1} \Big(\sum_{\substack{\Gamma^{(L)} \in \A_{\mathsf{L},K} \\ \gamma^{(L)} \subset Q}} \Pa(\Gamma^{(L)}) e^{\overline{\delta} \beta \norm{\X(\Gamma^{(L)})}_1} \Big) 
    \Big( \sum_{\substack{\Gamma^{(R)} \in \A_{\mathsf{R},K} \\ \gamma^{(R)} \subset Q}} 
    \Pa(\Gamma^{(R)}) e^{\overline{\delta} \beta \norm{\X(\Gamma^{(R)})}_1} \Big) \leq C_K\,,
\]
for some constant $C_K:= C_{K,\beta} > 0$. Taking the limit as  $N$, then $K\to\infty$ in ~\eqref{eqn:entropic-repulsion-1} yields~\eqref{eqn:entropic-repulsion-goal}.
\end{proof}
\end{lemma}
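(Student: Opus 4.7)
The plan is to prove entropic repulsion for the free polymer directly, via the tight correspondence with the effective random walk developed in \cref{subsec:rw-model}. The argument I envision runs in three steps: a reduction from ``animal in strip'' to ``some cone-point in strip'', a factorization-and-repulsion comparison that converts the polymer quantity into a random walk quantity, and a Brownian-excursion-type estimate on the walk.

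First, I would reduce to showing $\Cpts(\Gamma) \cap \mathfrak{R}_0(\delta) = \emptyset$ with high probability, where $\mathfrak{R}_0(\delta) := [N^{4\delta}, N-N^{4\delta}] \times [0, 2N^\delta]$ is the enlarged rectangle. The analog of \cref{prop:bounded-irreducible-pieces} for the free polymer law (which follows by literally repeating its proof with $q_D$ and $\Gb_D$ replaced by $q$ and $\Gb(\,\cdot\mid\gamma\subset D)$) guarantees that every irreducible piece satisfies $|\Gamma^{(i)}| \leq (\log N)^2 = o(N^\delta)$ with high probability. Because any stretch of the animal between consecutive cone-points $\u,\v$ is trapped in the diamond $(\u+\fcone)\cap(\v+\bcone)$, having $\Gamma$ touch $\mathfrak{R}(\delta)$ while all cone points avoid $\mathfrak{R}_0(\delta)$ would force some such stretch to span at least $N^\delta$ vertical units, which is excluded. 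Simultaneously, \cref{lem:G-H-smallGammaL,lem:generalized-length-cpts,rk:upgrade-H-to-D} (again in their free-polymer versions) let me restrict attention to the ``good'' set $\cP_D^{\cp,\len}(\x,K)$ of animals with $\max(|\Gamma^{(L)}|,|\Gamma^{(R)}|) \leq K$ and $|\gamma| \leq 1.1\|\x\|_1$.

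Second, on this good set I apply the OZ factorization~\eqref{eqn:good-decomposed-2} to express the relevant probability as
\[
\frac{e^{-\tau_\beta(\x)}}{\Gb(\x\mid\gamma\subset D)} \sum_{\substack{\Gamma^{(L)}\in\A_{\mathsf L,K}\\ \gamma^{(L)}\subset D}} \sum_{\substack{\Gamma^{(R)}\in\A_{\mathsf R,K}\\ \gamma^{(R)}\subset D}} \Pa(\Gamma^{(L)})\Pa(\Gamma^{(R)})\, \fA\bigl(\X(\Gamma^{(L)}),\x-\X(\Gamma^{(R)}); E_\delta\bigr),
\]
where $E_\delta = \{\Cpts(\Gamma) \cap \mathfrak{R}_0(\delta) \neq \emptyset\}$. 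Using $\fA \leq \fA^+$ (from~\eqref{eqn:U-U+}) together with \cref{prop:ist-repulsion}, I bound the ratio $\fA/\Gb(\x\mid\gamma\subset\H)$ by a constant times $e^{\bar\delta\beta(\|\X(\Gamma^{(L)})\|_1+\|\X(\Gamma^{(R)})\|_1)}$ multiplied by the conditional random walk probability
\[
\P_{\X(\Gamma^{(L)})}\bigl(\exists i < H_{\x-\X(\Gamma^{(R)})} : \sS(i) \in \mathfrak{R}_0(\delta)\ \big|\ H_{\x-\X(\Gamma^{(R)})} < H_{\H_-}\bigr),
\]
and \cref{eqn:partition-fn-comparison-freeQH} ensures that using $\H$ rather than $D$ in the denominator only costs a constant factor.

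Finally, I would invoke the ballot asymptotic \cref{thm:our-rw-inputs}(1) and standard Brownian excursion estimates: the conditional law of the random walk bridge is well-approximated by a rescaled Brownian excursion of height $\asymp \sqrt N$ on a time window of length $\asymp N$, and the probability that such an excursion dips below a strip of height $2N^\delta$ in the middle region $[N^{4\delta},N-N^{4\delta}]$ vanishes provided both $N^{4\delta} \ll N$ and $N^\delta \ll \sqrt N$, i.e., for $\delta < 1/8$. The sums $\sum_{\Gamma^{(L)}}\Pa(\Gamma^{(L)})e^{\bar\delta\beta\|\X(\Gamma^{(L)})\|_1}$ and its $\Gamma^{(R)}$-analog remain uniformly $O_K(1)$ thanks to the exponential tails in \cref{eqn:oz-exp-decay} and the inequality $\bar\delta < \nu_g/4$. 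The main obstacle is securing the uniformity of the random walk repulsion estimate over $\Gamma^{(L)},\Gamma^{(R)}$ whose displacements vary in an $O(K)$-ball; here the ballot theorem's explicitly uniform conclusion for start/end points up to $N^{1/2-\delta}$ is exactly what is needed.
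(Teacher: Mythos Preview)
Your proposal is correct and follows essentially the same argument as the paper: reduce from the animal to its cone-points via \cref{prop:bounded-irreducible-pieces}, restrict to the good set $\cP_D^{\cp,\len}(\x,K)$, apply the OZ factorization~\eqref{eqn:good-decomposed-2}, then combine $\fA\le\fA^+$ with \cref{prop:ist-repulsion} and \cref{eqn:partition-fn-comparison-freeQH} to land on the conditional random walk probability, and conclude via \cref{thm:our-rw-inputs}(1) together with the exponential tails~\eqref{eqn:oz-exp-decay} and $\bar\delta<\nu_g/4$. One cosmetic remark: your stated reason for the range $\delta<1/8$ (namely $N^{4\delta}\ll N$ and $N^\delta\ll\sqrt N$) only gives $\delta<1/4$, so that heuristic does not pin down the $1/8$; but this is a matter of bookkeeping, not method.
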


\begin{proof}[Proof of \cref{prop:animal-entropic-repulsion}]
\cref{lem:entropic-repulsion-unmodified} and \cref{cor:small-contour-probs} imply that $\gamma$ under $\bP_D^{\x}(\cdot)$ stays above $\fR(\delta)$. 
Of course, all cone-points of $\Gamma$ lie on $\gamma$, so we are done.
\end{proof}

\subsection{Coupling with the effective random walk} \label{subsec:random-walk-coupling}
Let $D=Q$ or $D=\H$, and
fix $\delta \in (0,1/8)$.
The entropic repulsion result, \cref{prop:animal-entropic-repulsion}, sets the stage for a coupling between the  cone-points of $\Gamma$ lying in the strip 
$\cS_{N^{4\delta}, N-N^{4\delta}}:= [N^{4\delta}, N-N^{4\delta}]\times [0,\infty)$ and the random walk $\sS(\cdot)$ defined in \cref{subsec:rw-model}. Before defining this coupling explicitly, we set up some notation.

For $\u,\v \in \Z^2$, define
\begin{align*}
    &\cP_{D,*}^{\delta}(\u, \v) \\
    &:= \Big\{ \Gamma \in \cP_D(\u,\v)  : |\Cpts(\Gamma)| \geq 2 \,,\, \max_{i\in \{L,1,\dots, |\Cpts(\Gamma)|-1,R\}} |\Gamma^{(i)}| < (\log N)^2 \,,\, \Cpts(\Gamma) \cap \fR(\delta) = \emptyset 
    \Big\} 
\end{align*}
where, as usual, we write $\cP_{D,*}^{\delta}(\x) := \cP_{D,*}^{\delta}(0,\x)$. Observe that the last two conditions in the definition, along with the shape of $\fcone$, ensure that 
\begin{align}
    \Gamma \cap ([N^{4\delta}, N-N^{4\delta}] \times [0,N^{\delta/2}]) = \emptyset \text{ for all } \Gamma \in \cP_{D,\delta}^*(\u,\v)\,,
    \label{eqn:entire-animal-repulsion}
\end{align}
where the intersection pertains to both contour and clusters of $\Gamma$. From \cref{lem:generalized-length-cpts,prop:bounded-irreducible-pieces,prop:animal-entropic-repulsion}, we know that 
\begin{align}
    \bP_D^{\x}( \cP_{D,*}^{\delta}(\x))  = 1 +o(1)\,.
    \label{eqn:probability-good-event}
\end{align}
Define the measure
\[
    \bP_{D,*}^{\x}(\cdot) := \bP_D^{\x}(\cdot \given \cP_{D,*}^{\delta}(\x))\,.
\]
For $\Gamma \in \cP_{D,*}^{\delta}$, let $\zeta^{(\Lstar)}$ and $\zeta^{(\Rstar)}$  denote the left-most and right-most cone-points of $\Gamma$ in $\cS_{N^{4\delta}, N-N^{4\delta}}$, respectively.
Note that, since $\Gamma \in \cP_{D,*}^{\delta}$, we have 
\begin{align}
    \zeta^{(\Lstar)} \in (N^{4\delta}, N^{4\delta}+(\log N)^2]\times [N^{\delta}, N^{4\delta}(\log N)^2]
    \label{eqn:zeta-Lstar-condition}
\end{align}
and
\begin{align}
    \zeta^{(\Rstar)} \in [N-N^{4\delta}-(\log N)^2, N-N^{4\delta}] \times  (N^{\delta}, N^{4\delta}(\log N)^2] \,.
    \label{eqn:zeta-Rstar-condition}
\end{align}
Let $\Gamma^{(\Lstar)} \in \cP_{D,*}^{\delta}(0,\zeta^{(\Lstar)})$  denote the portion of $\Gamma$ connecting $0$ to $\zeta^{(\Lstar)}$ (including all clusters connected to $\gamma^{(\Lstar)}$). Similarly, define $\Gamma^{(\Rstar)}\in\cP_{D,*}^{\delta}(\zeta^{(\Rstar)},\x)$ as the portion of $\Gamma$ connecting 
 $\zeta_{\Rstar}$ to $\x$, and define $\Gamma^* \in \cP_{D,*}^{\delta}(\zeta^{(\Lstar)},\zeta^{(\Rstar)})$ as the portion of $\Gamma$ connecting $\zeta_{\Lstar}$ to $\zeta_{\Rstar}$ so that 
 \[
    \Gamma = \Gamma^{(\Lstar)} \circ \Gamma^* \circ \Gamma^{(\Rstar)}\,.
 \]
Note that $\Gamma^*$ is a concatenation of irreducible animals,
each of which are completely contained in $Q$ by~\eqref{eqn:entire-animal-repulsion}. Therefore, from~\eqref{eqn:q-factorization}, we have
\[
    q_D(\Gamma) = q_D(\Gamma^{(\Lstar)}) q(\Gamma^*) q_D(\Gamma^{(\Rstar)})\,.
\]
Crucially, the $q_D$-weight of $\Gamma^*$ is equal to its (free) $q$-weight.

For any fixed $\overline{\zeta}^{(\Lstar)}$ and $\overline{\zeta}^{(\Rstar)}$ satisfying~\eqref{eqn:zeta-Lstar-condition} and~\eqref{eqn:zeta-Rstar-condition} respectively, and for any fixed
animals $\overline{\Gamma}^{(\Lstar)} \in \cP_D(0, \overline{\zeta}^{(\Lstar)})$ and $\overline{\Gamma}^{(\Rstar)} \in \cP_D(\overline{\zeta}^{(\Rstar)}, \x)$, consider the  measure 
\begin{align} \label{def:P*}
    \bP_{D,*}^{\overline{\zeta}^{(\Lstar)},\overline{\zeta}^{(\Rstar)}}(\cdot) := 
    \bP_{D,*}^{\x} \big(\cdot \given \Gamma^{(\Lstar)} = \overline{\Gamma}^{(\Lstar)}, \Gamma^{(\Rstar)} = \overline{\Gamma}^{(\Rstar)} \big) 
    \,,
\end{align}
which describes the conditional law of $\Gamma^*$ and defines  a probability measure on the following set of animals: 
\begin{align*}
\cP_{D,\delta}^{*,\A}\big(\overline{\zeta}^{(\Lstar)}, \overline{\zeta}^{(\Rstar)}\big) := \big\{ \overline\Gamma^{(1)} \!\circ \cdots \circ \overline\Gamma^{(n)} \!\in \cP_{D,\delta}^*\big(\overline{\zeta}^{(\Lstar)}, \overline{\zeta}^{(\Rstar)}\big) \, : \, n\in \N \,,\,
\overline\Gamma^{(i)} \!\!\in \A \text{ for each } i \in [1,n]
\big\}.
\end{align*}
Note that, for fixed $\overline{\zeta}^{(\Lstar)}$ and  $\overline{\zeta}^{(\Rstar)}$, the choice of $\bar{\Gamma}^{(\Lstar)}$ and $\bar{\Gamma}^{(\Rstar)}$ does not change $\bP_{D,*}^{\overline{\zeta}^{(\Lstar)},\overline{\zeta}^{(\Rstar)}}(\cdot)$. In particular, we have the formula
\begin{align}
    \bP_{D,*}^{\overline{\zeta}^{(\Lstar)},\overline{\zeta}^{(\Rstar)}}(E) = \sum_{\Gamma \in E \cap \cP_{D,\delta}^{*,A}(\overline{\zeta}^{(\Lstar)},\overline{\zeta}^{(\Rstar)}) } q(\Gamma) \ \ / \ \sum_{\Gamma \in \cP_{D,\delta}^{*,A}(\overline{\zeta}^{(\Lstar)},\overline{\zeta}^{(\Rstar)})} q(\Gamma) \,.
    \label{eqn:conditional-law}
\end{align}
Again, note that only the free weight appears in the above formula.

Lastly, for any $c>0$, define the half-space $\H_c := \{(x,y) \in \R^2 : y \geq c\}$.

\begin{proposition}\label{prop:rw-coupling}
Let $D=Q$ or $D=\H$. There exists $\nu_2>0$ such that for all $N\in \N$, for all $\beta>0$ sufficiently large, and for all $\overline{\zeta}^{(\Lstar)}$ and $\overline{\zeta}^{(\Rstar)}$ satisfying~\eqref{eqn:zeta-Lstar-condition} and~\eqref{eqn:zeta-Rstar-condition} respectively, if we let $T= H_{\overline{\zeta}^{(\Rstar)}}$ and view  $\Cpts(\Gamma^*)$  as an ordered tuple  (see \cref{eqn:cone-points-vector}), 
\[ \Big\| \bP_{D,*}^{\overline{\zeta}^{(\Lstar)},\overline{\zeta}^{(\Rstar)}}\Big(\Cpts(\Gamma^*) \in \cdot\Big) - 
\P_{\overline{\zeta}^{(\Lstar)}}\Big( 
\big(\sS(i)\big)_{i= 0}^{T}
\in \cdot \given T < H_{\H_{N^{\delta}}} \Big)
\Big\|_{\tv}\leq Ce^{-\nu_2 \beta (\log N)^2}
\]
for some constant $C:=C(\beta)>0$.

\begin{proof}
Fix $\overline{\zeta}^{(\Lstar)}$ and $\overline{\zeta}^{(\Rstar)}$ satisfying~\eqref{eqn:zeta-Lstar-condition} and~\eqref{eqn:zeta-Rstar-condition} respectively.
Recall $\V_{\u,\v}^+$ from~\eqref{def:V+,A+}, and 
define
\begin{align*}
    \V_{\overline\zeta^{(\Lstar)}, \overline\zeta^{(\Rstar)}}^{\delta} :=& \Big \{ (\vec v_1, \dots, \vec v_n) \in \V_{\overline\zeta^{(\Lstar)}, \overline\zeta^{(\Rstar)}}^+ :  n \geq 1,
    \ \norm{\vec v_k}_1 \leq (\log N)^2  \\
    &\qquad \text{ and } 
    \overline{\zeta}^{(\Lstar)} + \sum_{i=1}^k \vec v_i \geq N^{\delta} ~\forall k \in [1, n] \Big \}\,,  \\
    \A_{\overline\zeta^{(\Lstar)}, \overline\zeta^{(\Rstar)}}^{\delta} :=& \Big \{
    (\overline \Gamma^{(1)}, \dots, \overline \Gamma^{(n)}) \in \A^n : \ n \geq 1, \ \big( X(\overline \Gamma^{(1)}), \dots, X(\overline \Gamma^{(n)}) \big)  \in \V_{\overline\zeta^{(\Lstar)}, \overline\zeta^{(\Rstar)}}^{\delta} 
    \Big \}\,.
\end{align*}
Note that the sets $\A_{\overline\zeta^{(\Lstar)}, \overline\zeta^{(\Rstar)}}^{\delta}$ and $\cP_{D,\delta}^{*,\A}(\overline\zeta^{(\Lstar)}, \overline\zeta^{(\Rstar)})$ are in bijection with one another, since for any 
$(\overline \Gamma^{(1)}, \dots, \overline \Gamma^{(n)}) \in \A_{\overline\zeta^{(\Lstar)}, \overline\zeta^{(\Rstar)}}^{\delta}$, the animal $\overline\Gamma^{(1)} \circ \cdots \circ \overline\Gamma^{(n)}$ is in $\cP_{D,\delta}^{*,\A}(\overline\zeta^{(\Lstar)}, \overline\zeta^{(\Rstar)})$.
The above observations, along with~\eqref{eqn:decomposed-weight} (where we substitute $\y := \x$, and use that $\P^{\h_\x}(\Gamma^{(i)})=e^{\h_{\x} \cdot \X(\Gamma^{(i)})}q(\Gamma^{(i)})$) 
yields the following:
\begin{align}
    \sum_{\Gamma \in \cP_{D,\delta}^{*,A}(\overline{\zeta}^{(\Lstar)},\overline{\zeta}^{(\Rstar)})} 
    &q(\Gamma) 
    = e^{\h_{\x}\cdot \big(\overline{\zeta}^{(\Lstar)}+\x-\overline{\zeta}^{(\Rstar)}\big)} 
    \ \sum_{n\geq 1} \sum_{(\overline\Gamma^{(1)}, \dots, \overline \Gamma^{(n)}) \in \A_{\overline{\zeta}^{(\Lstar)}, \overline{\zeta}^{(\Rstar)}}^{\delta}} \prod_{i=1}^n \Pa(\overline \Gamma^{(i)}) \nonumber \\
    &= e^{\h_{\x}\cdot \big(\overline{\zeta}^{(\Lstar)}+\x-\overline{\zeta}^{(\Rstar)}\big)} 
    \ \sum_{n\geq 1} \sum_{(\vec v_1, \dots, \vec v_n) \in \V_{\overline{\zeta}^{(\Lstar)}, \overline{\zeta}^{(\Rstar)}}^{\delta}}
    \prod_{i=1}^n \Big(\sum_{\overline \Gamma^{(i)} \in \A} \Pa(\overline \Gamma^{(i)}) \ind{\overline \X(\overline\Gamma^{(i)}) = \vec v_i} \Big) \nonumber \\
    &= e^{\h_{\x}\cdot \big(\overline{\zeta}^{(\Lstar)}+\x-\overline{\zeta}^{(\Rstar)}\big)} 
    \ \sum_{n\geq 1} \sum_{(\vec v_1, \dots, \vec v_n) \in \V_{\overline{\zeta}^{(\Lstar)}, \overline{\zeta}^{(\Rstar)}}^{\delta}} \prod_{i=1}^n \P(\sS(1) = \vec v_i) \nonumber \\
    &= e^{\h_{\x}\cdot \big(\overline{\zeta}^{(\Lstar)}+\x-\overline{\zeta}^{(\Rstar)}\big)}  \
    \P_{\overline{\zeta}^{(\Lstar)}}\big( E_{\delta}^{*} \big)\,,
    \label{eqn:great-partition-function}
\end{align}
where we define the random walk event 
\[
E_{\delta}^{*} := \big \{ H_{\overline{\zeta}^{(\Rstar)}} < H_{\H_{N^{\delta}}} \,,\,  \|\sS(i+1) - \sS(i) \|_1 \leq (\log N)^2, \ \forall i \in [0, H_{\overline{\zeta}^{(\Rstar)}}-1]  \big \} \,.
\]
A nearly identical calculation but for the numerator of the right-hand side of~\eqref{eqn:conditional-law}  yields the following: for any $n\geq 1$ and for any $(\vv_1, \dots, \vv_n) \in (\Z^2)^n$,
\begin{align*}
    &\bP_{D,*}^{\overline{\zeta}^{(\Lstar)},\overline{\zeta}^{(\Rstar)}} \Big( \Cpts(\bar{\Gamma}) = \Big(\overline{\zeta}^{(\Lstar)}, \ \overline{\zeta}^{(\Lstar)} + v_1, \dots, \ \overline{\zeta}^{(\Lstar)} + \sum_{i=1}^{n-1} \vec{v}_i , \overline{\zeta}^{(\Rstar)} \Big) \Big) \\
    &=\P_{\overline{\zeta}^{(\Lstar)}} \Big( H_{\overline{\zeta}^{(\Rstar)}} = n+1 \,,\, \sS(k) = \overline{\zeta}^{(\Lstar)} + \sum_{i=1}^k \vv_i \,,\, \forall k \in [1, n] \given E_{\delta}^{*} \Big) 
        \ind{(\vv_1, \dots, \vv_n) \in \V_{\overline{\zeta}^{(\Lstar)}, \overline{\zeta}^{(\Rstar)}}^{\delta}}\,.
\end{align*}
Thus, the law of $\Cpts(\overline \Gamma)$ under $\bP_{D,*}^{\overline{\zeta}^{(\Lstar)},\overline{\zeta}^{(\Rstar)}}$ is \emph{equal to} the law of $(\sS(i))_{i=0}^{H_{\overline{\zeta}^{(\Rstar)}}}$ under $\P_{\overline{\zeta}^{(\Lstar)}}(\cdot \given E_{\delta}^{*})$.
Now, observe that \cref{thm:our-rw-inputs} along with shift-invariance of the random walk (to shift the walk downwards by $N^{\delta}$) give the following estimate, which holds uniformly over our ranges of $\overline{\zeta}^{(\Lstar)}$ and~$\overline{\zeta}^{(\Rstar)}$:
\[
    \P_{\overline{\zeta}^{(\Lstar)}}\big( H_{\overline{\zeta}^{(\Rstar)}} < H_{\H_{N^{\delta}}}\big) = \P_{\overline{\zeta}^{(\Lstar)} - (0,N^{\delta})}\big( H_{\overline{\zeta}^{(\Rstar)}-(0,N^{\delta})} < H_{\H_{-}}\big) \gtrsim N^{-3/2}\,.
\]
Exponential tails of the random walk increments~\eqref{eqn:rw-exp-tail} and the above estimate then imply that
\begin{align}\label{eqn:random-walk-increments-bound}
    \P_{\overline{\zeta}^{(\Lstar)}}\big(E_{\delta}^* \given H_{\overline{\zeta}^{(\Rstar)}} < H_{\H_{N^{\delta}}}\big)
    \geq 1- Ce^{-\nu \beta (\log N)^2}
\end{align}
for some constant $\nu >0$ independent of $\beta$, and a constant $C:=C(\beta)>0$, finishing the proof.
\end{proof}
\end{proposition}

\subsection{Proof of \cref{thm:BM-for-Ising-polymers-restated}}
\label{subsec:pf-BM-for-Ising-polymers}

We see from the shape  of the cone $\fcone$ and the restriction on $|\Gamma^{(i)}|$ that for all $\Gamma \in \cP_{D,\delta}^*(\x)$, 
\[
    \frac{1}{ \sqrt N}\max_{x\in [0,N]} |\overline\gamma(x) - \underline\gamma(x)| \leq \frac{2(\log N)^2}{ \sqrt N}\,.
\]
In addition to~\eqref{eqn:probability-good-event}, this tells us that it suffices to show $\mathfrak{J}_N(\Cpts(\Gamma))$ under $\bP_{D,*}^{\x}(\cdot)$ converges weakly to a Brownian excursion. Now,  we have the following on $\cP_{D,\delta}^*$:
\begin{align}
    \sup\{ y : \exists x \in \Z \text{ s.t. } (x,y) \in \gamma^{(\Lstar)} \} \,,\, 
    \sup\{ y : \exists x \in \Z \text{ s.t. } (x,y) \in \gamma^{(\Rstar)} \}
    \leq N^{4\delta} (\log N)^2 
\end{align}
as well as the bounds in \cref{eqn:zeta-Lstar-condition,eqn:zeta-Rstar-condition}. These bounds imply that $\Gamma^{(\Lstar)}$ and $\Gamma^{(\Rstar)}$ do not impact the diffusive scaling limit of $\Cpts(\Gamma)$. Thus, recalling the linear interpolation $\mathfrak{J}_N$ from~\eqref{def:J},  it suffices to show 
$
\mathfrak{J}_N(\Cpts(\Gamma^*)) 
$
under law 
$\bP_{D,*}^{\overline{\zeta}^{(\Lstar)},\overline{\zeta}^{(\Rstar)}}(\cdot)$
converges weakly to a Brownian excursion,
uniformly in $\overline{\zeta}^{(\Lstar)}$ and $\overline{\zeta}^{(\Rstar)}$ satisfying~\eqref{eqn:zeta-Lstar-condition} and~\eqref{eqn:zeta-Rstar-condition}. 
But this is an immediate consequence of \cref{prop:rw-coupling}, the shift invariance of the random walk, and \cref{thm:rw-invariance}.

This concludes the proof of \cref{thm:BM-for-Ising-polymers-restated}, and hence also \cref{mainthm:BM-for-Ising-polymers}.
\qed

\section{Random Walks in a half-space}\label{sec:rw-half-space}
This section is devoted to the proofs of  \cref{thm:our-rw-inputs,thm:rw-invariance} via the analysis of random walks in a half-space.

As mentioned after \cref{thm:rw-invariance}, we seek to extend many of the results from \cite{DenisovWachtel15} and \cite{DurajWachtel20}. These papers are written for random walks whose coordinates are uncorrelated. Though $\sS(\cdot)$ may not have this property, we can obtain such a random walk by rotating--- this is done in \cref{subsec:pf-our-rw-inputs}. 

As such, we must consider random walks in general half-spaces. Throughout the rest of this section, fix a unit vector $\n \in \bbS^1$, and consider the half-space through the origin with inward normal $\n$, and call this space $\H_{\n} \subset \R^2$.
Let $\np \in \bbS^1$ be a unit vector orthogonal to $\n$ (the choice between $\n$ and $-\n$ does not matter). We will think of our random walk in terms of coordinates with respect to $\n$ and $\np$ instead of $\e_1$ and $\e_2$.
Let $S(\cdot)$ denote a general $2$D random walk  with step distribution $X = (X_1, X_2)$ such that $\E X =\vec{0}$, $\Cov X = \mathrm{Id}$, and $\P(|X|>m) \leq c_1e^{-c_2m}$  for some constants $c_1, c_2>0$ and for all $m \geq 1$. Additionally, we impose a lattice assumption: assume that $X$ takes values on a lattice $\mathcal{L}$ that is a non-degenerate linear transformation of $\Z^2$, and that the distribution of $X$ is strongly aperiodic; that is, for each $\u \in \mathcal{L}$, $\mathcal{L}$ is the smallest subgroup of $\Z^2$ containing 
\[
\{ \v : \v = \u + \mathsf{w} \text{ for some $\mathsf{w}$ such that $\P(X = \mathsf{w}) > 0$ } \}\,.
\]

The theory of random walks confined to a half-space is intimately related to the theory of harmonic functions for processes killed upon exiting a half-space, and so we recall the relevant facts from this theory  before proceeding.

\subsection{Harmonic functions for processes in a half-space} \label{subsubsec:harmonic-functions}
 Our domain of interest is a relatively simple one, the half-space $\H_{\n}$, and we are concerned with the harmonic functions of the Brownian motion and of the random walk $S(\cdot)$ killed upon exit from $\H_{\n}$.

The harmonic function of the Brownian motion killed at $\partial \H_{\n}$ is given by the minimal (up to a constant), strictly positive harmonic function on $\H_{\n}$ with zero boundary conditions. For us, such functions take a very explicit form. 
Consider the rotation that sends $\n \mapsto \e_2$, and thus $\H_{\n}$ to $\H$. This is a conformal mapping that induces a bijection between positive harmonic functions in $\H$ that vanish on $\partial \H$ to positive harmonic functions in $\H_{\n}$ that vanish on $\partial \H_{\n}$. Since\footnote{See, e.g., \cite[Theorem~7.22]{SheldonBourdonWade01}.} every such harmonic function in $\H$ takes the form $h(\v) = c \v\cdot \e_2$, for some constant $c>0$, it follows that every harmonic $h$ function in $\H_{\n}$ takes the form $h(\v) = c \v\cdot \n$. In particular, the harmonic function only depends on the projection of $\v$ onto $\n$. In what follows, we take $c=1$ and define 
\[
u(\v) := \v \cdot \n\,.
\]
 We refer to~\cite{Dante87} for a general discussion of the harmonic function of the Brownian motion in cones. 

One of the many achievements of~\cite{DenisovWachtel15} is the construction of a positive harmonic function $V$ for a wide class of random walks $S(\cdot)$ killed upon exit from $K$, i.e., $V$ solves the equation\footnote{In~\cite{DenisovWachtel15} and~\cite{DurajWachtel20}, the authors consider random walks $S(\cdot)$ from $S_0 = 0$, and study the law of $(\v+S(\cdot))$ for $\v \in K$. In particular, they write $\tau_\v$ to the hitting time $H_{K^c}^{S(\cdot)}$ under law $\P_\v$.}
\[
\E_\v[V(S(1)), H_{K^c} >1] = V(\v) \,,\, \text{ for } \v\in K\,,
\]
where $K$ is an element of a wide class of cones.\footnote{See also~\cite{DenisovWachtel19}, where such a harmonic function was constructed for a more general class of cones; as well as~\cite{DenisovWachtel21}, which addressed a more general class of random walks.} Many of their limit theorems that we wish to extend were stated in terms of $V$. The function $V$ was constructed as
\[
V(\v) = \lim_{n \to \infty} \E_\v[ h(S(n))\,,\, H_{K^c} > n ]\,,
\]
where $h$ is a choice of the harmonic function for Brownian motion killed at $\partial K$. For $K := H_{\H_{\n}}$,  $h := u$, and so we see
\begin{align}
V(\v) = V_1(\v\cdot \n)\,, \text{ for all } \v \in \H_{\n}\,,
\label{eqn:harmonic-function-asymptotics}
\end{align}
where $V_1 := V_1^{\n}$ is the unique positive harmonic function for the one-dimensional random walk $\n \cdot S(\cdot)$ killed at leaving $(0,\infty)$ satisfying 
\[
\lim_{a \to \infty} \frac{V_1(a)}{a}  = 1 \,.
\]
The uniqueness of $V_1$, as well as an exact formula for $V_1$, was established by Doney in~\cite{Doney98}.

\subsection{Limit theorems for general random walks in a half-space}
The following two results, \cref{prop:rwcones-ballot,prop:2nd-component-genrw-limit}, are modifications of a ballot-type theorem from~\cite{DenisovWachtel15} and an invariance principle from~\cite{DurajWachtel20}, respectively. We'll write $\mathfrak{e}(\cdot)$ to denote the standard Brownian excursion on~$[0,1]$.

\begin{proposition}[Modification of {\cite[Theorem~6]{DenisovWachtel15}}]
\label{prop:rwcones-ballot}
Fix any $A > 0$ and $\delta \in (0,1/2)$.
Then there exists a constant $C_1>0$ such that, uniformly over sequences $a_k, b_k$ and $u_k$ satisfying
\begin{align}
    a_k \in [-A\sqrt{k}, A\sqrt{k}] \ , \  \frac{a_k}{\sqrt{k}} \to a \in [-A,A] \ , \ b_k, u_k \in (0, k^{1/2-\delta}]\ , \  \text{ and} \ \ \{ u_k \n \,,\, a_k \np+b_k \} \subset \mathcal{L} \label{eqn:uniform-parameters}
\end{align}
we have
    \begin{align}
        \P_{u_k \n}\Big(S(k) = a_k \np + b_k \n \,,\, H_{\H_{-\n}}^S > k \Big) \sim C_1\kappa^2 \frac{V_1(u_k) V_1'(b_k)}{k^2} e^{-a^2/2 }\,, \label{eqn:rwcones-ballot}
    \end{align}
    where $V_1'$ is the positive harmonic function for $-\n\cdot S_2(\cdot)$ killed upon leaving $(0,\infty)$.
\end{proposition}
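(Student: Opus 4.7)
The plan is to adapt the three-scale decomposition of Denisov--Wachtel~\cite{DenisovWachtel15} while exploiting the explicit product structure of the harmonic function on $\H_{\n}$. After rotating so that $\n = \e_2$, the walk has identity covariance, so the tangential and normal coordinates $S\cdot\np$ and $S\cdot\n$ are uncorrelated; correspondingly, the Denisov--Wachtel harmonic function factors as $V(\v) = V_1(\v\cdot\n)$ by \cref{eqn:harmonic-function-asymptotics}. The target asymptotic \cref{eqn:rwcones-ballot} can thus be read as a product of a one-dimensional conditioned local probability $\kappa V_1(u_k) V_1'(b_k)/k^{3/2}$ for the height coordinate (a local meander-bridge type estimate) and a one-dimensional free Gaussian local probability $\kappa e^{-a^2/2}/\sqrt{k}$ for the tangential coordinate, with $C_1$ absorbing lattice and Gaussian constants.

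To execute this factorization, I would decompose $[0,k]$ into $[0,t_1]$, $[t_1, k-t_2]$, $[k-t_2, k]$ with $t_1 = t_2 = \lfloor k/3\rfloor$ and condition on the two intermediate positions. On the initial segment, a uniform local limit theorem for the one-dimensional walk $\n\cdot S$ conditioned to stay positive would yield
\begin{equation*}
\P_{u_k\n}\!\left(\n\cdot S(t_1) \in [h,h+1],\, H_{\H_{-\n}}^S > t_1\right) = \frac{\kappa V_1(u_k)}{t_1}\,\rho_{\mathrm{mea}}(h/\sqrt{t_1})(1+o(1)),
\end{equation*}
where $\rho_{\mathrm{mea}}$ is the Rayleigh density of a Brownian meander endpoint, with an $o(1)$ uniform in $u_k \in (0, k^{1/2-\delta}]$, while the tangential marginal converges after rescaling to a centered Gaussian asymptotically independent of the height. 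Time reversal (which turns the forward harmonic function $V_1$ into $V_1'$ for the reversed walk with increment $-X$) delivers the same kind of estimate on $[k-t_2, k]$, producing the factor $V_1'(b_k)$.

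The middle segment $[t_1, k-t_2]$ is handled by the free multidimensional local CLT: once the heights at $t_1$ and $k-t_2$ are of order $\sqrt{k}$, reflection estimates show that exit from $\H_{\n}$ has stretched-exponentially small probability during the bulk, so the transition probability reduces to a Gaussian density with covariance $(k/3)\,\mathrm{Id}$ (lattice-corrected by $\kappa^2$). Convolving the three factors and performing the saddle-point/Laplace integral over the intermediate positions --- a Gaussian--Gaussian--Gaussian convolution in the tangential direction yielding $e^{-a^2/2}/\sqrt{k}$, and two meander densities collapsed against a Gaussian in the normal direction yielding $V_1(u_k)V_1'(b_k)/k^{3/2}$ --- reproduces \cref{eqn:rwcones-ballot}.

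The main obstacle is the \emph{uniformity} in $u_k, b_k \in (0, k^{1/2-\delta}]$. For fixed heights this is essentially Theorem~6 of~\cite{DenisovWachtel15}; for $u_k, b_k \asymp \sqrt{k}$, a Donsker-type approximation by a Brownian bridge conditioned to stay positive suffices. The delicate regime is $u_k \to \infty$ with $u_k = o(\sqrt{k})$, where the Brownian limit degenerates (the walk still feels the boundary) yet $V_1$ is no longer slowly varying. To bridge the two regimes I would invoke Doney's uniform asymptotic $V_1(u)/u \to 1$ from~\cite{Doney98} together with a quantitative coupling between the killed one-dimensional walk and a Brownian meander at scale $\sqrt{k}$, with error control drawn from the exponential tails of $X$ and the strong aperiodicity of the step distribution to supply the required uniform local CLT inputs.
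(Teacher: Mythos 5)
Your outline replaces the paper's route (which follows Denisov--Wachtel's Section~6.3 proof after establishing a \emph{uniform} conditioned local CLT, \cref{prop:rwcones-thm5}, plus the uniform upper bounds in \cref{eqn:tau-bound} and \cref{prop:rwcones-tau}) by a direct three-block factorization into one-dimensional meander estimates for the normal coordinate and a free Gaussian estimate for the tangential one. The central step of that factorization is not justified: after rotation the two coordinates of $S$ are \emph{uncorrelated but not independent}, so the event $\{H_{\H_{-\n}}^S>k\}$ (which depends only on $\n\cdot S$) does not decouple from the joint local event $S(k)=a_k\np+b_k\n$, and the probability in \cref{eqn:rwcones-ballot} cannot be written as a product of a 1D positive-walk local probability and a 1D free local probability. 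The "asymptotic independence of the tangential marginal from the height" that you invoke at the matching times is precisely the content of a bivariate local limit theorem for the walk conditioned to stay in the half-space, uniform in the starting height $u_k\in(0,k^{1/2-\delta}]$ --- i.e.\ exactly the modified Theorem~5 of Denisov--Wachtel that the paper proves as \cref{prop:rwcones-thm5}. Treating it as an available input assumes the hardest part of the proposition.

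A second, smaller gap is the middle block: the Laplace/convolution step requires summing over \emph{all} intermediate positions, not just those with height of order $\sqrt{k}$. Your reflection argument ("exit has stretched-exponentially small probability") covers only the typical heights; for atypically low intermediate heights one needs uniform local bounds of the type $\P_{u\n}(S(m)=\y,\,H_{\H_{-\n}}^S>m)\leq C(1+V_1(u))(1+V_1(\y\cdot\n))m^{-2}$, which is \cref{eqn:tau-bound} (the modified Lemmas~27--28 of Denisov--Wachtel) in the paper, and these bounds are also what make the truncation to $\|\z\|\leq R\sqrt{k}$ legitimate uniformly in $u_k,b_k$. With a proof of the uniform joint conditioned local CLT and these uniform local bounds in hand, your decomposition could be pushed through, but at that point it essentially reproduces the paper's argument rather than bypassing it; as written, the proposal rests on an independence assertion that is false at finite $k$ and unproved in the limit.
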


\begin{proposition}[Modification of {\cite[Theorem~6]{DurajWachtel20}}]\label{prop:2nd-component-genrw-limit}
Fix any $\delta \in (0,1/2)$.
Uniformly over sequences 
\begin{align}
    a_k \in [-A\sqrt{k}, A\sqrt{k}] \quad \text{ and } \quad b_k, u_k \in (0, k^{1/2-\delta}] \quad \text{ and } \quad \{ u_k \n \,,\, a_k \np+b_k\} \subset \mathcal{L}\,, \label{eqn:uniform-parameters-noconverge}
\end{align}
the family of conditional laws
\begin{align*}
    \bQ_{u_k, a_k, b_k}^k (\cdot) := \P_{u_k \n}\bigg( \Big(\frac{\n \cdot S(\floor{tk})}{\sqrt{k}}\Big)_{t\in [0,1]} \in \cdot \given S(k) = a_k \np + b_k\n \,,\, H_{\H_{-\n}}^{S} > k \bigg)
\end{align*}
converge  as $k \to \infty$  to the law of $\mathfrak{e}(\cdot)$ in the Skorokhod space $(D[0,1], \|\cdot\|_{\infty})$.
\end{proposition}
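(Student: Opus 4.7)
The plan is to follow the blueprint of \cite{DurajWachtel20}, adapting the proof of their Theorem~6 so as to obtain uniformity over $(u_k,a_k,b_k)$ in the stated range. I will establish (i) convergence of the finite-dimensional distributions of $X_k(t):=k^{-1/2}\n\cdot S(\lfloor tk\rfloor)$ under $\bQ^k_{u_k,a_k,b_k}$ to those of the standard Brownian excursion $\fe(\cdot)$, and (ii) tightness in $(D[0,1],\|\cdot\|_\infty)$. Throughout, let $E_k:=\{S(k)=a_k\np+b_k\n\}\cap\{H_{\H_{-\n}}^S>k\}$ denote the conditioning event.

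For (i), fix times $0<t_1<\cdots<t_m<1$, heights $\mathsf{d}_1,\ldots,\mathsf{d}_m>0$ and transverse positions $\mathsf{c}_1,\ldots,\mathsf{c}_m\in\R$, and set $\tau_i:=\lfloor t_ik\rfloor$. By the Markov property applied at the $\tau_i$, the joint probability
\[
\P_{u_k\n}\Big(\bigcap_{i=1}^m\{S(\tau_i)=\mathsf{c}_i\np+\mathsf{d}_i\n\}\cap E_k\Big)
\]
factorizes into $m+1$ ballot-type probabilities, one per consecutive pair of times. Applying \cref{prop:rwcones-ballot} to each factor and dividing by $\P_{u_k\n}(E_k)$ (estimated once more by \cref{prop:rwcones-ballot}), the intermediate $V_1$-weights cancel telescopically; the surviving Gaussians in the $\np$-direction together with the endpoint weights $V_1(u_k)$ and $V_1'(b_k)$ combine, after the diffusive rescaling $\mathsf{d}_i=x_i\sqrt k$ and using $V_1(x\sqrt k)/(x\sqrt k)\to 1$, into precisely the joint density of $(\fe(t_i))_{i=1}^m$ at $(x_1,\ldots,x_m)$.

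For (ii), tightness is established via the fourth-moment Kolmogorov criterion. By splitting the walk at the times $\lfloor sk\rfloor$ and $\lfloor tk\rfloor$ and summing over the intermediate positions via \cref{prop:rwcones-ballot} (much as in (i)), one obtains
\[
\E_{\bQ^k_{u_k,a_k,b_k}}\big[\,|X_k(t)-X_k(s)|^4\,\big]\lesssim (t-s)^2
\]
uniformly in the parameters, which is sufficient. This parallels the corresponding step in \cite[Theorem~6]{DurajWachtel20}; only \cref{prop:rwcones-ballot} needs to hold uniformly, an issue discussed next.

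The main obstacle, and the reason this statement is not a direct citation of \cite{DurajWachtel20}, lies in establishing that the error terms in \cref{prop:rwcones-ballot} and in the underlying Denisov--Wachtel local limit theorem are uniform in $(u_k,a_k,b_k)$ throughout the range $u_k,b_k\le k^{1/2-\delta}$. Tracking those arguments, the remainders involve moment quantities such as $\E[V_1(u_k+\n\cdot X)^r]$ and overshoot terms like $V_1(u_k)-u_k$, all polynomially controlled in $u_k$, and hence negligible compared to the leading $u_k V_1'(b_k)/k^2$ asymptotic throughout our range. The explicit execution of this uniform local limit theorem is the bulk of the subsequent subsection, from which \cref{prop:2nd-component-genrw-limit} follows via the argument above.
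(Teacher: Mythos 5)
Your plan (direct finite-dimensional convergence via a Markov factorization plus a Kolmogorov moment bound) differs from the paper's route, but as written it has a genuine gap. \Cref{prop:rwcones-ballot} is only valid when \emph{both} the starting and ending heights are at most $k^{1/2-\delta}$, i.e.\ $o(\sqrt k)$. In your factorization at times $\tau_i=\lfloor t_i k\rfloor$ the intermediate heights are $\mathsf d_i=x_i\sqrt k$, which lie outside that range, so the ballot asymptotic cannot be applied to the middle factors; worse, its asymptotic \emph{form} is wrong in that regime. For two endpoints at heights of order $\sqrt k$ the conditioned local limit is governed by the killed (reflected) Gaussian kernel $\phi_{t}(x-y)-\phi_{t}(x+y)$ in the normal component, not by a product $V_1(x)V_1'(y)$ times a free Gaussian — the latter is the small-endpoint degeneration of the former. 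If you literally multiply ballot asymptotics across the middle factors you obtain products of $x_ix_{i+1}$-weighted free Gaussians, which is \emph{not} the finite-dimensional density of the Brownian excursion, so the claimed "telescoping" computation does not produce the right limit. The same issue infects your tightness step: "summing over intermediate positions via \cref{prop:rwcones-ballot}" again uses the estimate where it is not available; what one needs there is a uniform \emph{upper} bound on the conditioned two-point function valid for all interior heights (in the paper the analogous tool is the bound \eqref{eqn:tau-bound}, and even that only has one low endpoint). Finally, your last paragraph defers the entire uniformity question to an unspecified "uniform local limit theorem", which is precisely the hard content and is not supplied.

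For comparison, the paper avoids ever needing a two-sided interior local CLT: it uses the $h$-transform identity \eqref{eqn:40} (Equation~(40) of \cite{DurajWachtel20}) to rewrite the bridge conditioning on $[0,tk]$ as a meander conditioning $\{H_{\H_{-\n}}^S>kt\}$ weighted by a density $h^{(k)}_{u_k,a_k,b_k}(t,X_{k,t})$; the uniform versions of the hitting-time tail (\cref{prop:rwcones-tau}), the ballot estimate (\cref{prop:rwcones-ballot}) and the one-sided local CLT (\cref{prop:rwcones-thm5}) show $h^{(k)}\to h(a,t,\cdot)$ uniformly, after which the meander invariance principle of \cite{DurajWachtel20} gives convergence on $D[0,t]$ for each $t<1$ (the transverse Brownian factor integrates out, removing the $a$-dependence), and tightness near $t=1$ is obtained by time reversal; the limit is then identified as the excursion by specializing to independent simple walks. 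If you want to salvage your route, you would need to prove, uniformly in the stated parameter range, a conditioned local limit theorem with both endpoints at macroscopic height (yielding the reflected kernel) together with an integrable uniform upper bound for the moment estimate — neither of which follows from the propositions you cite.
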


For the next result, let $\bS(\cdot)$ denote a two-dimensional random walk with step distribution $\bX = (\bX_1, \bX_2)$ satisfying the same lattice, covariance, and tail decay assumptions as $X$, but with mean  $\E \bX =\mu \np$ for some $\mu >0$. 

\begin{proposition}\label{prop:genrw-ballot}
Fix any $\delta \in (0,1/2)$. There exists a constant $\mathbf{C'}:= \mathbf{C'}(\mathbf{X})>0$ such that, uniformly over $u, v \in (0,N^{1/2-\delta}]$ and $\{u\n\,,\,N\np+v \n\} \subset \mathcal{L}$, we have 
\begin{align}
     \P_{u \n}\Big(H^{\bS}_{N \np + v \n} < H_{\H_{-\n}}^{\bS} \Big)  \sim \mathbf{C'} \frac{V_1(u) V_1'(v)}{N^{3/2}}\,,
     \label{eqn:genrw-ballot-leadingorder}
\end{align}
where $V_1'$ denotes the harmonic function for $-\n\cdot\bS_2(\cdot)$ killed upon leaving $(0,\infty)$.

Furthermore, 
if
\[ \mathbf{p}_{N,A} := \P_{u \n}\Big(H^{\bS}_{N \np + v \n}  \in [N/\mu - A\sqrt{N}, N/\mu+ A\sqrt{N}] \given H^{\bS}_{N \np + v \n} < H_{\H_{-\n}}^{\bS} \Big)\]
then 
\begin{align}
    \lim_{A\to\infty}\liminf_{N\to\infty} \mathbf{p}_{N,A} = 1\,.
    \label{eqn:genrw-ballot-goodk}
\end{align}
\end{proposition}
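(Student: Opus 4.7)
The plan is to reduce \cref{eqn:genrw-ballot-leadingorder} to the mean-zero ballot theorem \cref{prop:rwcones-ballot} via the drift-stripping decomposition $\bS(k) = \mu k \np + \tilde\bS(k)$, with $\tilde\bS$ mean-zero and identity-covariance. Because $\n\perp\np$, the half-space event is invariant: $\{H^{\bS}_{\H_{-\n}} > k\}$ coincides with $\{H^{\tilde\bS}_{\H_{-\n}} > k\}$, and the 1D harmonic functions attached to $\pm \n\cdot\bS$ coincide with those attached to $\pm \n\cdot\tilde\bS$. Setting $\mathbf{z} := N\np + v\n$, the Green's function term
\[
p_k := \P_{u\n}\bigl(\bS(k)=\mathbf{z},\,H^{\bS}_{\H_{-\n}}>k\bigr) = \P_{u\n}\bigl(\tilde\bS(k) = (N-\mu k)\np + v\n,\, H^{\tilde\bS}_{\H_{-\n}}>k\bigr)
\]
therefore falls into the hypotheses of \cref{prop:rwcones-ballot} for $k$ in a window $|k - N/\mu| \leq s_0\sqrt{N}$, with $a_k = N-\mu k$, $b_k = v$, $u_k = u$, yielding
\[
p_k \sim C_1 \kappa^2\,\frac{V_1(u)V_1'(v)}{k^2}\,\exp\!\Bigl(-\tfrac{(N-\mu k)^2}{2k}\Bigr)
\]
uniformly in $u, v \in (0, N^{1/2-\delta}]$.

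I will then sum $p_k$ over $k$. With the substitution $k = N/\mu + s\sqrt{N}$, the lattice constraint on $k$ becomes uniform spacing $\propto 1/\sqrt{N}$ in $s$, and the Gaussian $e^{-(N-\mu k)^2/(2k)}$ becomes a Gaussian in $s$ of variance $\asymp 1/\mu^3$. A Riemann-sum approximation gives
\[
\sum_{|k-N/\mu|\le s_0\sqrt{N}} p_k \;\sim\; \mathbf{C}\,\frac{V_1(u) V_1'(v)}{N^{3/2}}
\]
for an explicit $\mathbf{C} = \mathbf{C}(\bX) > 0$. Tail contributions from $|k-N/\mu|> s_0\sqrt{N}$ are $o(N^{-3/2})$ for $s_0$ large by standard Cram\'er/moderate-deviation bounds for the drift walk $\bS\cdot\np$ (the event $\bS\cdot\np(k)=N$ has probability $\lesssim e^{-(N-\mu k)^2/(2k)}$ in the moderate regime and $\lesssim e^{-c|N-\mu k|}$ in the large-deviation regime), and these bounds are uniform over the permitted range of $v$.

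To translate the Green's function estimate into the hitting probability, strong Markov at $\tau := H^{\bS}_{\mathbf{z}}$, together with translation-invariance of $\H_{\n}$ in the $\np$-direction, gives
\[
\sum_k p_k \;=\; \P_{u\n}\bigl(\tau<H^{\bS}_{\H_{-\n}}\bigr)\cdot G(v),\qquad G(v):=\sum_m \P_{v\n}\bigl(\bS(m)=v\n,\,H^{\bS}_{\H_{-\n}}>m\bigr).
\]
Positive drift in $\np$ forces each loop term $\P_{v\n}(\bS(m)=v\n,\dots)$ to be bounded by $\P(\bS\cdot\np(m)=0)$, which decays exponentially in $m$ by Cram\'er; hence $G(v)$ is finite and bounded uniformly in $v$. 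Combining this with the Green's function asymptotic yields \cref{eqn:genrw-ballot-leadingorder}. The concentration \cref{eqn:genrw-ballot-goodk} then follows at once from the Gaussian factor in $p_k$: the bulk of $\sum_k p_k$ sits in $|k-N/\mu|=O(\sqrt{N})$, and this concentration transfers to $\tau$ after deconvolving the exponentially-tailed loop kernel $r$.

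The main obstacle is controlling the loop constant $G(v)$ uniformly in $v \in (0, N^{1/2-\delta}]$: while $G(v)$ is bounded above and below by walk-universal constants, it genuinely depends on $v$, and obtaining the walk-universal prefactor $\mathbf{C}'(\bX)$ claimed in \cref{eqn:genrw-ballot-leadingorder} requires a careful harmonic-function-style argument tying the $v$-dependence of $G(v)$ to the $V_1'(v)$-dependence coming out of \cref{prop:rwcones-ballot}. This is where the bulk of the technical work sits, running parallel to the corresponding uniform-version analyses of \cite{DenisovWachtel15,DurajWachtel20} in \cite[Section~5]{IOVW20}.
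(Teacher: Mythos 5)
Your skeleton (drift removal, applying \cref{prop:rwcones-ballot} with $a_k=N-\mu k$, $b_k=v$ in a diffusive window of $k$, Riemann summation) is the same route the paper takes, which simply follows the proof of \cite[Theorem~5.1]{IOVW20} up to their Eq.~(59) and then invokes \cref{prop:rwcones-ballot}. But two steps of your proposal do not close. First, the loop factor: you convert the occupation (Green's function) sum into the hitting probability via $\sum_k p_k=\P_{u\n}(H_{\mathbf{z}}<H_{\H_{-\n}})\,G(v)$ and then concede that reconciling $1/G(v)$ with the universal constant $\mathbf{C}'$ and the pure $V_1'(v)$ dependence is unresolved. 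This is not a technicality you can defer: for a genuinely revisiting walk the asymptotic constant really is $\mathbf{C}/G(v)$, and $G(v)$ varies with $v$ (it increases from $G(1)$ to the unconstrained Green's function as $v\to\infty$), so no ``harmonic-function-style argument'' will turn it into a $v$-independent prefactor. The way the difficulty is actually avoided — both in \cite{IOVW20} and in the setting where \cref{prop:genrw-ballot} is applied here — is that the walk never visits a lattice point twice (the steps in \cite{IOVW20} have first coordinate at least $1$, and the walk to which the proposition is applied is a fixed invertible linear image of $\sS$, whose first coordinate is strictly increasing), so one decomposes directly over the first hitting time, $\{H_{\mathbf{z}}=k\}=\{\bS(k)=\mathbf{z}\}$, and $G(v)\equiv 1$; your proposal is missing this observation, and without it (or an explicit no-revisit hypothesis) the claimed form of \cref{eqn:genrw-ballot-leadingorder} is out of reach.

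Second, the tail control outside the window is not justified by what you invoke. Bounding $p_k$ for $|k-N/\mu|>s_0\sqrt N$ only by the Cram\'er/moderate-deviation probability that the drift coordinate equals $N$, i.e.\ $p_k\lesssim e^{-(N-\mu k)^2/(2k)}$, gives a total tail contribution of order $\sqrt N\,e^{-cs_0^2}$, which for fixed $s_0$ dwarfs the main term $V_1(u)V_1'(v)N^{-3/2}$; and taking $s_0\to\infty$ with $N$ is not an option, since \cref{prop:rwcones-ballot} requires $a_k\in[-A\sqrt k,A\sqrt k]$ for a fixed $A$. What is needed is an estimate that simultaneously retains the Gaussian deviation factor and the $k^{-2}$ half-space decay at both endpoints (with the $V_1(u)$, $V_1'(v)$ factors), i.e.\ a deviation-augmented version of \cref{eqn:tau-bound}; establishing exactly such combined bounds is the content of \cite[Section~5]{IOVW20} up to their Eq.~(59), which is what the paper cites, and it is also what makes \cref{eqn:genrw-ballot-goodk} hold with a fixed $A$ in the $\liminf$. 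So the proposal identifies the right strategy but leaves precisely the two load-bearing steps — the no-revisit identification of hitting with occupation, and the uniform off-window estimates — unproved.
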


\Cref{prop:rwcones-ballot,prop:2nd-component-genrw-limit,prop:genrw-ballot} will be proved in \cref{subsec:pfs-limit-thms}. Note that when $\n = \e_2$ (so $\H_{\n} = \H$),  \Cref{prop:genrw-ballot} reduces to \cite[Theorem~5.1]{IOVW20}.

\subsection{Proof of \cref{thm:our-rw-inputs,thm:rw-invariance}}
\label{subsec:pf-our-rw-inputs}
Recall that the random walk $\sS(\cdot)$ is a two-dimensional random walk whose step distribution $X= ( X_1,   X_2)$ satisfies $\E[ X] = (\mu,0)$, for some $\mu >0$, and exponential tail decay for $| X|$. Recall $\sigma_1^2 := \Var(X_1)$ and $\sigma_2^2 := \Var(X_2)$. 

\begin{proof}[Proof of \cref{thm:our-rw-inputs}]
\Cref{thm:our-rw-inputs} follows from \cref{prop:rwcones-ballot,prop:2nd-component-genrw-limit,prop:genrw-ballot} by linearly transforming the random walk $\sS(\cdot)$ to satisfy the hypotheses of these results.
We demonstrate this explicitly for item (2) of \cref{thm:our-rw-inputs}; the proof of item (1) follows readily.

Following from \cite[Example~2]{DenisovWachtel15}, we  derive the aforementioned linear transformation, . 
Consider the re-centered, normalized random walk 
\[
    \overline \sS(n) := 
    \begin{pmatrix}
        \sigma_1^{-1} & 0 \\ 0 & \sigma_2^{-1}
    \end{pmatrix} 
    \bigg( \sS(n) - n\begin{pmatrix} \mu \\ 0 \end{pmatrix} \bigg)\,.
\]
This is a random walk with increments $\bar{X}:= (\bar{X}_1, \bar{X}_2)$ of mean $0$ such that, for some $\rho \in (0,1)$, we have $C:= \Cov \bar{X} =  \E \bar{X}\bar{X}^t = \begin{pmatrix} 1&\rho \\ \rho &1 \end{pmatrix}$.
Let $C = ODO^t$. Since $C$ is a covariance matrix, it is positive semi-definite; however, if $C$ had $0$ as an eigenvalue, then that would imply the random walk lives on a line, which we know is not true as $X$ can be $\e_1$ or $\e_1+\e_2$ with positive probability. So, $C$ is positive-definite. Consider the transformation matrix $M := O^t\sqrt{D^{-1}} O$. Then 
\[
\Cov(MX) = \E[M\bar{X}\bar{X}^tM^t] = \mathrm{Id} \,.
\]
To be explicit, straight-forward calculations reveal that if $\theta \in [-\pi,\pi]$ solves $\sin 2\theta = \rho$, then we may take
\[
M = \frac{1}{\sqrt{1- \rho^2}} \begin{pmatrix} \cos \theta & - \sin \theta \\ -\sin \theta & \cos \theta \end{pmatrix}\,,
\]
though we will not need this explicit form.

Now, observe that $\n := M^{-1}\e_2$ has norm $1$ (explicitly, $\n = (\sin \theta, \cos \theta)$). Using the fact that $M^{-1}$ is symmetric and $\n$ is outward normal to $M\H_{-} = \H_{-\n}$, 
\begin{multline*}
    \big\{ H_{\H_-}^{\sS}> k \big\} = \big\{\e_2^{t} \sS(i) >0 \,,\, \forall i \in [0,k] \big\}
= \big\{\e_2^{t} \overline\sS(i) >0 \,,\, \forall i \in [0,k] \big\} \\
= \big\{ M \overline{\sS}(i) \cdot M^{-1}\e_2 >0 \,,\, \forall i \in [0,k]\big\} 
    = \big\{ H_{\H_{-\n}}^{M\overline\sS} >k \big\}\,,
\end{multline*}
In what follows, we write $M(a,b)$ to denote the matrix applied to the vector $a \e_1 + b\e_2$. 
Then for any measurable set $\mathrm{B}$,
\begin{multline}
    \P_{(0,u)}\Big(\sS(\cdot) \in \mathrm{B} \given  S(k) = (N,v) \,,\, H_{\H_-}^{\sS(\cdot)} > k \Big)  \\ 
    = \P_{(0,\sigma_2^{-1} u)}\Big(\overline{\sS}(\cdot) \in \mathrm{B}-k(\mu,0) \given  \overline{\sS}(k) = (\tfrac{N-k\mu}{\sigma_1},\tfrac{v}{\sigma_2}) \,,\, H_{\H_-}^{\overline\sS} > k \Big)  \\
    = \P_{M(0,u)}\Big(M\overline{\sS}(\cdot) \in M(\mathrm{B}-k(\mu,0)) \given M\overline{\sS}(k)  = M(\tfrac{N-k\mu}{\sigma_1},\tfrac{v}{\sigma_2}) \,,\, H_{\H_{-\n}}^{M\overline\sS} >k \Big)\,.
    \label{eqn:transformed-rw-law}
\end{multline}
Let us now check the hypotheses of \cref{prop:2nd-component-genrw-limit}.
Take $\np := M\e_1/\norm{M\e_1}$. Note that $\np$ spans $\partial \H_{-\n}$ and $\n$ is the inward normal of $M\H=\H_{\n}$.
$M\sS(\cdot)$ is a random walk on the lattice $\mathcal{L}$ generated by $\norm{M\e_1} \np$ and $M\e_2$, the latter of which can be expressed as
\begin{align*}
    M\e_2 = (M\e_2\cdot \np) \np + \n = \frac{ \e_1^t M^tM\e_2}{\norm{M\e_1}} \np + \n = \frac{\e_1^tC^{-1}\e_2}{\norm{M\e_1}}  \np + \n\,. 
\end{align*}
Thus, for any $x, y \in \R$, 
\[
M(x,y) = \frac{x + y\e_2^t C^{-1} \e_1}{\norm{M\e_1}} \np + y\n \,.
\]
For any $k \in [N/\mu -A\sqrt{N}, N/\mu +A\sqrt{N}]$, for $A$ arbitrarily large (but fixed with respect to $N$), and for any $u,v  \in [1, N^{1/2-\delta}]$, we see that
\[
    u_k \n:= M(0,u)  \quad  \text{ and } \quad a_k \np+ b_k\n := M(\tfrac{N-k\mu}{\sigma_1},\tfrac{v}{\sigma_2})
\]
satisfies \cref{eqn:uniform-parameters}. With the hypotheses of \cref{prop:2nd-component-genrw-limit} satisfied, it follows from \cref{eqn:transformed-rw-law} and
\[
\sigma_2^{-1}\sS_2 (\cdot)  =  \overline{\sS}(\cdot) \cdot \e_2 = M\overline{\sS}(\cdot) \cdot M^{-1}\e_2  = M\overline{\sS}(\cdot)\cdot \vec{n} 
\]
that item (2) of \cref{thm:our-rw-inputs} is an immediate consequence of \cref{prop:2nd-component-genrw-limit}.
\end{proof}

We now use \cref{thm:our-rw-inputs} to prove the Brownian excursion  limit. 

\begin{proof}[Proof of \cref{thm:rw-invariance}]
Due to \cref{eqn:our-rw-goodk}, it suffices to show that, for each fixed $k$, uniformly in the number of steps $k \in [N/\mu - A\sqrt{N}, N\mu + A\sqrt{N}]$, the  family of conditional laws
\begin{align*}
    \mathbf{Q}_{u,v}^{N,k} (\cdot) &:= \P_{(0,u)}\Big(\big(\mathfrak{e}^{\sS,v}(t)\big)_{t\in[0,1]} \in \cdot \given  \sS(k) = (N,v) \,,\, k < H_{\H_{-}}^{\sS} \Big)
\end{align*}
 converges  as $k \to \infty$  to the law of the standard Brownian excursion on $[0,1]$. We begin with \cref{claim:rw-x-size}, which localizes the $x$-coordinate $\sS_1(j)$  to an interval of size $o(N)$, for $j\in[0,k]$, and  allows us to compare the linear interpolation $\mathfrak{e}^{\sS,v}$ with the linear interpolation in item (2) of \cref{thm:our-rw-inputs}.

\begin{claim}\label{claim:rw-x-size}
For any $\eta >0$,  uniformly over $u$ and $v$ as in the theorem statement, we have
\begin{align}
    \lim_{N \to\infty}\P_{(0,u)} \Big( \max_{j \in [0,k]} \big|\sS_1(j) - \mu j\big| > N^{1/2+\eta} \given \sS(k) = (N,v) \,,\, k < H_{\H_{-}}^{\sS} \Big) = 0\,.
\end{align}
\begin{proof}
In light of the bound on $\P_{(0,u)}(\sS(k) = (N,v) \,,\, k < H_{\H_{-}}^{\sS})$ provided by \cref{eqn:our-rw-ballot}, it suffices to show
\begin{align}
    \P_0 \Big( \max_{j \in [0,N]} \big|\sS_1(j) - \mu j\big| > N^{1/2+\eta} \Big) = o(N^{-3/2})\,.
    \label{eqn:claim-rw-x-size-crux}
\end{align}
This follows from a union bound over $j$, the exponential tail bound on the steps of $\sS_1(\cdot)$ from \cref{eqn:rw-exp-tail}, and Hoeffding's inequality:
\begin{align*}
    &\P_{0} \Big( \max_{j \in [0,N]} \big|\sS_1(j) - \mu j\big| > N^{1/2+\eta} \Big) 
    \leq N \max_{j \in [0,N]} \P_0\Big(\big|\sS_1(j)- \mu j\big| > N^{1/2+\eta} \Big) \\
    &\leq N \max_{j \in [0,N]} \P_0\Big(\big|\sS_1(j)- \mu j\big| > N^{1/2+\eta} \,,\, \max_{i \in [0,j-1]} \big|\sS(i) - \sS(i+1) \big| \leq (\log N)^2 \Big) \\
    &\qquad+ c' N^2 e^{-\nu_g \beta (\log N)^2} \\
    &\leq 2N e^{-\frac{2N^{2\eta}}{(\log N)^4}}+ c' N^2 e^{-\nu_g \beta (\log N)^2} = o(N^{-3/2})\,. \qedhere
\end{align*}
\end{proof}
\end{claim}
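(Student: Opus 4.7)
The strategy I will follow is to show that the \emph{unconditional} probability of the bad event
\[
A_N := \Big\{\max_{j \in [0,k]} |\sS_1(j) - \mu j| > N^{1/2+\eta}\Big\}
\]
decays faster than any polynomial in $N$, which will easily defeat the polynomial \emph{lower} bound on the probability of the conditioning event $E_N := \{\sS(k) = (N,v),\, k < H_{\H_{-}}^{\sS}\}$. For the denominator, the ballot-type asymptotic \cref{eqn:our-rw-ballot}, applied in the present regime $k \in [N/\mu - A\sqrt{N}, N/\mu + A\sqrt{N}]$ where the Gaussian factor is bounded away from $0$, yields $\P_{(0,u)}(E_N) \asymp V_1(u) V_1'(v)/k^2$, which is at least inverse-polynomial in $N$. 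Consequently it suffices to prove $\P_{(0,u)}(A_N) = o(N^{-\alpha})$ for every fixed $\alpha > 0$.

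By shift invariance $\P_{(0,u)}(A_N) = \P_0(A_N)$, and a union bound over $j \in [0,k]$ (with $k = O(N)$) reduces the problem to bounding $\P_0(|\sS_1(j) - \mu j| > N^{1/2+\eta})$ uniformly in $j \leq k$. The plan is to split according to whether some increment of $\sS$ in $[0,j-1]$ exceeds $(\log N)^2$ in $\ell^1$-norm. On the ``bad increment'' event, the exponential-tail estimate \cref{eqn:rw-exp-tail} combined with another union bound over $i \leq j$ produces a contribution of order $j\,e^{-\nu_g \beta (\log N)^2}$, which is super-polynomial in $N$. On the complementary event, the centered increments $\sS_1(i+1) - \sS_1(i) - \mu$ are i.i.d.\ and bounded in absolute value by $(\log N)^2 + \mu$, so Hoeffding's inequality produces
\[
\P_0\Big(|\sS_1(j) - \mu j| > N^{1/2+\eta},\; \text{all increments} \leq (\log N)^2\Big) \leq 2\exp\Big(-\frac{c\, N^{2\eta}}{(\log N)^4}\Big),
\]
which is also super-polynomial in $N$. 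Summing the two pieces and taking a union bound over $j$ yields the required super-polynomial decay of $\P_0(A_N)$.

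The proof contains no substantial obstacle: it is a routine truncation-plus-Hoeffding concentration estimate. The only real judgement call is the choice of the polylogarithmic truncation scale $(\log N)^2$, which must be tuned so that both the rare-large-increment term and the Hoeffding term are super-polynomially small; this scale is small enough that the Hoeffding exponent diverges as a power of $N$, and large enough to beat the exponential tail of the increments. The crucial sanity check is simply that the super-polynomial bound on $\P_0(A_N)$ comfortably beats the inverse-polynomial lower bound $\P_{(0,u)}(E_N) \gtrsim N^{-2}$ supplied by the ballot asymptotic.
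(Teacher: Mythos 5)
Your proposal is correct and follows essentially the same route as the paper: divide out the ballot-theorem lower bound on $\P_{(0,u)}(\sS(k)=(N,v),\,k<H_{\H_-}^{\sS})$ from \cref{eqn:our-rw-ballot}, then kill the unconditional bad event by a union bound over $j$, truncation of increments at scale $(\log N)^2$ via the exponential tail \cref{eqn:rw-exp-tail}, and Hoeffding's inequality. The paper phrases the target as an $o(N^{-3/2})$ bound while you ask for super-polynomial decay, but the underlying estimates are identical.
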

Note that \cref{thm:our-rw-inputs}(2) and \cref{claim:rw-x-size} are the equivalents of \cite[Eqs.~(76) and~(77)]{IOVW20} for our random walk. We therefore finish the proof of our \cref{thm:rw-invariance} exactly as in the proof of  \cite[Theorem~5.3]{IOVW20}.
\end{proof}

\subsection{Uniform estimates for the random walk in a half-space} \label{subsec:proof-rw-be-inputs}
Before proving \cref{prop:rwcones-ballot,prop:2nd-component-genrw-limit,prop:genrw-ballot}, 
we need to modify several key estimates of~\cite{DenisovWachtel15} to address the range of parameters in \cref{eqn:uniform-parameters,eqn:uniform-parameters-noconverge}. In what follows, we will repeatedly refer back to~\cite{DenisovWachtel15}, explaining how their arguments can be adapted to give the desired uniformity. When stating or citing the results of~\cite{DenisovWachtel15} and~\cite{DurajWachtel20}, we will for the most part use their notation, pointing out any discrepancies explicitly.

\subsubsection{Needed inputs}
The results of~\cite{DenisovWachtel15} are often stated in terms of the dimension $d$ and a positive constant $p$, where $p$ is related to the asymptotic behavior of the relevant harmonic function in the cone. For us, $d=2$ and  $p=1$. 

Our first input is an extension of two estimates of \cite{DenisovWachtel15} uniformly over our range of initial positions. 
\begin{proposition}[Modification of {\cite[Eq.~(7)]{DenisovWachtel15}}] 
\label{prop:rwcones-tau}
Fix any $\delta \in (0,1/2)$. The following estimate holds
uniformly over all $a_k \in \R$ and $b_k \in (0 , k^{1/2-\delta}]$ such that $a_k \np +b_k\n \in \mathcal{L}$:
\begin{align}
    \P_{a_k \np + b_k\n}\big(H_{\H_{-\n}} > k \big) \sim \kappa V_1(b_k) k^{-1/2}\,.
    \label{eqn:hitting-time}
\end{align}
\begin{proof}
    \cref{eqn:hitting-time}, for $a_k := \mathfrak{a}$ and $b_k := \mathfrak{b}$ fixed, is proved in  \cite{DenisovWachtel15} as an immediate consequence of the lemmas of \cite[Sections~3 and 4]{DenisovWachtel15}. Of these, only \cite[Lemma~21]{DenisovWachtel15} is insufficient for the uniformity that we require.\footnote{It may appear that the constant $C(x)$ in \cite[Lemma~16]{DenisovWachtel15} also poses an issue for uniformity, but here $C(x)$ can be taken to be $C(\epsilon)$ for $\epsilon$ in that lemma, as a consequence of \cite[Lemma~14]{DenisovWachtel15} in our special case of $u(\v):= \v \cdot \n$.}
    That is, we must show that for all $\ep >0$ sufficiently small,
    \[
    \E_{a_k \np + b_k\n}\Big[u\big(S(\nu_k)\big); H_{\H_{-\n}}^{S} > \nu_k, \nu_k \leq k^{1-\ep} \Big] = V(a_k \np + b_k\n)(1+o(1))\,,
    \]
    where $\nu_k$ is defined as the first hitting time of $k^{1/2-\ep} \n + \H_{\n}$ and $o(1) \to 0$ as $k \to \infty$ uniformly over the ranges of interest for $a_k$ and $b_k$. Uniformity in $a_k$ is trivial. Uniformity of $b_k$ was proven in \cite[Sec.~5.6]{IOVW20} (see their Eq.~$(60)$, and recall that $u(S(\nu_k)) = S(\nu_k) \cdot \n$ and $V(a_k \np + b_k\n) = V_1(b_k)$). Thus, the proof of \cite[Eq.~(7)]{DenisovWachtel15} given at the end of \cite[Sec.~4]{DenisovWachtel15} extends to prove our proposition.
\end{proof}
\end{proposition}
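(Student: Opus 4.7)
The plan is to reduce the proposition to a one-dimensional survival estimate and then to run the Denisov--Wachtel argument with uniformity over the polynomial range of starting heights $b_k$.

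\emph{Reduction to one dimension.} Starting the walk at $v_0 = a_k \np + b_k \n$, the process lies in $\H_\n$ at step $j$ iff $\n \cdot (v_0 + S(j)) > 0$, i.e.\ $\n \cdot S(j) > -b_k$; since $\n \cdot \np = 0$, the perpendicular coordinate $a_k$ drops out entirely. Hence
\[
\P_{a_k \np + b_k \n}\bigl(H_{\H_{-\n}}^{S} > k\bigr) = \P_0\bigl(T_1 > -b_k,\ \dots,\ T_k > -b_k\bigr),
\]
where $T_j := \n \cdot S(j)$ is the one-dimensional random walk whose increments $\n \cdot X$ have mean zero, unit variance, and exponential tails. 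The uniformity in $a_k \in \R$ is thus automatic. It remains to prove the one-dimensional ballot-type estimate
\[
\P_0\bigl(T_1 > -b_k,\ \dots,\ T_k > -b_k\bigr) \sim \kappa\, V_1(b_k)\, k^{-1/2} \qquad \text{uniformly in } b_k \in (0,k^{1/2-\delta}],
\]
where $V_1$ is the Doney harmonic function for $T$ killed upon leaving $(0,\infty)$.

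\emph{Applying the Denisov--Wachtel machinery.} For a \emph{fixed} $b_k$, this is precisely \cite[Eq.~(7)]{DenisovWachtel15}, whose proof runs through Sections~3--4 there: an invariance principle up to the first time the walk reaches height $k^{1/2-\ep}$, a local CLT far from the boundary, and a harmonic extension argument at the crossing surface. I would follow that proof verbatim, simply observing that each of the intervening estimates is stated pointwise in the starting position and extends painlessly to uniformity --- \emph{provided} the one key output, their Lemma~21, namely
\[
\E_{b_k \n}\!\bigl[u(S(\nu_k))\,;\,H_{\H_{-\n}}^{S} > \nu_k,\ \nu_k \leq k^{1-\ep}\bigr] = V_1(b_k)\bigl(1+o(1)\bigr),
\]
with $\nu_k := H_{k^{1/2-\ep}\n + \H_\n}^{S}$, holds uniformly in $b_k$ over the allowed range.

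\emph{The main obstacle.} The technical crux is therefore the uniform version of Lemma~21, and this is where most of the work lives. Happily, the analogous uniformity was already carried out by Ioffe, Ott, Velenik and Wachtel in \cite[Section~5.6]{IOVW20} for their random walk: the input is the harmonic asymptotic $V_1(x)\sim x$ as $x\to\infty$ together with a concentration estimate showing that $\n \cdot S(\nu_k)$ is sharply localized around the crossing level $k^{1/2-\ep}$, and the argument depends on the model only through mean zero, finite variance, and exponential tails. I would transcribe that argument to the projected walk $T$, and then feed the resulting uniform Lemma~21 back into the Denisov--Wachtel scheme to conclude the asymptotic uniformly in $b_k \in (0,k^{1/2-\delta}]$ and (by the reduction) $a_k \in \R$.
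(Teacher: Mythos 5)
Your proposal is correct and takes essentially the same route as the paper: both identify the uniform-in-$b_k$ version of \cite[Lemma~21]{DenisovWachtel15} as the only missing ingredient, import that uniformity from \cite[Section~5.6]{IOVW20}, and then rerun the Denisov--Wachtel argument of \cite[Sections~3--4]{DenisovWachtel15} unchanged. Your explicit projection onto $\n$ simply spells out why uniformity in $a_k$ is trivial (the exit event depends only on the $\n$-coordinate), which the paper asserts without elaboration.
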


\begin{proposition}[Modification of {\cite[Theorem~3]{DenisovWachtel15}}]\label{prop:thm3}
     Uniformly over sequences $a_k$ and $b_k$ satisfying \cref{eqn:uniform-parameters}, the family of measures
     \[
        \P_{a_k \np+  b_k \n} \Big( \frac{S(k)}{\sqrt k} \in \cdot \given H_{\H_{-\n}}^S >k \Big)
     \]
     converges weakly, as $k$ tends to $\infty$, to the probability measure on $\H_{\n}$ with density given by  $H_0 (\y\cdot \n) e^{-\|a\np-\y\|^2/2} \d \y$, where $H_0 > 0$ is the normalizing constant.
\begin{proof}
Let $B$ denote a measurable subset of $K$. Theorem~3 of \cite{DenisovWachtel15} gives us 
\[
    \lim_{k\to\infty} \P\Big( \frac{S(k)}{\sqrt k} \in B- a \np \given H_{\H_{-\n}}^S >k \Big) = H_0 \int_{B- a\np} \!\!\!(\y\cdot \n) e^{-|\y|^2/2} \d \y = H_0 \int_{B} (\y\cdot \n) e^{-|a\np -\y|^2/2} \d \y.
\]
A simple continuity argument then yields
\[
\lim_{k \to \infty} \P\Big( \frac{S(k)+a_k \np + b_k \n}{\sqrt k}  \in B \given H_{\H_{-\n}}^S >k \Big)= H_0 \int_{B} (\y\cdot \n) e^{-|a\np -\y|^2/2} \d \y
\]
as well. This concludes the proof.
\end{proof} 
\end{proposition}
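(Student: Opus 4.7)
The plan is to reduce to the case of a walk starting near the boundary by shifting along $\np$, then invoke a uniform version of \cite[Theorem~3]{DenisovWachtel15} together with a continuity argument. Set $S'(n) := S(n) - a_k\np$; this has the same step distribution as $S$ but starts from $b_k\n$. Because $a_k\np \cdot \n = 0$, the events $\{H_{\H_{-\n}}^{S} > k\}$ and $\{H_{\H_{-\n}}^{S'} > k\}$ coincide, and so for any Borel $B \subset \H_\n$,
\[
\P_{a_k\np + b_k\n}\big(S(\floor{tk})/\sqrt{k} \in B \mid H_{\H_{-\n}}^{S} > k\big) = \P_{b_k\n}\big(S(\floor{tk})/\sqrt{k} \in B - a_k\np/\sqrt{k} \mid H_{\H_{-\n}}^{S} > k\big).
\]

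The main technical step is to establish the following uniform variant of \cite[Theorem~3]{DenisovWachtel15}: for any $\delta \in (0,1/2)$, uniformly over $b_k \in (0, k^{1/2-\delta}]$ with $b_k\n \in \mathcal{L}$, the family of measures $\P_{b_k\n}\big(S(\floor{tk})/\sqrt{k} \in \cdot \mid H_{\H_{-\n}}^{S} > k\big)$ converges weakly on $\H_\n$ to the measure with density $H_0\,(y\cdot\n)\,e^{-|y|^2/2}$. This runs exactly parallel to how \cref{prop:rwcones-tau} extends \cite[Equation~7]{DenisovWachtel15}: the fixed-starting-point form of \cite[Theorem~3]{DenisovWachtel15} is assembled from the lemmas of \cite[Sections~3--4]{DenisovWachtel15}, and the only real obstruction to uniformity in the starting point is the input \cite[Lemma~21]{DenisovWachtel15}, whose uniform version was worked out in \cite[Section~5.6]{IOVW20} (their equation~(60)) and is already invoked in our proof of \cref{prop:rwcones-tau}.

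Combining this uniform convergence with $a_k\np/\sqrt{k} \to a\np$ (so that $B - a_k\np/\sqrt{k}$ converges to $B - a\np$ on continuity sets $B$ of the limit) yields convergence of the right-hand side of the first display to $H_0 \int_{B - a\np}(y\cdot\n)\,e^{-|y|^2/2}\,\dd y$. The change of variables $z = y + a\np$ preserves Lebesgue measure, and since $a\np\cdot\n = 0$ it satisfies $y\cdot\n = z\cdot\n$, rewriting the integral as $H_0\int_B (z\cdot\n)\,e^{-|z - a\np|^2/2}\,\dd z$, which is the advertised density. The chief obstacle is the uniformity in $b_k$: it is not automatic from the fixed-starting-point version of \cite[Theorem~3]{DenisovWachtel15} and requires tracking the dependence of the estimates in \cite[Sections~3--4]{DenisovWachtel15} on the starting point. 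The shift along $\np$ is the natural device that isolates this as the only issue, since $a_k\np$ lies on $\partial\H_\n$ and therefore leaves the half-space conditioning invariant.
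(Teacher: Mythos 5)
Your proposal is correct and reaches the conclusion by the same two ingredients the paper uses: \cite[Theorem~3]{DenisovWachtel15} plus a shift/continuity argument, so the routes are essentially the same; the difference is where the work is placed. The paper invokes the fixed-starting-point form of \cite[Theorem~3]{DenisovWachtel15} and absorbs the passage to starting points $a_k\np+b_k\n$ as in \cref{eqn:uniform-parameters} into ``a simple continuity argument,'' whereas you split that passage into an exact recentering along $\np$ (valid, since $a_k\np\cdot\n=0$ leaves the survival event unchanged) followed by a uniform-in-$b_k$ version of \cite[Theorem~3]{DenisovWachtel15}, which you propose to obtain by re-running the argument of \cite{DenisovWachtel15} with the uniform replacement of their Lemma~21 from \cite[Section~5.6]{IOVW20}, exactly as in the proof of \cref{prop:rwcones-tau}. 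This is a defensible, arguably more careful, accounting of the same argument: since $b_k$ may grow like $k^{1/2-\delta}$, uniformity in the starting height is not an automatic consequence of the fixed-start statement, and your identification of Lemma~21 as the only genuinely start-point-sensitive input is consistent with the footnote in the proof of \cref{prop:rwcones-tau} (the constant in their Lemma~16 being controllable for the harmonic function $u(\v)=\v\cdot\n$); you assert rather than carry out this adaptation, but that matches the level of detail the paper itself adopts for \cref{prop:rwcones-tau}, with the fully worked-out analogue being \cref{prop:rwcones-thm5}. Your closing continuity-set argument and the change of variables $z=y+a\np$ (using $a\np\cdot\n=0$) reproduce the paper's computation of the limiting density.
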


\begin{proposition}[Modification of {\cite[Theorem~5]{DenisovWachtel15}}]
\label{prop:rwcones-thm5}
Recall $H_0$ from \cref{prop:thm3}. Then uniformly over sequences $a_k$ and $b_k$ satisfying \cref{eqn:uniform-parameters}, 
    \begin{align}
        \limsup_{k \to \infty} \sup_{\y \in \H_{\n}} \abs{\frac{k^{3/2}}{V_1(b_k)} \P_{a_k \np + b_k \n}\big(S(k) = \y \,,\, H_{\H_{-\n}}^S > k \big) - \kappa H_0  \frac{\y\cdot \n}{\sqrt{k}} e^{- \|a\sqrt{k}\np - \y\|^2/2k}} = 0\,.
    \label{eqn:rwcones-thm5}
    \end{align}
\begin{proof}[Proof of \cref{prop:rwcones-thm5}]
Below, we adapt the proof of Theorem~5 given in \cite[Section~6.2]{DenisovWachtel15}, beginning as in \cite{DenisovWachtel15} by splitting   $K := \H_{\n}$ into three parts:
\begin{align*}
    K^{(1)} &:= \{ \y \in \H_{\n} : \norm{\y} > R \sqrt k \} \, \\
    K^{(2)} &:= \{ \y \in \H_{\n} : \norm{\y} \leq R \sqrt{n}\,,\, \y \cdot \n \leq 2 \ep \sqrt{n} \} \, \\
    K^{(3)} &:= \{\y \in \H_{\n} : \norm{\y} \leq R \sqrt k \,,\, \y \cdot \n  > 2\ep\sqrt k \} \,,
\end{align*}
for some $R>0$ and $\ep >0$. Below, we let $C>0$ denote a constant independent of $k$, $\y$, $R$, and $\ep$  that may change from line to line. Since
\begin{align*}
    \lim_{R \to \infty} \lim_{\ep \to 0} \sup_{\y \in K^{(1)} \cup K^{(2)}} \frac{\y \cdot \n}{\sqrt{k}} e^{- \|a\sqrt{k}\np - \y\|^2/2k} = 0 \,,
\end{align*}
\cref{prop:rwcones-thm5} will be proved if we can show
\begin{align}
    \lim_{R \to \infty} \lim_{\ep \to 0} \limsup_{k \to\infty} \frac{k^{3/2}}{V_1(b_k)}\sup_{\y \in K^{(1)} \cup K^{(2)}} \P_{a_k \np + b_k \n}\big(S(k) = \y \,,\,H_{\H_{-\n}}^S>k\big) = 0 \label{eqn:rwcones-thm5-zeroregion}
\end{align}
and 
\begin{align}
    \lim_{\ep \to 0} \limsup_{k\to\infty} \!\! \sup_{\y \in K^{(3)}} \abs{\frac{k^{3/2}}{V_1(b_k)} \P_{a_k\np+b_k\n}\big(S(k) = \y\,,\, H_{\H_{-\n}}>k\big) - \kappa H_0 \frac{\y\cdot\n}{\sqrt{k}} e^{-\frac{\|a\np - \y\|^2}{2k}}} = 0.
    \label{eqn:rwcones-thm5-main}
\end{align}
We begin with the more complicated \cref{eqn:rwcones-thm5-main}.

Set $m: = \lfloor \ep^3k \rfloor$. Our starting point is \cite[Eq.~(82)]{DenisovWachtel15}, reproduced below:
\begin{multline}
    \P_{a_k \np + b_k \n}\big(S(k) = \y \,,\,H_{\H_{-\n}}^S > k \big) \\
    = \sum_{\z \in \H_{\n}} \!\! \P_{a_k \np + b_k \n}\big(S(n-m) = \z \,,\, H_{\H_{-\n}}^S > k-m \big) \P_z\big(S(m) = \y \,,\, H_{\H_{-\n}}^S > m\big)\,. \label{eqn:rwcones-82}
\end{multline}
Let $\H_{\n}^{(1)}(\y) := \{ \z \in \H_{\n} : \norm{\z-\y} < \ep \sqrt{k} \}$. Then we may follow the computations in \cite[Eqs.~(83),(84)]{DenisovWachtel15} exactly, yielding a constant $a> 0$ such that the following  inequalities hold uniformly interm $a_k, b_k$ and $\y$ such that $\y\cdot \n > 2\ep\sqrt{k}$:
\begin{multline}
    \frac{k^{3/2}}{V_1(b_k)}\sum_{\z \in \H_{\n}\setminus \H_{\n}^{(1)}(\y)} \!\!\!\!\!\! \P_{a_k \np + b_k \n}\big(S(k-m) = \z \,,\, H_{\H_{-\n}}^S > k-m \big) \P_z\big(S(m) = \y \,,\, H_{\H_{-\n}}^S > m \big) 
    \\ \leq C  \ep^{-3} e^{-\frac{a}{\ep}} \label{eqn:rwcones-thm5-3-1}
\end{multline}
and 
\begin{multline}
    \frac{k^{3/2}}{V_1(b_k)}\sum_{\z \in \H_{\n}^{(1)}(\y)} \P_{a_k \np + b_k \n}\big(S(k-m) = \z \,,\, H_{\H_{-\n}}^S > k-m \big) \P_z\big(S(m) = \y \,,\, H_{\H_{-\n}}^S < k \big) 
    \\
    \leq C \ep^{-3}e^{-\frac{a}{\ep}} \,. \label{eqn:rwcones-thm5-3-2}
\end{multline}
Both right-hand sides go to $0$ as $\ep \to 0$, and so we turn our attention to the following expression:
\begin{align*}
    \Sigma(\y) = \sum_{\z \in \H_{\n}^{(1)}(\y)} \P_{a_k \np + b_k \n}\big(S (k-m) = \z \,,\, H_{\H_{-\n}}^S > k-m \big) \P_z\big(S(m) = \y\big)\,.
\end{align*}
The bound in \cite[Eq.~(85)]{DenisovWachtel15} also holds uniformly over $a_k, b_k$, and $\y\cdot \n > 2 \ep \sqrt{k}$, except that there should be a $V(\x)$ factor in the $O(\cdot)$-expression of Eq.~(85) there, where for us $\x := a_k \np +b_k \n$ and so $V(\x) = V_1(b_k)$.\footnote{The $V_1(b_k)$ factor comes from an application of \cref{prop:rwcones-tau} in the second line of \cite[Eq.~(85)]{DenisovWachtel15}. This term was dropped in \cite{DenisovWachtel15} because they consider $\x$ fixed, and so $V_1(b_k)$ is order $1$.
} 
Altogether, we find
\begin{align}
    \Sigma(\y) 
    = (2\pi k \ep^3)^{-1}\P_{a_k\np+b_k\n}\big( H_{\H_{-\n}}^S > k-m \big) 
    \Sigma_1(\y)
    + O\Big(\frac{V_1(b_k)}{k^{3/2}} \ep^{-3} e^{-a/\ep}\Big)\,,
    \label{eqn:rwcones-85}
\end{align}
where
\[
    \Sigma_1(\y):= \sum_{\z \in \H_{\n}^{(1)}(\y)} \P_{a_k \np + b_k \n}\big(S(k-m) = \z \given  H_{\H_{-\n}}^S > k-m \big) e^{-\frac{\norm{\y-\z}^2}{2\ep^3k}}\,.
\]
From \cref{prop:thm3} and a compactness argument, we have
\begin{align*}
    \limsup_{k\to\infty}\!\! \sup_{\y\in K^{(3)}} 
    \abs{
        \Sigma_1(\y) - H_0 \int_{\|(1-\ep^3)^{1/2}r - \y/\sqrt{k}\|<\ep} (\mathsf{r} \cdot \n) e^{-\|a \np - \mathsf{r}\|^2/2} e^{-\| (1-\ep^3)^{1/2}\mathsf{r} - \y/\sqrt{k} \|/2\ep^3}  \d \mathsf{r}
    } = 0\,.
\end{align*}
We can follow the steps up to the display before~\cite[Eq.~(86)]{DenisovWachtel15}, appealing this time to the uniform continuity of $(\mathsf{r}\cdot \np) e^{-\|a\np-\mathsf{r}\|^2/2}$ (instead of  the function $u(\mathsf{r})e^{-\|r\|^2/2}$ as in \cite{DenisovWachtel15}), to obtain
\begin{align*}
    \limsup_{k \to \infty} \sup_{\y\in K^{(3)}} \abs{\Sigma_1(\y) - H_0 \frac{\y\cdot \n}{\sqrt{k}} e^{-\frac{\|a\np - \y\|^2}{2k}}} = o(\ep^{3})\,.
\end{align*}
Combining the above with \cref{eqn:rwcones-85} and
applying \cref{prop:rwcones-tau}, we find
\begin{align*}
    \lim_{\ep \to 0} \limsup_{k\to\infty} \sup_{\y \in K^{(3)}} \abs{\frac{k^{3/2}}{V_1(b_k)} \Sigma(\y) - \kappa H_0 \frac{\y\cdot\n}{\sqrt{k}} e^{-\frac{\|a\np - \y\|^2}{2k}}} = 0\,.
\end{align*}
Combining this with \cref{eqn:rwcones-thm5-3-1,eqn:rwcones-thm5-3-2} yields \cref{eqn:rwcones-thm5-main}.

The modifications of~\cite[Section~6.2]{DenisovWachtel15} required to obtain \cref{eqn:rwcones-thm5-zeroregion} are much simpler than those needed to obtain \cref{eqn:rwcones-thm5-main}.
\Cref{eqn:rwcones-thm5-zeroregion} follows very similarly to the proofs of~\cite[Eqs.~(77) and~(81)]{DenisovWachtel15}, and so we just highlight the main differences. The main technical modification comes from the following, which adapts their Lemmas~27 and~28:
\begin{align}
        \P_{u\n}\big(S(k) = a \np + b \n \,,\, H_{\H_{-\n}}^S > k \big) \leq C (1+V_1(u)) n^{-\frac{3}2} \wedge C (1+V_1(u))(1+V_1(b)) n^{-2}
        \label{eqn:tau-bound}
\end{align}
for all $u, a, b \geq 0$.
Indeed, note that \cref{prop:rwcones-tau} gives the bound 
    \begin{align*}
    \P_{a_k \np + b_k\n}\big(H_{\H_{-\n}}^S > k \big) \leq C\big(1+V_1(b_k)\big)k^{-1/2}\,.
    \end{align*}
Then the proofs of Lemmas~27 and~28 of \cite{DenisovWachtel15} can be repeated to yield \cref{eqn:tau-bound} (in particular, the $C(\x)$ in Lemma~27 can be expressed as  $C V_1(\x\cdot\n)$, and the $C(\x,\y)$ in Lemma~28 can be expressed as $C V_1(\x\cdot\n)V_1(\y\cdot\n)$). So all $C(\x)$ terms from the proofs of~\cite[Eqs.~(77),(81)]{DenisovWachtel15} should be replaced by $V_1(b_k)$, after which one finds that their work up to \cite[Eq.~(81)]{DenisovWachtel15} yields \cref{eqn:rwcones-thm5-zeroregion}.
\end{proof}
\end{proposition}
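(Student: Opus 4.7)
The plan is to adapt the classical Denisov--Wachtel strategy for a local CLT under conditioning to stay in a cone, but track all constants carefully so as to obtain uniformity in the starting point $a_k \np + b_k \n$ and the endpoint $\y \in \H_{\n}$. The key inputs we already have are the uniform survival asymptotic in \cref{prop:rwcones-tau} and the uniform weak convergence of the conditional law in \cref{prop:thm3}. Heuristically, after $k-m$ steps the walk conditioned to survive has a density against $V_1(\cdot\cdot\n)\,e^{-\|a\np-\cdot\|^2/2}\,\d\cdot$, and during a short terminal window of $m$ steps the survival event becomes automatic while the transition density is governed by the ordinary local CLT; the two pieces convolve to give the claimed Gaussian-like limit with the $\y\cdot\n$ ``ballot'' factor.

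Concretely, I would fix $R \gg 1$ and $\epsilon \ll 1$ and split $\H_{\n}$ into the ``bulk''
\[
K^{(3)} = \{\y \in \H_{\n} : \|\y\| \le R\sqrt{k},\ \y\cdot\n > 2\epsilon\sqrt{k}\},
\]
the ``far'' region $K^{(1)} = \{\|\y\| > R\sqrt{k}\}$, and the ``near-boundary'' region $K^{(2)}$. On $K^{(1)}\cup K^{(2)}$ the target Gaussian $(\y\cdot\n)k^{-1/2}e^{-\|a\sqrt{k}\np - \y\|^2/2k}$ tends to zero uniformly as $R\to\infty$ and $\epsilon \to 0$, so the task is to show the same for the actual local probability. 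This would be done by first establishing a uniform pointwise bound of the form
\[
\P_{u\n}\big(S(k) = \y,\ H_{\H_{-\n}}^S > k\big) \;\lesssim\; (1+V_1(u))(1+V_1(\y\cdot\n))\,k^{-2},
\]
following the $C(\x)\mapsto C V_1(\x\cdot\n)$ refinement suggested by \cref{prop:rwcones-tau}, and then combining with Gaussian (or exponential) tail bounds on the unconditioned walk to control the mass in the far region and a reflection/ballot argument for the near-boundary region.

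For the dominant bulk region $K^{(3)}$, the core step is a Markov decomposition at the intermediate time $k-m$ with $m := \lfloor \epsilon^3 k\rfloor$:
\[
\P_{a_k\np+b_k\n}\!\big(S(k){=}\y,\ H^S{>}k\big) \;=\; \sum_{\z\in\H_{\n}} \P_{a_k\np+b_k\n}\!\big(S(k-m){=}\z,\ H^S{>}k-m\big)\,\P_{\z}\!\big(S(m){=}\y,\ H^S{>}m\big).
\]
Exponential tails of the increments restrict the relevant $\z$ to a ball of radius $\epsilon\sqrt{k}$ around $\y$; for such $\z$ the distance of the terminal segment to $\partial\H_{\n}$ is $\gtrsim \epsilon\sqrt{k} \gg \sqrt{m}$, so the survival constraint in the last $m$ steps is automatic up to negligible error and the ordinary local CLT gives $\P_{\z}(S(m){=}\y) \sim (2\pi m)^{-1} e^{-\|\y-\z\|^2/2m}$. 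Dividing by $V_1(b_k)k^{-1/2}$ and applying \cref{prop:rwcones-tau,prop:thm3} identifies the rescaled prefactor as $\kappa$ times the integral of $H_0(\mathsf{r}\cdot\n)\,e^{-\|a\np-\mathsf{r}\|^2/2}$ against the Gaussian convolution kernel $\epsilon^{-3}e^{-\|\sqrt{1-\epsilon^3}\mathsf{r}-\y/\sqrt{k}\|^2/2\epsilon^3}$, which concentrates on the point $\y/\sqrt{k}$ and produces the claimed limit $\kappa H_0 (\y\cdot\n)k^{-1/2} e^{-\|a\sqrt{k}\np-\y\|^2/2k}$.

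The main obstacle is ensuring that every estimate in this chain is \emph{uniform} in the parameters $a_k\in[-A\sqrt{k},A\sqrt{k}]$, $b_k\in(0,k^{1/2-\delta}]$, and $\y \in K^{(3)}$, rather than holding for fixed starting and ending points as in the original Denisov--Wachtel arguments. Concretely this means revisiting the normalization of the harmonic function in the proofs of \cref{prop:rwcones-tau,prop:thm3} to confirm that the $o(1)$ error terms there are uniform in $b_k$ up to $k^{1/2-\delta}$ (which we already arranged), verifying that the continuity step replacing the integrand $(\mathsf{r}\cdot\n) e^{-\|a\np-\mathsf{r}\|^2/2}$ by its value at $\mathsf{r} = \y/\sqrt{k}$ is uniform over $\y\in K^{(3)}$, and carefully propagating the $V_1(b_k)$ factor through the analogues of \cite[Lemmas~27--28]{DenisovWachtel15} so that upper bounds hold with prefactors of the form $V_1(b_k)$ rather than an unspecified $C(\x)$. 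Once this bookkeeping is in place, the decomposition above yields the required uniform limit.
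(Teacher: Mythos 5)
Your proposal is correct and follows essentially the same route as the paper's proof: the three-region decomposition of $\H_{\n}$, the Markov splitting at time $k-m$ with $m=\lfloor\ep^3 k\rfloor$, localization of the intermediate point near $\y$ via exponential tails followed by the local CLT for the final stretch, identification of the limit through \cref{prop:rwcones-tau,prop:thm3}, and the $V_1(b_k)$-refined analogues of Lemmas~27--28 of \cite{DenisovWachtel15} to control $K^{(1)}\cup K^{(2)}$. The only work remaining is the uniformity bookkeeping you already flag, which is exactly what the paper's proof carries out.
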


\subsection{Proofs of \cref{prop:rwcones-ballot,prop:2nd-component-genrw-limit,prop:genrw-ballot}}
\label{subsec:pfs-limit-thms}
\begin{proof}[Proof of \cref{prop:rwcones-ballot}]
    In what follows, all estimates will be uniform over $a_k, b_k$, and $u_k$ satisfying \cref{eqn:uniform-parameters}. 

    We begin by following the proof given in \cite[Section~6.3]{DenisovWachtel15}. In particular, we also set $m = \lfloor (1-t)k \rfloor$ for some $t \in (0,1)$ and write the decomposition
    \begin{align}
        &\P_{u_k \n}\Big(S(k) = a_k \np + b_k \n \,,\, H_{\H_{-\n}}^S > k \Big) \nonumber \\
        &= \sum_{\z \in \H_{\n}} \P_{u_k \n}\Big(S(k-m)= \z \,,\, H_{\H_{-\n}}^S > k-m \Big)\P_{a_k \np + b_k \n}\Big(S'(m) = \z \,,\, H_{\H_{-\n}}^{S'} > m \Big)\,,
    \end{align}
    where $S'$ is distributed as $-S$. 
    Letting $\x:= u_k \n$ and $\y:= a_k \np+ b_k \n$, 
    and recalling \cref{eqn:tau-bound} as the needed modification of Lemmas~27 and~28 of \cite{DenisovWachtel15},
    we can follow the proof given in \cite[Sec.~6.3]{DenisovWachtel15} up to their Eq.~(89) to find 
    \[
        \lim_{R \to \infty} \lim_{k \to \infty} \Big(\frac{V_1(u_k)V'(b_k)}{k^2}\Big)^{-1} \Sigma_1(R,k) = 0\,,
    \]
    where 
    \[
        \Sigma_1(R,k) := \!\!\! \sum_{\z \in \H_{\n} : |\z|> R \sqrt{k}} \!\!\! \P_{u_k \n}\Big(S(k-m)= \z \,,\, H_{\H_{-\n}}^S > k-m \Big)\P_{a_k \np + b_k \n}\Big(S'(m) = \z \,,\, H_{\H_{-\n}}^{S'} > m \Big) \,.
    \]
    We are now in a position to apply \cref{prop:rwcones-thm5} to the remainder term
    \begin{align*}
    &\Sigma_2 (R,k) := \\
    &\quad \sum_{\z \in \H_{\n}: \|\z\| \leq R\sqrt k} \P_{u_k \n}\Big(S(k-m)= \z \,,\, H_{\H_{-\n}}^S > k-m \Big)
    \P_{a_k \np + b_k \n}\Big(S'(m) = \z \,,\, H_{\H_{-\n}}^{S'} > m \Big) \\
    &= \frac{H_0^2\kappa^2 V_1(u_k)V_1(b_k)}{\big(t(1-t)\big)^{3/2}k^3}
    \sum_{\z \in \H_{\n}: \|\z\| \leq R\sqrt k} \Big(\frac{\z\cdot\n}{\sqrt{tk}}\Big) \Big(\frac{\z\cdot\n}{\sqrt{(1-t)k}}\Big)
    e^{-\frac{|\z|^2}{2tk} - \frac{|a\sqrt{k}\np-\z|^2}{2(1-t)k}} \nonumber \\
    &\qquad +o\big(V_1(u_k)V'(b_k)k^{-2}\big) \\
    &= \frac{H_0^2\kappa^2 V_1(u_k)V_1(b_k)}{\big(t(1-t)\big)^{3/2}k^3}e^{-\frac{a^2}{2}}
    \!\!\!\!\!\sum_{\z \in \H_{\n}: \|\z\| \leq R\sqrt k} \Big(\frac{\z\cdot\n}{\sqrt{tk}}\Big) \Big(\frac{\z\cdot\n}{\sqrt{(1-t)k}}\Big)
    e^{-\frac{(\z\cdot\n)^2}{2t(1-t)k} -  \frac{(\z\cdot\np-ta\sqrt{k})^2}{2t(1-t)k}}  \nonumber \\
    &\qquad + o\big(V_1(u_k)V'(b_k)k^{-2}\big)\,.
    \end{align*}
    Note that, compared to the first display after \cite[Eq.~(89)]{DenisovWachtel15}, the only different terms  are those involving $a$ and the little-oh terms: this is a consequence of the modifications in our \cref{prop:rwcones-thm5} compared to their Theorem~5. From here, we can follow their  proof step-by-step until the end, yielding \cref{eqn:rwcones-ballot}.
\end{proof}

\begin{proof}[Proof of \cref{prop:2nd-component-genrw-limit}]
We begin by showing convergence of the finite-dimensional distributions. For this, it is enough to consider sequences $a_k\in [-A\sqrt{k}, A\sqrt{k}]$ such that $a_k/\sqrt{k} \to a$, and show that for any $a \in [-A,A]$, the finite-dimensional distributions converge to the same limit (as then every subsequence has a further  subsequence converging to the same distribution).

We proceed by following the arguments as in \cite[Sec.~4]{DurajWachtel20}, making adaptions as necessary. We begin with \cite[Eq.~(40)]{DurajWachtel20}, which states that for any $t \in [0,1)$ and $B \in \sigma(\{S_2(i) , i \leq kt\})$, we have
\begin{align}
    &\P_{u_k \n}\big(B \given S(k) = a_k \np + b_k \n \,,\, H_{\H_{-\n}}^S> k \big) 
    = \E\Big[ h_{u_k,a_k,b_k}^{(k)}\big(t, X_{k,t}\big) \indset{B}  \given H_{\H_{-\n}}^S >kt \Big] \,,
    \label{eqn:40}
\end{align}
where $X_{k,t} := S(\floor{tk})/\sqrt{k}$ and 
\begin{align*}
    h_{u_k,a_k,b_k}^{(k)}(t,\w)
    = \frac{\P_{u_k\n}\big(H_{\H_{-\n}} > kt \big)
    \P_{\w\sqrt{n}}\Big(S((1-t)k) = a_k \np+b_k\n \,,\, H_{\H_{-\n}}^S >(1-t)k \Big)}{\P_{u_k \n}\Big(S(k) = a_k\np+b_k\n \,,\, H_{\H_{-\n}}^S > k\Big)}\,.
\end{align*}
From \cref{prop:rwcones-ballot,prop:rwcones-tau}, we have
\[
    \frac{\P_{u_k\n}\big(H_{\H_{-\n}} > kt \big)
    }{\P_{u_k \n}\Big(S(k) = a_k\np+b_k\n \,,\, H_{\H_{-\n}}^S > k\Big)} \sim 
    \frac{ t^{-1/2}  e^{a^2/2}}{C_1 \kappa V_1'(b_k)}k^{3/2} \,.
\]
Now, let $S'(\cdot)$ denote the random walk whose increments are independent copies of $-X$. Considering the walk $S(\cdot)$ in reversed time, we have 
\begin{multline*}
    \P_{\w\sqrt{n}}\Big(S((1-t)k) = a_k \np+b_k\n \,,\, H_{\H_{-\n}}^S >(1-t)k \Big ) \\
    = \P_{a_k \np+b_k\n}\Big( S'((1-t)k) = \w\sqrt{k} \,,\, H_{\H_{-\n}}^{S'} >(1-t)k \Big) \,.
\end{multline*}
Applying
\cref{prop:rwcones-thm5} yields
\begin{multline*}
    \limsup_{k\to\infty} \sup_{\w\in  \H_{\n}} \bigg| \frac{(1-t)^{3/2}k^{3/2}}{V_1'(b_k)} \P_{\w\sqrt{k}}\Big(S((1-t)k) = a_k \np+b_k\n \,,\, H_{\H_{-\n}}^S >(1-t)k \Big ) 
    \\
    - \kappa H_0  \frac{\w\cdot \n}{(1-t)^{1/2}} e^{-\frac{|a\np - \w|^2}{2(1-t)}} \bigg|
    = 0
\end{multline*}
uniformly over $a_k$ and $b_k$. Altogether, we find 
\begin{align*}
    h_{u_k,a_k,b_k}^{(k)}(t,\w) &= \big(1+o(1)\big) h(a,t,\w) \,,
\end{align*}
uniformly over $\w \in \H_{\n}$, where
\[
    h(a,t,\w) := \frac{H_0}{C_1} t^{-\frac{1}2}
    (1-t)^2 
    (\w\cdot \n)
    e^{- \frac{|\w\cdot \n|^2}{2(1-t)} 
    }
    \exp \Big(- \frac{|\w\cdot \np|^2}{2(1-t)} + \frac{a \w\cdot\np}{1-t} - \frac{ta^2}{2(1-t)} \Big) \,.
\]
Recall $D[0,t]$ the space of cadlag functions from $[0,t]$ to $\R$. For any bounded and continuous functional $g_t : D[0,t] \to \R$, \cref{eqn:40} gives us
\begin{multline}
\E_{u_k \n}\big[g_t(X_{k,.}\cdot \n) \given S(k) = a_k \np + b_k \n \,,\, H_{\H_{-\n}}^S > k \big] \\
= \big(1+o(1)\big)\E_{u_k \n}\big[g_t(X_{k,.}\cdot \n) h\big(a,t, X_{k,t}\big) \given  H_{\H_{-\n}}^S > kt \big] \,,
\label{eqn:gt-formula-1}
\end{multline}
where we have used that for fixed $a$ and $t$, $h(a,t,\w)$ is uniformly bounded in $\w$.
Applying the convergence result \cite[Theorem~2]{DurajWachtel20} (see also Remark~1 in \cite{DurajWachtel20}), we find
\begin{multline}
    \lim_{k \to \infty} \E_{u_k \n}\big[g_t(X_{k,.}\cdot \n) \given S(k) = a_k \np + b_k \n \,,\, H_{\H_{-\n}}^S > k \big] \\
    =
    \E\Big[g_t\big(t^{\frac{1}2}\fM_{\H_{\n}}\cdot \n\big) h\big(a,t,t^{1/2}\fM_{\H_{\n}}(1)\big)  \Big]\,,
    \label{eqn:gt-limit-formula-3}
\end{multline}
where $(\fM_{\H_{\n}}(s))_{s\in [0,1]}$ denotes the Brownian meander  in $\H_{\n}$ started from the origin.
Recall that $\fM_{\H_{\n}}(s) = W_s \np +  M_s \n$, where $(W_s)_{s\in[0,1]}$ is a standard one-dimensional Brownian motion, $(M_s)_{s\in[0,1]}$ is a standard one-dimensional Brownian meander, and $W_.$ and $M_.$ are independent processes. Then the expectation over $W_.$ factors out of the right-hand side of \cref{eqn:gt-limit-formula-3} as follows:
\[
\E\Big[\frac{H_0}{C_1}(1-t)^{-\frac{3}2} M_1 e^{-\frac{t|M_1|^2}{2(1-t)} }g_t(t^{1/2}M_.) \Big]
\E \Big[ \exp \Big(- \frac{t\calN^2}{2(1-t)}+ \frac{a\sqrt{t}}{1-t}\calN-\frac{ta^2}{2(1-t)}\Big) \Big]\,,
\]
where $\calN \sim N(0,1)$.
That second expectation evaluates to $\sqrt{1-t}$ (in particular, there is no dependence on $a$ in the above expression). So, letting $C_2:=H_0/C_1$, we've found 
\begin{multline*}
    \lim_{k \to \infty} \E_{u_k \n}\big[g_t(X_{k,.}\cdot \n) \given S(k) = a_k \np + b_k \n \,,\, H_{\H_{-\n}}^S > k \big] \\
    = \E\Big[C_2(1-t)^{-\frac{3}2} M_1 e^{-\frac{t|M_1|^2}{2(1-t)} }g_t(t^{\frac12}M_.) \Big]\,.
\end{multline*}
Thus, for every fixed $t <1$, we have shown convergence in distribution on $D[0,t]$ to the same limit for every $a$. This in particular implies convergence of all finite dimensional distributions to the same limit for every $a$, as well as tightness on $[0,1-\delta]$, for any $\delta \in (0,1)$. Tightness on $[1-\delta,1]$ follows by applying the exact same arguments for the random walk reversed in time, just as in the end of \cite[Section~4]{DurajWachtel20}.

Thus, we have the convergence of the family of laws $\mathbf{Q}^k_{u_k,a_k,b_k}$. The limit may be identified as that of the standard Brownian excursion by taking $S(i) := \hat{S}_1(i) \np + \hat{S}_2(i) \n$, where $\hat{S}_1(\cdot)$ and $\hat{S}_2(\cdot)$ are independent simple symmetric random walks on $\Z$.
\end{proof}

\begin{proof}[Proof of \cref{prop:genrw-ballot}]
Following the proof of \cite[Thm~5.1]{IOVW20} until their Eq.~(59) yields equation \cref{eqn:genrw-ballot-goodk}. \Cref{eqn:genrw-ballot-leadingorder} is then an immediate consequence of \cref{prop:rwcones-ballot}.
\end{proof}

\section{Proof of \cref{mainthm:FS}}
\label{sec:FS}
This section is dedicated to the proof of \cref{mainthm:FS}. 
We prove a slightly more detailed result below. Recall the generator $\mathsf{L}$ of the relevant Ferrari--Spohn diffusion defined in \cref{def:fs-generator}. 

\begin{theorem} \label{thm:fs-detailed}
    Fix an integer $n \geq 0$. Suppose that $a_L$, the fractional part of $\tfrac{1}{4\beta}\log L$, converges to some limit $a$, and let $\lambda>0$ denote the $L\to\infty$ limit of $\lambda^{(n)}(L):= c_{\infty}e^{4\beta a_L}(1-e^{-4\beta})e^{4\beta n}$. 
    Define $\y:= (\lfloor KL^{2/3} \rfloor,0) $, for some $K >0$, and the box $Q:= \llbracket -KL^{2/3}, KL^{2/3}\rrbracket^2$.
    Consider the modified Ising polymer $\gamma$ in $D =Q$ or $\H$ with start point $-\y$ and end point $\y$ with area tilt $\exp(-\frac{\lambda^{(n)}(L)A(\gamma)}{L})$.
    Let $\overline{\gamma}(x)$ denote the maximum vertical distance of $\gamma$ at $x \in \R$. Let $\sigma>0$ be the constant from \cref{mainthm:BM-for-Ising-polymers}, Part (b). For any fixed $T>0$, the diffusively-rescaled interface $\sigma^{-\frac{1}2}L^{-\frac{1}3}\overline{\gamma}(L^{\frac{2}3} x)$ converges weakly in $(D[-T,T], \|\cdot\|_{\infty})$  as  first $L\to\infty$ then $K\to\infty$  to the stationary Ferrari--Spohn diffusion on $(0,\infty)$ with generator 
    $\mathsf{L}$
    and Dirichlet boundary condition at $0$. The same holds for $\underline{\gamma}(x)$, the minimum vertical distance at $x$.
\end{theorem}

\begin{remark}
    Fix $\beta$ sufficiently large, and consider the SOS model with a floor on the box $Q$ with boundary conditions $H-n$ everywhere, except on the bottom where they are $H-n-1$. \cref{eqn:contourLawAreaTerm} and \cref{obs:sos-is-ist} imply that the law of the $H-n$-level line (connecting the bottom corners of $Q$) is given by a modified  Ising polymer  with area tilt as in \cref{thm:fs-detailed}. Thus, \cref{thm:fs-detailed} implies \cref{mainthm:FS}.
\end{remark}

\begin{remark}\label{rem:K-L-1/20}
The argument used to prove the above theorem holds mutatis mutandis for $K\in (0,L^\epsilon)$ for, say, $0<\epsilon<\frac1{20}$, where the restriction on $\epsilon$ is due to the fact that we are able to control the effect of the area tilt term $\exp[-\frac\lambda L A(\gamma)]$ on boxes of side-length $L^{2/3+\epsilon}$ (via~\cite[Prop.~A.1]{CLMST16}). 
\end{remark}

\subsection{Proof of \cref{thm:fs-detailed}}
Let $\bP_D^{\u,\v}$ be the  modified Ising polymer in $D = Q$ or $\H$  with start-point $\u\in \Z^2$ and end-point $\v\in \Z^2$, i.e.,
\[
    \bP_D^{\u,\v}(\cdot) := \frac{\Gb_D(\u \to \v \given \gamma \in \cdot)}{\Gb_D(\u \to \v)}\,,
\]
where we recall the partition functions $\Gb_D(\u \to \v)$ from \cref{def:general-partition-fn}.\footnote{In \cref{def:mod-ising-polymer}, we defined modified Ising polymers with start-point zero for ease of notation, and because until now, essentially all of our Ising polymers started from $0 \in \Z^2$.}
Let $\bE_{D}^{\u,\v}$ denote expectation with respect to $\bP_D^{\u,\v}$.
Next, define the modified Ising polymer with area-tilt
\begin{align}\label{def:area-tilted-Ising-polymer}
    \tilde{\bP}^{-\y,\y}_{D,\lambda}(\cdot) := \frac{\bE^{-\y,\y}_D \big[\ind{\gamma \in \cdot} e^{-\frac{\lambda}L A(\gamma)} \big]}{\bE^{-\y,\y}_D[e^{-\frac{\lambda}L A(\gamma)}]} \,.
\end{align}
As  explained above \cref{eqn:ising-polymer-animals}, we will also view $\bP_{D,\lambda}^{-\y,\y}$ as a measure on animals $\Gamma$ (in which $A(\Gamma) := A(\gamma)$).
Note that we have replaced $\lambda^{(n)}(L)$ with its $L\to\infty$ limit $\lambda$, and as the next claim (controlling the influence of the area tilt) will show, the difference between these will play no role in our estimates.
\begin{claim} \label{claim:area-tilt}
For each fixed $K>0$, with $\y=(\lfloor K L^{2/3}\rfloor,0)$ as in \cref{thm:fs-detailed},
there exists $a_K >0$ such that $\bE_D^{-\y,\y}\big[e^{-\frac{\lambda}{L} A(\gamma)}\big] \to a_K$ as $L\to \infty$.
\begin{proof}
From \cref{mainthm:BM-for-Ising-polymers}, we know that both $L^{-1/3}\underline{\gamma}(\lfloor L^{2/3}x\rfloor)$ and $L^{-1/3}\overline{\gamma}(\lfloor L^{2/3}x\rfloor)$ converge weakly in $D([-K,K], \|\cdot\|_{\infty})$ to the Brownian excursion from $0$ to $0$. Since the function
\begin{align*}
    f: (D[-K,K],\|\cdot\|_{\infty})  &\to \R \\
    g &\mapsto \exp\Big(-\int_{-K}^K |g(x)| \d x\Big)
\end{align*}
is a bounded, continuous function,  it follows after a change of variables $x \mapsto L^{2/3}x$ that 
\[
    \bE_D^{-\y,\y}[e^{-\frac{\lambda}{L}A(\gamma)}] \to \bE[e^{-\lambda \int_{-K}^K \xi(x) \d x}] \,,
\]
where $\xi$ denotes a Brownian excursion from $0$ to $0$ on $[-K,K]$. This concludes the proof.
\end{proof}
\end{claim}
The above claim (applied to the denominator of \cref{def:area-tilted-Ising-polymer} to show the effect of the area tilt there is uniformly bounded, whereas its effect on the numerator can be isolated via Cauchy--Schwarz) now implies that the total-variation distance between $\tilde\bP^{-\y,\y}_{D,\lambda}$ and $\tilde\bP^{-\y,\y}_{D,\lambda^{(n)}}$ 
vanishes as $L\to\infty$.

We prove \cref{thm:fs-detailed} by first coupling the cone-points of $\Gamma \sim \bP_{D,\lambda}^{-\y,\y}$ that lie inside some strip (those cone-points for which we will have entropic repulsion) with the trajectory of a random walk with area-tilt. We then fit this random walk into the framework of \cite[Section~6]{IOSV21}, where a Ferrari--Spohn limit was proved for a broad class of directed, 2D random walks with area tilt.

\subsubsection{Coupling with an area-tilted random walk}
This subsection will closely follow the notation and  work in \cref{subsec:random-walk-coupling}, where  the existence of a coupling between the cone-points of a modified Ising polymer (\emph{without} area tilt) and the corresponding  random walk was proved. The inputs to construct such a coupling were:
\begin{enumerate}[(a)]
\item existence of many cone-points and boundedness of the  polymer length (\cref{lem:generalized-length-cpts}); \label{property:many-cone-points}
\item boundedness of the irreducible pieces (\cref{prop:bounded-irreducible-pieces}); and
\label{property:boundedness}
\item entropic repulsion (\cref{prop:animal-entropic-repulsion}). \label{property:entropic-repulsion}
\end{enumerate}
These results all held with probability tending to $1$ as $N$ (the side length of the box) tends to infinity. In the current situation, we take  $N= 2\|\y\|_1 = 2\lfloor KL^{2/3} \rfloor$ and we shift $\x$ (as in the statement of the above results) to $\y$ so that these results hold under $\bP_D^{-\y,\y}$ with probability tending to $1$ as $L$ tends to $\infty$. \cref{claim:area-tilt} states that $\bP_D^{-\y,\y}[e^{-\frac{\lambda}{L}A(\gamma)}]$ is bounded away from $0$ uniformly in $L$, so that \cref{property:many-cone-points,property:entropic-repulsion} above hold with probability tending to $1$ as $L$ tends to $\infty$ under $\bP_{D,\lambda}^{-\y,\y}$ as well. Thus, we are in good position to establish a coupling between the area-tilted Ising polymer and an area-tilted random walk that we define below.

Fix $\delta \in (0,1/8)$. Analogous to \cref{def:P*}, define the measure
\[
    \bP_{D,\lambda,*}^{\overline{\zeta}^{(\Lstar)}, \overline{\zeta}^{(\Rstar)}}\big(\cdot\big) := \bP_{D,\lambda}^{-\y,\y}\big(\cdot \given \cP_{D,\delta}^*(-\y,\y), \Gamma^{(\Lstar)} = \overline{\Gamma}^{(\Lstar)}, \Gamma^{(\Rstar)} = \overline{\Gamma}^{(\Rstar)}\big)\,,
\]
where
$\overline{\zeta}^{(\Lstar)}$ and $\overline{\zeta}^{(\Rstar)}$ are points in $\Z^2$ satisfying
\begin{align}
    \overline{\zeta}^{(\Lstar)} &\in [-\tfrac{N}2 + N^{4\delta}, -\tfrac{N}2+ N^{4\delta}+(\log N)^2]\times (N^{\delta}, N^{4\delta}(\log N)^2]\,,
    \label{eqn:zeta-Lstar-condition-areatilt}\\    \overline{\zeta}^{(\Rstar)} &\in [\tfrac{N}2-N^{4\delta}-(\log N)^2, -\tfrac{N}2-N^{4\delta}] \times  (N^{\delta}, N^{4\delta}(\log N)^2] 
    \label{eqn:zeta-Rstar-condition-areatilt}
\end{align}
(these are the analogues of \cref{eqn:zeta-Lstar-condition,eqn:zeta-Rstar-condition}, respectively). Recall that, thanks to \cref{property:boundedness} and \cref{property:entropic-repulsion}, the first and last cone-points of $\Gamma$ in the strip $\cS_{-\frac{N}2+N^{4\delta}, \frac{N}2- N^{4\delta}}$ satisfy \cref{eqn:zeta-Lstar-condition-areatilt,eqn:zeta-Rstar-condition-areatilt} with $\bP_{D,\lambda}^{-\y,\y}$-probability tending to $1$ as $L$ tends to $\infty$. It is between these cone-points that we couple with an area-tilted random walk.

Recall the random walk $\sS$ and its law $\P$ defined in \cref{subsec:rw-model}. Write $\E$ and $\E_{\u}$ for expectation under  $\P$ and $\P_{\u}$, respectively. Define the area under $\sS$ as follows: for an $l$-steps walk $\sS=\{(\sS_1(0),\sS_2(0)),\dots,(\sS_1(l),\sS_2(l))\}$, we write
\begin{equation}
A(\sS) := \sum_{i=1}^l(\sS_1(i)-\sS_1(i-1))\sS_2(i)\,.
\end{equation}
To indicate the law of the random walk $\sS$ with area tilt, started from $\sS(0) = \u$, we write
\begin{equation}\label{def:rw-area-tilt}
\P_{\lambda}^{\u}(\cdot) :=  \frac{\E_\u\big[\ind{\sS\in \cdot}e^{-\frac{\lambda}{L}A(\sS)} \big]}{\E_\u\big[e^{-\frac{\lambda}{L}A(\sS)} \big]}\,.
\end{equation}

\begin{proposition}
There exists $\nu>0$ such that for all $K>0$ and $L$ large enough with respect to $K$, for all $\beta>0$ sufficiently large, and for all $\overline{\zeta}^{(\Lstar)}$ and $\overline{\zeta}^{(\Rstar)}$ satisfying \cref{eqn:zeta-Lstar-condition,eqn:zeta-Rstar-condition} respectively, if we let $T= H_{\overline{\zeta}^{(\Rstar)}}$ and view  $\Cpts(\Gamma^*)$  as an ordered tuple  (see \cref{eqn:cone-points-vector}), then
\[ \Big\|\bP_{D,\lambda,*}^{\overline{\zeta}^{(\Lstar)}, \overline{\zeta}^{(\Rstar)}}\Big(\Cpts(\Gamma^*) \in \cdot\Big) - 
\P_{\lambda}^{\overline{\zeta}^{(\Lstar)}}\Big( 
\big(\sS(i)\big)_{i= 0}^{T}
\in \cdot \given T < H_{\H_{N^{\delta}}} \Big)
\Big\|_{\tv}\leq Ce^{-\nu \beta (\log N)^2}
\]
for some constant $C:=C(\beta)>0$. 
\end{proposition}
\begin{proof}
This follows from the coupling of the untilted measures given by \cref{prop:rw-coupling}. One just needs to check that the difference in the definitions of area for $\Gamma^*$ and $\sS$ results in a negligible difference in the tilts. This is easy to see: by \cref{property:boundedness}, 
the distance between two consecutive cone-points is at most $(\log L)^2$, and hence one can enclose the diamond between them in a square of area $(\log L)^4$. Since the total area of such squares is an upper bound on $|A(\Gamma^*)-A(S)|$, we find
\[
\left|\frac{A(\Gamma^*)}{L} - \frac{A(\sS)}{L}\right| \leq 2K\frac{(\log L)^4}{L^{1/3}} = o(1)\,.
\]
This shows that the tilts are equivalent up to a $o(1)$ factor.
Since \cref{claim:area-tilt} implies that the expectations of $e^{-\frac{\lambda}LA(\Gamma^*)}$  and $e^{-\frac{\lambda}{L}A(\sS)}$ are bounded away from $0$, the result follows.
\end{proof}

\subsubsection{Convergence to Ferrari--Spohn}
We have reduced to a two-dimensional random walk bridge with area-tilt conditioned to stay in $\H_{N^{\delta}}$, with start-point $\bar \u:= \overline\zeta^{(\Lstar)}$ and end-point $\bar \v:= \overline\zeta^{(\Rstar)}$ satisfying \cref{eqn:zeta-Lstar-condition-areatilt,eqn:zeta-Rstar-condition-areatilt}, respectively.
In \cite[Sections~6.6 and 6.7]{IOSV21}, the Ferrari--Spohn diffusion limit is proved for a wide-class of such random walks, with the stronger condition (\cite[Eq.~(6.10)]{IOSV21}) on the start-point $\u$ and end-point $\v$ of the random walk:
\begin{align}
    \u &\in [-\bar{K}L^{2/3}-L^{1/3+\ep}, \bar{K}L^{2/3}+L^{1/3+\ep}] \times [cL^{1/3}, CL^{1/3}] \label{eqn:iosv-endpoint-condition-u} \\
    \v &\in [\bar{K}L^{2/3}- L^{1/3+\ep}, \bar{K}L^{2/3}+L^{1/3+\ep}] \times [cL^{1/3}, CL^{1/3}]\,, \label{eqn:iosv-endpoint-condition-v}
\end{align}
where $\ep >0$ is any small constant, $C>c>0$ are fixed constants, and $\bar{K}\leq K$ is a parameter tending to $\infty$ after $L$ (these conditions are stronger than \cref{eqn:zeta-Lstar-condition-areatilt,eqn:zeta-Rstar-condition-areatilt} in the $y$-coordinates only).
\Cref{lem:rw-goes-high} shows that our random  walk indeed passes through points satisfying \eqref{eqn:iosv-endpoint-condition-u} and~\eqref{eqn:iosv-endpoint-condition-v} with high probability, thereby putting us in the same setting as \cite[Section~6.6]{IOSV21}. We prove it using the Brownian excursion limit \cref{thm:rw-invariance}.

For brevity, we will write the law of the area-tilted random walk bridge as
\[
    \P_{\lambda,+}^{\bar\u,\bar\v} :=  \P_{\lambda}^{\bar\u}(\cdot \given H_{\bar\v} < H_{\H_{N^{\delta}}})\,.
\] 
Similarly, we write the law of the un-tilted random walk bridge via
\[
    \P_{+}^{ \bar\u,\bar\v} :=  \P_{\bar\u}(\cdot \given H_{\bar\v} < H_{\H_{N^{\delta}}})\,.
\]
We will denote expectation under these measures by replacing $\P$ with $\E$.
\begin{lemma}\label{lem:rw-goes-high}
 Let $E_{c,C}$ denote the event that the random walk $\sS$ passes through $\u$ and $\v$ satisfying \cref{eqn:iosv-endpoint-condition-u,eqn:iosv-endpoint-condition-v}, for some $C>c>0$. Then for some constant $K_0>0$, the following limit holds uniformly in $\bar\u$ satisfying \cref{eqn:zeta-Lstar-condition-areatilt} and $\bar\v$ satisfying \cref{eqn:zeta-Rstar-condition-areatilt}:
 \begin{align*}
     \lim_{c\to 0} \inf_{K \geq K_0} \liminf_{L\to\infty} \P_{\lambda, +}^{\bar\u, \bar\v} (E_{c,C}) = 1\,.
 \end{align*}
\begin{proof}
We will show that $\sS$ passes through a point $\u$ satisfying \cref{eqn:zeta-Lstar-condition-areatilt} with high probability;  the same argument will show the same is true for $\v$. Further, one only needs to check that $\u_2 \geq cL^{1/3}$ (the upper bound by $CL^{1/3}$ is not needed), since $\sS_2(0) = \bar\u_2< cL^{1/3}$, and so the first time $\sS_2$ rises above $cL^{1/3}$, it will also be below $CL^{1/3}$ for $L$ large enough (recall from \cref{eqn:random-walk-increments-bound} that under the non-area-tilted measure, $\sS$ has  increments bounded by $(\log L)^2$ with probability tending to $1$ as $L\to\infty$, and so by \cref{claim:area-tilt}, the same is true under the area-tilted measure).

Now, using the localization of $\sS_1$ (\cref{claim:rw-x-size}) and the triviality-in-$L$ of the area tilt (\cref{claim:area-tilt}), we have that for all $\ep >0$ small, the following hold with $\P_{\lambda,+}^{\bar\u,\bar\v}$-probability tending to $1$ as $L\to\infty$:
\begin{align}
    \mathcal{E}_1(L^{2/3}) &:= \big\{\sS_1(L^{2/3}) \in [-(K-\mu) L^{2/3} - L^{1/3+\ep}, -(K-\mu)L^{2/3} +L^{1/3+\ep}] \big\}\,, \text{ and} 
    \nonumber\\
    \mathcal{E}_1(2L^{2/3}) &:= \big\{\sS_1(2L^{2/3}) \in [-(K-2\mu) L^{2/3} - L^{1/3+\ep}, -(K-2\mu)L^{2/3} +L^{1/3+\ep}] \big\} \,.
    \label{eqn:S1-localization-areatilt}
\end{align}
Thus, the lemma will be proved upon showing
\begin{align}
    \lim_{c\to0} \sup_{K\geq K_0} \limsup_{L\to\infty} \P_{\lambda,+}^{\bar\u,\bar\v}(\sS_2(L^{2/3})< cL^{1/3} \given \sS_2(2L^{2/3})<cL^{1/3} ~,~\cE_1(2L^{2/3})) = 0\,. 
    \label{eqn:rw-goes-high-suff1}
\end{align}
Write the above probability as 
\begin{align}
    \frac{\E_+^{\bar\u,\bar\v}\Big[ \E_{+}^{\bar\u, \sS(2L^{2/3})}\Big[\ind{\sS_2(L^{2/3})< cL^{1/3}}e^{-\frac{\lambda}{L}A(\sS)}\Big]
    F(\sS(i), i \geq 2L^{2/3}) \Big]
    }
    {\E_+^{\bar{\u},\bar\v}\Big[\E_{+}^{\bar\u, \sS(2L^{2/3})}\Big[e^{-\frac{\lambda}{L}A(\sS)}\Big]F(\sS(i), i \geq 2L^{2/3})\Big]}\,,
\end{align}
where 
\[
F(\sS(i), i \geq 2L^{2/3}) := e^{-\frac{\lambda}{L}A(\sS(i), i \geq 2L^{2/3})} \ind{\sS_2(2L^{2/3})<cL^{1/3} ,\ \cE_1(2L^{2/3})}\,.
\]
Due to the restriction on $\sS(2L^{2/3})$ imposed by the indicator defining $F$, \cref{eqn:rw-goes-high-suff1} will be shown if we can prove 
\begin{align}\label{eqn:rw-goes-high-suff2}
     \lim_{c\to0}\sup_{K\geq K_0}\limsup_{L\to\infty}\frac{\E_{+}^{\bar\u, \bar \w}\Big[\ind{\sS_2(L^{2/3})< cL^{1/3}}e^{-\frac{\lambda}{L}A(\sS)}\Big]}{\E_{+}^{\bar\u, \bar \w }\Big[e^{-\frac{\lambda}{L}A(\sS)}\Big]} = 0\,,
\end{align}
uniformly over
\[
\bar\w \in [-(K-2\mu) L^{2/3} - L^{1/3+\ep}, -(K-2\mu)L^{2/3} +L^{1/3+\ep}] \times [0,cL^{1/3})\,.
\]
Note that the $\sup$ over $K$ is trivial: indeed, $K$ plays no role in the limit in \cref{eqn:rw-goes-high-suff2}, since by horizontal shift-invariance, we may  shift to the right by $KL^{2/3}$. The denominator may be bounded from below as
a constant, by shifting both $\bar\u_2$ and $\bar\w_2$ up to $cL^{1/3}$, and then applying the Brownian excursion limit as in the proof of \cref{claim:area-tilt}. The area tilt in the numerator may be bounded from above by $1$. Thus, we have reduced to showing the following:
\[
    \lim_{c\to0}\limsup_{L\to\infty} \P_{+}^{\bar\u, \bar \w}(\sS_2(L^{2/3})< cL^{1/3}) = 0\,,
\]
uniformly over $\bar u$ and $\bar w$.
By monotonicity, we may assume $\bar{w}_2 = 0$. The result now follows from the Brownian excursion limit of $\sS_2$ under $\P_{+}^{\bar\u, \bar \w}$ (\cref{thm:rw-invariance}).
\end{proof}
\end{lemma}

Following \cite{IOSV21}, we are now ready to prove \cref{thm:fs-detailed}.

\begin{proof}[Proof of \cref{thm:fs-detailed}]
We have reduced to an area-tilted, directed 2D random walk bridge between $\u$ and $\v$ satisfying \cref{eqn:zeta-Lstar-condition-areatilt} and~\eqref{eqn:zeta-Rstar-condition-areatilt} respectively,  conditioned to stay positive. That is, we have a random walk $\sS$ under law 
\[
    \sS \sim \P_{\lambda,+}^{\u,\v}\,.
\]
This puts us in the framework of 
the proof of the Ferrari--Spohn limit for such random walks given in \cite[Section~6.6 and 6.7]{IOSV21}. Tightness follows exactly as in \cite[Section~6.6]{IOSV21}. The proof of finite-dimensional distributions in \cite[Section~6.7]{IOSV21} had just two additional inputs: their Proposition~6.2 and Lemma~6.3, and so we will be done as soon as we establish our analogues of these results.

Their Proposition~6.2 holds exactly the same for us. Their Lemma~6.3 does as well, with the exception that the constant in front of our area tilt is slightly different, and thus the resulting generator is slightly different as well. We re-state and prove that result  in our setting. 

For any $n\in\N$, we will write $\sS[0,n]:= (\sS(i))_{i\in [0,n]}$. 
Following \cite[Eq.(6.6)]{IOSV21}, for any function $f: \N \to \R$ and $u \in \H$, define the $n$-step partition function 
\[
    \Gb_{\lambda,L,+}^n[f](\u) := \E_{\u}\Big[ e^{ - \frac{\lambda}{N}A(\sS[0,n])} f(\sS_2(n))\ind{\sS[0,n] \subset \H}\Big]\,.
\]
Recall from \cref{subsec:rw-model} that $\sS$ under law $\P$ has step-distribution $X = (X_1, X_2)$.
Recall also $\sigma^2 := \Var(X_2)/\mu$ from the discussion above \cref{thm:rw-invariance}.
Following \cite[Section~6.5]{IOSV21}, define the following operator on smooth test functions $f$ with compact support in $(0,\infty)$:
\[
    \mathbf{T}_L f(r) := \E\Big[e^{-\frac{\lambda}{L}X_1 r L^{1/3} \sigma} f\Big(r+ \frac{X_2}{L^{1/3} \sigma} \Big) \ind{r+ \frac{X_2}{L^{1/3} \sigma} \geq 0} \Big]\,.
\]
Note that the indicator may be dropped, as $f$ is supported above $0$.
By Taylor expanding $f$ to second order and $e^x-1$ to first order (the errors are $o(L^{-2/3})$), we have
\begin{multline*}
    \lim_{L\to\infty} \frac{\mathbf{T}_L-\mathrm{Id}}{L^{-2/3}}f(r) \\
    = 
    \lim_{L\to\infty}
    L^{2/3} \E\Big[\big(e^{-\frac{\lambda}{L}X_1 r L^{1/3} \sigma} -1\big)f(r) 
    + e^{-\frac{\lambda}{L}X_1 r L^{1/3} \sigma} 
    \Big(f'(r)\frac{X_2}{L^{1/3}\sigma} + \tfrac{1}2 f''(r)\frac{X_2^2}{L^{2/3}\sigma^2}\Big)
    \Big] \\
    =\lim_{L\to\infty}
    L^{2/3} \E\Big[-\frac{\lambda}{L^{2/3}}X_1 r  \sigma f(r) 
    + e^{-\frac{\lambda}{L}X_1 r L^{1/3} \sigma} 
    \Big(f'(r)\frac{X_2}{L^{1/3}\sigma} + \tfrac{1}2 f''(r)\frac{X_2^2}{L^{2/3}\sigma^2} \Big) 
    \Big]\,.
\end{multline*}
Now use $\mu:= \E[X_1]$, $\E[X_2] = 0$, and $\Var(X_2) = \E[X_2^2]$,  so that the $f'(r)$ term vanishes and we find
\begin{align*}
    \lim_{L\to\infty} \frac{\mathbf{T}_L-\mathrm{Id}}{\mu L^{-2/3}}f(r)  =  \tfrac{1}2 f''(r) - \lambda \sigma r f(r)   =: \mathcal{L} f(r)\,.
\end{align*}
Kurtz's semigroup convergence theorem (\cite[Theorem~1.6.5]{EthierKurtz}, see also \cite[Eq.(2.34)]{ISV15}) then gives
\[
    \lim_{L\to\infty} \mathbf{T}_L^{\lfloor tL^{2/3}/\mu \rfloor} f = e^{t \mathcal{L}} f\,,
\]
uniformly over $t$ in bounded intervals of $\R_+$.
Define the rescaling operator
\[
    \mathrm{Sc}_L f(x) = f(xL^{-1/3} \sigma^{-1})\,.
\]
Observing that $\Gb_{\lambda,L,+}^{k}[ \mathrm{Sc}_L f] =\mathbf{T}_L^k f$, the second-to-last-display immediately yields
\[
    \lim_{L\to\infty} \Gb_{\lambda,L,+}^{\lfloor tL^{2/3}/\mu \rfloor}[ \mathrm{Sc}_L f] = e^{t\mathcal{L}}f\,,
\]
which is our version of \cite[Lemma~6.3]{IOSV21}. 
\end{proof}

\appendix

\section{Cluster expansion and open contours in the SOS model}
\label{appendix:cluster-expansion}
\subsection{Cluster expansion}
\label{subsec:cluster-expansion}
    Consider the  SOS model in a region $\Lambda \Subset \Z^2$ without floor, with boundary condition $j\in\Z$. Fix any subset $U$ of the inner boundary of $\Lambda$, and condition on $\{\varphi|_U\geq j\}$. Let $\widehat Z_{\Lambda, U} := \widehat{Z}_{\Lambda,U}^{j,+}$ denote the partition function of this model. Note that the partition function has no dependence on $j$ and is also unchanged by a replacement of the conditioning with  $\{\varphi|_U \leq j\}$. \cite{CLMST14} proves that there exists a constant $\beta_0>0$ such that for all $\beta>\beta_0$, one has 
    \begin{equation}\label{expansion}
    \log \widehat Z_{\Lambda, U} = \sum_{V \subset \Lambda} f_U(V)
    \end{equation}
    for some function $f_U$. \Cref{expansion} was proven using the main theorem from \cite{KoteckyPreiss86}, which can actually be used to show (with the same proof as in \cite{CLMST14}) that the formula holds simultaneously for all subsets $\Lambda'\subset\Lambda$ (with the same $f_U$):    \begin{equation}\label{expansionPlus}
    \log \widehat Z_{\Lambda', U} = \sum_{V \subset \Lambda'} f_U(V)
    \end{equation}
    (here, $Z_{\Lambda',U}$ is defined as before, with the understanding that the condition on $U$ only needs to be satisfied on $\Lambda'\cap U$, or in other words $Z_{\Lambda',U}:=Z_{\Lambda',U \cap \Lambda'}$). The latter observation leads, using M\"obius inversion, to the following formula for $f_U$:
    \begin{equation}\label{inversion}
    f_U(V) = \sum_{W \subset V}(-1)^{|V|-|W|}\log \widehat Z_{W, U}\,.
    \end{equation}
    From \cref{inversion}, we can read off some properties of the function $f_U$:
    \begin{enumerate}
    \item \label{decorationsPropertyA} $f_U(V)$ only depends on $U$ only through $U\cap V$, and in particular, if $U\cap V = \emptyset$, $f_U(V) = f_0(V)$ for some universal function $f_0$. Moreover, $f_0(V)$ only depends on the ``shape'' and size of the volume $V$, i.e., $f_0(\cdot)$ is invariant with respect to lattice symmetries.
    \item If $V$ is not connected, $f_U(V)=0$. Indeed, if $V=V_1 \sqcup V_2$, each log-partition function on the right-hand side of \cref{inversion} splits into the sum of two terms, both of which appear in the global sum with the same frequency of plus and minus signs.
    \item There exists a constant $\beta_0>0$ such that
    \begin{equation}\label{decorationsDecay}
    \sup_U |f_U(\cC)| \leq \exp\big((\beta-\beta_0)d(\cC)\big)\,,
    \end{equation}
    where $d(\cC)$ denotes the cardinality of the smallest connected set of bonds of $\Z^2$ containing all boundary bonds of $\cC$ (i.e., bonds connecting $\cC$ to $\cC^c$), see \cite[Proposition~3.9]{DKS92} for details.
    \end{enumerate}
    
\subsection{Proof of \cref{prop:soscontour-law}}
\label{subsec:proof-soscontour-law}
Recall $\widehat{Z}_{\Lambda,U}^{j,+}$ from above \cref{expansion}, and define $\widehat{Z}_{\Lambda,U}^{j,-}$ similarly but with the conditioning $\{\varphi|_U \leq j\}$ (as noted above \cref{expansion}, $\widehat{Z}_{\Lambda,U}^{j,+} = \widehat{Z}_{\Lambda,U}^{j,-}$. The different  notation is mostly for clarity in the calculations that follow).
Recall also $\Lambda_{\gamma}^+, \Lambda_{\gamma}^-, \Delta_{\gamma}^+,$ and $\Delta_{\gamma}^{-}$ from above \cref{prop:soscontour-law}.

Observe that 
\begin{align}
    \widehat Z_{\Lambda}^{\xi} = \sum_{\gamma} e^{-\beta|\gamma|} \widehat Z_{\Lambda_{\gamma}^+, \Delta_{\gamma}^+}^{1,+} \widehat Z_{\Lambda_{\gamma}^-,\Delta_{\gamma}^-}^{0,-}\,,
    \label{eqn:claim-pf-2}
\end{align}
This is a simple consequence of the following identity: for any $\u, \v \in \Z^2$ such that  $\varphi_\u \geq 1$ and $\varphi_{\v} \leq 0$, we have
\begin{align*}
         |\varphi_\u - \varphi_{\v}| = \abs{(\varphi_\u -1) - \big(\varphi_{\v} - 0\big) + (1-0)} = (\varphi_\u -1) + (0 - \varphi_{\v}) + 1\,.
\end{align*}
Now, cluster-expanding each of the partition functions on the right-hand side of \cref{eqn:claim-pf-2} yields
\begin{align}
    \widehat Z_{\Lambda_{\gamma}^+, \Delta_{\gamma}^+}^{1} \widehat Z_{\Lambda_{\gamma}^-,\Delta_{\gamma}^-}^{0}
    &= \exp \Big(\sum_{\substack{\cC \subset \Lambda \\ \cC \cap \Delta_{\gamma} = \emptyset}} f_0(\cC) + \sum_{\substack{\cC \subset \Lambda_{\gamma}^+ \\ \cC \cap \Delta_{\gamma} \neq \emptyset}} f_{\Delta_{\gamma}^+}(\cC) + \sum_{\substack{\cC \subset \Lambda_{\gamma}^- \\ \cC \cap \Delta_{\gamma} \neq \emptyset}} f_{\Delta_{\gamma}^-} (\cC) \Big) 
    \nonumber \\
    &= e^{\sum_{\cC\subset \Lambda} f_0(\cC)} \exp\Big( \Psi_{\Lambda}(\gamma)\Big)  =\widehat Z_{\Lambda}^{0} \exp\Big( \Psi_{\Lambda}(\gamma)\Big)  \,, \label{eqn:claim-pf-3}
\end{align} 
where 
\[
\Psi_{\Lambda}(\gamma) := - \sum_{\substack{\cC \subset \Lambda \\ \cC \cap \Delta_{\gamma} \neq \emptyset}} f_0(\cC) + \sum_{\substack{\cC \subset \Lambda_{\gamma}^+ \\ \cC \cap \Delta_{\gamma} \neq \emptyset}} f_{\Delta_{\gamma}^+}(\cC) + \sum_{\substack{\cC \subset \Lambda_{\gamma}^- \\ \cC \cap \Delta_{\gamma} \neq \emptyset}} f_{\Delta_{\gamma}^-}(\cC)\,.
\]
Defining
\begin{equation}
\phi(\cC;\gamma):= -f_0(\cC) + f_{\Delta_{\gamma}^+}(\cC)\ind{\cC\cap \Delta_{\gamma}^- =\emptyset} + f_{\Delta_{\gamma}^-}(\cC) \ind{\cC\cap \Delta_{\gamma}^+ =\emptyset},
\label{def:Phi}
\end{equation}
it follows from \cref{eqn:claim-pf-2,eqn:claim-pf-3} that
\[
\widehat{Z}_{\Lambda}^{\xi} =\widehat{Z}_{\Lambda}^0\sum_{\gamma} \exp\Big(-\beta|\gamma| + \sum_{\substack{\cC \subset \Lambda \\ \cC \cap \Delta_{\gamma} \neq \emptyset}} \phi(\cC;\gamma)\Big)\,.
\]
\Cref{eqn:sos-contour-law,eqn:sos-contour-partitionfunction} follow immediately. From the definition of $\phi$ in \cref{def:Phi} and the properties of $f_U$ from \cref{subsec:cluster-expansion}, we obtain \cref{it:ce-conn,it:ce-local,it:ce-translation,it:ce-decay} of the proposition.
\qed

\subsection{Law of an open contour in the SOS model with floor}
\label{subsec:open-contour-floor}
Consider the SOS model $\pi_{R}^{\xi}$ in a domain $R$ with floor at $0$ and  boundary condition $\xi \in \{h-1, h\}^{\partial \Lambda}$ inducing a unique open 
$h$-contour $\gamma$.
Consider  $h=\lfloor \frac{1}{4\beta}\log L \rfloor - n$, for $n \in \N$ fixed relative to $L$, and assume that the domain $R$ satisfies $|R| \leq L^{\frac{4}{3}+2\epsilon}$ and $|\partial R| \leq L^{\frac{2}{3}+2\epsilon}$, for some $\ep \in (0,1/20)$. In this subsection, we derive an asymptotic expression for the law of the random contour $\gamma$ in terms of the no-floor law $\widehat{\pi}_R^{\xi}$. 

For a subset $U$ of the inner boundary of $R$, recall $\widehat{Z}_{R,U}^{j, \pm}$ as in \cref{subsec:proof-soscontour-law}, and define $Z_{R,U}^{j,\pm}$ similarly but with a floor at $0$. 
As in \cref{eqn:claim-pf-2}, we can write the law of $\gamma$ in terms of the partition functions above and below it:
\begin{equation*}
\pi_{R}^{\xi}(\gamma) = e^{- \beta |\gamma|} Z_{R_{\gamma}^+, \Delta_{\gamma}^+}^{h, +} Z_{R_{\gamma}^-,\Delta_{\gamma}^-}^{h-1, -}\,.
\end{equation*} 
The partition functions $Z_{R, U}^{j,\pm}$ can be related to the corresponding partition function $\widehat Z_{R, U}^{j,\pm}$ of the model without floor via the following (taken from~\cite[Prop.~A.1]{CLMST16}):
\[
Z_{R, U}^{j,\pm} = \widehat Z_{R, U}^{j,\pm} \exp(-\widehat\pi(\varphi_o > j)|R| + o(1))\,,
\]
where $\widehat\pi$ is the infinite volume measure obtained as the thermodynamic limit of $\widehat{\pi}_{\Lambda}^0$, see \cite{BW82}.
It follows that
\[
\pi_{R}^{\xi}(\gamma) \propto e^{- \beta |\gamma|} \widehat Z_{R_{\gamma}^+, \Delta_{\gamma}^+}^{h,+} \widehat Z_{R_{\gamma}^-,\Delta_{\gamma}^-}^{h-1,-}\exp\Big(- \big(\widehat\pi(\varphi_o > h-1)-\widehat\pi(\varphi_o > h)\big)|R_{\gamma}^{-}|+o(1)\Big)\,,
\]
where we used $|R_{\gamma}^{+}| = |R| - |R_{\gamma}^{-}|$ and dropped the term $|R|\widehat\pi(\varphi_o > h)$, which is independent of $\gamma$.
It was proved in~\cite[Lemma~2.4]{CLMST16} that there exists a constant $c_{\infty}:= c_\infty(\beta)$ such that, for $j \geq 1$,
\[ 
\widehat\pi(\varphi_o \geq j) = c_{\infty}e^{-4\beta j} + O(e^{-6\beta j})\,.
\]
Define $a_L := \frac1{4\beta}\log L - \lfloor \frac1{4\beta}\log L\rfloor$, $\lambda := \lambda(L) = c_{\infty}e^{4\beta a_L}(1-e^{-4\beta})$ and $\lambda^{(n)} := \lambda e^{4\beta n}$. We have
\begin{multline*}
\widehat\pi(\varphi_o > h-1) - \widehat\pi(\varphi_o > h) = c_{\infty} e^{-4\beta h}(1-e^{-4\beta})+O(e^{-6\beta h}) \\
= c_{\infty}e^{4\beta a_L- \log L + 4\beta n}(1-e^{-4\beta})+O(e^{-6\beta h})
= \lambda^{(n)}/L + O(e^{-6\beta h})\,.
\end{multline*}
Putting everything together, we find
\begin{equation}\label{eqn:contourLawAreaTerm}
\pi_{R}^{\xi}(\gamma) \propto \widehat\pi_{R}^{\xi}(\gamma) \exp\Big(- \frac{\lambda^{(n)}}{L}A(\gamma)+o(1)\Big) \,,
\end{equation}
where $A(\gamma)=|R_{\gamma}^{-}|$ is the area under $\gamma$ in $R$.

\section{Proof of \cref{prop:oz-weights} and \cref{eqn:gen-oz-exp-decay}} \label{appendix:ornstein--Zernike}

\begin{proof}[Proof of \cref{prop:oz-weights}]
Fix $\delta \in (0,1)$.
We begin  by showing that for any $\y \in \fcone_{\delta}\setminus\{0\}$, we have $f(\h_{\y}) = 1$, where 
\[
f(\h):= \sum_{\Gamma \in \A} W^{\h}(\Gamma)\,.
\]
Let
\[
\wulffcone := \{s \h_{\y}: \y \in \fcone_{\delta} \setminus\{0\}, \ s \in [0, 1]\}
\]
and
\[
\extwulffcone := \{s \h_{\y}: \y \in \fcone_{\delta} \setminus\{0\}, \ s \in [1, 1+\tfrac{\beta\nu}{2}\|\h_{\y}\|_2^{-1}]\}\,,
\]
where $\nu$ is the same as in  \cref{prop:many-cone-points}. In words, $\wulffcone$ is  the sector of the Wulff shape $\Kb$ where the sector boundary (part of $\partial\Kb$) is $\{\h_\y : \y\in \fcone_{\delta}\}$, while $\extwulffcone$ lies right outside of $\Kb$ along the continuation of such sector (note that $\wulffcone \subset \Kb$ comes from $0 \in \Kb$ and the convexity of $\Kb$).

In what follows, we let $C:= C(\beta)>0$ denote a  constant that may depend on $\beta$ and may change from line-to-line.
We begin with two claims.
\begin{claim}\label{claim:oz-1}
For all $\h \in \wulffcone \cup \extwulffcone$, we have
\begin{align*}
\sum_{\Gamma \in \AL \cup \AR} W^{\h}(\Gamma) < \infty\,, 
\end{align*}
and the series converges uniformly over $\h$. In particular, $f(\h)$ is continuous on $\wulffcone \cup \extwulffcone$.
\begin{proof}[Proof of Claim~\ref{claim:oz-1}]
    We will use the following bound many times: from \cref{eqn:partition-fn-tau}, we have that for any $\ep \in (0,1)$ and for all $\beta>0$ sufficiently large, there exists a constant $C:= C(\ep, \beta)>0$ such that for all $\y \in\Z^2$
    \begin{align}
        \Gb(\y) \leq C e^{-\tau_{\beta}(\y) + \ep\|\y\|_1}\,. \label{eqn:partition-fn-tau-ub}
    \end{align}

    For $\h\in \wulffcone \cup \extwulffcone$, we have $\h\cdot \y - \tau_{\beta}(\y) \leq \frac{\beta\nu}{2}\|\y\|_2 \leq \frac{\beta\nu}{2}\|\y\|_1$ for all $\y \in \Z^2 \setminus \{0\}$. Then \cref{prop:many-cone-points} and \cref{eqn:partition-fn-tau-ub}  yield
    \begin{align}
        \sum_{\substack{\Gamma \in \AL \cup \AR \\ \Gamma: 0 \to\y}} W^{\h}(\Gamma) 
        \leq  e^{\h\cdot\y}  \Gb(\y \given |\Cpts(\Gamma)| = 0) \leq  C\exp\Big(-(\tfrac{\nu \beta}2 - \ep)\norm{\y}_1\Big) \,.
        \label{eqn:bound-no-cone-points}
    \end{align}    
    Take $\ep < \nu \beta /2$.
    Then since $\A \subset \AL$, the sequence of continuous functions 
    \[
        f_N(\h) := \sum_{\substack{\y \in \Z^2 \setminus \{0\} \\ \norm{\y}_1 \leq N}}  \sum_{\substack{\Gamma \in \A \\ \Gamma: 0 \to\y}} W^{\h}(\Gamma) 
    \]
    converges  uniformly to $f(\h)$, and so the claim follows.
\end{proof}
    \end{claim}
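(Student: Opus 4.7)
The plan is to bound $W^{\h}(\Gamma) = e^{\h\cdot X(\Gamma)} q(\Gamma)$ pointwise by a quantity summable in $\y = X(\Gamma) \in \Z^2$, uniformly over $\h \in \wulffcone \cup \extwulffcone$. I would first organize the sum by endpoint: since every $\Gamma \in \AL \cup \AR$ has no cone-points, we have
\[
\sum_{\Gamma \in \AL \cup \AR,\, X(\Gamma) = \y} q(\Gamma) \leq \Gb\big(\y \,\big|\, |\Cpts(\Gamma)| = 0\big).
\]
Applying \cref{prop:many-cone-points} gains a factor $e^{-\nu\beta\|\y\|_1}$, and \cref{eqn:partition-fn-tau} gives $\Gb(\y) \leq Ce^{-\tau_\beta(\y) + \epsilon\|\y\|_1}$ for any $\epsilon>0$ and all $\|\y\|_1$ large.

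Next I would exploit the geometry of $\Kb$ to dominate $e^{\h \cdot \y}$ in terms of $e^{\tau_\beta(\y)}$. For $\h \in \wulffcone \subset \Kb$ the support-function property gives $\h\cdot\y \leq \tau_\beta(\y)$ for every $\y \in \Z^2$. For $\h \in \extwulffcone$, by construction $\h$ lies within Euclidean distance $\beta\nu/2$ of a point $\h_{\y_0} \in \partial\Kb$, so Cauchy--Schwarz yields $\h \cdot \y - \tau_\beta(\y) \leq \tfrac{\beta\nu}{2}\|\y\|_2 \leq \tfrac{\beta\nu}{2}\|\y\|_1$ uniformly in $\y \in \Z^2$. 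Combining with the previous paragraph,
\[
\sum_{\Gamma \in \AL \cup \AR,\, X(\Gamma) = \y} W^{\h}(\Gamma) \leq C \exp\!\big(-(\tfrac{\nu\beta}{2} - \epsilon)\|\y\|_1\big),
\]
which, choosing $\epsilon < \nu\beta/2$, is summable in $\y$ with a bound independent of $\h \in \wulffcone \cup \extwulffcone$. This establishes both finiteness and uniform convergence of $\sum_{\Gamma \in \AL \cup \AR} W^{\h}(\Gamma)$.

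For the continuity of $f$, since $\A \subset \AL$ the same uniform tail bound controls the partial sums $f_N(\h) := \sum_{\|\y\|_1 \leq N} \sum_{\Gamma \in \A,\, X(\Gamma) = \y} W^{\h}(\Gamma)$. Each $f_N$ is a finite sum of exponentials in $\h$ and hence continuous, so the uniform limit $f$ is continuous on $\wulffcone \cup \extwulffcone$.

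The main technical point is the geometric step above: verifying that $\extwulffcone$ lies within Euclidean distance $\beta\nu/2$ of $\partial\Kb$ along its defining rays, so that the support-function inequality for $\Kb$ degrades only by an additive error linear in $\|\y\|_2$ of slope $\beta\nu/2$, which is strictly smaller than the exponential gain $\nu\beta$ extracted from the ``no cone-points'' restriction via \cref{prop:many-cone-points}. Once this ``room'' is isolated, the remainder of the argument is bookkeeping on two bounds already in hand.
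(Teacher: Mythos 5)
Your proposal is correct and follows essentially the same route as the paper's proof: organizing the sum by the displacement $\y$, bounding it by $\Gb(\y \given |\Cpts(\Gamma)|=0)$, invoking \cref{prop:many-cone-points} together with the bound $\Gb(\y)\leq Ce^{-\tau_\beta(\y)+\ep\|\y\|_1}$, and using $\h\cdot\y-\tau_\beta(\y)\leq\tfrac{\beta\nu}{2}\|\y\|_2$ to get a summable bound uniform in $\h$, with continuity of $f$ following from uniform convergence of the continuous partial sums. The only difference is that you spell out the Cauchy--Schwarz justification of the geometric inequality for $\h\in\extwulffcone$, which the paper asserts without detail; this is a welcome but inessential elaboration.
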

\begin{claim}\label{claim:oz-2}
    For $\h \in \wulffcone$, $f(\h)\leq1$. For $\h \in \extwulffcone$, $f(\h)\geq1$.
\end{claim}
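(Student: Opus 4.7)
The plan is to exploit the multiplicative structure of the weights $W^{\h}$ under the irreducible decomposition, and to compare $f(\h)$ with the convergence behaviour of the generating function $\sum_{\y}e^{\h\cdot\y}\Gb(\y)$ as $\h$ crosses $\partial\Kb$. The starting point is the identity, valid in $[0,\infty]$ for every $\h \in \wulffcone \cup \extwulffcone$,
\begin{equation*}
\sum_{\y \neq 0} e^{\h\cdot\y}\, \Gb\big(\y \given |\Cpts(\Gamma)|\geq 2\big)
\;=\; A_L(\h)\, A_R(\h) \sum_{n\geq 1} f(\h)^n,
\end{equation*}
where $A_L(\h):=\sum_{\Gamma\in\AL} W^{\h}(\Gamma)$ and $A_R(\h):=\sum_{\Gamma\in\AR}W^{\h}(\Gamma)$ are finite and strictly positive by \cref{claim:oz-1}. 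The identity follows from writing each $\Gamma$ with $|\Cpts(\Gamma)|\geq 2$ uniquely as $\Gamma^{(L)}\circ\Gamma^{(1)}\circ\cdots\circ\Gamma^{(n)}\circ\Gamma^{(R)}$ via \cref{eqn:irreducible-decomposition}: both $q(\Gamma)$ and $e^{\h\cdot\X(\Gamma)}$ then factor across the irreducible pieces by \cref{eqn:q-factorization}, and summing over $\y$ removes the displacement constraint. Non-negativity of all terms (recall \cref{subsec:pos-decorations-def-animals}) makes the reordering of sums free of convergence issues. In particular, $\sum_{n\geq 1}f(\h)^n$ is finite iff $f(\h)<1$, and this is equivalent to the left-hand side being finite.

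For the upper bound, take $\h = s\h_{\y_0} \in \wulffcone$ with $s<1$. Then $\h \in \mathrm{int}(\Kb)$, so $\h\cdot\y \leq s\,\tau_\beta(\y)$ for every $\y$. Combined with the lower bound $\tau_\beta(\y)\gtrsim \|\y\|_1$ (positivity and continuity of $\tau_\beta$ on the unit sphere) and the upper bound $\Gb(\y) \leq Ce^{-\tau_\beta(\y)+\ep\|\y\|_1}$ from \cref{eqn:partition-fn-tau-ub}, applied with $\ep$ much smaller than $1-s$, this produces a uniform exponential decay $e^{\h\cdot\y}\Gb(\y) \lesssim e^{-c\|\y\|_1}$. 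Hence the left-hand side of the displayed identity is finite, forcing $\sum_{n\geq 1}f(\h)^n<\infty$ and therefore $f(\h)<1$. Sending $s\uparrow 1$ and invoking continuity of $f$ on $\wulffcone$ (\cref{claim:oz-1}) yields $f(\h_{\y_0})\leq 1$, completing the upper half of the claim.

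For the lower bound, take $\h = s\h_{\y_0}\in\extwulffcone$ with $s>1$. Since $\h_{\y_0}\cdot\y_0 = \tau_\beta(\y_0)$, one has $\h\cdot\y_0 - \tau_\beta(\y_0) = (s-1)\tau_\beta(\y_0) =: \kappa > 0$, and the defining limit \cref{def:tau-beta} yields $e^{\h\cdot N\y_0}\Gb(N\y_0) \geq e^{N\kappa/2}$ for all large $N$, so $\sum_{\y}e^{\h\cdot\y}\Gb(\y) = +\infty$. To transfer this divergence to the constrained partition function, I invoke \cref{prop:many-cone-points}: $\Gb(\y \given |\Cpts(\Gamma)|\geq 2) \geq (1 - Ce^{-\nu\beta\|\y\|_1})\Gb(\y)$ for all $\|\y\|_1$ large, so the left-hand side of the displayed identity is also $+\infty$. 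Since $A_L(\h)A_R(\h)<\infty$, the series $\sum_{n\geq 1}f(\h)^n$ must diverge, forcing $f(\h)\geq 1$. The main obstacle is the bookkeeping for the displayed identity—one must check that the unique irreducible decomposition accounts for every animal with $|\Cpts(\Gamma)|\geq 2$ and that their total displacement matches $\y$—after which the claim reduces to the clean dichotomy that $\sum_\y e^{\h\cdot\y}\Gb(\y)$ is finite on $\mathrm{int}(\Kb)$ and infinite outside $\Kb$.
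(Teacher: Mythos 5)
Your proposal is correct and takes essentially the same route as the paper: both compare $\big(\sum_{\Gamma\in\AL}W^{\h}(\Gamma)\big)\big(\sum_{\Gamma\in\AR}W^{\h}(\Gamma)\big)\sum_{n\geq1}f(\h)^n$ with the tilted sum $\sum_{\y}e^{\h\cdot\y}\Gb(\y)$ through the irreducible decomposition (the paper via the two-sided chain in \cref{ineq:doubleineq}, you via the equivalent identity restricted to $|\Cpts(\Gamma)|\geq 2$, with \cref{prop:many-cone-points} controlling the few-cone-point animals), and then conclude from convergence of the tilted sum for $\h$ strictly inside $\Kb$ and its divergence along the ray $N\y_0$ for $\h$ outside, finishing with continuity of $f$ from \cref{claim:oz-1}. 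The only differences are cosmetic (a claimed exact identity versus the paper's two inequalities, and a direct use of \cref{eqn:partition-fn-tau-ub} in place of quoting the second characterization of $\Kb$), and the bookkeeping you flag about the decomposition is the same one the paper relies on.
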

\begin{proof}[Proof of Claim~\ref{claim:oz-2}]
Similar to \cref{eqn:bound-no-cone-points}, we have the following for any $\h \in \wulffcone \cup \extwulffcone$:
\begin{align}
    \sum_{\y \in \fcone_{\delta}} \sum_{\substack{\Gamma = [\gamma, \underline{\mathcal{C}}] \\  \gamma: 0 \to\y}} W^{\h}(\Gamma) \ind{|\Cpts(\Gamma)|<2} 
    \leq C+  C \sum_{\substack{\y\in \fcone_{\delta} \\ \norm{\y}_1 > 2\delta_0^{-1}}} e^{\h\cdot \y - \beta\nu \norm{\y}} \Gb(\y) <\infty \,.
    \label{eqn:oz-many-cone-points-1}
\end{align}
Using the factorization of $q(\Gamma)$~\eqref{eqn:q-factorization} and~\eqref{eqn:oz-many-cone-points-1}, we have
\begin{align*}
\sum_{\y\in \fcone_{\delta}} 
&e^{{\h}\cdot\y} \Gb(\y) \\
&\leq C + \sum_{\y\in \fcone}
\sum_{m \geq 1}
\sum_{\Gamma^{(L)}\in\AL}\sum_{\Gamma^{(1)}\,\dots,\Gamma^{(m)}\in\A}\sum_{\Gamma^{(R)}\in\AR}W^{\h}(\Gamma^{(L)})\bigg(\prod_{i=1}^m W^{\h}(\Gamma^{(i)})\bigg)W^{\h}(\Gamma^{(R)})\one_{\X(\Gamma) =\y} \\
&= C+ \bigg(\sum_{\Gamma^{(L)}\in\AL}W^{\h}(\Gamma^{(L)})\bigg)\bigg(\sum_{\Gamma^{(R)}\in\AR}W^{\h}(\Gamma^{(R)})\bigg)
\sum_{m\geq1} f(\h)^m \\
&=: C + B_L(\h)B_R(\h)\sum_{m\geq1}
f(\h)^m\,,
\end{align*}
where in the inequality we replaced $\fcone_{\delta}$ by the full cone $\fcone$ and in the next line we exchanged the sums and used the fact that an animal with at least one cone-point must necessarily satisfy $\X(\Gamma) \in \fcone$. 
Hence, we have the chain of inequalities
\begin{equation}\label{ineq:doubleineq}
\sum_{\y\in \fcone_{\delta}}e^{\h\cdot\y} \Gb(\y) \leq C + B_L(\h)B_R(\h)\sum_{m\geq1}
f(\h)^m \leq C + \sum_{\y\in \Z^2}e^{\h\cdot\y} \Gb(\y)\,.
\end{equation}
Since $B_L(\h),B_R(\h)<\infty$ by \cref{claim:oz-1}, finiteness of the middle term in \eqref{ineq:doubleineq} depends only on the convergence of $\sum_{m\geq1}f(\h)^m$. 

Now, if $\h$ is in the interior of $\wulffcone$ (and thus in the interior of $\Kb$), the rightmost term in \eqref{ineq:doubleineq} converges due to the second definition of $\Kb$, and thus $\sum_{m\geq1}f(\h)^m<\infty$, and thus $f(\h)<1$. By continuity of $f$, we get $f(\h)\leq 1$ on $\wulffcone$. On the other hand, if $\h$ is in the interior of $\extwulffcone$, the leftmost term in \eqref{ineq:doubleineq} diverges (by the same argument that showed the equivalence of the two definitions of $\Kb$ in subsection \ref{subsec:surface-tension-wulff}), and thus $\sum_{m\geq1}f(\h)^m=\infty$, and thus $f(\h)\geq1$.
\end{proof}

By continuity of $f$, it follows that $f(\h)=1$ on $\wulffcone\cap\extwulffcone$, i.e., $f(\h_{\y})=1$ for $\y \in \fcone_{\delta}$.
This proves that $\P^{\h_{\y}}$ defines a probability distribution on $\A$ for $\y \in \fcone_{\delta}$. 

Next, we show~\eqref{eqn:colinear-expectation}. For any $\y \in \fcone_{\delta}\setminus \{0\}$, define
\[
F(\y) := f(\h_{\y}) = \sum_{\Gamma\in\A}\P^{\h_{\y}}(\Gamma)\,.
\]
From above, we know that $F(\y) \equiv 1$. Since $\tau_\beta$ is analytic outside of the origin (\cref{prop:surface-tension-facts}), it is in particular twice-differentiable, and thus so is $\P^{\h_{\y}}(\Gamma)$ with $\nabla \P^{\h_{\y}}(\Gamma) = J_{\h_{\y}} \X(\Gamma) \P^{\h_{\y}}$, where $J_{\h_{\y}}$ is the Jacobian matrix of $\y \mapsto \h_{\y}$. Using the same argument as in \cref{claim:oz-1} and boundedness of $J_{\h_{\y}}$, we find that $\sum_{\Gamma\in\A}J_{\h_{\y}} \X(\Gamma)\Pa(\Gamma)$ converges uniformly for $\y\in \Z^2\setminus \{0\}$, and thus we can differentiate $F(\y)$ under the sum to get
\[
0 = \nabla F(\y) = \sum_{\Gamma\in\A}J_{\h_{\y}} \X(\Gamma) \P^{\h_{\y}}(\Gamma) = J_{\h_{\y}}  \sum_{\Gamma\in\A}\X(\Gamma) \P^{\h_{\y}}(\Gamma) = J_{\h_{\y}} \E^{\h_{\y}}[\X(\Gamma)]\,,
\]
i.e., $\E^{\h_{\y}}[\X(\Gamma)] \in \text{Ker}(J_{\h_{\y}})$. 
Differentiating the relation $\h_{\y} \cdot \y = \tau_\beta(y)$, we see that $\y \in \text{Ker}(J_{\h_{\y}})$, 
and so if $J_{\h_{\y}}$ is non-degenerate we get that $\E^{\h_{\y}}[\X(\Gamma)]$ is collinear to $\y$, i.e., $\E^{\h_{\y}}[\X(\Gamma)] = \alpha \y$ for some scalar $\alpha=\alpha(\y)$. Finally, the non-degeneracy condition can be removed: since $\tau_\beta$ is analytic (outside of the origin), the points at which $J_{\h_{\y}}$ is fully degenerate form a discrete set, and since collinearity must hold outside of such set and $\E^{\h_{\y}}[\X(\Gamma)]$ is continuous, we get the result for any $\y \in \fcone_{\delta}\setminus \{0\}$.

It remains to show the exponential tail decay~\eqref{eqn:oz-exp-decay}, which actually holds for the weights $W^{\h}$ for any $\h \in \Kb$. Below, $\nu_1>0$ and $\nu_2>0$  denote  constants independent of $\h$ and $\beta$, while $C_1>0$ and $C_2>0$ may depend on $\beta$ but not $\h$.
We express the left-hand side of~\eqref{eqn:oz-exp-decay} as follows:
\begin{align}
    \sum_{\substack{\Gamma \in \AL \cup \AR \\|\Gamma| \geq k}} W^\h(\Gamma)
    \leq 
    \sum_{\norm{\y}_1 \geq k/2}
    \sum_{\substack{ \Gamma \in \AL \cup \AR \\|\Gamma| \geq k\\ \Gamma: 0 \to \y}} W^\h(\Gamma) 
    + \sum_{\norm{\y}_1 < k/2} \sum_{\substack{|\Gamma| \geq k \\ \Gamma: 0 \to \y}} W^\h(\Gamma)\,,  \label{eqn:oz-exp-decay-boundthis}
\end{align}
The first term in the above sum can be bounded using~\eqref{eqn:partition-fn-tau-ub}, \cref{prop:many-cone-points}, and the bound $\h \cdot \y \leq \tau_{\beta}(\y)$ for any $\h \in \Kb$:
\[
    \sum_{\norm{\y}_1 \geq k/2}
    \sum_{\substack{ \Gamma \in \AL \cup \AR \\|\Gamma| \geq k\\ \Gamma: 0 \to \y}} W^\h(\Gamma) 
    \leq \sum_{\norm{\y}_1 \geq k/2} e^{\h \cdot \y} \Gb(\y \given |\Cpts(\Gamma)| = 0)
    \leq \sum_{\norm{\y}_1 \geq k/2}  e^{-(\nu \beta -\ep)\norm{\y}_1}  \leq C_1 e^{-\nu_1 \beta k}\,.
\]
The second term in~\eqref{eqn:oz-exp-decay-boundthis} is bounded by
\begin{multline*}
 \sum_{\norm{\y}_1 <k/2}e^{\h\cdot \y} \Gb(\y \given |\gamma| \geq k) 
\leq \sum_{\norm{\y}_1 < k/2}  e^{\tau_{\beta}(\y)} \Big(c e^{-\nu_0 \beta (k/\norm{\y}_1) \norm{\y}_1} \Big)  \Gb(\y) \\
\leq   C_2 e^{-\nu_0 \beta k}\sum_{\norm{\y}_1 < k/2} e^{\ep \|\y\|_1} 
\leq  C_2 e^{-\nu_2 \beta k}   
\end{multline*}
where the first inequality uses~\eqref{eqn:ist-length-bound} and  $\h  \cdot \y \leq \tau_{\beta}(\y)$, and  the second inequality uses~\eqref{eqn:partition-fn-tau-ub}.
\end{proof}

\begin{proof}[Proof of \cref{eqn:gen-oz-exp-decay}]
Define the partition function over contours $\gamma \subset \Z^2$ such that $\gamma:0 \to \y$:
\[
    \Gb_D^{\full}(\y) := \sum_{ \gamma : 0 \to \y } q_D(\gamma)\,.
\]
\cref{eqn:modified-weight-comparison} gives us
    $q_D(\gamma) \leq \exp(6 e^{-\chi \beta}|\gamma|)q(\gamma)$, from which we find
\begin{align*}
    \Gb_D^{\full}(\y) \leq \sum_{\gamma:0\to\y} e^{6 e^{-\chi \beta}|\gamma|}q(\gamma) \leq e^{6.6e^{-\chi\beta}\|\y\|_1}\Gb(\y) + \sum_{\substack{\gamma: 0 \to \y \\ |\gamma| \geq 1.1 \|\y\|_1}} e^{6 e^{-\chi \beta}|\gamma|}q(\gamma) 
    \leq e^{7e^{-\chi\beta}\|\y\|_1} \Gb(\y)\,,
\end{align*}
where in the final inequality we used~\cref{eqn:ist-length-bound}. We then obtain the analogue of \cref{eqn:partition-fn-tau-ub} by applying \cref{eqn:partition-fn-tau-ub} to the above: for any $\ep \in (0,1)$, there exists $\beta_0:= \beta_0(\ep)>0$ such that for all $\beta \geq \beta_0$ and for all $\y \in \Z^2$,
\begin{align}
    \Gb_D^{\full}(\y) \leq Ce^{-\tau_{\beta}(\y) + \ep\|\y\|_1}
\end{align}
for some $C:= C(\ep,\beta)>0$.
Similarly, we obtain the length and contour cone-points bound \cref{lem:ist-length-cpts} and the animal cone-points bound \cref{prop:many-cone-points} with $\Gb_D^{\full}(\y)$ replacing $\Gb(\y)$ there. With these bounds, we may re-run the  proof of \cref{eqn:oz-exp-decay}, replacing $W^{\h}(\Gamma)$ with $e^{\h\cdot \X(\Gamma)} q_D(\Gamma)$.
\end{proof}
\begin{Backmatter}

\paragraph{Acknowledgments}
We thank Milind Hegde for a useful discussion.

\paragraph{Funding statement}
Y.H.K.\ acknowledges the support of the NSF Graduate Research Fellowship 1839302.
E.L.\ acknowledges the support of NSF DMS-2054833. 

\paragraph{Competing interests}
None.

\end{Backmatter}

\end{document}